\newcommand{\rrvert}{\vert}
\newcommand{\rrVert}{\Vert}
\newcommand{\llvert}{\vert}
\newcommand{\llVert}{\Vert}
\title[Two-Sample Tests Based on Geometric Graphs]{Asymptotic Distribution and Detection Thresholds for Two-Sample Tests Based on Geometric Graphs}
\author{Bhaswar B. Bhattacharya}
\address{Department of Statistics, University of Pennsylvania, Philadelphia, USA,
{\tt bhaswar@wharton.upenn.edu}}
\begin{document}

\begin{abstract}
In this paper, we consider the problem of testing the equality of two
multivariate distributions based on geometric graphs 
constructed using the interpoint distances between the observations.
These include the tests based on the minimum spanning tree
and the $K$-nearest neighbor (NN) graphs, among others. These tests are
asymptotically distribution-free, universally
consistent and computationally efficient, making them particularly
useful in modern applications. However, very
little is known about the power properties of these tests. In this
paper, using the theory of stabilizing
geometric graphs, we derive the asymptotic distribution of these tests
under general alternatives,
in the Poissonized setting. Using this, the detection threshold and the
limiting local power of the test based on the
$K$-NN graph are obtained, where interesting exponents depending on
dimension emerge. This provides a way to
compare and justify the performance of these tests in different examples.
\end{abstract}

%
%
%
%

\maketitle

\section{Introduction}
\label{sec1}

Given independent and identically distributed samples
%
\begin{align}
\label{2} \sX_{N_{1}}=\{X_{1}, X_{2},
\ldots, X_{N_{1}}\} \quad\text{and} \quad\sY _{N_{2}}=
\{Y_{1}, Y_{2}, \ldots, Y_{N_{2}}\},
\end{align}
from two unknown densities $f$ and $g$ (with respect to the Lebesgue
measure) in $\R^{d}$, respectively, the two-sample problem is to test
the hypotheses
%
\begin{equation}
H_{0}: f=g\quad \text{versus}\quad H_{1}: f\ne g.
\label{test}
\end{equation}
In this paper, we will derive asymptotic properties of two-sample tests
based on geometric graphs in the usual limiting regime where the
dimension $d$ is fixed and the sample size $N_{1}+N_{2}:=N\rightarrow
\infty$, such that
%
\begin{align}
\label{pq} \frac{N_{1}}{N_{1}+N_{2}}\rightarrow p\in(0, 1),\qquad
\frac
{N_{2}}{N_{1}+N_{2}}\rightarrow q:=1-p.
\end{align}

For univariate data, there are several well-known nonparametric tests
such as the two-sample Kolmogorov--Smirnoff maximum deviation test
\cite{smirnoff}, the Wald--Wolfowitz runs test \cite{ww} and the
Mann--Whitney rank test \cite{mann_whitney}.

The nonparametric two-sample problem for multivariate data has been extensively
studied, beginning with the work of Weiss \cite{weiss} and Bickel
\cite{bickel}. Friedman and Rafsky \cite{fr} generalized the
Wald--Wolfowitz runs test \cite{ww} to higher dimensions using the
Euclidean minimal spanning tree (MST) of the pooled data. Thereafter,
many other two-sample tests based on geometric graphs have been
proposed. Schilling \cite{schilling} and Henze \cite{henzenn} considered 
tests based on the $K$-nearest neighbor ($K$-NN) graph of the pooled
sample. Later, Rosenbaum \cite{rosenbaum} developed a test based on
matchings, and, more recently, Biswas et al. \cite{tsp} proposed a
test based on the Hamiltonian cycle, both of which are exactly
distribution-free under the null. Recently, Chen and Friedman \cite
{chenfriedman} proposed new modifications of these tests for
high-dimensional and object data. Maa et al. \cite{interpoint}
provided certain theoretical motivations for using tests based on
interpoint distances.

Another class of multivariate two-sample tests is the Liu--Singh rank
sum statistics \cite{liu_singh}, which generalize the Mann--Whitney
rank test using the notion of data depth \cite{tukey,liu_singh}.
For other popular two-sample tests, refer to \cite
{aslan_zech,bfranz,gretton,hall_tajvidi,rousson} and the references
therein. The problem of testing the equality of two discrete
distributions has also been extensively studied in recent years \cite
{batu,CDVV}.

\subsection{Graph-based two-sample tests}
\label{graph2tests}

Many of the tests mentioned above can be studied in the general
framework of graph-based two-sample tests \cite{BBB}, which include
the tests based on geometric graphs, as well as those based on data
depth. To this end, we have the following definition: A \emph{graph
functional} $\sG$ in $\R^{d}$ defines a graph for all finite subsets
of $\R^{d}$, that is, given $S\subset\R^{d}$ finite, $\sG(S)$ is a
graph with vertex set $S$. A graph functional is said to be \emph
{undirected/directed} if the graph $\sG(S)$ is an undirected/directed
graph with vertex set $S$. We assume that $\sG(S)$ has no self loops
and multiple edges, that is, no edge is repeated more than once in the
undirected case, and no edge in the same direction is repeated more
than once in the directed case. The set of edges in the graph $\sG(S)$
will be denoted by $E(\sG(S))$.

%
\begin{defn}[Bhattacharya \cite{BBB}]\label{graph2} Let $\sX
_{N_{1}}$ and $\sY_{N_{2}}$ be i.i.d. samples of size $N_{1}$ and
$N_{2}$ from densities $f$ and $g$, respectively, as in (\ref{2}). The
\emph{2-sample test statistic based on the graph functional $\sG$} is
defined as
%
\begin{align}
\label{graph2test} T\bigl(\sG(\sX_{N_{1}}\cup\sY_{N_{2}})\bigr):=
\sum_{i=1}^{N_{1}} \sum
_{j=1}^{N_{2}} \pmb1\bigl\{(X_{i},
Y_{j})\in E\bigl(\sG(\sX_{N_{1}}\cup\sY _{N_{2}})
\bigr)\bigr\}.
\end{align}
\end{defn}
If $\sG$ is an undirected graph functional, then the statistic \eqref
{graph2test} counts the number of edges in the graph $\sG(\sX
_{N_{1}}\cup\sY_{N_{2}})$ with one end point in $\sX_{N_{1}}$ and
the other in $\sY_{N_{2}}$. If $\sG$ is a directed graph functional,
then \eqref{graph2test} is the number of directed edges with the
outward end in $\sX_{N_{1}}$ and the inward end in $\sY_{N_{2}}$. The
null hypothesis is generally rejected for ``small'' values of the
statistic \eqref{graph2test}. This includes the Friedman--Rafsky (FR)
test  \cite{fr} (based on the MST), the test based on the $K$-NN graph
\cite{henzenn,schilling}, the cross match test \cite{rosenbaum}
(based on minimum non-bipartite matching), among others. These tests
are asymptotically distribution-free, universally consistent and
computationally efficient (both in sample size and in dimension),
making them particularly attractive for modern statistical applications.

\subsection{Poissonization}
\label{sec:poissonization}

In the Poissonized setting, instead of taking $N_{1}$ samples from the
density $f$ and $N_{2}$ from the density $g$, we have $\dPois(N_{1})$
from $f$ and $\dPois(N_{2})$ samples from $g$. To this end, suppose
$\sX=\{X_{1}, X_{2}, \ldots\}$ and $\sY=\{Y_{1}, Y_{2}, \ldots\}$
be i.i.d. samples from $f$ and $g$, respectively, and
%
\begin{align}
\sX_{N_{1}}'=\{X_{1}, X_{2},
\ldots, X_{L_{N_{1}}}\}\quad \text{and}\quad \sY _{N_{2}}'=
\{Y_{1}, Y_{2}, \ldots, Y_{L_{N_{2}}}\},
\label{2poisson}
\end{align}
where $L_{N_{1}}\sim\dPois(N_{1})$ and $L_{N_{2}} \sim\dPois
(N_{2})$ are independent of each other, and of $\sX$ and $\sY$.
Poissonization is a common assumption in geometric probability, which
simplifies calculations, due to the spatial independence of the Poisson
process, and yields cleaner formulas for the asymptotic variances. One
can expect to de-Poissionize the limit theorems derived below, using
well-known de-Poissonization methods \cite{penrosebook,penroseclt}.
However, de-Poissonization does not affect the rates of convergence,
and the detection thresholds obtained below would remain unchanged (see
Remark~\ref{rem:poisson} for more on de-Poissonization).

Given a graph functional $\sG$, the Poissonized two-sample statistic is
defined as
%
\begin{align}
\label{graph2testp} T\bigl(\sG\bigl(\sX_{N_{1}}'\cup
\sY_{N_{2}}'\bigr)\bigr):=\sum
_{i=1}^{L_{N_{1}}} \sum_{j=1}^{L_{N_{2}}}
\pmb1\bigl\{(X_{i}, Y_{j})\in E\bigl(\sG\bigl(
\sX'_{N_{1}}\cup \sY'_{N_{2}}\bigr)
\bigr)\bigr\}.
\end{align}
The distribution of this statistic can be described as follows: Let
$\phi_{N}(x):=\frac{N_{1}}{N} f(x)+ \frac{N_{2}}{N} g(x)$ and
$Z_{1}, Z_{2}, \ldots,$ be independent random variables with common
density $\phi_{N}(\cdot)$. Let $L_{N}$ be an independent Poisson
variable with mean $N_{1}+N_{2}$. Then $\cZ_{N}'=\{Z_{1}, Z_{2},
\ldots, Z_{L_{N}}\}$ is a nonhomogeneous Poisson process in $\R^{d}$
with rate function $N\phi_{N}=N_{1} f+N_{2} g$. Label each point of
$z\in\cZ_{N}'$ independently with
%
\begin{align}
\label{pooledlabelalternative} c_{z}= %
\begin{cases} 1 &
\text{with probability } \frac{N_{1} f(z)}{N_{1} f(z)+N_{2}
g(z)},
\\
2 & \text{with probability } \frac{N_{2} g(z)}{N_{1} f(z)+N_{2} g(z)}. \end{cases}
\end{align}
Then the sets of points assigned labels 1 and 2 have the same
distribution as $\sX_{N_{1}}'$ and $\sY_{N_{2}}'$ (as in \eqref
{2poisson}), respectively. This implies that for a directed graph
functional $\sG$, the Poissonized 2-sample test statistic \eqref
{graph2testp} is equal in distribution to
%
\begin{align}
\label{Tpoisson} T\bigl(\sG\bigl(\cZ_{N}'\bigr)
\bigr) = \sum_{x, y \in\cZ_{N}'} \psi(c_{x},
c_{y}) \bm 1\bigl\{(x, y)\in E\bigl(\sG\bigl(\cZ_{N}'
\bigr)\bigr)\bigr\},
\end{align}
where $\psi(c_{x}, c_{y})= \bm 1\{c_{x}=1, c_{y}=2\}$. (Note that every
undirected graph functional $\sG$ can be modified to a directed graph
functional $\sG_{+}$ in a natural way: For $S\subset\R^{d}$ finite,
$\sG_{+}(S)$ is obtained by replacing every edge in $\sG(S)$ with two
directed edges, one in each direction. Thus, without loss of
generality, it suffices to consider directed graph functionals.)

Denote by $\E_{H_{0}}$ and $\E_{H_{1}}$ the expectation under the
null and the alternative, respectively. For a directed graph functional
$\sG$,
\[
\E_{H_{0}}\bigl(T\bigl(\sG\bigl(\cZ_{N}'
\bigr)\bigr)\bigr)=\frac{N_{1}N_{2}}{(N_{1}+N_{2})^{2}}\E \bigl( \bigl\llvert E\bigl(\sG\bigl(
\cZ_{N}'\bigr)\bigr) \bigr\rrvert \bigr),
\]
where $|E(\sG(\cZ_{N}'))|$ denotes the number of edges in the graph
$\sG(\cZ_{N}')$. For example, in the MST functional, $\E(|E(\cT
(\cZ_{N}'))|)=N-1$, and in the directed $K$-NN graph functional $\E
(|E(\cN_{K}(\cZ_{N}'))|)=KN$, respectively. (Formal definitions of
these graph-functionals are given in Section~\ref{sec:stabilizing_graphs} below.) We will see later in Section~\ref{distribution} that for many geometric graphs, such as the MST and the
$K$-NN graph, the statistic $T(\sG(\cZ_{N}'))$ is asymptotically
normal and distribution-free under the null $H_{0}$, that is,
$N^{-\frac{1}{2}}\{ T(\sG(\cZ_{N}'))- \E_{H_{0}}(T(\sG(\cZ
_{N}')))\} \dto N(0, \sigma_\sG^{2})$,
where $\sigma_\sG$ depends on the graph functional $\sG$, but not on
the unknown null distribution. For such a graph functional $\sG$, the
asymptotically level $\alpha$-test rejects $H_{0}$ when
%
\begin{align}
\label{rejregionN} \frac{1}{\sqrt N} \bigl\{ T\bigl(\sG\bigl(
\cZ_{N}'\bigr)\bigr)- \E_{H_{0}}\bigl(T\bigl(
\sG\bigl(\cZ _{N}'\bigr)\bigr)\bigr) \bigr\} \leq
\sigma_\sG z_{\alpha},
\end{align}
where $z_{\alpha}$ is the standard normal quantile of level $\alpha$.

\subsection{Stabilizing graphs}
\label{sec:stabilizing_graphs}

Many geometric graphs such as the MST and the $K$-NN graph, have local
dependence, that is, addition/deletion of a point only effects the
edges incident on the neighborhood of that point. This was formalized
by Penrose and Yukich \cite{py}, using the notion of stabilization. To
describe this, a few definitions are needed: A subset $S\subset\R
^{d}$ is said to be \emph{locally finite}, if $S\cap C$ is finite, for all
compact subsets $C\subset\R^{d}$. A locally finite set $S\subset\R
^{d}$ is \emph{nice} if all the interpoint distances among elements of
$S$ are distinct. If $S$ is a set of $N$ i.i.d. points $W_{1}, W_{2},
\ldots, W_{N}$ from some continuous distribution function $F$, then
the distribution of $ \llVert  W_{1}-W_{2} \rrVert  $ does not have any point mass, and
$S$ is nice almost surely.

Let $\sG$ be a graph functional defined for all locally finite subsets
of $\R^{d}$. For $S\subset\R^{d}$ nice and $x\in\R^{d}$, let $E(x,
\sG(S))$ be the set edges incident on $x$ in $\sG(S\cup\{x\})$. Note
that $|E(x, \sG(S))|:=d(x, \sG(S))$, the (total) degree of the vertex
$x$ in $\sG(S\cup\{x\})$. Finally, note that two graphs $H_{1},
H_{2}$ are said to be isomorphic if there is a bijection $\phi$ from the
vertex set of $H_{1}$ to the vertex set of $H_{2}$ such that any two
vertices $u$ and $v$ of $H_{1}$ are adjacent in $H_{1}$ if and only if
$\phi(u)$ and $\phi(v)$ are adjacent in $H_{2}$.

%
\begin{defn}\label{ts}Given $S\subset\R^{d}$, $y\in\R^{d}$, and
$a\in\R$, denote by $y+S=\{y+z: z\in S\}$ and $aS=\{az: z\in S\}$. A
graph functional $\sG$ is said to be \emph{translation invariant} if
the graphs $\sG(x+S)$ and $\sG(S)$ are isomorphic for all points
$x\in\R^{d}$ and all locally finite $S\subset\R^{d}$. A graph
functional $\sG$ is \emph{scale invariant} if $\sG(aS)$ and $\sG
(S)$ are isomorphic for all points $a\in\R$ and and all locally
finite $S\subset\R^{d}$.
\end{defn}

For $\lambda\geq0$, denote by $\cP_\lambda$ the homogeneous Poisson
process of intensity $\lambda$ in $\R^{d}$, and $\cP_{\lambda}^{x}
:= \cP_{\lambda} \cup\{x\}$, for $x\in\R^{d}$. Penrose and Yukich
\cite{py} defined stabilization of graph functionals over homogeneous
Poisson processes as follows.

%
\begin{defn}[Penrose and Yukich \cite{py}] \label{stabilization} A
translation and scale invariant graph functional $\sG$ \emph
{stabilizes} on $\cP_\lambda$ if, for almost all realizations $\cP
_\lambda$, there exists $R:=R(\cP_\lambda)<\infty$ such that
%
\begin{equation}
E\bigl(0, \sG\bigl(\cP_\lambda^{0}\bigr)\bigr)
\stackrel{a.s.}=E\bigl(0, \sG\bigl(\cP_\lambda ^{0}\cap
B(0, R)\cup\sA\bigr)\bigr),
\end{equation}
for all finite $\sA\subset\R^{d} \setminus B(0, R)$, where $B(0, R)$
is the (Euclidean) ball of radius $R$ centered at the origin $0\in\R^{d}$.
\end{defn}

Informally, a graph functional is stabilizing if addition of finitely
many points outside a ball of finite radius centered at the origin,
does not effect the set of edges incident at the origin. The $K$-NN
graph and the minimum spanning tree are known to be stabilizing (\cite{py}, Lemma~2.1). We discuss the two-sample tests associated with these
graphs below.

\subsubsection{Friedman--Rafsky (FR) test}
\label{sec1.3.1}

Friedman and Rafsky \cite{fr} generalized the Wald and Wolfowitz runs
test to higher dimensions by using the Euclidean minimal spanning tree
of the pooled sample.

%
\begin{defn}\label{mst}
Given a nice finite set $S \subset\R^{d}$, a \emph{spanning tree} of
$S$ is a connected graph $\cT$ with vertex-set $S$ and no cycles. The
\emph{length} $w(\cT)$ of $\cT$ is the sum of the Euclidean lengths
of the edges of $\cT$. A \emph{minimum spanning tree} (MST) of $S$,
denoted by $\cT(S)$, is a spanning tree with the smallest length, that
is, $w(\cT(S))\leq w(\cT)$ for all spanning trees $\cT$ of $S$.
\end{defn}

Thus, $\cT$ defines a graph functional in $\R^{d}$, and given $\sX
_{N_{1}}'$ and $\sY_{N_{2}}'$ as in \eqref{2poisson}, the FR-test
rejects $H_{0}$ for small values of
%
\begin{align}
\begin{split}
T\bigl(\cT\bigl(\cZ_{N}'\bigr)\bigr)={}&\sum
_{x, y \in\cZ_{N}'}\pmb1\{c_{x}\ne c_{y}\} \bm1
\bigl\{(x, y)\in E\bigl(\cT\bigl(\cZ_{N}'\bigr)\bigr)
\bigr\},
\\
={}&\sum_{x, y \in\cZ_{N}'} \psi(c_{x},
c_{y}) \pmb1\bigl\{(x, y)\in E\bigl(\cT_{+}\bigl(
\cZ_{N}'\bigr)\bigr)\bigr\}, \label{Tfr}
\end{split}
\end{align}
where $\cZ_{N}'=\sX_{N_{1}}'\cup\sY_{N_{2}}'$ and $\cT_{+}(\cZ
_{N}')$ is obtained by replacing every (undirected) edge in $\cT(\cZ
_{N}')$ with two directed edges, one in each direction. Note that this
counts the number of edges in the MST of the pooled sample with one
end-point in sample 1 and the other end-point in sample 2, which is
expected to be small when the two distributions are different. Note
that this reduces to the well-known Wald--Wolfowitz runs test when
dimension $d=1$, where the MST is the path through the data.

Friedman and Rafsky \cite{fr} calibrated \eqref{Tfr} as a permutation
test, and showed that it has good power in finite sample simulations.
Later, Henze and Penrose \cite{henzepenrose} proved that the statistic
$T(\cT(\cZ_{N}'))$ is asymptotically normal under $H_{0}$ and is
consistent under all fixed alternatives.

\subsubsection{Test based on the $K$-nearest neighbor ($K$-NN) graph}
\label{sec1.3.2}
As in \eqref{Tfr}, a multivariate two-sample test can be constructed
using the $K$-nearest neighbor graph of $\cZ_{N}'$. This was
originally suggested by Friedman and Rafsky \cite{fr}, and later
studied by Schilling \cite{schilling} and Henze \cite{henzenn}.

%
\begin{defn}\label{knn}
Given a nice finite set $S \subset\R^{d}$, the (directed) $K$-nearest
neighbor graph ($K$-NN) is a graph with vertex set $S$ with a directed
edge $(a, b)$, for $a, b \in S$, if the Euclidean distance between $a$
and $b$ is among the $K$-th smallest distances from $a$ to any other
point in $S$. Denote the directed $K$-NN of $S$ by $\cN_{K}(S)$.
\end{defn}

Given $\cZ_{N}'=\sX_{N_{1}}' \cup\sY_{N_{2}}'$ as in \eqref
{2poisson}, the $K$-NN statistic is
%
\begin{align}
\label{Tknn} T\bigl(\cN_{K}\bigl(\cZ_{N}'
\bigr)\bigr)=\sum_{x, y \in\cZ_{N}'} \psi(c_{x},
c_{y}) \pmb1\bigl\{(x, y)\in E\bigl(\cN_{K}\bigl(
\cZ_{N}'\bigr)\bigr)\bigr\}.
\end{align}

As before, when the two distributions are different, the number of
directed edges starting from sample 1 and ending in sample 2 will be
small (see Figure~\ref{3nn}), so the $K$-NN test rejects $H_{0}$ for
small values of \eqref{Tknn}. This will be our main running example
throughout the paper.

\begin{figure*}[h] 
\centering
\begin{minipage}[l]{0.495\textwidth}
\centering
\includegraphics[width=2.0in]
    {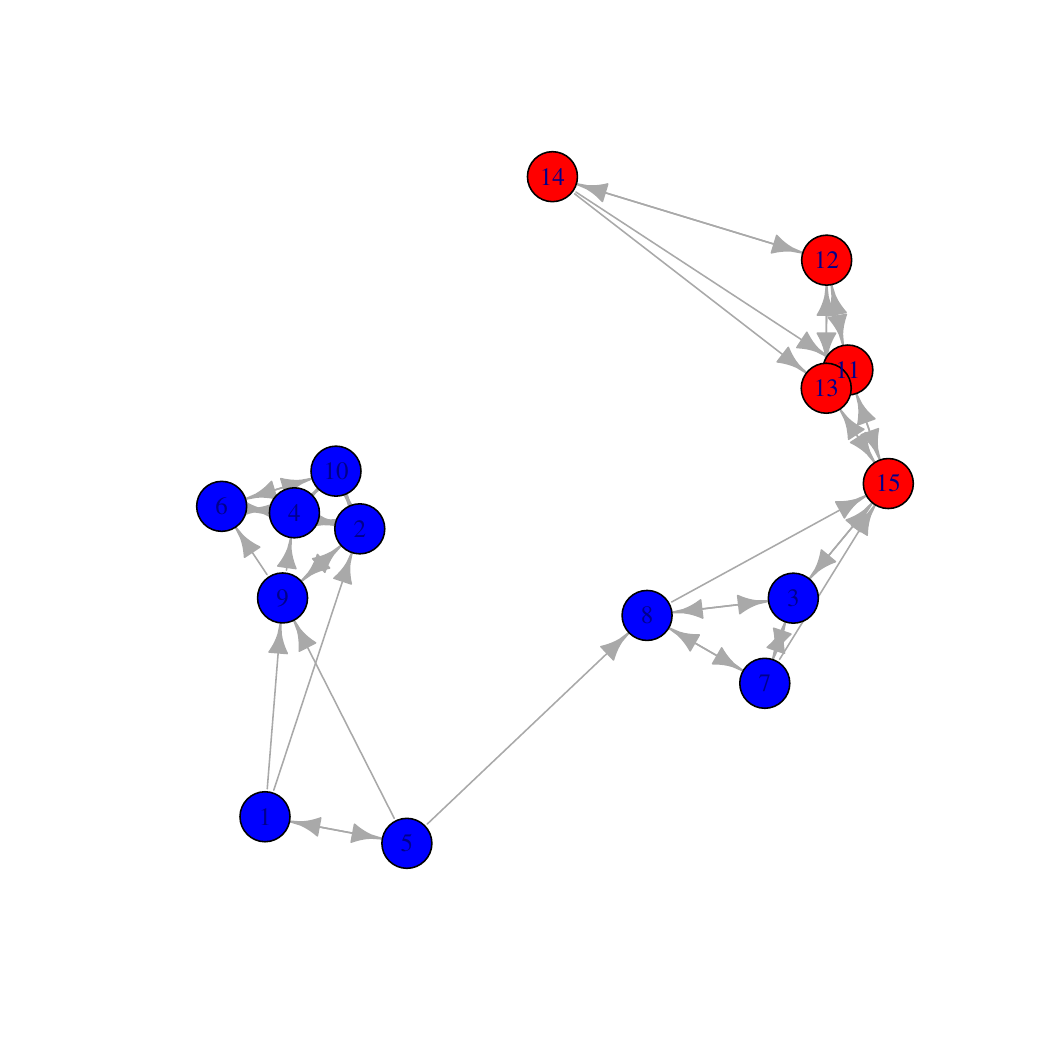}\\
\vspace{-0.2in}\scriptsize{(a)}
\end{minipage}
\begin{minipage}[c]{0.495\textwidth}
\centering
\includegraphics[width=2.0in]
    {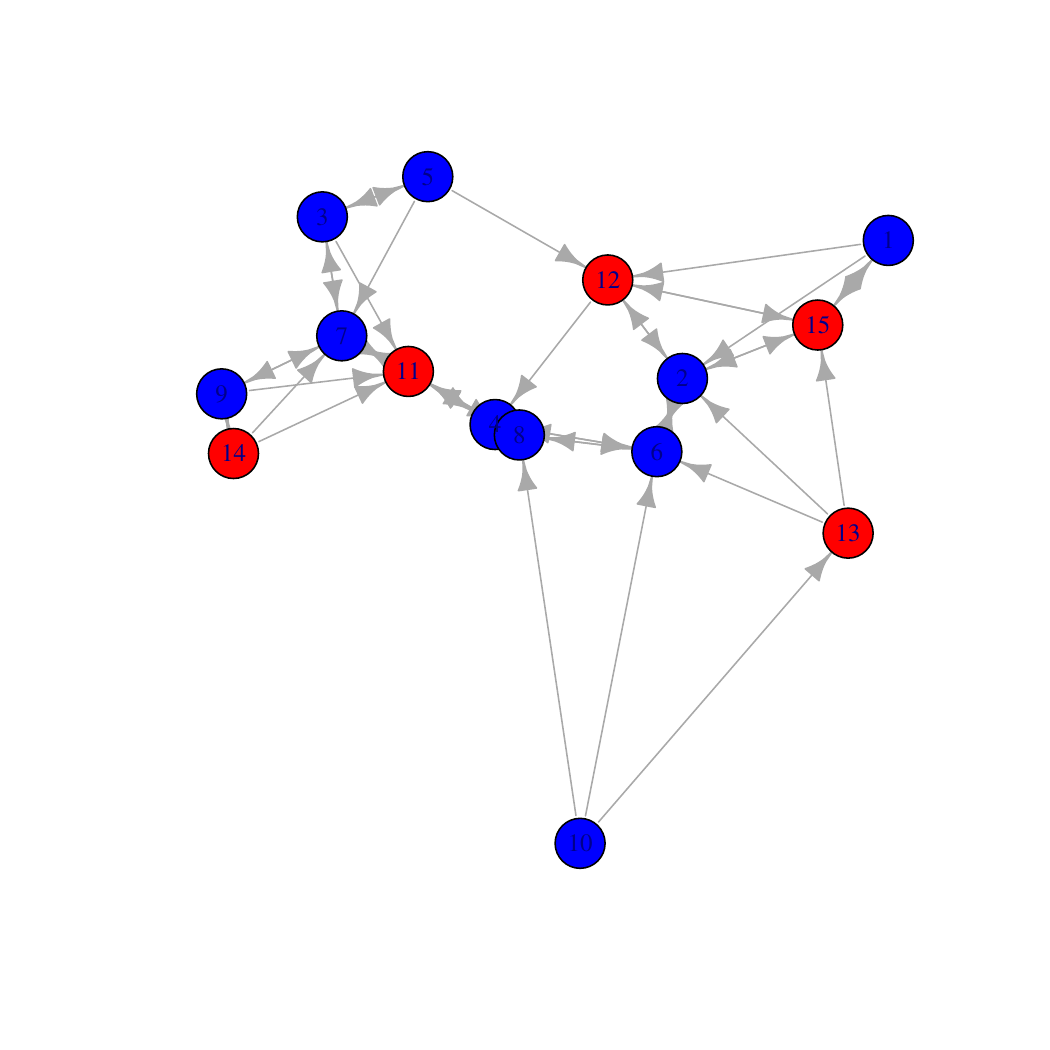}\\
\vspace{-0.2in}\scriptsize{(b)}
\end{minipage}
\caption{\small{The directed 3-NN graph on a pooled sample of size 15 in $\R^2$ with 10 i.i.d. points from $N(0, \mathrm I_2)$ (colored blue) and 5 i.i.d. points from $N(\Delta\cdot \pmb 1, \mathrm I_2)$ (colored red). For (a) $\Delta=2$, there are 3 directed edges starting from sample 1 and ending in sample 2,  and  for (b) $\Delta=0.05$, there are 8 such edges. }}
\label{3nn}
\end{figure*}

Another variant is the \emph{symmetrized $K$-NN test statistic} \cite
{schilling}:
%
\begin{align}
\label{Tknnsymmetry} T_{S}\bigl(\cN_{K}\bigl(
\cZ_{N}'\bigr)\bigr)=\sum
_{x, y \in\cZ_{N}'} \psi_{S}(c_{x},
c_{y}) \pmb1\bigl\{(x, y)\in E\bigl(\cN_{K}\bigl(
\cZ_{N}'\bigr)\bigr)\bigr\},
\end{align}
where $\psi_{S}(c_{x}, c_{y})= \bm1\{c_{x}\ne c_{y}\}$, which counts
the number of (directed) edges with the end-points in the different
samples. This can be rewritten as a graph-based test (\ref{Tpoisson})
by considering the underlying undirected multigraph (which allows for
multiple edges between two vertices).

\subsection{Summary of results}
\label{sec1.4}

The asymptotic null distribution and consistency of the tests described
above are well known (see \cite{henzepenrose} for the FR test and
\cite{henzenn,schilling} for the $K$-NN test). However, a mathematical
treatment of the power properties of these tests, which requires
understanding the limiting distribution of the test statistics under
the alternative, remained unavailable. In this paper, we address this
problem by deriving the asymptotic distribution of \eqref{Tpoisson},
for stabilizing geometric graph functionals, under general
alternatives, in the Poissonized setting described above. As a
consequence, the exact detection threshold and the limiting local power
of these tests can be derived.

We begin with a few notations: For a vector $x\in\R^{p}$, $ \llVert  x \rrVert  $ and
$ \llVert  x \rrVert  _{1}$ will denote the $L_{2}$ and $L_{1}$ norms of $x$,
respectively. For two nonnegative sequences, $(a_{n})_{n \geq1}$ and
$(b_{n})_{n \geq1}$, $a_{n}=\Theta(b_{n})$ means that there exist
positive constants $C_{1}, C_{2} $, such that $C_{1} b_{n} \leq a_{n}
\leq C_{2} b_{n}$, for all $n$ large enough. Finally, for two positive
sequences $(a_{n})_{n \geq1}$ and $(b_{n})_{n \geq1}$, we write
$a_{n} \ll b_{n}$ or $a_{n} \gg b_{n}$, if $a_{n}/b_{n} \rightarrow0$
or $a_{n}/b_{n} \rightarrow\infty$, respectively. The results
obtained in this paper are summarized below.
\begin{enumerate}
\item[1.] The limiting distribution of graph-based two-sample tests
under general alternatives is derived. The proof of this general result
has two main steps: To begin with we show that for tests based on
stabilizing geometric graphs, such as the Friedman--Rafsky test \eqref
{Tfr} and the test based on the $K$-nearest-neighbor ($K$-NN)
graph \eqref{Tknn}, the statistic \eqref{Tpoisson} has a limiting
normal distribution, after centering by the conditional mean and
scaling by $N^{-\frac{1}{2}}$ (Theorem~\ref{TH:CLT_R1}). This result
is of independent interest, as it leads to a new conditional test, and
can be used for approximate power calculations (Remark~\ref
{rem:approxpower}). Next, under the stronger assumption of \emph
{exponential stabilization} \cite{penroseclt}, the conditional CLT can
be strengthened to obtain the (unconditional) central limit theorem of
\eqref{Tpoisson} (Theorem~\ref{TH:CLT_R}).

\item[2.] The CLT proved above can be used to determine the detection
threshold of the $K$-NN test, that is, the rate at which the
alternatives shrink toward the null, such that the limiting power of
the test transitions from 0 to 1. More precisely, suppose $\{\P_\theta
\}_{\theta\in\Theta}$ is a parametric family of distributions in $\R
^{d}$, indexed by $\theta \in \Theta\subseteq\R^{p}$. Given samples $\sX
_{N_{1}}'$ and $\sY_{N_{2}}'$ from $\P_{\theta_{1}}$ and $\P
_{\theta_{2}}$ as in \eqref{2poisson}, respectively, consider the
testing problem
\[
H_{0}: \theta_{2}-\theta_{1}=0\quad
\text{versus}\quad H_{1}: \theta _{2}-
\theta_{1}=\varepsilon_{N},
\]
for a sequence $(\varepsilon_{N})_{N\geq1}$ in $\R^{p}$, such that
$ \llVert  \varepsilon_{N} \rrVert  \rightarrow0$. The detection threshold for the
$K$-NN test is the magnitude of the sequence $\varepsilon_{N}$ below which the test is
powerless and above which the test has power going to 1. The parametric
rate of detection is $O(N^{-\frac{1}{2}})$; however, results in \cite
{BBB} imply that tests based on geometric graphs, have no power in this
scale, that is, they have zero Pitman efficiency, which makes the
problem of determining the detection threshold of such tests
particularly interesting. In Theorem~\ref{EFFSECOND}, we determine the
precise detection threshold of the $K$-NN test, which undergoes a
remarkable phase transition at dimension $d \geq9$, and compute the
exact limiting power at the threshold. The result is pictorially
represented in Figure~\ref{fig:effsecond} and summarized below:
\begin{itemize}
\item[$\bullet$] For dimension $d \leq8$, the detection threshold of
the test based on the $K$-NN graph \eqref{rejregionKNN} is at $\Theta
(N^{-\frac{1}{4}})$, that is, the limiting power of the test undergoes
a phase transition from the level $\alpha$ to 1, depending on whether
$ \llVert  N^{\frac{1}{4}} \varepsilon_{N} \rrVert   \rightarrow0$ or $ \llVert  N^{\frac
{1}{4}} \varepsilon_{N} \rrVert   \rightarrow\infty$, respectively.
Moreover, using the CLT above, we can derive the exact local power at
the threshold $N^{\frac{1}{4}}\varepsilon_{N} \rightarrow h $.

\item[$\bullet$] The detection threshold changes for dimension $d
\geq9$, where the situation becomes more delicate: Here, the $K$-NN
test has power going to $\alpha$ or 1, depending on whether
$ \llVert  N^{\frac{1}{2}-\frac{2}{d}}\varepsilon_{N} \rrVert   \rightarrow0 $ or
$ \llVert   N^{\frac{2}{d}} \varepsilon_{N} \rrVert   \rightarrow\infty$,
respectively. This shows that the detection threshold is somewhere
between these two bounds, however, unlike in $d \leq8$, the exact
location of the detection threshold has no universality: it depends on
the distribution of the data under the null and the direction along
which $\varepsilon_{N}$ goes to zero. Note that the exponent in the
lower bound $\frac{1}{2}-\frac{2}{d}$ increases to $\frac{1}{2}$
(the parametric detection threshold), and the exponent in the upper
bound decreases to the 0 (which gives consistent fixed alternatives).
We show that both these thresholds are tight in a truncated spherical
normal problem, depending on the sign of the alternative. This is an
example where the $K$-NN test exhibit a surprising \emph{blessing of
dimensionality}, that is, it becomes easier to detect local changes
along certain directions as dimension increases (see Section~\ref{sec:snormal} for details). The reason behind the phase transition of
the detection threshold at dimension 9 is explained in Section~\ref{sec:pfoutline}, and the details of the proof are given in
Appendix \ref{sec:pfeffsecond}.
\end{itemize} 
\end{enumerate}

\begin{figure}[h]
\centering
\begin{minipage}[c]{1.00\textwidth}
\centering
\includegraphics[width=4.5in]
    {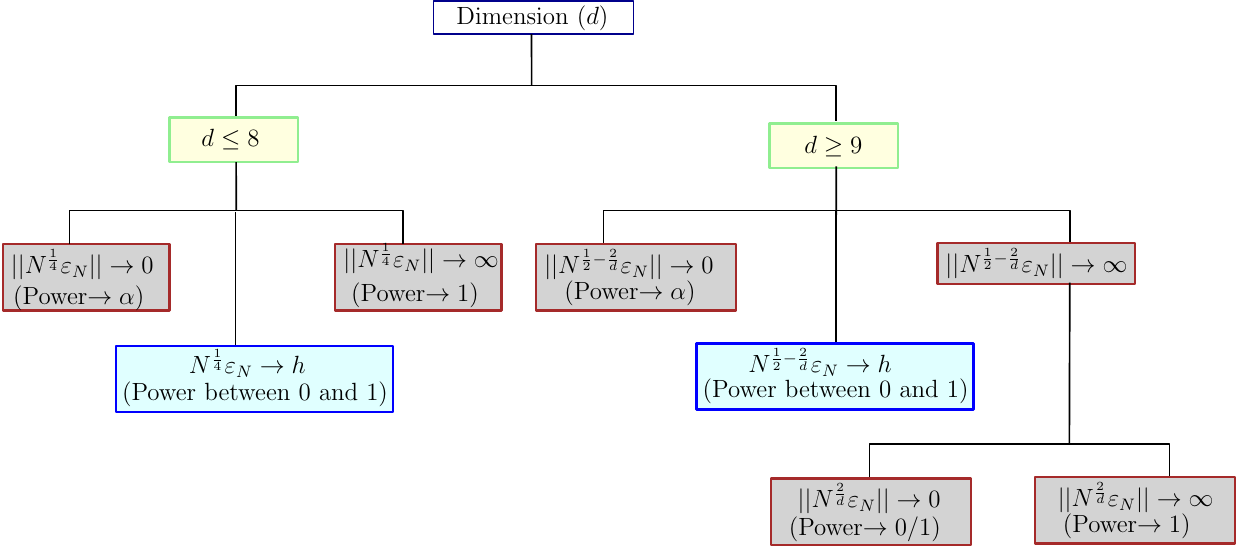}\\
\end{minipage}
\caption{\small{Detection threshold of the test based on the $K$-NN graph: Illustration of Theorem \ref{EFFSECOND}.}}
\label{fig:effsecond}\vspace{-0.2in}
\end{figure}

\subsection{Organization}
\label{sec1.5}
The rest of the paper is organized as follows: The general consistency
result is stated in Section~\ref{consistent}. The central limit
theorems for the statistic \eqref{Tpoisson} are described in
Section~\ref{distribution}. The detection threshold and local power of
the $K$-NN test are given in Section~\ref{sec:secondorder}, and the
performance of the different tests are compared in simulations in
Section~\ref{sec:examples}. The proofs of the results are given in the
Appendix.

\section{Consistency}
\label{consistent}

In this section, we prove consistency against all fixed alternatives of
the test \eqref{rejregionN} for stabilizing graphs functionals. This
unifies the proof of consistency of the test based on the $K$-NN
graph \cite{schilling,henzenn}, and the FR test \cite{henzepenrose},
generalizing the result to any stabilizing graph. We begin by recalling
that $d(x, \sG(S))$ is the total degree of the vertex $x$ in $\sG
(S\cup\{x\})$, for $S\subset\R^{d}$ nice and $x\in\R^{d}$.
Moreover, for a function $\psi: \R^{d} \rightarrow\R$, we denote by
$\cP_\psi$ the inhomogeneous Poisson process with intensity function
$\psi$. (In particular, this means for any measurable set $A \subset
\R^{d}$, the number of points in $A$ is distributed as $\dPois
(\int_{A} \psi(x) \,\mathrm{d}x )$.)

%
\begin{assumption}[Degree moment condition]
A translation and scale invariant graph functional $\sG$ is said to
satisfy the $\beta$-\emph{degree moment condition} if it stabilizes on 
$\cP_{\lambda}$, for all $\lambda\in(0, \infty)$, and
%
\begin{equation}
\sup_{N \in\N}\sup_{\substack{z \in\R^{d},\\ \cA\subset\R
^{d}}}\E \bigl
\{d^\beta\bigl(z, \sG(\cP_{N\phi_{N}}\cup\cA)\bigr) \bigr\} <
\infty, \label{degmoment}
\end{equation}
where $\cA$ ranges over all finite subsets of $\R^{d}$, and $\phi
_{N}=\frac{N_{1}}{N} f + \frac{N_{2}}{N} g$.
\label{pdegree}
\end{assumption}

This condition ensures that the $\beta$-th moment of the degree
function at a point $z$ is uniformly bounded over $z$ and over the
addition of finitely many points to the data. Note that this is
trivially satisfied for bounded degree graphs, such as the $K$-NN and
the MST. Under this assumption, the weak limit of the statistic $\frac
{1}{N} T(\sG(\cZ_{N}'))$ can be derived, which is given in terms of
the Henze--Penrose dissimilarity measure between the two density functions.

%
\begin{defn}\label{hpdiss}Given $p\in(0, 1)$ and densities $f$ and
$g$ in $\R^{d}$, the \emph{Henze--Penrose dissimilarity} measure is
defined as
%
\begin{equation}
\delta(f, g, p)=1- 2pq
\int\frac{f(x)g(x)}{p f(x)+q g(x)}\,\mathrm{d}x. \label{hpdis}
\end{equation}
This belongs to a general class of separation measures between
probability distributions \cite{gyorfinemetz1}.
\end{defn}

The following proposition gives the weak-limit of $\frac{1}{N} T(\sG
(\cZ_{N}'))$ for stabilizing graph functionals satisfying the degree
moment condition. The proof of the proposition closely mimics
\cite{henzepenrose}, Theorem~2, and is detailed in Section~A.3.

%
\begin{ppn} \label{CONSISTENT} Let $\sG$ be a translation and scale
invariant directed graph functional which stabilizes on $\cP_{\lambda
}$ for all $\lambda\in(0, \infty)$. If $\sG$ satisfies the $\beta
$-degree moment condition for some $\beta>4$, then
%
\begin{equation}
\label{eq:weaklimit} \frac{1}{N} T\bigl(\sG\bigl(\cZ_{N}'
\bigr)\bigr)\pto\frac{\E\Delta_{0}^\uparrow}{2} \bigl(1-\delta(f, g, p)\bigr),
\end{equation}
where $\Delta_{0}^\uparrow=d^\uparrow(0, \sG(\cP_{1}))$ is the
out-degree of the origin in the graph $\sG(\cP_{1} \cup\{0\})$.
\end{ppn}

Using the fact $\delta(f, g, p)\geq\delta(f, f, p)=p^{2}+q^{2}$ and
that the inequality is strict for densities $f$ and $g$ differing on a
set of positive measure (see \cite{gyorfinemetz1}, Theorem~1 and Corollary~1), it can be shown that various tests based on
stabilizing graph functionals, which includes the MST and the $K$-NN
graphs, are consistent for all fixed alternatives (\ref{test}) (refer
to Remark~A.2 for details).

%
\begin{remark}Recently, Arias-Castro and Pelletier \cite
{cm_consistency} showed that Rosenbaum's cross match test \cite
{rosenbaum} based on non-bipartite matching (NBM), has the same limit
as in \eqref{eq:weaklimit}, thus it is also consistent for general
alternatives. Note that this does not follow from Proposition~\ref
{CONSISTENT}, because it is unknown whether the NBM graph functional is
stabilizing. They show that the properties of stabilizing graphs
required in the proof of consistency also hold for the NBM graph
functional and, therefore, \eqref{eq:weaklimit} holds for the cross
match test as well.
\end{remark}

\section{Distribution under general alternatives}
\label{distribution}

This section describes the central limit theorems of the Poissonized
two sample statistic $T(\sG(\cZ_{N}'))$ (recall \eqref{Tpoisson})
for stabilizing graph functionals. Let $\sX_{N_{1}}'$ and $\sY
_{N_{2}}'$ be Poissonized samples from densities $f$ and $g$ in $\R
^{d}$ as in \eqref{2poisson}. Define
%
\begin{align}
\phi_{N}(x)=\frac{N_{1}}{N}f(x)+\frac{N_{2}}{N}g(x)
\quad\text{and}\quad \phi (x)=pf(x)+qg(x). \label{phi}
\end{align}
Recall from Section~\ref{sec:poissonization} that the joint
distribution of $\sX_{N_{1}}'$ and $\sY_{N_{2}}'$ can be described as
follows: Let $\cZ'=\{Z_{1}, Z_{2}, \ldots\}$ be independent random
variables with common density $\phi_{N}$. Then $\cZ_{N}'=\{Z_{1},
Z_{2}, \ldots, Z_{L_{N}}\}$, where $L_{N}\sim\dPois(N)$ is
independent of $\cZ'$, and each point of $\cZ_{N}'$ is labeled 1 or 2
as in \eqref{pooledlabelalternative}. Then the sets of points assigned
labels 1 and 2 have the same distribution as $\sX_{N_{1}}'$ and $\sY
_{N_{2}}'$. In this section, we derive the limiting distribution of the
test statistic
%
\begin{align}
\cR\bigl(\sG\bigl(\cZ_{N}'\bigr)\bigr)&=
\frac{1}{\sqrt N} \bigl\{ T\bigl(\sG\bigl(\cZ_{N}'
\bigr)\bigr)- \E_{H_{1}}\bigl(T\bigl(\sG\bigl(\cZ_{N}'
\bigr)\bigr)\bigr) \bigr\} \label{R}
\end{align}
for stabilizing graph functionals. This involves the following two steps:
\begin{enumerate}
\item[(1)] The first step is to derive the CLT of the \emph{test
statistic centered by the conditional mean} $\E_{H_{1}}(T(\sG(\cZ
_{N}'))|\cF)$, where $\cF:=\sigma(\cZ', L_{N})$ is the
sigma-algebra generated by $\cZ'$ and the Poisson random variable
$L_{N}$, that is,
%
\begin{equation}
\cR_{1}\bigl(\sG\bigl(\cZ_{N}'\bigr)
\bigr)=\frac{1}{\sqrt N} \bigl\{ T\bigl(\sG\bigl(\cZ _{N}'
\bigr)\bigr)- \E_{H_{1}}\bigl(T\bigl(\sG\bigl(\cZ_{N}'
\bigr)\bigr)|\cF\bigr) \bigr\}, \label{R1}
\end{equation}
for a stabilizing graph functional $\sG$. Note that conditional on
$\cF$, the randomness comes from the labeling \eqref
{pooledlabelalternative}. As the labeling is independent across the
vertices of the graph, the dependence in \eqref{R1} is local, and the
CLT can be proved using Stein's method based on dependency graphs
(Theorem~\ref{TH:CLT_R1}). This can be used to devise and calibrate a
conditional test (see Remark~\ref{rem:approxpower}), which might be of
independent interest.

\item[(2)] The second step is to derive the CLT of the \emph
{conditional mean}
%
\begin{equation}
\cR_{2}\bigl(\sG\bigl(\cZ_{N}'\bigr)
\bigr)= \frac{1}{\sqrt N} \bigl\{ \E_{H_{1}}\bigl(T\bigl(\sG \bigl(
\cZ_{N}'\bigr)\bigr)|\cF\bigr) - \E_{H_{1}}
\bigl(T\bigl(\sG\bigl(\cZ_{N}'\bigr)\bigr)\bigr) \bigr
\}. \label{R2}
\end{equation}
This requires the additional assumption of exponential stabilization
(Definition~\ref{expstabilize}), and is proved in Proposition~\ref{CLT_R2}.
\end{enumerate}

The above results can be combined to obtain the CLT of \eqref{R},
since $\cR(\sG(\cZ_{N}'))=\cR_{1}(\sG(\cZ_{N}'))+\cR_{2}(\sG
(\cZ_{N}'))$ (see Theorem~\ref{TH:CLT_R} below for details).

\subsection{The conditional CLT}
\label{sec3.1}

For a directed graph functional $\sG$, $S\subset\R^{d}$ finite and a
point $x\in\R^{d}$, let $d^\uparrow(x, \sG(S))$ be the out-degree
of the vertex $x$ in the graph $\sG(S\cup\{x\})$, that is, the number
of outgoing edges $(x, y)$, where $ y \in S\cup\{x\}$, in the graph
$\sG(S\cup\{x\})$. Similarly, let $d^\downarrow(x, \sG(S))$ be the
in-degree of the vertex $x$ in the graph $\sG(S\cup\{x\})$, that is,
the number of incoming edges $(y, x)$, where $y \in S\cup\{x\}$, in
the graph $\sG(S\cup\{x\})$. Note that $d(x, \sG(S))=d^\downarrow
(x, \sG(S))+d^\uparrow(x, \sG(S))$ is the total degree of the vertex
$x$ in the graph $\sG(S\cup\{x\})$.

Moreover, let
%
\begin{equation}
T_{2}^{\uparrow}\bigl(x, \sG(S)\bigr)= {d^\uparrow\bigl(x, \sG(S)\bigr)\choose2},\qquad
T_{2}^{\downarrow}\bigl(x, \sG(S)\bigr)= {d^\downarrow\bigl(x, \sG(S)\bigr)\choose2}
\label{2star}
\end{equation}
be the number of outward 2-stars and inward 2-stars incident on $x$ in
$\sG(S)$, respectively. Finally, let $T_{2}^{+}(x, \sG(S))$ be the
number of 2-stars incident on $x$ in $\sG(S)$ with different
directions on the two edges. For notational brevity, denote
%
\begin{align}
\label{Delta0} \Delta^\uparrow_{0}=d^\uparrow
\bigl(0, \sG(\cP_{1})\bigr), \qquad \Delta ^\downarrow_{0}=d^\downarrow
\bigl(0, \sG(\cP_{1})\bigr),
\end{align}
and $\Delta^{+}_{0}:=|\{z\in\cP_{1}: (0, z), (z, 0)\in E(\sG(\cP
_{1}^{0}))\}|$. (Note that $\Delta^\uparrow_{0}$ was already defined
in the statement of Proposition~\ref{CONSISTENT}.) Similarly, let
%
\begin{align}
\label{2star0} T_{2}^\uparrow=T_{2}^\uparrow
\bigl(0, \sG(\cP_{1})\bigr),\qquad  T_{2}^\downarrow
=T_{2}^\downarrow\bigl(0, \sG(\cP_{1})\bigr),
\end{align}
and $T_{2}^{+}:=T_{2}^{+}(0, \sG(\cP_{1}))$.

To derive the CLT of \eqref{R1}, we need some control on the maximum
degree of the graph functional $\sG$. The natural assumption of
bounded maximum degree includes most of the natural graphs, such as the
MST and the $K$-NN graph. The slightly weaker polynomial upper bound
given below includes other stabilizing geometric graphs, like the
Delaunay graph \cite{py}.

%
\begin{assumption}[Maximum degree condition]\label{assummaxdeg}
A graph functional $\sG$ is said to satisfy the \emph{maximum degree
condition} if
%
\begin{equation}
\sup_{z\in\cP_{N\phi_{N}}} d\bigl(z, \sG(\cP_{N\phi
_{N}})
\bigr)=o_{P}\bigl(N^{\frac{1}{40}}\bigr). \label{maxdegree}
\end{equation}
\end{assumption}

The following theorem gives the CLT of the test statistic centered by
the conditional mean, as in \eqref{R1}, for stabilizing graph functionals. Recall $\phi(x)=p f(x) + q g(x)$. 

%
\begin{thm}\label{TH:CLT_R1} Let $\sG$ be a translation and scale
invariant directed graph functional which stabilizes on $\cP_{\lambda
}$, for all $\lambda\in(0, \infty)$. If $\sG$ satisfies the $\beta
$-degree moment condition for $\beta>4$ and the maximum degree
condition \eqref{maxdegree}, then
\begin{align*}
\mathcal R_{1}\bigl(\sG\bigl(\cZ_{N}'
\bigr)\bigr) \dto N\bigl(0, \kappa_{\sG}^{2}\bigr),
\end{align*}
where
%
\begin{align}
\label{sigma1} \kappa_{\sG}^{2}=\frac{r}{4}
\int\frac{f(x)g(x)}{\phi^{3}(x)} L(x) \,\mathrm{d}x,
\end{align}
with $r:=2pq$, and
\[
L(x):=2\E\Delta_{0}^\uparrow\phi^{2}(x) + 4
\phi(x) \bigl(q\E T_{2}^\uparrow g(x)+ p\E
T_{2}^\downarrow f(x) \bigr) - 4pq \E\Gamma _{0}
f(x)g(x),
\]
where $\Gamma_{0}:= T_{2}^\uparrow+ T_{2}^\downarrow+ T_{2}^{+} +
\frac{\Delta^{+}_{0}}{2}+ \frac{\Delta_{0}^\uparrow}{2}$, with $\Delta_{0}^\uparrow$, $\Delta_{0}^\downarrow$, $T_{2}^\uparrow$,
and $T_{2}^\downarrow$ as defined in \eqref{Delta0} and \eqref{2star0}.
\end{thm}

The proof of theorem is given in Section~A.4. The limit
of the conditional variance \eqref{sigma1} is derived using properties
of stabilizing graphs, and the CLT is proved using Stein's method based
on dependency graphs. In fact, our proof suggests that it is possible
to extend the CLT in Theorem~\ref{TH:CLT_R1} to other distance
functions in $\R^{d}$, whenever the maximum degree condition
(Assumption~\ref{assummaxdeg}) holds, and the conditional variance of
$\mathcal R_{1}(\sG(\cZ_{N}'))$ has a limit in probability, in the
graph $\sG(\cZ_{N}')$ constructed using that metric. This is because
our proof technique proceeds by conditioning on the randomness of the
graph and, therefore, as long as the associated graph quantities that
arise in $\Var(\cR_{1}(\sG(\cZ_{N}'))|\cF)$ converge in
probability (as in Lemma~A.5), and the dependence is local
(which is ensured by Assumption~\ref{assummaxdeg}), the Stein's method
argument applies and the asymptotic normality in Theorem~\ref
{TH:CLT_R1} would hold.

%
\begin{remark}[Null distribution] Given the graph functional $\sG$,
the limit of the conditional variance $\kappa_{\sG}$ depends on the
densities $f$ and $g$ and the limiting proportion $p$ of the samples.
Under the null ($f=g$) this simplifies to
%
\begin{align}
\kappa_{\sG, H_{0}}^{2}=\frac{r}{4} \bigl\{2\E
\Delta_{0}^\uparrow+ 4 \bigl(q\E T_{2}^\uparrow+
p\E T_{2}^\downarrow \bigr) -2 r\Gamma _{0} \bigr
\}. \label{sigmanulldirected}
\end{align}
\begin{itemize}
\item$\sG=\cN_{K}$ is the $K$-NN nearest neighbor graph functional:
In this case, $\Delta_{0}^\uparrow=K$, $T_{2}^\uparrow=\frac
{K(K-1)}{2}$, $T_{2}^{+}=\Delta_{0}^\uparrow\Delta_{0}^\downarrow
-\Delta_{0}^{+}=K\Delta_{0}^\downarrow-\Delta_{0}^{+}$, $\E\Delta
^\downarrow=K$, and \eqref{sigmanulldirected} simplifies to
%
\begin{align}
\label{knnnulldirected} \kappa_{\cN_{K}, H_{0}}^{2} &=\frac{r}{2}
\bigl\{Kpq + (p-q)^{2}K^{2}+ p^{2}\Var\bigl(
\Delta_{0}^\downarrow\bigr) + pq \E\Delta _{0}^{+}
\bigr\}.
\end{align}

\item$\sG$ is an undirected graph functional: In this case \eqref
{sigmanulldirected} simplifies to
%
\begin{align}
\kappa_{\sG, H_{0}}^{2}=\frac{r}{2} \bigl\{r \E
\Delta_{0} + \E \bigl(\Delta_{0}^{2}\bigr)
(1-2r) \bigr\}, \label{sigmanullundirected}
\end{align}
since $\E T_{2}^\uparrow=\E T_{2}^\downarrow=\E\frac{\Delta
_{0}(\Delta_{0}-1)}{2}$, $\E T_{2}^{+}=\E\Delta_{0}(\Delta_{0}-1)$
and $\E\Delta_{0}^{+}=\E\Delta_{0}$. For example, when $\sG=\cT$
is the MST graph functional as in the Friedman--Rafsky test \eqref
{Tfr}, $\Delta_{0}=2$ (\cite{aldous_steele}, Lemma~7), and \eqref
{sigmanullundirected} becomes $\kappa_{\cT, H_{0}}^{2}= r  \{r +
\frac{1}{2}\E(\Delta_{0}^{2})(1-2r) \}$.
\end{itemize}
\end{remark}

The above discussion suggests that the CLT in Theorem~\ref{TH:CLT_R1}
can be used to derive a conditional test for \eqref{test}.

%
\begin{remark}[A conditional test and its
power]\label{rem:approxpower} For concreteness, suppose $\sG=\cT$ is the MST. Then under the
null $\E_{H_{0}}(T(\cT(\cZ_{N}'))|\cF)=\frac
{N_{1}N_{2}}{(N_{1}+N_{2})^{2}}$ $|E(\cT(\cZ_{N}'))|=\frac
{N_{1}N_{2}}{(N_{1}+N_{2})^{2}}(L_{N}-1)$, and given the data, we
reject $H_{0}$ whenever
\[
\frac{1}{\sqrt N} \biggl\{ T\bigl(\cT\bigl(\cZ_{N}'
\bigr)\bigr)- \frac
{N_{1}N_{2}}{(N_{1}+N_{2})^{2}} L_{N} \biggr\} \leq
\kappa_{\sG,
H_{0}} z_{\alpha}.
\]
By Theorem~\ref{TH:CLT_R1}, this test has asymptotically level $\alpha
$. Moreover, it can be shown that (see Section~A.3
for details) that
\begin{align*}
\E_{H_{1}}\bigl(T\bigl(\sG\bigl(\cZ_{N}'
\bigr)\bigr)|\cF\bigr)=\sum_{1\leq i\ne
j\leq L_{N}}
\frac{N_{1}N_{2} f(Z_{i})g(Z_{j}) \bm1\{(Z_{i},
Z_{j})\in E(\sG(\cZ_{N}'))\}
}{(N_{1}f(Z_{i})+N_{2}g(Z_{j}))(N_{1}f(Z_{i})+N_{2}g(Z_{j}))}.
\end{align*}
The proof of Proposition~\ref{CONSISTENT} reveals that this test is
consistent against all fixed alternatives, and using Theorem~\ref
{TH:CLT_R1} we can compute the approximate power of this test as
%
\begin{align}
\label{eq:approxpower} \Phi \biggl( \frac{\kappa_{\sG, H_{0}} z_{\alpha} -\Xi(\cZ
_{N}')}{\kappa_{\sG}} \biggr),
\end{align}
where $\Xi(\cZ_{N}')= \frac{1}{\sqrt N}( \E_{H_{1}}(T(\sG(\cZ
_{N}'))|\cF)-\E_{H_{0}}(T(\sG(\cZ_{N}'))|\cF))$ is the difference
of the conditional means under the alternative and the null, which can
be calculated from the data. The approximation in \eqref
{eq:approxpower} can be justified because Stein's method gives uniform
control on the corresponding distribution functions (see Proposition~A.2 in the  Appendix). (Note that the argument
above holds for any stabilizing graph functional, as long as the number
of edges $|E(\sG(\cZ_{N}'))|$ does not depend on the unknown null
distribution, as is the case for the Friedman--Rafsky test and the test
based on the $K$-NN graph.)
\end{remark}

\subsection{CLT of the test statistic under general alternatives}
\label{sec3.2}

In this section, the (unconditional) CLT of the test statistic \eqref
{R} is derived. This involves finding the CLT of the conditional mean
\eqref{R2}, which requires the stronger notion of exponential
stabilization \cite{penroseclt}. For any locally finite point set $\cH
\subset\R^{d}$ and $x\in\R^{d}$, define the \emph{out-degree
measure} of a graph functional $\sG$ as follows: For all Borel sets
$A\subset\R^{d}$,
%
\begin{equation}
\label{outdegmeasure} d_\sG^\uparrow(x, \cH, A)=\sum
_{y\in\cH^{x} \cap A}\bm1\bigl\{(x, y)\in E\bigl(\sG\bigl(
\cH^{x}\bigr)\bigr)\bigr\},
\end{equation}
where $\cH^{x}=\cH\cup\{x\}$. In other words, the out-degree measure
of a set $A$, with respect to $\cH$ and $x$ is the number of edges
incident on $x$ with the other end point in $\cH^{x} \cap A$ in the
graph $\sG(\cH^{x})$. The following definition formalizes the notion
of ``radius of stabilization'' of a point, which is the smallest radius
outside which addition of finitely many points does not affect the
degree measure at the point.

%
\begin{defn}Fix a locally finite point set $\cH$, a point $x\in\R^{d}$, and a Borel set $A\subseteq\R^{d}$. The \emph{radius of
stabilization} of the degree measure (\ref{outdegmeasure}) at $x$ with
respect to $\cH$ and $A$ (to be denoted by $R(x, \cH, A)$) is the
smallest $R\geq0$ such that
%
\begin{equation}
d_\sG^\uparrow\bigl(x, x+\bigl\{\cH\cap B(0, R)\cup\cY
\bigr\}, x+B\bigr)=d_\sG ^\uparrow\bigl(x, x+\bigl\{\cH\cap
B(0, R)\bigr\}, x+B\bigr),
\end{equation}
for all finite $\cY\subseteq A\setminus B(0, R)$ and all Borel subsets
$B\subseteq A$, where $B(0, R)$ is the (Euclidean) ball of radius $R$
with center at the point $0\in\R^{d}$. If no such $R$ exists, then
set $R(x, \cH, A)=\infty$.
\end{defn}

Throughout this section, we will assume that $f$ and $g$ have a common
support $S$, which is compact and convex, and $N \rightarrow\infty$
such that
%
\begin{align}
\label{eq:N12} \sqrt N \biggl(\frac{N_{1}}{N_{1}+N_{2}}-p \biggr)\rightarrow0\quad
\text{and}\quad \sqrt N \biggl(\frac{N_{1}}{N_{1}+N_{2}}-q \biggr)\rightarrow0.
\end{align}

%
\begin{defn}\label{expstabilize}Let $R_{N}(x):=R(x, \cP_{N\phi_{N}},
S)$ be the radius of stabilization of out-degree measure $d_\sG
^\uparrow$ at $x$ with respect to the Poisson process $\cP_{N\phi
_{N}}$ and $S$. Define
%
\begin{equation}
\label{tau} \tau(s):=\sup_{N \in\mathbb N}\sup
_{x\in\mathbb R^{d}}\P\bigl(R_{N\phi
_{N}}(x)>N^{-\frac{1}{d}}s\bigr).
\end{equation}
The out-degree measure $d_\sG^\uparrow$ is said to be 
\begin{itemize}
\item{\emph{power law stabilizing of order}} $q$ if $\sup_{s\geq1}
s^{q}\tau(s)<\infty$,
\item{\emph{exponentially stabilizing}} if $\lim\sup_{s\rightarrow
\infty} \frac{1}{s} \log\tau(s)<0$.
\end{itemize}
\end{defn}

Conditions on the decay of the tail of the radius of stabilization,
similar to \eqref{tau} above, is a standard requirement for proving
limit theorems of functionals of random geometric graphs \cite
{penroseclt,yukichclt}. Using this machinery, we prove the following
theorem, which gives the CLT of the conditional mean \eqref{R2} for
exponentially stabilizing random geometric graphs.

%
\begin{ppn}\label{CLT_R2} Let $\sG$ be a translation and scale
invariant directed graph functional in $\R^{d}$ which satisfies the
$\beta$-degree moment condition \eqref{degmoment} for some $\beta
>2$. If the out-degree measure $d_\sG^\uparrow$ is power law
stabilizing of order $q> \frac{\beta}{\beta-2}$, then
%
\begin{align}
\label{varcond} \lim_{N\rightarrow\infty}\Var\bigl( \cR_{2}
\bigl(\sG\bigl(\cZ_{N}'\bigr)\bigr)\bigr) = \tau
_{\sG}^{2},
\end{align}
where
%
\begin{align}
\label{sigma2} \tau_{\sG}^{2}=\frac{r^{2}}{4}
L_{0}
\int\frac
{f^{2}(x)g^{2}(x)}{\phi^{3}(x)} \,\mathrm{d}x,
\end{align}
where $L_{0}:=\int  (\E\{d^\uparrow(0, \sG(\cP_{1}^{z}))
d^\uparrow(z, \sG(\cP_{1}^{0})) \}-(\E\Delta_{0}^\uparrow)^{2}
 ) \,\mathrm{ d} z +\E(\Delta_{0}^\uparrow)^{2}$. Moreover, if
$d_\sG^\uparrow$ is exponentially stabilizing then $\cR_{2}(\sG(\cZ
_{N}')) \dto N(0, \tau_{\sG}^{2})$.
\end{ppn}

The proof of theorem is given in Section~A.6.1. Combining
Theorem~\ref{TH:CLT_R1} and Proposition~\ref{CLT_R2}, the CLT of $\cR
(\sG(\cZ_{N}'))$ (defined in \eqref{R}) can be obtained. The proof
is in Section~A.6.2.

%
\begin{thm}\label{TH:CLT_R} Let $\sG$ be a translation and scale
invariant directed graph functional which satisfies the $\beta$-degree
moment condition for some $\beta>4$ and the maximum out-degree
condition \eqref{maxdegree}. If the degree measure $d_\sG^\uparrow$
is exponentially stabilizing, then
%
\begin{align}
\label{cltt} \cR\bigl(\sG\bigl(\cZ_{N}'\bigr)
\bigr) \dto N\bigl(0, \sigma^{2}_\sG\bigr),
\end{align}
where $\sigma^{2}_\sG= \kappa_{\sG}^{2}+\tau_{\sG}^{2}$, with
$\kappa_{\sG}^{2}$ and $\tau_{\sG}^{2}$ as defined in \eqref
{sigma1} and \eqref{sigma2}, respectively.
\end{thm}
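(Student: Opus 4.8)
The plan is to deduce the CLT for $\cR(\sG(\cZ_N'))$ from the decomposition $\cR(\sG(\cZ_N'))=\cR_1(\sG(\cZ_N'))+\cR_2(\sG(\cZ_N'))$ together with the two CLTs already established: the conditional CLT $\cR_1(\sG(\cZ_N'))\dto N(0,\sigma_{\sG,1}^2)$ from Theorem~\ref{TH:CLT_R1} and the CLT of the centered conditional mean $\cR_2(\sG(\cZ_N'))\dto N(0,\sigma_{\sG,2}^2)$ from Theorem~\ref{CLT_R2}, noting that the hypotheses here ($\beta$-degree moment condition for $\beta>4$, maximum degree condition, exponential stabilization) imply all the hypotheses of both theorems (exponential stabilization implies power-law stabilization of every order, so in particular of order $q>\beta/(\beta-2)$). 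The key point to verify is that the sum converges to $N(0,\sigma_{\sG,1}^2+\sigma_{\sG,2}^2)$, i.e.\ that the limiting variances simply add; since $\cR_1$ and $\cR_2$ are not jointly Gaussian a priori and are not independent, this requires an argument rather than Slutsky.

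First I would reduce the joint convergence to a statement about conditional characteristic functions. Write $\varphi_N(t):=\E\big[e^{\iota t\cR_1(\sG(\cZ_N'))}\,\big|\,\cF\big]$. The CLT in Theorem~\ref{TH:CLT_R1} is proved conditionally on $\cF$ (via Stein's method on the dependency graph of the labels $c_z$), and the key observation is that the limiting conditional variance obtained in Lemma~\ref{var_R} is a \emph{deterministic} constant $\sigma_{\sG,1}^2$ — more precisely, $\Var(\cR_1(\sG(\cZ_N'))\mid\cF)\pto\sigma_{\sG,1}^2$, because this conditional variance is (by stabilization) a sum over the Poisson points of contributions that converge, by the law of large numbers / ergodicity for Poisson functionals, to the integral expressions in~\eqref{sigma1}. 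Consequently the Berry--Esseen-type bound from the Stein argument gives $\varphi_N(t)\pto e^{-t^2\sigma_{\sG,1}^2/2}$ for each fixed $t$, with the convergence holding in probability (indeed the bound on $|\varphi_N(t)-e^{-t^2\sigma_{\sG,1}^2/2}|$ is in terms of $\cF$-measurable quantities that go to $0$ in probability). Then for fixed $s,t$,
\begin{align*}
\E\big[e^{\iota t\cR_1+\iota s\cR_2}\big]
&=\E\big[e^{\iota s\cR_2}\,\varphi_N(t)\big]\\
&=e^{-t^2\sigma_{\sG,1}^2/2}\,\E\big[e^{\iota s\cR_2}\big]+\E\big[e^{\iota s\cR_2}\big(\varphi_N(t)-e^{-t^2\sigma_{\sG,1}^2/2}\big)\big],
\end{align*}
where $\cR_2=\cR_2(\sG(\cZ_N'))$ is $\cF$-measurable. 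The second term is bounded in absolute value by $\E\big|\varphi_N(t)-e^{-t^2\sigma_{\sG,1}^2/2}\big|\to 0$ by dominated convergence, and $\E[e^{\iota s\cR_2}]\to e^{-s^2\sigma_{\sG,2}^2/2}$ by Theorem~\ref{CLT_R2}. Hence $\E[e^{\iota t\cR_1+\iota s\cR_2}]\to e^{-t^2\sigma_{\sG,1}^2/2}\,e^{-s^2\sigma_{\sG,2}^2/2}$; taking $s=t$ and invoking Lévy's continuity theorem yields $\cR(\sG(\cZ_N'))\dto N(0,\sigma_{\sG,1}^2+\sigma_{\sG,2}^2)$, which is the claim (and as a byproduct shows $\cR_1$ and $\cR_2$ are asymptotically independent Gaussians).

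The main obstacle is establishing that the convergence $\varphi_N(t)\to e^{-t^2\sigma_{\sG,1}^2/2}$ is genuinely \emph{in probability uniformly enough} to push through the $\E|\cdot|\to 0$ step — that is, extracting from the proof of Theorem~\ref{TH:CLT_R1} not merely a conditional distributional limit along almost every sequence of $\cF$, but an explicit bound on $|\varphi_N(t)-e^{-t^2\sigma_{\sG,1}^2/2}|$ by an $\cF$-measurable error that tends to $0$ in probability (and is bounded, so dominated convergence applies). This hinges on two facts that should be harvested from the earlier proofs: (i) the Stein/dependency-graph bound for $\cR_1$ given $\cF$ depends on $\cF$ only through moments of local degree counts, which are controlled uniformly by the $\beta$-degree moment condition ($\beta>4$) and the maximum degree condition~\eqref{maxdegree}; and (ii) the random conditional variance concentrates at $\sigma_{\sG,1}^2$, i.e.\ $\Var(\cR_1\mid\cF)\pto\sigma_{\sG,1}^2$, which is exactly Lemma~\ref{var_R}. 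Granting these, the remaining steps are the routine characteristic-function manipulation above. I would also remark that the centering is consistent: $\E_1(T(\sG(\cZ_N')))=\E_1\big[\E_1(T(\sG(\cZ_N'))\mid\cF)\big]$, so $\cR_1+\cR_2$ telescopes exactly to $\cR$ with no leftover drift term.
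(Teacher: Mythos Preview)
Your proposal is correct and is essentially the same argument as the paper's: factor the conditional characteristic function as $\E(e^{it\cR}\mid\cF)=e^{it\cR_2}\,\E(e^{it\cR_1}\mid\cF)$, use that the second factor converges in probability to the deterministic constant $e^{-t^2\sigma_{\sG,1}^2/2}$ (via the Stein bound in Proposition~\ref{cltR1} together with Lemma~\ref{var_R}), combine with $\cR_2\dto N(0,\sigma_{\sG,2}^2)$, and pass to the unconditional expectation by dominated convergence. Your treatment is slightly more explicit about why the conditional characteristic function has a deterministic limit (separating the Berry--Esseen error from the convergence of the conditional variance), and your two-parameter version $(s,t)$ additionally yields asymptotic independence of $\cR_1$ and $\cR_2$, but the substance is identical to the paper's proof.
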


Many random geometric graphs, such as the $K$-NN graph and the Delaunay
graph \cite{penroseclt,py} are exponentially stabilizing. This theorem gives the asymptotic distribution of
two-sample tests based on such graphs, under general alternatives. This
can be used to the compute power of such tests as in Remark~\ref
{rem:approxpower}. Moreover, using this we can understand the
asymptotic performances of the tests, by identifying testable local
alternatives, as elaborated in the following section for the test based
on the $K$-NN graph. The techniques used in this section might also be
useful in studying limiting distributions of multivariate
goodness-of-fit tests based on nearest neighbors \cite
{bickel_breiman,henze_yukich}.

To see why the asymptotic variance in \eqref{cltt} is the sum of two
terms, note that
\begin{align*}
\Var\bigl(\cR\bigl(\sG\bigl(\cZ_{N}'\bigr)\bigr)
\bigr) &= \E\bigl(\Var\bigl(\cR\bigl(\sG\bigl(\cZ_{N}'
\bigr)\bigr)|\cF\bigr)\bigr)+ \Var\bigl(\E\bigl(\cR\bigl(\sG\bigl(
\cZ_{N}'\bigr)\bigr)|\cF\bigr)\bigr),
\end{align*}
where $\cF:=\sigma(\cZ', L_{N})$ is the sigma-algebra generated by
$\cZ'$ and the Poisson random variable $L_{N}$ (recall notation from
Section~\ref{sec:poissonization}). Now, recalling \eqref{R1} shows
$\Var(\cR(\sG(\cZ_{N}'))|\cF)=\Var(\cR_{1}(\sG(\cZ_{N}'))|\cF
)$, and \eqref{R2} gives $\Var(\E(\cR(\sG(\cZ_{N}'))|\cF))=\Var
(\cR_{2}(\sG(\cZ_{N}')))$. In the proof of Theorem~\ref{TH:CLT_R1},
we show that $\Var(\cR_{1}(\sG(\cZ_{N}'))|\cF)$ converges in
$L_{2}$ to $\kappa_{\sG}^{2}$ (Lemma~A.5), while the proof
of Proposition~\ref{CLT_R2} shows that $\Var(\cR_{2}(\sG(\cZ
_{N}'))) \rightarrow\tau_{\sG}^{2}$ (Section~A.6.1), hence
the asymptotic variance in \eqref{cltt} is the sum of these two terms.

%
\begin{remark}\label{rem:poisson}(Comments on de-Poissonization)
Poissionization is a commonly used trick in geometric probability,
where calculations become simpler because of the spatial independence
of the Poisson process. In fact, when the sample sizes are large, one
can pretend that the data comes from Poissonized samples with a
slightly smaller mean, since a Poisson random variable is tightly
concentrated around its expectation. De-Poissionization techniques are
well known (\cite{penrosebook}, Section~2.5 and \cite{penroseclt}, Theorem~2.3), using which one can expect to de-Poissonize the CLT
in Theorem~\ref{TH:CLT_R} for the test based on the $K$-NN graph. The
only thing that would change is the formula of the asymptotic variance,
but its derivation is quite tedious for general alternatives. However,
for the implementation of the test, we are more interested in the
asymptotic null variance, where the calculations are much simpler, and
the de-Poissonized null variance can be easily computed
(see Section~A.5). In fact, de-Poissonization would only change
the asymptotic variance (not the order), and the constants in the
limiting power (but, not the rates). Therefore, de-Poissonization would
not affect (most of) the results of Section~\ref{sec:knn} as these
mainly focus on detection thresholds. This is also validated by the
simulations in Section~\ref{sec:examples}.
\end{remark}

\section{Local power of the $K$-NN test}
\label{sec:knn}

The test based on the $K$-NN graph is exponentially stabilizing and,
therefore, the results obtained in the previous section apply. Recall that we assume $f$, $g$ have a common support $S$ which is compact and convex,
and $N \rightarrow\infty$ such that \eqref{eq:N12} hold. Then we
have the following corollary of Theorem~\ref{TH:CLT_R}.

%
\begin{cor}\label{cor:KNN}
For the $K$-NN graph functional $\cN_{K}$ and $f$ and $g$ as above:
%
\begin{align}
\label{cltKNN} \cR\bigl(\cN_{K}\bigl(\cZ_{N}'
\bigr)\bigr) \dto N\bigl(0, \sigma^{2}_{\cN_{K}}\bigr),
\end{align}
where $\sigma^{2}_{\cN_{K}}= \kappa_{\cN_{K}}^{2}+\frac
{r^{2}K^{2}}{4} \int\frac{f^{2}(x)g^{2}(x)}{\phi^{3}(x)} \,\mathrm{d}x$, with $
\kappa_{\cN_{K}}$ as defined in \eqref{sigma1}.
\end{cor}

\begin{proof} Note that the graph functional $\cN_{K}$ is exponentially
stabilizing \cite{penroseclt} and satisfies the degree moment
condition for $\beta>4$. Therefore, by Theorem~\ref{TH:CLT_R}, \eqref
{cltKNN} holds with $\sigma^{2}_{\cN_{K}}=\kappa_{\cN_{K}}^{2}+\tau
_{\cN_{K}}^{2}$. The result follows by noting that $\tau_{\cN
_{K}}^{2}=\frac{r^{2} K^{2}}{4} \int\frac{f^{2}(x)g^{2}(x)}{\phi
^{3}(x)} \,\mathrm{d}x$ (recall \eqref{sigma2}).
\end{proof}

%
\begin{remark}\label{eq:KNNnullvariance} Under the null $(f=g)$, $\tau
_{\cN_{K}, H_{0}}^{2}=\frac{r^{2} K^{2}}{4}$, and using \eqref
{knnnulldirected}, the asymptotic variance \eqref{cltKNN} simplifies to
%
\begin{align}
\label{sigmaK}
\begin{split} \sigma_{K}^{2}&:=\sigma^{2}_{\cN_{K}, H_{0}}
=\kappa_{\cN_{K},
H_{0}}^{2}+\tau_{\cN_{K}, H_{0}}^{2}
\\
& =\frac{r}{2} \bigl\{K(K+1)pq + (p-q)^{2}K^{2}+
p^{2}\Var\bigl(\Delta _{0}^\downarrow\bigr) \bigr
\}.
\end{split}
\end{align}
Then recalling \eqref{rejregionN}, the two-sample test based on $\cN
_{K}$ rejects when
%
\begin{align}
\label{rejregionKNN} \frac{1}{\sqrt N} \bigl\{ T\bigl(\cN_{K}\bigl(
\cZ_{N}'\bigr)\bigr)- \E_{H_{0}}\bigl(T\bigl(
\cN _{K}\bigl(\cZ_{N}'\bigr)\bigr)\bigr)
\bigr\} \leq\sigma_{K} z_{\alpha}.
\end{align}
\end{remark}

\subsection{Power against local alternatives}
\label{sec:secondorder}

In this section, we determine the power of the $K$-NN test against
local alternatives, that is, the power when the alternatives shrink
(with increasing sample size) toward the null at a certain rate. To
this end, let $\Theta\subseteq\R^{p}$ be the parameter space and $\{
\P_\theta\}_{\theta\in\Theta}$ be a parametric family of
distributions in $\R^{d}$ with density $f(\cdot|\theta)$. Let $\sX
_{N_{1}}'$ and $\sY_{N_{2}}'$ be samples from $\P_{\theta_{1}}$ and
$\P_{\theta_{2}}$ as in \eqref{2poisson}, respectively, and consider
the testing problem
%
\begin{equation}
\label{epsilonN} H_{0}: \theta_{2}-
\theta_{1}=0\quad \text{versus}\quad H_{1}: \theta
_{2}-\theta_{1}=\varepsilon_{N},
\end{equation}
for a sequence $(\varepsilon_{N})_{N\geq1}$ in $\R^{p}$, such that
$ \llVert  \varepsilon_{N} \rrVert  \rightarrow0$. The \emph{limiting power} of the
two-sample test based on the $K$-NN graph $\cN_{K}$ \eqref{rejregionKNN} is
\begin{align*}
\lim_{N \rightarrow\infty}\P_{\theta_{2}=\theta
_{1}+\varepsilon_{N}} \bigl(N^{-\frac{1}{2}}
\bigl\{ T\bigl(\cN_{K}\bigl(\cZ _{N}'
\bigr)\bigr)- \E_{H_{0}}\bigl(T\bigl(\cN_{K}\bigl(
\cZ_{N}'\bigr)\bigr)\bigr) \bigr\} \leq
\sigma_{K} z_\alpha \bigr),
\end{align*}
where $\sigma_{K}$ is the variance of the $K$-NN test under the null
(recall \eqref{sigmaK}). Our goal is to find the threshold on
$\varepsilon_{N}$ where the $K$-NN test transitions from powerless to
powerful. More precisely, we want to determine the sequence $a_{N}
\rightarrow0$, such that for $ \llVert  \varepsilon_{N} \rrVert   \ll a_{N}$, the
limiting power is $\alpha$, and for $ \llVert  \varepsilon_{N} \rrVert   \gg a_{N}$,
the limiting power is 1. The sequence $(a_{N})_{N \geq1}$ is often
known as the \emph{detection-threshold} of the test.

The parametric rate of detection is $O(N^{-\frac{1}{2}})$; however,
results in \cite{BBB} imply that the test based on the $K$-NN graph
has no power in this scale. As a result, the asymptotic performance of
these tests cannot be compared using their Pitman efficiencies
(limiting local power when $\varepsilon_{N}=hN^{-\frac{1}{2}}$, which
happens to be zero in this case), making the problem of determining the
exact detection threshold particularly important. We answer this
question in Theorem~\ref{EFFSECOND}, where the exact detection
threshold of the $K$-NN test is determined. Quite interestingly, the
threshold depends on several things, such as the dimension $d$, the
distribution of the data, and the direction of the alternative.

To state the assumptions required for computing the detection
threshold, we need a few definitions: For a function $g(z_{1}, z_{2}):
\R^{d} \times\R^{p} \rightarrow\R$, $\grad_{z_{1}} g(z_{1},
z_{2})$ denotes the $d\times1$ gradient vector
and $\mathrm H_{z_{1}} g(z_{1}, z_{2})$ the $d \times d$ Hessian matrix
of $g$, with respect to $z_{1}$ (with $z_{2}$ held fixed). Similarly,
$\grad_{z_{2}} g(z_{1}, z_{2})$ and $\mathrm H_{z_{2}} g(z_{1},
z_{2})$ is the $p\times1$ gradient vector and the $p \times p$ Hessian
matrix of $g$, with respect to $z_{2}$, respectively.

%
\begin{assumption}\label{assumptionlocalpower} Suppose the parameter
space $\Theta\subseteq\R^{p}$ is convex, and the family of
distributions $\{\P_\theta\}_{\theta\in\Theta}$ satisfy:
\begin{enumerate}[(a)]
\item[(a)] For all $\theta\in\Theta$, the density $f(\cdot|\theta
)$ has a compact and convex support $S\subset\R^{d}$, with a nonempty
interior, not depending on $\theta$.

\item[(b)] $\int_{\partial S}f(z|\theta)\,\mathrm{ d}z=0$, for all
$\theta\in\Theta$, where $\partial S$ denotes the boundary of $S$.

\item[(c)] For all $\theta\in\Theta$, the functions $f(\cdot
|\theta)$ and $\grad_\theta f(\cdot|\theta)$ are three times
continuously differentiable in the interior of $S$, and the expected
squared of the score function:\break $\E_{X\sim f(\cdot|\theta)} [
\frac{h^\top\grad_{\theta} f(X|\theta)}{f(X|\theta)}  ]^{2}
>0$, for all $h \in\R^{p} \backslash\{0\}$.

\item[(d)] For all $x \in S$, $f(x|\cdot)$ is three times
continuously differentiable in the interior of $\Theta$.
\end{enumerate}
\end{assumption}

The compactness of the support is required for establishing the CLT for
exponentially stabilizing graph functionals (recall Corollary~\ref
{cor:KNN}). However, we expect the CLT, and hence our results, to hold
even when the support is not compact, as long as, the distributions
have ``nice'' tails (see simulations in Section~\ref{sec:examples}
below). Under the above assumptions, the following theorem
characterizes the detection threshold of the $K$-NN test and determines
the exact limiting power at the threshold. To state the theorem, we
need to introduce some notations: Recall that $\cP_{1}^{0}$ denotes
the Poisson process of rate 1 in $\R^{d}$ with the origin $0 \in\R
^{d}$ added to it. Define
%
\begin{align}
\label{eq:C} C_{K, s}&:=\E \biggl\{\sum
_{x \in\cP_{1}^{0}} \llVert x \rrVert ^{s} \bm1\bigl\{(0, x)
\in E\bigl(\cN_{K}\bigl(\cP_{1}^{0}\bigr)
\bigr)\bigr\} \biggr\},
\end{align}
which is the expectation of the sum of the $s$-th power of the lengths
of the outward edges incident at the origin $0$ in the graph $\cN
_{K}(\cP_{1}^{0})$. This can be computed explicitly in terms of Gamma
functions (see (B.11) in the supplementary material for details).
Finally, define
%
\begin{align}
\label{eq:aKtheta1} a_{K, \theta_{1}}(h):=- \frac{rp C_{K, 2}}{4d\sigma_{K}}
\int_{S} h^\top\grad_{\theta_{1}} \biggl(
\frac{ \tr(\mathrm H_{x}
f(x|\theta_{1}))}{f(x|\theta_{1})} \biggr) f^{\frac
{d-2}{d}}(x|\theta_{1}) \,
\mathrm{ d }x,
\end{align}
where $C_{K, 2}$ is defined above in \eqref{eq:C}, $\sigma_{K}$ as in
\eqref{sigmaK}, and
%
\begin{align}
\label{eq:bKtheta1} b_{K, \theta_{1}}(h):=\frac{r^{2} K}{2 \sigma_{K}} \E \biggl[
\frac
{h^\top\grad_{\theta_{1}} f(X|\theta_{1})}{f(X|\theta_{1})} \biggr]^{2},
\end{align}
where the expectation is with respect to $X \sim f(\cdot|\theta_{1})$.

%
\begin{thm}\label{EFFSECOND} Let $\{\P_\theta\}_{\theta\in\Theta
}$ be a family of distributions satisfying Assumption~\ref
{assumptionlocalpower}, and $\sX_{N_{1}}'$ and $\sY_{N_{2}}'$ be
samples from $\P_{\theta_{1}}$ and $\P_{\theta_{2}}$ as in \eqref
{2poisson}, respectively. Consider the two-sample test based on the
$K$-NN graph functional $\cN_{K}$ with rejection region \eqref
{rejregionKNN} for the testing problem \eqref{epsilonN}.
\begin{enumerate}[(a)]

\item If the dimension $d \leq8$, then the following hold:
\begin{itemize}
\item[--] $ \llVert  N^{\frac{1}{4}}\varepsilon_{N} \rrVert   \rightarrow0$: The
limiting power of the test \eqref{rejregionKNN} is $\alpha$.

\item[--] $N^{\frac{1}{4}}\varepsilon_{N} \rightarrow h$: Then if
dimension dimension $d \leq7$, limiting power of the test \eqref
{rejregionKNN} is
%
\begin{align}
\label{eq:effhessian} \Phi \bigl(z_\alpha+b_{K, \theta_{1}}(h) \bigr).
\end{align}
Otherwise, dimension $d=8$ and the limiting power is
%
\begin{align}
\label{eq:effgradhessian}\Phi \bigl(z_\alpha+ a_{K,
\theta_{1}}(h)+b_{K, \theta_{1}}(h)
\bigr),
\end{align}
where $a_{K, \theta_{1}}(h)$ and $b_{K, \theta_{1}}(h)$ are as
defined above.

\item[--] $ \llVert  N^{\frac{1}{4}}\varepsilon_{N} \rrVert   \rightarrow\infty$:
The limiting power of the test \eqref{rejregionKNN} is $1$.
\end{itemize}

\item If the dimension $d \geq9$, then the following hold:
\begin{itemize}
\item[--] $ \llVert  N^{\frac{1}{2}-\frac{2}{d}}\varepsilon
_{N} \rrVert  \rightarrow0$: The limiting power of the test \eqref
{rejregionKNN} is $\alpha$.

\item[--] $N^{\frac{1}{2}-\frac{2}{d}}\varepsilon_{N} \rightarrow
h$: The limiting power of the test \eqref{rejregionKNN} is
%
\begin{align}
\label{eq:effgrad} \Phi \bigl(z_\alpha+a_{K, \theta_{1}}(h) \bigr).
\end{align}

\item[--] $ \llVert  N^{\frac{1}{2}-\frac{2}{d}}\varepsilon
_{N} \rrVert  \rightarrow\infty$ such that $ \llVert  N^{\frac{2}{d}}\varepsilon
_{N} \rrVert   \rightarrow0$: Then depending on whether
\[
\frac{1}{||\varepsilon_N||}\int_{S} \varepsilon_{N}^\top \grad
_{\theta_{1}} \biggl(\frac{ \tr(\mathrm H_{x} f(x|\theta
_{1}))}{f(x|\theta_{1})} \biggr) f^{\frac{d-2}{d}}(x|
\theta_{1}) \,\mathrm {d} x \rightarrow\begin{cases}
> 0 ,
\\
< 0 , \end{cases}
\]
the limiting power of the test \eqref{rejregionKNN} is $0$ or $1$,
respectively.

\item[--] $N^{\frac{2}{d}}\varepsilon_{N} \rightarrow h$: The
limiting power of the test \eqref{rejregionKNN} is $0$ or $1$,
depending on whether $a_{K, \theta_{1}}(h)+ b_{K, \theta_{1}}(h)$ is
negative or positive, respectively.

\item[--] $ \llVert  N^{\frac{2}{d}}\varepsilon_{N} \rrVert   \rightarrow\infty$:
The limiting power of the test \eqref{rejregionKNN} is $1$.\vadjust{\goodbreak}
\end{itemize}
\end{enumerate}
\end{thm}

The theorem is pictorially summarized in Figure~\ref{fig:effsecond},
and the proof is given in the Appendix. We elaborate on the
implications of this result, and its several interesting consequences below:
\begin{enumerate}[(a)]

\item[(a)] Theorem~\ref{EFFSECOND} shows that for dimension $d \leq7$, the
detection threshold of the test is at $N^{-\frac{1}{4}}$. More
precisely, if we fix an alternative direction $h \in\R^{p}$, and
suppose $\theta_{2}=\theta_{1}+\delta_{N} h$, for some positive
sequence $\delta_{N} \rightarrow0$, then, by Theorem~\ref
{EFFSECOND}, the limiting power of the test \eqref{rejregionKNN} is
\begin{align*}
\begin{cases} \alpha & \text{if } N^{\frac{1}{4}}
\delta_{N} \rightarrow0,
\\
\Phi \bigl(z_\alpha+ \lambda^{2} b_{K, \theta_{1}}(h)
\bigr)> \alpha & \text{if } N^{\frac{1}{4}}\delta_{N}
\rightarrow\lambda >0,
\\
1 & \text{if } N^{\frac{1}{4}}\delta_{N} \rightarrow
\infty. \end{cases}
\end{align*}
Note that the power at the threshold $N^{\frac{1}{4}}\delta_{N}
\rightarrow\lambda$ is always greater than $\alpha$, because $b_{K,
\theta_{1}}(h) >0$ by Assumption~\ref{assumptionlocalpower}(c). Here,
the limiting power is obtained from the limit of the Hessian of the
mean difference (defined below in \eqref{eq:meanT2}), which can be
thought of as the \emph{second-order efficiency} of the test \eqref
{rejregionKNN}, in comparison to the first-order (Pitman) efficiency,
which is zero in this case (see Section~\ref{sec:pfoutline} below for
more on this analogy).

\item[(b)] For dimension $d=8$, the behavior is similar to the case above,
but there is a subtle difference when $N^{\frac{1}{4}}\delta_{N}
\rightarrow\lambda$. Here, for an alternative direction $h \in\R
^{p}$ and $\theta_{2}=\theta_{1}+\delta_{N} h$ as above, the
limiting power of the test \eqref{rejregionKNN} is
\begin{align*}
\begin{cases} \alpha & \text{if } N^{\frac{1}{4}}
\delta_{N} \rightarrow0,
\\
\Phi \bigl(z_\alpha+ \lambda a_{K, \theta_{1}}(h)+
\lambda^{2} b_{K,
\theta_{1}}(h) \bigr) & \text{if }
N^{\frac{1}{4}}\delta_{N} \rightarrow\lambda>0,
\\
1 & \text{if } N^{\frac{1}{4}}\delta_{N} \rightarrow
\infty. \end{cases}
\end{align*}
Note that at the threshold $N^{\frac{1}{4}}\delta_{N} \rightarrow
\lambda$, the limiting power can be greater than $\alpha$ or less
than $\alpha$, depending on whether
$\lambda a_{K, \theta_{1}}(h)+\lambda^{2} b_{K, \theta_{1}}(h)$ is
positive or negative. In particular, considering the power as a
function of $\lambda$ gives: if $\lambda< -\frac{a_{K, \theta
_{1}}(h)}{b_{K, \theta_{1}}(h)}$, then the limiting power is less than
$\alpha$, and if $\lambda> -\frac{a_{K, \theta_{1}}(h)}{b_{K,
\theta_{1}}(h)}$, then the limiting power is greater than $\alpha$.
Therefore, in dimension 8, the limiting power function is nonmonotone
if $a_{K, \theta_{1}}(h)<0$. The asymptotic power starts off at
$\alpha$, decreases for a while, going below $\alpha$ and making it
\emph{asymptotically biased} (i.e., the limiting power is less than
the size of the test), then starts to increase, going past $\alpha$
and eventually becoming 1, as $\lambda\rightarrow\infty$. This also
shows that for every direction $h \in\R^{d}$ such that $a_{K, \theta
_{1}}(h) < 0$, there is a ``special'' point $\lambda= -\frac{a_{K,
\theta_{1}}(h)}{b_{K, \theta_{1}}(h)} > 0$, where the limiting power
is exactly $\alpha$.

\item[(c)] A surprising phenomenon happens for dimension $d \geq9$:
Here, unlike for dimension 8 or smaller, the precise location of the
detection threshold depends on the distribution of the data under the
null $f(\cdot|\theta_{1})$ and the direction of the alternative. As
before, fix an alternative direction $h \in\R^{p}$, and suppose
$\theta_{2}=\theta_{1}+\delta_{N} h$, for some positive sequence
$\delta_{N} \rightarrow0$. Then, depending on the sign of
$a_{K,\theta_{1}}(h)$ (recall \eqref{eq:aKtheta1}), there are two cases:
\begin{itemize}
\item[--] Suppose $a_{K,\theta_{1}}(h)>0$. Then, by Theorem~\ref
{EFFSECOND}, the limiting power of the test \eqref{rejregionKNN} is
%
\begin{align}
\label{eq:sigma_positive} \begin{cases} \alpha & \text{if }
N^{\frac{1}{2}-\frac{2}{d}}\delta_{N} \rightarrow0,
\\
\Phi \bigl(z_\alpha+ \lambda a_{K, \theta_{1}}(h) \bigr)> \alpha &
\text{if } N^{\frac{1}{2}-\frac{2}{d}}\delta_{N} \rightarrow
\lambda>0,
\\
1 & \text{if } N^{\frac{1}{2}-\frac{2}{d}}\delta_{N} \rightarrow
\infty. \end{cases}
\end{align}
Here, the detection threshold of the test \eqref{rejregionKNN} is at
$\Theta(N^{-\frac{1}{2}+\frac{2}{d}})$, that is, the limiting power
transitions from $\alpha$ to 1 at $\delta_{N}=\Theta(N^{-\frac
{1}{2}+\frac{2}{d}})$. Note that the detection threshold $N^{-\frac
{1}{2}+\frac{2}{d}}$ improves with dimension, moving closer to the
parametric rate of $N^{-\frac{1}{2}}$ as the dimension $d$ grows to
infinity, exhibiting a \emph{blessing of dimensionality}. An example
where this is attained is the truncated spherical normal problem (see
Section~\ref{sec:snormal} below).

\item[--] Suppose $a_{K,\theta_{1}}(h)<0$. By Theorem~\ref
{EFFSECOND}, the limiting power of the test \eqref{rejregionKNN} is
\begin{align*}
\begin{cases} \alpha & \text{if } N^{\frac{1}{2}-\frac{2}{d}}
\delta_{N} \rightarrow0,
\\
\Phi \bigl(z_\alpha+ \lambda a_{K, \theta_{1}}(h) \bigr) < \alpha &
\text{if } N^{\frac{1}{2}-\frac{2}{d}}\delta_{N} \rightarrow
\lambda>0,
\\
0 & \text{if } N^{\frac{1}{2}-\frac{2}{d}}\delta_{N} \rightarrow
\infty\text{ and } N^{\frac{2}{d}}\delta_{N} \rightarrow0,
\\
0 & \text{if } N^{\frac{2}{d}}\delta_{N} \rightarrow
\kappa\text{ and}
\\
& \kappa a_{K, \theta_{1}}(h)+ \kappa^{2} b_{K, \theta_{1}}(h)
<0 ,
\\
1 & \text{if } N^{\frac{2}{d}}\delta_{N} \rightarrow
\kappa\text{ and}
\\
& \kappa a_{K, \theta_{1}}(h)+ \kappa^{2} b_{K, \theta_{1}}(h) >
0 ,
\\
1 & \text{if } N^{\frac{2}{d}}\delta_{N} \rightarrow
\infty. \end{cases}
\end{align*}
Note that in this case the limiting power function is nonmonotone and
asymptotically biased, it starts off at $\alpha$, then goes below
$\alpha$, eventually drops to zero, and then transitions up to 1. This
surprising phenomenon happens because the test \eqref{rejregionKNN}
has a one-sided rejection region, and it is universally consistent.
Therefore, the limiting power when $N^{\frac{1}{2}-\frac{2}{d}}\delta
_{N} \rightarrow\lambda>0$ is given by the normal lower tail, more precisely,
$\Phi (z_\alpha+ \lambda a_{K, \theta_{1}}(h) )$.
Therefore, for a direction chosen such that $a_{K,\theta_{1}}(h)<0$,
the power drops below $\alpha$ and then goes to zero when $\lambda
\rightarrow\infty$, but it has to eventually go up to 1 because of
consistency, hence the nonmonotonicity. In this case, the power
transitions from 0 to 1, at $\delta_{N}=\Theta(N^{-\frac{2}{d}})$,
which becomes worse with dimension (converging finally to fixed
difference alternatives as $d \rightarrow\infty$), exhibiting a curse
of dimensionality. Again, this is attained in the truncated spherical
normal problem (see Section~\ref{sec:snormal} below). Theorem~\ref
{EFFSECOND} also gives the limiting power at the threshold $N^{\frac
{2}{d}} \delta_{N} \rightarrow\kappa>0$. Here, the limiting power of the
test \eqref{rejregionKNN} converges to $0$ or $1$, depending on
whether $\kappa a_{K, \theta_{1}}(h)+ \kappa^{2} b_{K, \theta
_{1}}(h)$ is negative or positive, respectively. In other words,
considering the limiting power as a function of $\kappa$ gives: if
$\kappa< -\frac{a_{K, \theta_{1}}(h)}{b_{K, \theta_{1}}(h)}$, then
the limiting power is 0, and if $\kappa> -\frac{a_{K, \theta
_{1}}(h)}{b_{K, \theta_{1}}(h)}$, then the limiting power is 1. This
happens because at the threshold $N^{\frac{2}{d}} \delta_{N}
\rightarrow\kappa>0$, the gradient and Hessian of the mean difference
(defined below in \eqref{eq:meanT2}) are of the same order, and the
limiting power is 0 or 1 depending on whether the sum of the gradient
and the Hessian diverges to $\infty$ or $-\infty$, which is in turn
determined by the sign of $\kappa a_{K, \theta_{1}}(h)+ \kappa^{2}
b_{K, \theta_{1}}(h)$.
(Note that, similar to case (b) above, there is a ``special'' point
$\kappa= -\frac{a_{K, \theta_{1}}(h)}{b_{K, \theta_{1}}(h)}$, where
the theorem is unable to say anything about the limiting power, when
$N^{\frac{2}{d}} \delta_{N} \rightarrow\kappa>0$. If this happens,
then the limiting power depends on the higher-order expansions of the
gradient and the Hessian of the mean difference, which has to be
calculated individually for specific examples.)
\end{itemize}
The discussion above shows that for dimension 9 and higher, given a
family of distributions $\{\P_\theta: \theta\in\Theta\subset\R
^{p}\}$ and an alternative direction $h \in\R^{p}$, there are some
``good directions'' (where $a_{K,\theta_{1}}(h)>0$) where the test
\eqref{rejregionKNN}
exhibits a blessing of dimensionality, but at the same time there are
``bad directions'' (where $a_{K,\theta_{1}}(h)<0$) where one sees a
curse of dimensionality. For simulations illustrating this phenomenon,
refer to Section~\ref{sec:snormal} below.

\item[(d)] Note that Theorem~\ref{EFFSECOND} does not tell us what the
detection threshold is when $a_{K,\theta_{1}}(h)=0$. These are the
``degenerate directions,'' for which the precise location of the
detection threshold has to be determined on a case by case basis: For
example, this happens in the normal location problem (see Section~\ref{sec:nlocation} below), where a direct calculation shows that,
irrespective of the dimension, the detection threshold is at $\Theta
(N^{-\frac{1}{4}})$, for all directions.

\item[(e)] The rates obtained in Theorem~\ref{EFFSECOND} can be summarized
in terms of the \emph{critical exponents},
%
\begin{align}
\label{betad} \beta_{d}=\begin{cases}
\frac{1}{4} & \text{if } d \leq8,
\\
\frac{1}{2}-\frac{2}{d} &\text{if } d\geq9,
\end{cases} \qquad \gamma_{d}=\begin{cases}
\frac{1}{4} & \text{if } d \leq8,
\\[2pt]
\frac{2}{d} & \text{if } d\geq9. \end{cases}
\end{align}
Theorem~\ref{EFFSECOND} says that (irrespective of the distribution of
the data) for the testing problem \eqref{epsilonN}: (1) if $ \llVert  N^{\beta
_{d}}\varepsilon_{N} \rrVert  \rightarrow0$, the limiting power of the
test \eqref{rejregionKNN} is $\alpha$; and (2) if $ \llVert  N^{\gamma
_{d}}\varepsilon_{N} \rrVert   \rightarrow\infty$, the limiting power of the
test \eqref{rejregionKNN} is $1$.
Note that they are equal up to dimension $d=8$, after which $\beta
_{d}$ increases with $d$ to $\frac{1}{2}$ (recall the $K$-NN test has
no power for $N^{-\frac{1}{2}}$ alternatives \cite{BBB}), and $\gamma
_{d}$ decreases with $d$ to zero (the $K$-NN test always has power
against fixed alternatives).
\end{enumerate}

\subsubsection{Proof outline}
\label{sec:pfoutline}

The proof of Theorem~\ref{EFFSECOND} is given in Appendix \ref{sec:pfeffsecond}. Here, we give an outline of the proof. To find the
limiting local power of the $K$-NN test \eqref{rejregionKNN}, it
suffices to derive the asymptotic distribution of
\begin{align*}
 \frac{1}{\sqrt N} \bigl\{ T\bigl(\cN_{K}\bigl(
\cZ_{N}'\bigr)\bigr)- \E_{H_{0}}\bigl(T\bigl(
\cN _{K}\bigl(\cZ_{N}'\bigr)\bigr)\bigr)
\bigr\} = T_{1}+T_{2},
\end{align*}
where $T_{1}:=\frac{1}{\sqrt N}\{ T(\cN_{K}(\cZ_{N}'))- \E
_{H_{1}}(T(\cN_{K}(\cZ_{N}')))\}$ and the \emph{mean difference}
%
\begin{align}
\label{eq:meanT2} T_{2}:=\frac{1}{\sqrt N} \E_{H_{1}}
\bigl(T\bigl(\cN_{K}\bigl(\cZ_{N}'\bigr)
\bigr)\bigr) - \E _{H_{0}}\bigl(T\bigl(\cN_{K}\bigl(
\cZ_{N}'\bigr)\bigr)\bigr) \},
\end{align}
when $\theta_{2}=\theta_{1}+\varepsilon_{N}$, where $\varepsilon
_{N}$ is as in \eqref{epsilonN}. The proof of Corollary~\ref{cor:KNN}
shows that the first term converges in distribution to $N(0, \sigma
_{K}^{2})$. Therefore, determining the limiting power boils down to
computing the limit of the mean difference $T_{2}$. In the parametric
setup of \eqref{epsilonN}, $\E_{H_{1}}(T(\cN_{K}(\cZ
_{N}'))):=\delta_{N}(\theta_{1}, \theta_{2})$ for some function
$\delta_{N}:\Theta^{2} \rightarrow\R$. (The expression of $\delta
_{N}$ is given in (B.1) in the supplementary materials. Note
that $\delta_{N}$ is related to the function $\mu_{N}$ in the
Appendix as: $\delta_{N}(\theta_{1}, \theta_{2}) =\frac
{N_{1}N_{2}}{N^{2}} \mu_{N}(\theta_{1}, \theta_{2})$.) Then by a
Taylor series expansion in the second coordinate (and ignoring the
error term) gives
\begin{align*}
T_{2} & = \frac{1}{\sqrt N} \bigl\{ \delta_{N}(
\theta_{1}, \theta _{1}+\varepsilon_{N})-
\delta_{N}(\theta_{1}, \theta_{1}) \bigr\}
\\
& \approx\frac{\varepsilon_{N}^\top}{\sqrt
N} \grad\delta_{N}(
\theta_{1}, \theta_{1}) + \frac{\varepsilon
_{N}^\top\mathrm H[\delta_{N}(\theta_{1}, \theta_{1})] \varepsilon
_{N}}{2 \sqrt N},
\end{align*}
where $\grad\delta_{N}(\theta_{1}, \theta_{1}):=\grad_{\theta}
\delta_{N}(\theta_{1}, \theta)|_{\theta=\theta_{1}} \in\R^{p}$
is the gradient vector (with respect to the second coordinate $\theta
$) of $\delta_{N}(\theta_{1}, \theta)$ evaluated at $\theta=\theta
_{1}$, and $\mathrm H[\delta_{N}(\theta_{1}, \theta_{1})]\in\R^{p
\times p}$ is the Hessian matrix (with respect to $\theta$) of $\delta
_{N}(\theta_{1}, \theta)$.
The proof of Theorem~\ref{EFFSECOND} involves showing the following steps:
\begin{itemize}
\item[--] $\frac{1}{\sqrt N} \varepsilon_{N}^\top\grad\delta
_{N}(\theta_{1}, \theta_{1})$ has finite limit when $\varepsilon
_{N}=\frac{h}{N^{\frac{1}{2}-\frac{2}{d}}}$ (Lemma~B.1 in
Appendix \ref{sec:pfeffsecond}).

\item[--] $\frac{1}{\sqrt N} \varepsilon_{N}^\top\mathrm H[\delta
_{N}(\theta_{1}, \theta_{1})]\varepsilon_{N}$ has finite limit when
$\varepsilon_{N}=\frac{h}{N^{\frac{1}{4}}}$ (Lemma~B.2
in Appendix \ref{sec:pfeffsecond}).
\end{itemize}

Note that when $d \leq7$, $N^{-\frac{1}{2}+\frac{2}{d}} \ll
N^{-\frac{1}{4}} $ and the Hessian term dominates the gradient term,
giving the formula in \eqref{eq:effhessian}. This can be thought of as
the \emph{second-order efficiency} of the test \eqref{rejregionKNN}.
(This is in analogy with the classical first-order (Pitman) efficiency,
which is derived under local alternatives $\varepsilon_{N}=h/\sqrt N$.
However, the $K$-NN test \eqref{rejregionKNN} has zero-Pitman
efficiency, because the first-order term $\frac{h^\top}{N} \grad
\delta_{N}(\theta_{1}, \theta_{1})$ is asymptotically zero in this
scale, hence the local power is given by the second-order Hessian
term.) On the other hand, when $d=8$, the rate of convergence of the
gradient and the Hessian terms match, and, as a result the
contributions from both the terms show up in \eqref
{eq:effgradhessian}. Finally, when $d\ge9$, the gradient term
dominates the Hessian term (since $N^{-\frac{1}{2}+\frac{2}{d}} \gg
N^{-\frac{1}{4}}$), which explains the shift in the location of the
detection threshold at dimension 8 and gives the expression in \eqref
{eq:effgrad}.

\subsection{Examples}
\label{sec:examples} In this section, we discuss examples which attain
the threshold obtained in Theorem~\ref{EFFSECOND}. In order to meet
compactness assumption in Theorem~\ref{EFFSECOND} (recall Assumption~\ref{assumptionlocalpower}), we consider standard distributions
truncated to a compact, convex set. However, as mentioned earlier, we
expect the results to hold for the un-truncated family (with ``nice''
tails), as well.

\subsubsection{Example: Normal location}
\label{sec:nlocation} Let $A \subset \R^{d}$ be a compact and convex set
which is symmetric around the origin $\bm0 \in\R^{d}$, that is,
$A=-A$. For $\theta\in\R^{d}$, define a family of densities $\phi
_{A}(x|\theta)=\frac{1}{Z_{A}(\theta)}e^{-\frac{1}{2}  \llVert  x-\theta
 \rrVert  ^{2}}$, where $Z_{A}(\theta):=\int_{A} e^{-\frac{1}{2} \llVert  x-\theta
 \rrVert  ^{2}}\,\mathrm{ d} x$, is the normalizing constant.
This is the $d$-dimensional multivariate normal $N(\theta, \mathrm
I_{d})$ truncated to the set $A$.

Now, consider the problem of testing \eqref{epsilonN} based on \eqref
{rejregionKNN}, given i.i.d. samples $\sX_{N_{1}}$ and $\sY_{N_{2}}$
from $\phi_{A}(\cdot|\theta_{1})$ and $\phi_{A}(\cdot|\theta
_{2})$, respectively. There are two cases depending whether the true
$\theta_{1}$ is zero or nonzero. Here, we discuss the case $\theta
_{1}=\bm0$: When $\theta_{1}=\bm0$, it is easy to check that
\[
\int_{A} \grad_{\theta=\bm0} \biggl(
\frac{ \tr(\mathrm H_{x} \phi
_{A}(x|\theta))}{\phi_{A}(x|\theta)} \biggr) \phi_{A}^{\frac
{d-2}{d}}(x| \theta)\,
\mathrm {d} x=0,
\]
which implies Theorem~\ref{EFFSECOND} cannot be directly applied to
the case $d \geq9$. However, in this case a direct calculation shows
that the gradient term is exactly zero across all dimensions, which
implies the following (calculations are given in Lemma~C.1
in Appendix \ref{sec:pfnlocation}): For any $d \geq1$:
\begin{itemize}
\item[--]If $ \llVert  N^{\frac{1}{4}}\varepsilon_{N} \rrVert  \rightarrow0$, the
limiting power of the test is $\alpha$.

\item[--] If $ \llVert  N^{\frac{1}{4}}\varepsilon_{N} \rrVert  \rightarrow\infty
$, the limiting power of the test is $1$.

\item[--] If $N^{\frac{1}{4}}\varepsilon_{N} \rightarrow h$, for
some $h \in\R^{p}\backslash\{\bf{0}\}$, the limiting power of the
test is given by $\Phi(z_\alpha+\frac{r^{2} K}{2 \sigma_{K}} \E_{X
\sim\phi_{A}(\cdot|\bm0)}( h^\top X )^{2})$.
\end{itemize}

Details of the other case $\theta_{1} \ne\bm 0$ can be found in
Appendix \ref{sec:pfnlocation}. In this case, because of the asymmetry
introduced by the truncation,
\[
\int_{A} \grad_{\theta_{1}} \biggl(\frac{ \tr(\mathrm H_{x} \phi
_{A}(x|\theta_{1}))}{\phi_{A}(x|\theta_{1})}
\biggr) \phi _{A}^{\frac{d-2}{d}}(x|\theta_{1}) \,
\mathrm {d} x\ne0,
\]
and hence, the detection threshold undergoes a phase-transition at
dimension 8 as in Theorem~\ref{EFFSECOND}. However, in the untruncated
normal family ($\{\P_\theta\sim N(\theta, \mathrm I_{d}): \theta\in
\R^{d}\}$)
\[
\int_{\R^{d}} \grad_{\theta_{1}} \biggl(
\frac{ \tr(\mathrm H_{x}
\phi_{\R^{d}}(x|\theta_{1}))}{\phi_{\R^{d}}(x|\theta_{1})} \biggr) \phi_{\R^{d}}^{\frac{d-2}{d}}(x|
\theta_{1}) =0,
\]
for all $\theta_{1} \in\R^{d}$, that is, for the untruncated normal
location problem we expect the detection threshold to be at $N^{-\frac
{1}{4}}$, for all dimensions, as seen in the simulations below.

\begin{figure*}[h]\vspace{-0.1in}
\centering
\begin{minipage}[l]{0.49\textwidth}
\centering
\includegraphics[width=2.05in]
    {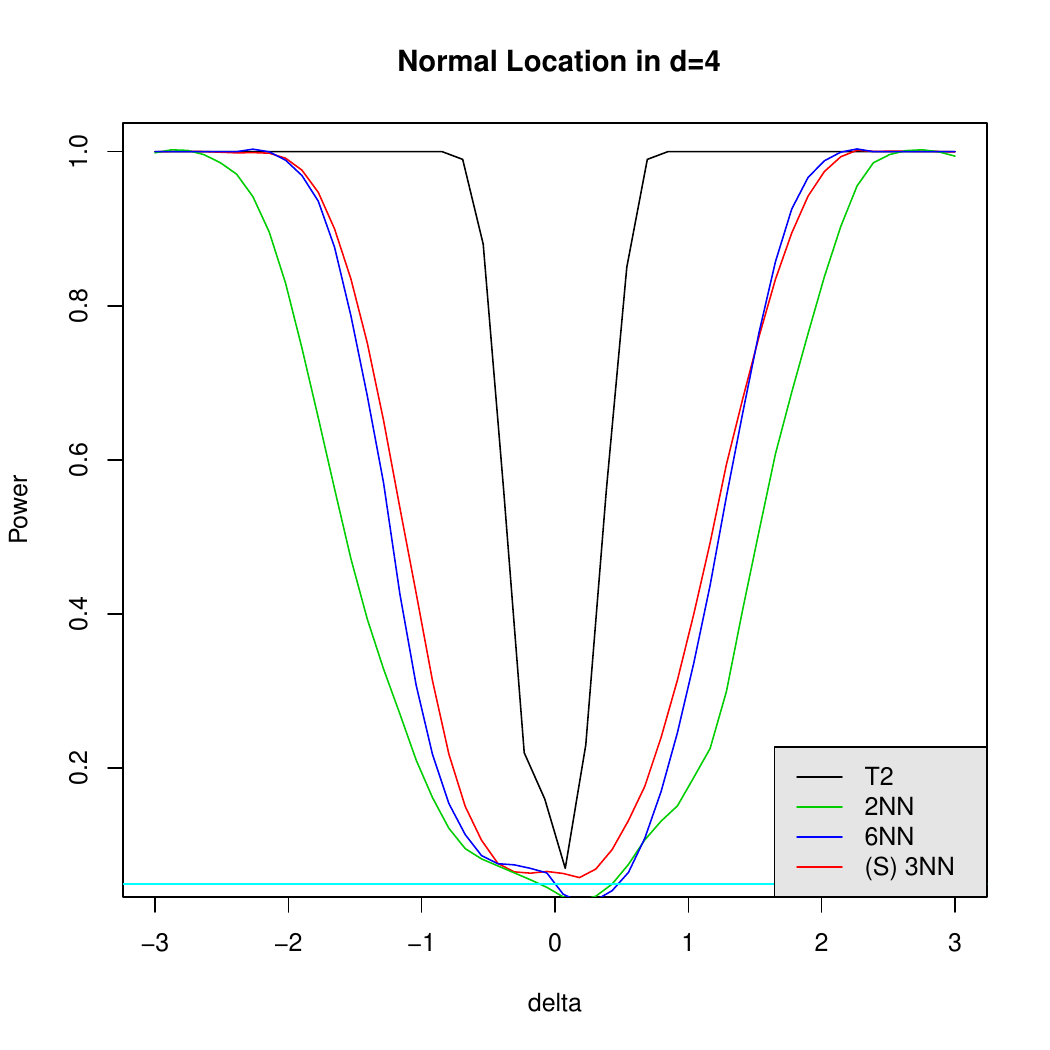}\\
\footnotesize{(a)}
\end{minipage}
\begin{minipage}[l]{0.49\textwidth}
\centering
\includegraphics[width=2.05in]
    {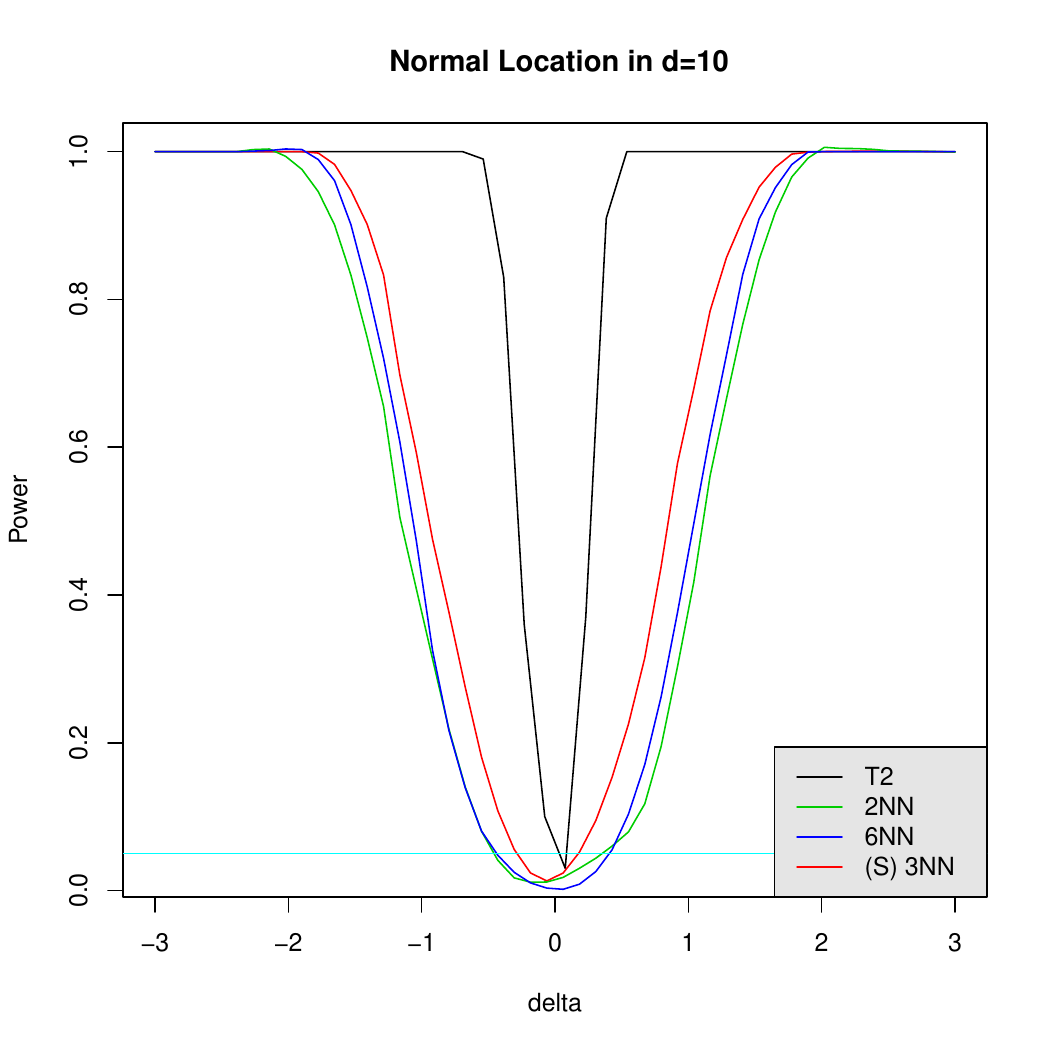}\\
\footnotesize{(b)}
\end{minipage}
\caption{\small{Empirical power for the normal location problem against $\delta N^{-\frac{1}{4}}$ alternatives, in dimension (a) $d=4$ and (b) $d=10$.}}
\label{fig:nlocationI}\vspace{-0.15in}
\end{figure*}

To illustrate the results above, we consider the following simulation:
Consider the parametric family $\P_{\theta}\sim N(\theta, \mathrm
I_{d})$, for $\theta\in\R^{d}$. Figure~\ref{fig:nlocationI} shows
the empirical power (out of 100 repetitions) of the tests based on the
2-NN and 6-NN graphs, the test based on the symmetrized 3-NN graph (see
Appendix \ref{sec:KNNsymmetry} for details on the limiting power of
the symmetrized 3-NN test), and the Hotelling's $T^{2}$ test, with
$N_{1}=2000 \text{ samples from } N(2\cdot\bm1, \mathrm I_{d}) \text
{ and } N_{2}=1000 \text{ samples from } N(2\cdot\bm1+ \delta
N^{-\frac{1}{4}} \bm1, \mathrm I_{d})$, over a grid of 40 values of
$\delta$ in $[-3, 3]$ (smoothed out using the \texttt{loess} function
in \texttt{R}), in (a) dimension 4 and (b) dimension 10. (Here,
$N=N_{1}+N_{2}=3000$.) The level of the tests are set to $\alpha
=0.05$. The plots show that the tests based on the NN graphs have
nontrivial local power as a function of $\delta$, as predicted by the
calculations above. Note that, in this case, the most powerful test is
the Hotelling's $T^{2}$-test, which has detection threshold at
$N^{-\frac{1}{2}}$ and, therefore, has high power at the $N^{-\frac
{1}{4}}$ scale, as seen in the plots.

Figure~\ref{fig:nlocationII} shows the empirical power (out of 100
repetitions) of the different tests with $N_{1}=5000$ samples from $N(2
\cdot\bm1, \mathrm I_{d})$ and $N_{2}=3000$ samples from $N( 2 \cdot
\bm1 + N^{-b} \cdot\bm h, \mathrm I_{d})$, where $b$ varies over a
grid of 100 values in $[0, 1]$, $\bm h=\bm1$ and dimension (a) $d=4$
and (b) $d=10$. Note that $b=0$ corresponds to fixed alternatives where
the power is expected to be near 1 because of consistency. The level of
the tests are set to $\alpha=0.25$. Note that the power of the tests
based on the $K$-NN graphs transitions from $\alpha$ to 1 around
$b=0.25$, which corresponds to the rate $N^{-\frac{1}{4}}$, in both
dimensions, as predicted by the calculations above. On the other hand,
the power of the Hotelling's $T^{2}$ test transitions from $\alpha$ to
1 around $b=0.5$, which corresponds to the parametric rate of
$N^{-\frac{1}{2}}$. The corresponding plots for the negative direction
$\bm h= -\bm1$ are given in Appendix \ref{sec:normal_location}.

\begin{figure*}[h]\vspace{-0.1in}
\centering
\begin{minipage}[l]{0.49\textwidth}
\centering
\includegraphics[width=2.05in]
    {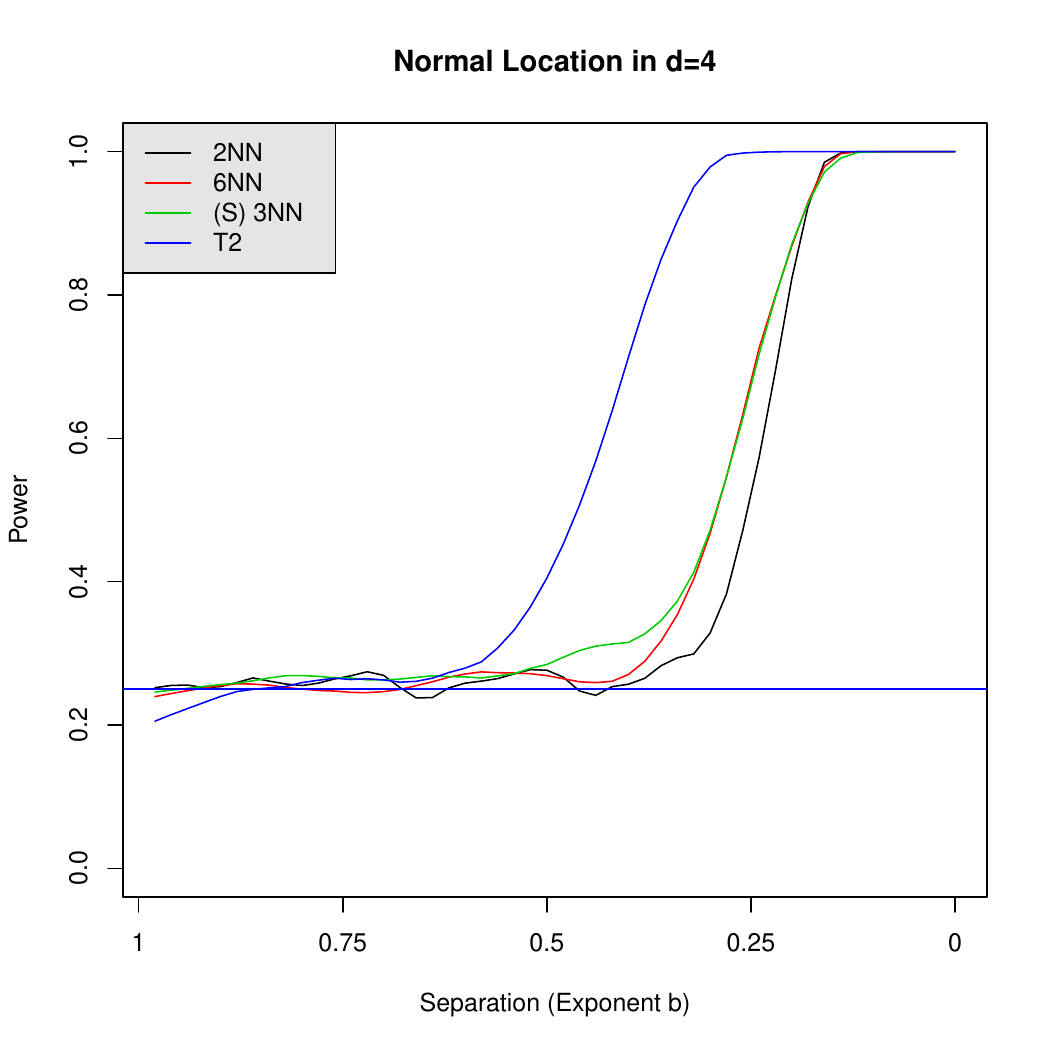}\\
\footnotesize{(a)}
\end{minipage}
\begin{minipage}[l]{0.49\textwidth}
\centering
\includegraphics[width=2.05in]
    {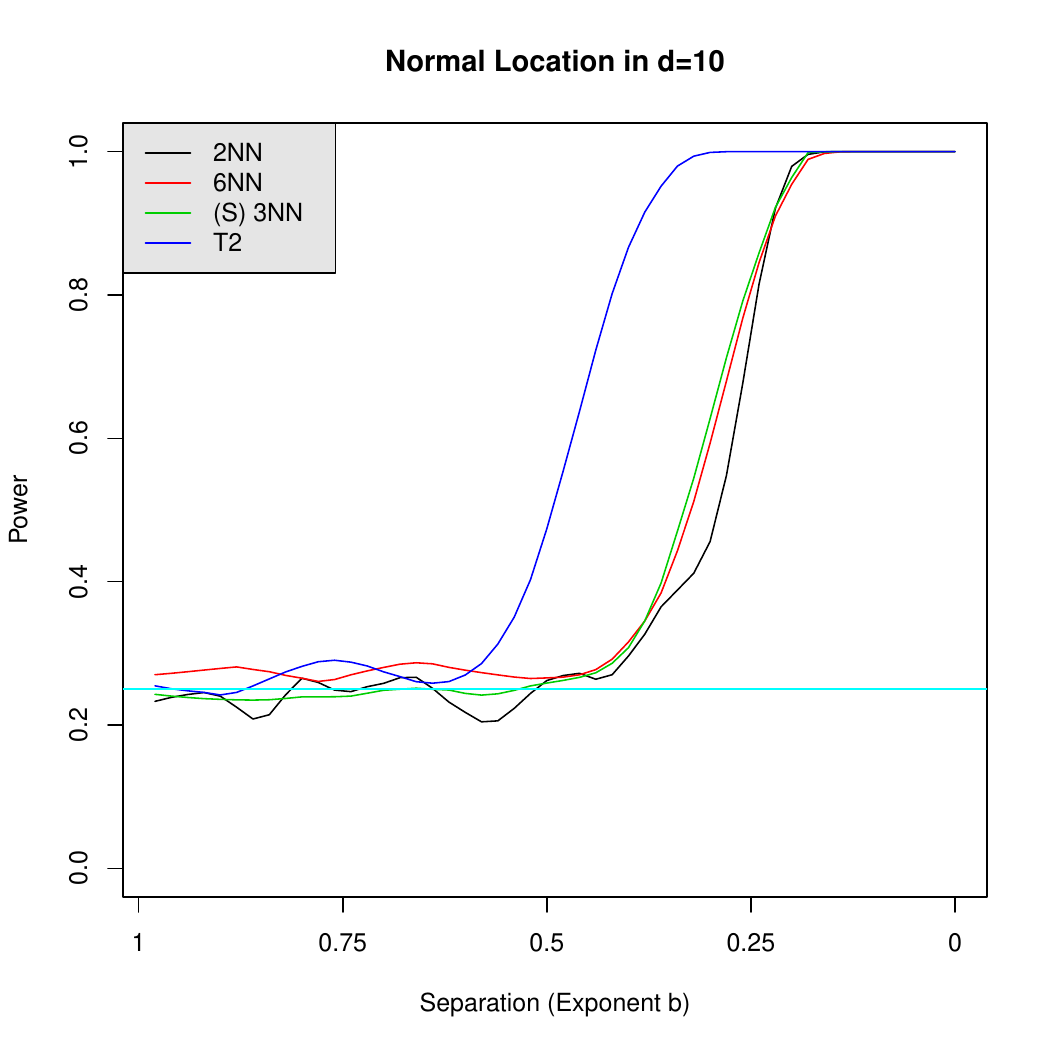}\\
\footnotesize{(b)}
\end{minipage}
\caption{\small{Empirical power in the normal location problem with $N_1=5000$ samples from $N(2 \cdot \bm 1, \mathrm I_d)$ and  $N_2=3000$  samples from $N( 2  \cdot \bm 1 + N^{-b} \cdot \bm h, \mathrm I_d)$, where $b$ varies over a grid of 100 values in $[0, 1]$,  $\bm h=\bm 1$, and dimension (a) $d=4$ and (b) $d=10$.}}
\label{fig:nlocationII}\vspace{-0.15in}
\end{figure*}

\subsubsection{Example: Spherical normal}
\label{sec:snormal}

Let $M$ be a convex, compact subset of $\R^{d}$. For $\lambda>0$,
define a family of densities $\phi_{M}(\cdot|\lambda^{2})$:
\[
\phi_{M}\bigl(x|\lambda^{2}\bigr)=
\frac{1}{Z_{M}(\lambda^{2})}e^{-\frac{1}{2
\lambda^{2}}  \llVert  x \rrVert  ^{2}}\quad \text{for } x \in M,
\]
where $Z_{M}(\lambda^{2}):=\int_{M} e^{-\frac{1}{2\lambda
^{2}} \llVert  x \rrVert  ^{2}}\,\mathrm {d} x$ is the normalizing constant. (Note that
$Z_{\R^{d}}(\lambda^{2})=(2 \pi\lambda^{2})^\frac{d}{2}$.) This is
the $d$-dimensional spherical normal distribution $N(0, \lambda^{2}
\mathrm I_{d})$ truncated to the set $M$. Now, consider the problem of
testing \eqref{epsilonN} based on \eqref{rejregionKNN}, given i.i.d.
samples $\sX_{N_{1}}$ and $\sY_{N_{2}}$ from $\phi_{M}(\cdot
|\lambda_{1}^{2})$ and $\phi_{M}(\cdot|\lambda_{2}^{2})$,
respectively. In this case, for $h \in\R$,
%
\begin{align}
\label{eq:nscalegradhessian}
\begin{split}
&\int_{M} h \cdot\grad_{\lambda_{1}} \biggl(
\frac{ \tr(\mathrm
H_{x} \phi_{M}(x|\lambda_{1}^{2}))}{\phi_{M}(x|\lambda_{1}^{2})} \biggr) \phi_{M}^{\frac{d-2}{d}}\bigl(x|
\lambda_{1}^{2}\bigr)
\\
& \quad = -\frac{Z_{M}(\frac{d\lambda_{1}^{2}}{d-2} )}{Z_{M}(\lambda
_{1}^{2})^{\frac{d-2}{d}}}\frac{2 h}{\lambda_{1}^{5}} \bigl(2 \E \llVert W \rrVert
^{2} - \lambda_{1}^{2} d \bigr),
\end{split}
\end{align}
where $W=(W_{1}, W_{2}, \ldots, W_{d})'$ are i.i.d. from the density
$\phi_{M}(\cdot|\frac{d\lambda_{1}^{2}}{d-2})$ (see Section~D in the supplementary material for details). Therefore
(recall \eqref{eq:aKtheta1}),
%
\begin{align}
\label{eq:aKtheta_scale} a_{K, \lambda_{1}}(h)=\frac{r p C_{K, 2}}{2 d \sigma_{K}}
\frac
{Z_{M}(\frac{d\lambda_{1}^{2}}{d-2} )}{Z_{M}(\lambda_{1}^{2})^{\frac
{d-2}{d}}}\frac{h}{\lambda_{1}^{5}} \bigl(2\E \llVert W \rrVert
^{2} -\lambda _{1}^{2} d \bigr),
\end{align}
which is positive or negative depending on whether $h$ is positive or
negative. Therefore, the limiting power of the test \eqref
{rejregionKNN} for dimension $d \geq9$, at the threshold $N^{\frac
{1}{2}-\frac{2}{d}}\varepsilon_{N} \rightarrow h$, is $\Phi(z_\alpha
+ a_{K, \lambda_{1}}(h))$.
(Note that in the simulations below we will consider the untruncated
spherical normal family $\{\P_\lambda\sim N(0, \lambda^{2} \mathrm
I_{d}): \lambda>0 \}$. The limiting power in this case can be obtained
by choosing $M=[-L, L]^{d}$, and taking $L \rightarrow\infty$ in
\eqref{eq:aKtheta_scale}.)

Now, suppose we are given i.i.d. samples $\sX_{N_{1}}$ from $\phi
_{M}(\cdot|\lambda_{1}^{2})$ and $\sY_{N_{2}}$ from $\phi_{M}(\cdot
|\lambda_{2}^{2})$, where $\lambda_{2}=\lambda_{1}+ h \delta_{N} >
0$, for some $h$ fixed and $\delta_{N} \rightarrow0$, as $N
\rightarrow\infty$. Then, by Theorem~\ref{EFFSECOND}, depending on
the dimension and the sign of $h$ we have the following cases:
\begin{itemize}
\item For dimension $d \leq8$, irrespective of the sign of $h$, the
limiting power of the test \eqref{rejregionKNN} is $0$ or $1$,
depending on whether $N^{\frac{1}{4}}\delta_{N} \rightarrow0$ or
$N^{\frac{1}{4}}\delta_{N} \rightarrow\infty$. At the threshold,
$N^{\frac{1}{4}}\delta_{N} \rightarrow\kappa$, the limiting power
is given by \eqref{eq:effhessian} or \eqref{eq:effgradhessian} (with
$h$ replaced by $\kappa h$). This is illustrated in Figure~\ref{fig:nscale_separationI}, which shows the empirical power (out of 100
repetitions) of the tests based on the 2-NN and 6-NN graphs, the test
based on the symmetrized 3-NN graph, and the generalized likelihood
ratio test (GLR), in dimension $d=4$, with $N_{1}=12\text{,}000$ samples from
$N(0, 3^{2} \cdot\mathrm I_{d})$ and $N_{2}=6000$ samples from $N(0,
(3+ h N^{-b})^{2} \mathrm I_{d})$, where $b$ varies over a grid of 100
values in $[0, 1]$ and (a) $h=2$ (b) $h=-2$. (Here,
$N=N_{1}+N_{2}=18\text{,}000$.) The level of the tests are set to $\alpha
=0.25$. Note that the power of the tests based on the $K$-NN graphs
transitions from $\alpha$ to 1 around $b=0.25$ (irrespective of the
sign of $h$), which corresponds to the rate $N^{-\frac{1}{4}}$, as
shown in the calculations above. On the other hand, the power of the
GLR test transitions from $\alpha$ to 1 around $b=0.5$, which
corresponds to the parametric rate of $N^{-\frac{1}{2}}$.

\begin{figure*}[h]\vspace{-0.1in}
\centering
\begin{minipage}[l]{0.495\textwidth}
\centering
\includegraphics[width=2.05in]
    {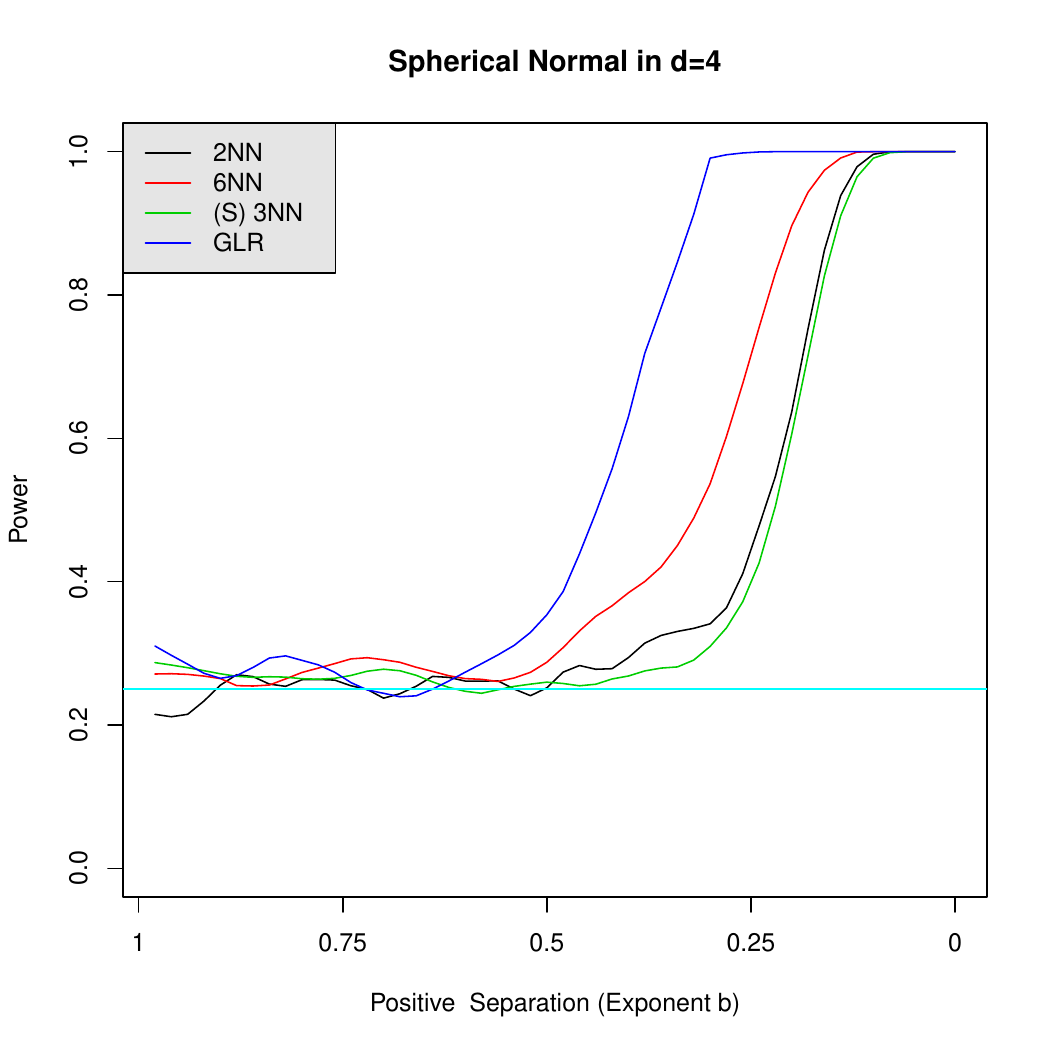}\\
\small{(a)}
\end{minipage}
\begin{minipage}[c]{0.495\textwidth}
\centering
\includegraphics[width=2.05in]
    {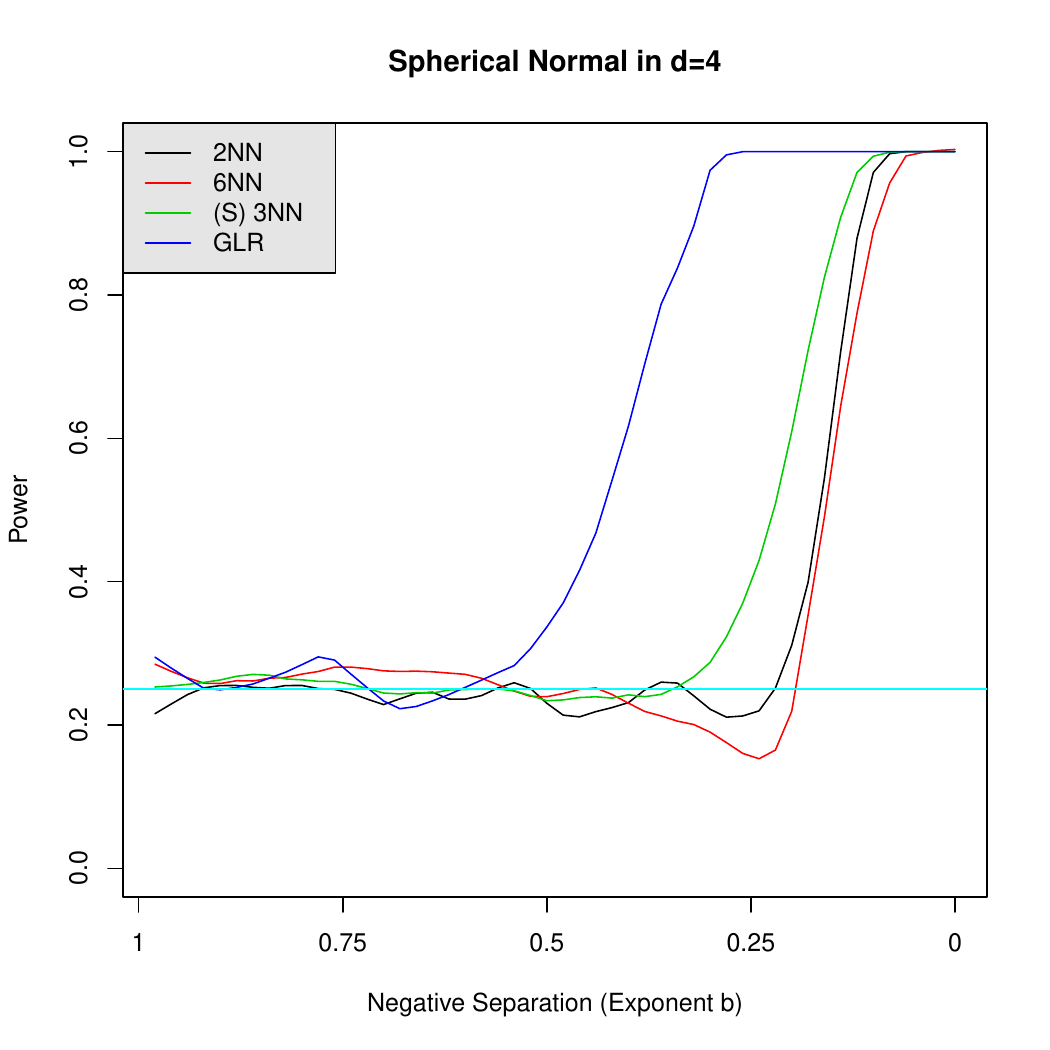}\\
\small{(b)}
\end{minipage}
\caption{\small{Empirical power in the spherical normal problem in dimension $d=4$ with $N_1=12000$ samples from $N(0, 3^2 \cdot \mathrm I_d)$ and  $N_2=6000$  samples from $N(0, (3+ h N^{-b})^2 \mathrm I_d)$, where $b$ varies over a grid of 100 values in $[0, 1]$ and (a) $h=2$ (b) $h=-2$.}}
\label{fig:nscale_separationI}\vspace{-0.15in}
\end{figure*}
%

\item Next, suppose $d\geq9$. Then depending on the sign of $h$ the
following cases arise:
\begin{itemize}
\item[--] Suppose $h >0$ (then $a_{K,\lambda_{1}}(h)>0$). By \eqref
{eq:sigma_positive}, the limiting power of the test \eqref
{rejregionKNN} is
\[
\begin{cases} \alpha & \text{if } N^{\frac{1}{2}-\frac{2}{d}}
\delta_{N} \rightarrow0,
\\
\Phi \bigl(z_\alpha+ \kappa a_{K, \lambda_{1}}(h) \bigr)> \alpha &
\text{if } N^{\frac{1}{2}-\frac{2}{d}}\delta_{N} \rightarrow \kappa>0,
\\
1 & \text{if } N^{\frac{1}{2}-\frac{2}{d}}\delta_{N} \rightarrow
\infty, \end{cases}
\]
where $a_{K, \lambda_{1}}(h)$ is defined above in \eqref
{eq:aKtheta_scale}. Here, the detection threshold exhibits a blessing
of dimensionality, improving with dimension to the parametric rate of
$N^{-\frac{1}{2}}$ as the dimension $d$ grows to infinity. This is
illustrated in Figure~\ref{fig:nscale_separationII}(a), which shows
the empirical power (out of 100 repetitions) of the different tests in
dimension $d=10$, with $N_{1}=300\text{,}000$ samples from $N(0, 3^{2} \cdot
\mathrm I_{d})$ and $N_{2}=200\text{,}000$ samples from $N(0, (3+ h N^{-b})^{2}
\mathrm I_{d})$, where $b$ varies over a grid of 100 values in $[0, 1]$
and $h=2$. As before, the level of the tests are set to $\alpha=0.25$.
Note that the power of the tests based on the $K$-NN graphs transitions
from $\alpha$ to 1 around $b=\frac{1}{2}-\frac{2}{d}=0.3$, which is
the predicted rate of $N^{-\frac{1}{2}+\frac{2}{d}}$. As before, the
power of the GLR test transitions from 0 to 1 around $b=0.5$. To see
the transitions more sharply and observe the local power of the
different tests, we can zoom in at the thresholds (see Appendix \ref{sec:normal_scale}).

\begin{figure*}[h] 
\centering
\begin{minipage}[l]{0.495\textwidth}
\centering
\includegraphics[width=2.05in]
    {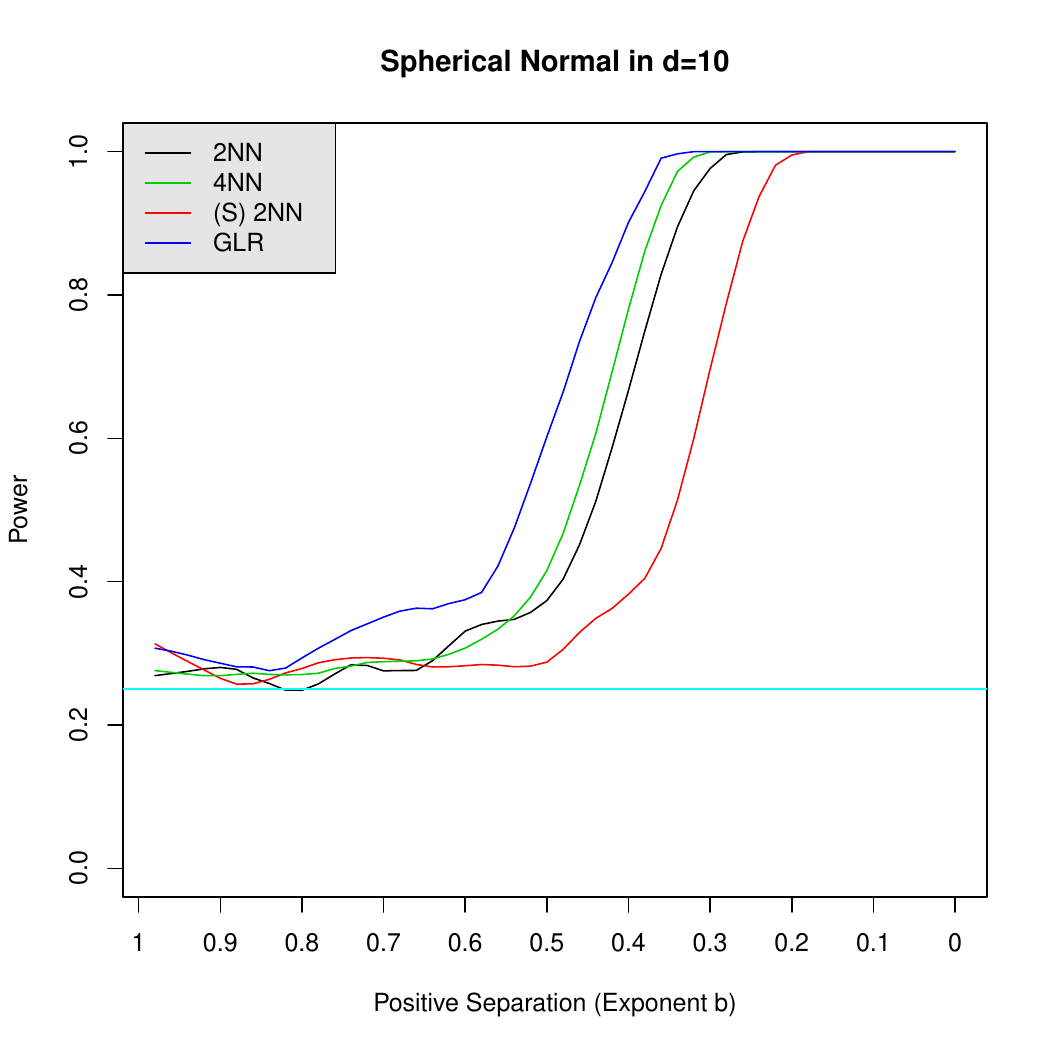}\\
\small{(a)}
\end{minipage}
\begin{minipage}[c]{0.495\textwidth}
\centering
\includegraphics[width=2.05in]
    {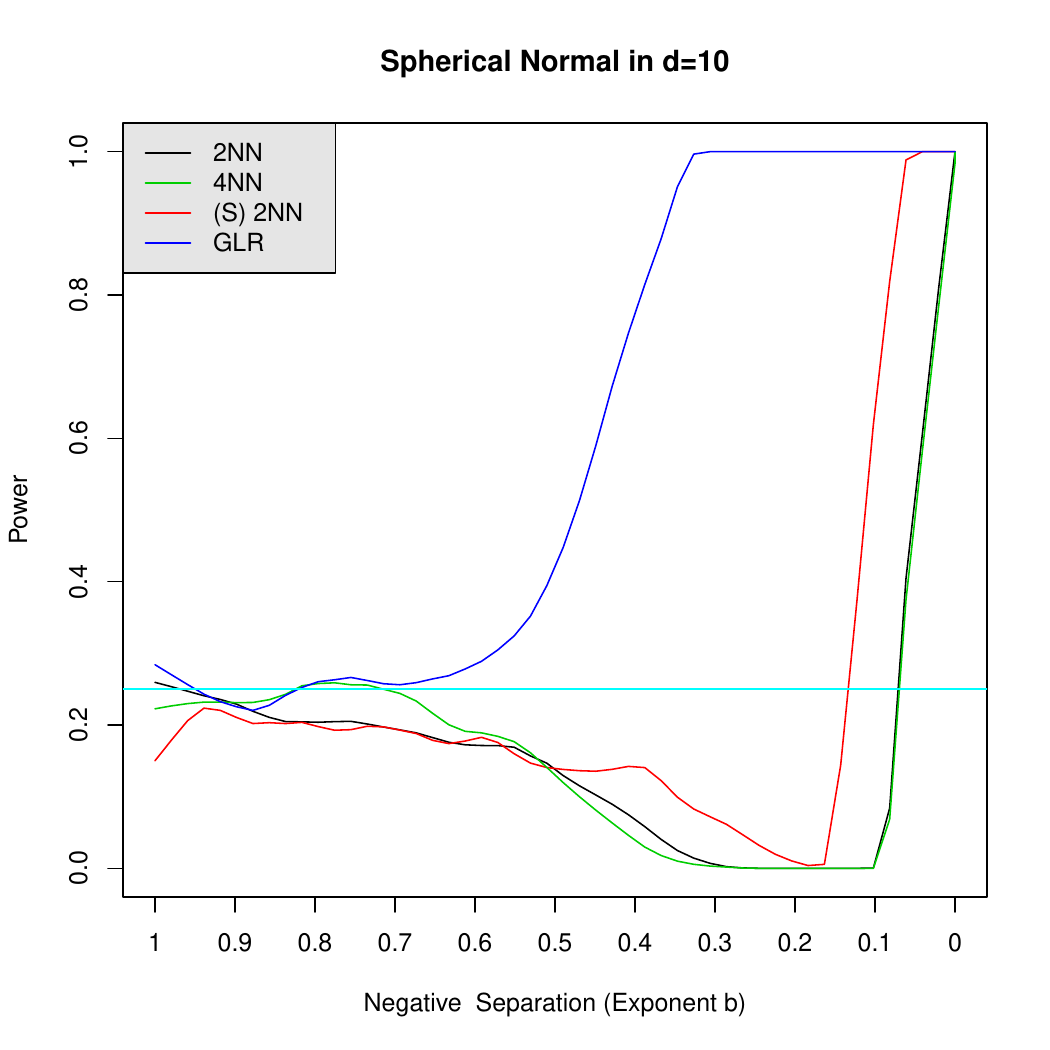}\\
\small{(b)}
\end{minipage}
\caption{\small{Empirical power in the spherical normal problem in dimension $d=10$ with $N_1$ samples from $N(0, 3^2 \cdot \mathrm I_d)$ and  $N_2$  samples from $N(0, (3+ h N^{-b})^2 \mathrm I_d)$, where $b$ varies over a grid of 100 values in $[0, 1]$ and (a) $h=2$ (b) $h=-2$.}}
\label{fig:nscale_separationII}\vspace{-0.15in}
\end{figure*}
%

\item[--] Suppose $h<0$ (then $a_{K,\theta_{1}}(h)<0$). By Theorem~\ref{EFFSECOND}, the limiting power of the test \eqref{rejregionKNN} is
\[
\begin{cases} \alpha & \text{if } N^{\frac{1}{2}-\frac{2}{d}}
\delta_{N} \rightarrow0,
\\
\Phi \bigl(z_\alpha+ \kappa a_{K, \theta_{1}}(h) \bigr) < \alpha &
\text{if } N^{\frac{1}{2}-\frac{2}{d}}\delta_{N} \rightarrow \kappa>0,
\\
0 & \text{if } N^{\frac{1}{2}-\frac{2}{d}}\delta_{N} \rightarrow
\infty\text{ and } N^{\frac{2}{d}}\delta_{N} \rightarrow0,
\\
1 & \text{if } N^{\frac{2}{d}}\delta_{N} \rightarrow
\infty. \end{cases}
\]
This is illustrated in Figure~\ref{fig:nscale_separationII}(b), which
shows the empirical power (out of 100 repetitions) of the different
tests in dimension $d=10$, with $N_{1}=200\text{,}000$ samples from $N(0, 3^{2}
\cdot\mathrm I_{d})$ and $N_{2}=100\text{,}000$ samples from $N(0, (3+ h
N^{-b})^{2} \mathrm I_{d})$, where $b$ varies over a grid of 100 values
in $[0, 1]$ and $h=-2$. Here, we observe the predicted non-monotonicity
of the power of the $K$-NN tests. The asymptotic power starts of at the
level $\alpha=0.25$, goes down to zero (predicted by the theorem at
$b=\frac{1}{2}-\frac{2}{d}=0.3$), stays at zero for a while and jumps
up to 1 (predicted by the theorem at $b=\frac{2}{d}=0.2$). Additional
simulations zooming in to the different thresholds are given in
Appendix \ref{sec:normal_scale}.
\end{itemize}
\end{itemize}

\section*{Acknowledgments} 
\small{The author is indebted to his advisor Persi Diaconis for introducing
him to graph-based tests and
for his constant encouragement and support. The author thanks
Riddhipratim Basu, Sourav Chatterjee, Jerry Friedman,
Shirshendu Ganguly and Susan Holmes for illuminating discussions and
helpful comments. The author also thanks
the associate editor and the anonymous referees for their detailed and
thoughtful comments, which greatly improved the quality of the paper.}

\normalsize

\appendix

\section{Proof of CLT under general alternatives}
\label{sec:pfclt}

In this section the asymptotic distribution of the two-sample test based on stabilizing geometric graphs in the Poissonized setting is derived. The section is organized as follows:  Begin by recalling preliminaries about geometric graphs in Section \ref{sec:preliminaries}. In Section \ref{sec:techlemmas} a few technical lemmas are proved, which will be required to derive the asymptotic variance of the statistic \eqref{R1}. The  consistency of these tests under general alternatives (Proposition \ref{CONSISTENT}) is given in Section \ref{sec:pfconsistency}. The proof of the conditionally centered CLT of the test statistic (Theorem \ref{TH:CLT_R1}) is described in Section \ref{R}. The CLT of the conditional mean and the proofs of Proposition \ref{CLT_R2} and Theorem \ref{TH:CLT_R} are given in Section \ref{sec:pfR2R}. 

\subsection{Preliminaries on stabilizing graphs and Palm theory}\label{sec:preliminaries}

Given a graph functional $\sG$, $\varphi(z, \sG(\cZ))$ is a measurable $\R^+$ valued function defined for all locally finite set $\cZ\subset \R^d$ and $z\in \cZ$. If $z\notin \cZ$, then  $\varphi(z, \sG(\cZ)):=\varphi(z, \sG(\cZ\cup\{z\}))$. The function $\varphi$ is {\it translation invariant} if $\varphi(y+z, \sG(y+\cZ))=\varphi(z, \sG(\cZ))$, and {\it scale invariant} if $\varphi(a z, \sG(a\cZ))=\varphi(z, \sG(\cZ))$, for all $y \in \R^d$ and $a\in \R^+$.  Similar to stabilizing graph functionals, Penrose and Yukich \cite{py} defined stabilizing functions of  graph functionals as follows:

\begin{defn}(Penrose and Yukich \cite{py}) \label{defn:stabilize}
For any locally finite point set $\cZ\subset \R^d$ and any integer $m \in \N$
$$\overline \varphi(\sG(\cZ), M):=\sup_{N\in \N}\left(\esssup_{\substack{\cA\subset \R^d\setminus B(0, M)\\|\cA|=N}}\left\{\varphi(0, \sG(\cZ\cap B(0, M) \cup \cA))\right\}\right)$$
and
$$\underline \varphi(\sG(\cZ), M):=\inf_{N\in \N}\left(\essinf_{\substack{\cA\subset \R^d\setminus B(0, M)\\|\cA|=N}}\left\{\varphi(0, \sG(\cZ\cap B(0, M) \cup \cA))\right\}\right),$$
where the essential supremum/infimum is taken with respect to the Lebesgue measure on $\R^{dN}$. The functional $\varphi$ is said to {\it stabilize} on $\sG(\cZ)$ if
$$\liminf_{M\rightarrow \infty}\underline \varphi(\sG(\cZ), M)=\limsup_{M\rightarrow \infty}\overline\varphi(\sG(\cZ), M)=\varphi (0, \sG(\cZ)).$$
\end{defn}

\begin{remark}
It is important to distinguish the difference between translation/scale invariance of graph functional $\sG$ and the translation/scale invariance a functional $\varphi(\cdot, \sG(S))$ defined on the graph, and how it fits into the notation defined above. Throughout the paper, all graphs considered will be translation and scale invariant. However, at times we will consider functionals on these graphs which might not be scale invariant, in which case $\varphi(x, \sG(a S)) \ne \varphi(x, \sG(S))$ (even though $\sG(aS)=\sG(S)$).
For example, if $\varphi(0, \cP_\lambda)=\sum_{y \in \cP_\lambda} ||y||^r \bm 1\{(0, y) \in E(\sG(\cP_\lambda)\}=\left(\frac{1}{\lambda}\right)^{\frac{r}{d}} \sum_{y \in \cP_1} ||y||^r \bm 1\{(0, y) \in E(\sG(\cP_1)\}=\left(\frac{1}{\lambda}\right)^{\frac{r}{d}} \varphi(0, \cP_1)$, using 
$\cP_\lambda =\lambda^{-1/d}  \cP_1$.
\end{remark}

Hereafter, for a stabilizing function $\varphi$, define the rescaled functional $$\varphi_N(x, \sG(S))=\varphi(0, \sG(N^{\frac{1}{d}}(S-x))).$$ 
It follows from \cite[Lemma 3.2]{py} that, given a density $\kappa$ in $\R^d$, under appropriate moment  conditions, $\varphi_N(z, \sG(\cP_{N\kappa}))$\footnote{Recall that for a density $\kappa$ in $\R^d$, $\cP_{N\kappa}$ denotes the inhomogeneous Poisson process in $\R^d$ of rate $N\kappa(\cdot)$.} converges to $\varphi(0, \sG(\cP_{\kappa(z)}))$, if $z$ is a Lebesgue point of $\kappa$ (see definition in \eqref{eq:phi_point} below). 
 The proof of \cite[Lemma 3.2]{py} can be easily modified to show that the same holds for any sequence of densities $\kappa_N\rightarrow \kappa$ uniformly, which is summarized below: 

\begin{lem}\label{poissonstabilize} Let $\sG$ be a translation and scale invariant graph functional in $\R^d$, and $\phi_N, \phi$ as in~\eqref{phi}. Suppose $\varphi$ is translation invariant and almost surely stabilizing on $\sG(\cP_\lambda)$, with limit $\varphi(0, \sG(\cP_{\lambda}))$ for all $\lambda\in (0, \infty)$, and for some $\beta>1$
\begin{equation}\label{funcmoment}
\sup_{N \in \N}\sup_{\substack{z \in \R^d,\\ \cA\subset \R^d}}\E\left\{\varphi_N(z, \sG(\cP_{N\phi_N}\cup \cA))^\beta\right\}< \infty,
\end{equation}
where the set $\cA$ ranges over all finite subsets of $\R^d$. 
\begin{enumerate}[(a)]
\item Then, if $z$ is a Lebesgue point of $\kappa$, as $N\rightarrow \infty$,
\begin{eqnarray}\label{eq:mb1}
\varphi_N(z, \sG( \cP_{N\phi_N}))\rightarrow  \varphi(0, \sG(\cP_{\phi(z)})),
\end{eqnarray}
in expectation and in distribution.
\item For $y\in \R^d$, as $N\rightarrow \infty$,
\begin{eqnarray}\label{eq:mb2}
\varphi_N(z+N^{-\frac{1}{d}}y, \sG( \cP_{N\phi_N}))\rightarrow  \varphi(0, \sG(\cP_{\phi(z)})),
\end{eqnarray}
in expectation and in distribution. \hfill $\Box$
\end{enumerate}
\end{lem}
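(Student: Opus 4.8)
The plan is to follow the proof of \cite[Lemma 3.2]{py} essentially verbatim, the only genuinely new point being that the intensity $N\phi_N$ of the underlying Poisson process depends on $N$. First I would pass to a rescaled picture: by the translation and scale invariance of $\sG$ and $\varphi$,
\[
\varphi_N(z, \sG(\cP_{N\phi_N})) = \varphi(\mathbf{0}, \sG(\Pi_N)), \qquad \Pi_N := N^{\frac1d}(\cP_{N\phi_N} - z),
\]
and a change of variables shows that $\Pi_N$ is a Poisson process on $\R^d$ with intensity density $u \mapsto \phi_N(z + N^{-\frac1d}u)$ with respect to Lebesgue measure. Since $\phi_N \to \phi$ uniformly (recall~\eqref{phi}) and, for every Lebesgue point $z$ of $\phi$ — which is all that is needed, as the lemma is applied inside integrals over $z$ — one has $N\int_{B(z, N^{-1/d}K)}|\phi(w) - \phi(z)|\,\mathrm d w \to 0$ for each fixed $K$, the intensity of $\Pi_N$ restricted to any fixed ball $B(\mathbf{0}, K)$ converges in $L^1$ to the constant $\phi(z)$; if $\phi(z) = 0$ the limiting process is empty and everything below is trivial.

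Next, for each fixed $K$ I would couple $\Pi_N$ and the homogeneous process $\cP_{\phi(z)}$ on a common probability space so that they agree on $B(\mathbf{0}, K)$ with probability tending to $1$ as $N \to \infty$; this is possible because the total variation distance between two Poisson processes on a bounded region is bounded by the $L^1$-distance of their intensity functions there. By hypothesis $\varphi$ stabilizes on $\sG(\cP_{\phi(z)})$, so there is an a.s.\ finite radius $R$ with $\varphi(\mathbf{0}, \sG(\cP_{\phi(z)}^{\mathbf{0}})) = \varphi(\mathbf{0}, \sG(\cP_{\phi(z)}^{\mathbf{0}} \cap B(\mathbf{0}, R) \cup \cA))$ for every finite $\cA$ outside $B(\mathbf{0}, R)$; taking $\cA = \emptyset$ and then an arbitrary configuration shows this value is determined by $\cP_{\phi(z)} \cap B(\mathbf{0}, R)$ alone, and a routine limiting argument extends this to locally finite configurations outside $B(\mathbf{0}, R)$. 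Hence on the event where $R \le K$ and the coupling succeeds — an event of probability tending to $1$ after letting first $N \to \infty$ and then $K \to \infty$ — we get $\varphi(\mathbf{0}, \sG(\Pi_N)) = \varphi(\mathbf{0}, \sG(\cP_{\phi(z)}))$, which yields the convergence in distribution in~\eqref{eq:mb1}. Convergence in expectation follows by combining this with the uniform integrability of $\{\varphi_N(z, \sG(\cP_{N\phi_N}))\}_N$, which is immediate from the moment bound~\eqref{funcmoment} with $\beta > 1$.

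Finally, for part (b) translation invariance of $\varphi$ gives
\[
\varphi_N(z + N^{-\frac1d}y, \sG(\cP_{N\phi_N})) = \varphi(\mathbf{0}, \sG(\Pi_N - y)) = \varphi(y, \sG(\Pi_N)),
\]
so it suffices to repeat the coupling argument centered at the fixed point $y$ instead of the origin: the intensity of $\Pi_N$ still converges in $L^1$ to $\phi(z)$ on every ball $B(y, K)$, $\varphi$ stabilizes at $y$ on $\sG(\cP_{\phi(z)})$, and by stationarity of the homogeneous process $\varphi(y, \sG(\cP_{\phi(z)})) \stackrel{D}{=} \varphi(\mathbf{0}, \sG(\cP_{\phi(z)}))$ with equal expectations, giving~\eqref{eq:mb2}. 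The main obstacle is the coupling step: establishing that the total variation distance vanishes requires carefully combining the uniform convergence $\phi_N \to \phi$ with the Lebesgue-point property of $\phi$ at $z$ (the density $\phi_N$ being $N$-dependent is exactly what is absent in \cite[Lemma 3.2]{py}), and one must also check that stabilization transfers from finite to locally finite perturbations so that the infinitely many points of $\Pi_N$ outside $B(\mathbf{0}, R)$ may be discarded; everything else is a direct adaptation of \cite[Lemma 3.2]{py}.
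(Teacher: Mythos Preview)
Your proposal is correct and follows essentially the same approach as the paper: rescale to a process $\Pi_N$ with intensity converging to $\phi(z)$, couple with the homogeneous process $\cP_{\phi(z)}$ on large balls, invoke stabilization, and upgrade convergence in distribution to convergence in expectation via the moment bound~\eqref{funcmoment}. The only cosmetic differences are that the paper realizes the coupling explicitly by thinning a rate-$1$ Poisson process on $\R^d\times[0,\infty)$ (and cites \cite[Lemma 3.1]{py} / \cite[Lemma 3.1]{penroseclt} for the agreement on balls) rather than appealing to an abstract total-variation bound, and for part (b) the paper recenters the coupling at $z+N^{-1/d}y$ directly rather than using your translation-invariance identity $\varphi_N(z+N^{-1/d}y,\sG(\cP_{N\phi_N}))=\varphi(y,\sG(\Pi_N))$; both routes are equivalent.
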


One aspect of the spatial independence of the Poisson process, which we will use in our proofs, is its Palm theory, which says, if conditioned to have points at particular locations, the distribution of the Poisson points elsewhere is unchanged. 

\begin{lem} [Palm theory for Poisson process {\cite[Theorem 1.6]{penrosebook}}] Let  $\lambda>0$ and $f$ be a density function (with respect to the Lebesgue measure) in $\R^d$ with support $\cK$. Suppose $s \geq 1$ is a positive integer and $h(\cY, \cX)$ is a bounded measurable function defined on all pairs of the form $(\cY, \cX)$, where $\cX$ is a finite subset of $\R^d$ and $\cY$ is a subset of $\cX$, satisfying $h(\cY, \cX)=0$, except when $\cY$ has $s$ elements. Then 
$$\E \left(\sum_{\cY \subseteq \cP_{\lambda f} } h(\cY, \cP_{\lambda f}) \right) = \frac{\lambda^s}{s!} \int_{\cK^s} \E \left(h(\{z_1, \ldots, z_s\}, \{z_1 \ldots, z_s\} \bigcup \cP_{\lambda f})  \right) \prod_{i=1}^s f(z_i) \mathrm dz_i$$
where the sum on the LHS is over all subsets $\cY$ of the random point set $\cP_{\lambda f}$.
\end{lem}

\subsection{Technical lemmas}

\label{sec:techlemmas}

In this section a few technical lemmas required for deriving the limit of the conditional variance in Theorem~\ref{TH:CLT_R1} are proved. Begin with a few definitions: For $A\subset \R^d$, denote by $|A|$ the Lebesgue measure of the set $A$.\footnote{The notation $|S|$ is also used to denote the cardinality of a finite set $S$, depending on the context.} A point $x\in \R^d$ is a {\it Lebesgue point} of $\phi$ if 
\begin{equation}
\lim_{\varepsilon\rightarrow 0}\frac{1}{|B(x, \varepsilon)|}\int_{B(x, \varepsilon)}|\phi(y)-\phi(x)|\mathrm dy=0,
\label{eq:phi_point}
\end{equation}
where $B(x, \varepsilon)$ is the Euclidean ball in $\R^d$ with center at $x$ and radius $\varepsilon$. Almost every point $x\in \R^d$ is a Lebesgue point of $\phi$ \cite[Theorem 7.7]{rudin}.


Let $\phi_N, \phi$ as in~\eqref{phi}, and $h: \R^d\times \R^d \rightarrow [0, 1]$ a symmetric and jointly measurable function, such that for almost every $x\in \R^d$, $h(x, \cdot )$ is measurable and $x$ a Lebesgue point of the function $\phi(\cdot)h(x, \cdot)$. Define 
\begin{align}\label{kappadefn}
\kappa_N(z)=\sum_{w\in \cP_{N\phi_N}} h(z, w) \bm 1\{(z, w)\in E(\sG(\cP_{N\phi_N}))\}.
\end{align}

\begin{lem}\label{lem:hzz} Let $\sG$ be a translation and scale invariant graph functional in $\R^d$ which satisfies the $\beta$-degree moment condition~\eqref{degmoment} for some $\beta>2$.  Then for $h: \R^d\times \R^d \rightarrow [0, 1]$  as above 
\begin{equation}\label{kappaz}
\lim_{N\rightarrow \infty}\E\kappa_N(z) = h(z, z) \E \Delta_0^\uparrow,
\end{equation}
if $z$ is a Lebesgue point of $\phi$ and $h(z, \cdot)\phi$, and where $\Delta_0^\uparrow$ is defined in \eqref{Delta0}. Moreover, as $N\rightarrow \infty$, 
\begin{equation}\label{hzz}
\frac{1}{N}\sum_{z\in \cP_{N\phi_N}}\kappa_N(z) \stackrel{L^2}\rightarrow \E \Delta_0^\uparrow \int_{\R^d} h(z, z)\phi(z)\mathrm d z.
\end{equation}
\end{lem}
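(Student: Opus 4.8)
The plan is to analyze $\kappa_N(z)$ by conditioning and using the local-limit technology from Lemma~\ref{poissonstabilize}. First I would recognize that $\kappa_N(z)$ is exactly $\varphi_N(z,\sG(\cP_{N\phi_N}))$ for the specific functional $\varphi(z,\sG(\cZ))=\sum_{w\in\cZ}h(z,w)\bm 1\{(z,w)\in E(\sG(\cZ^z))\}$, which is a weighted out-degree and hence is translation invariant (in the combined sense that the label-weight $h$ travels with the point). The key point is that as $N\to\infty$, the rescaled process $N^{1/d}(\cP_{N\phi_N}-z)$ localizes to a homogeneous Poisson process $\cP_{\phi(z)}$, and since the weights $h(z,w)$ converge: for $w$ near $z$ (at Euclidean distance $O(N^{-1/d})$, which is where the edges land because $\sG$ is scale invariant), $h(z,w)\to h(z,z)$ for a.e.\ $z$ by the Lebesgue-point hypothesis on $h(z,\cdot)\phi$. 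So the limiting functional is $h(z,z)$ times the out-degree of $0$ in $\sG(\cP_1)$, which has mean $\E\Delta_0^\uparrow$. The degree moment condition for $\beta>2$ (together with $0\le h\le 1$, so $\varphi_N\le d^\uparrow\le d$) gives uniform integrability, upgrading distributional convergence to convergence of expectations. This establishes~\eqref{kappaz}.

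For~\eqref{hzz} I would compute the $L^2$ limit directly. Write $A_N=\frac1N\sum_{z\in\cP_{N\phi_N}}\kappa_N(z)$. By the Mecke/Campbell formula for Poisson processes, $\E A_N=\frac1N\int N\phi_N(z)\,\E[\kappa_N(z)\mid z\in\cP_{N\phi_N}]\,\mathrm dz=\int\phi_N(z)\,\E_z[\kappa_N(z)]\,\mathrm dz$, where $\E_z$ adds the point $z$ to the process. Since $\phi_N\to\phi$ uniformly on its support and $\E_z[\kappa_N(z)]\to h(z,z)\E\Delta_0^\uparrow$ pointwise a.e.\ by the same argument as in~\eqref{kappaz} (now the configuration is $\cP_{N\phi_N}^z$, which still localizes correctly), dominated convergence — using the uniform degree moment bound and compactness/integrability of $\phi$ — yields $\E A_N\to\E\Delta_0^\uparrow\int h(z,z)\phi(z)\,\mathrm dz$. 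The harder part is showing $\Var(A_N)\to 0$. For this I would expand $\E A_N^2=\frac1{N^2}\E\big[\sum_{z,z'}\kappa_N(z)\kappa_N(z')\big]$, split into the diagonal term $z=z'$ (which is $\frac1{N^2}\E\sum_z\kappa_N(z)^2=O(1/N)$ since $\kappa_N(z)^2\le d(z,\sG)^2$ and the second moment is uniformly bounded by the $\beta>2$ moment condition, giving $\frac1N\int\phi_N\E_z[\kappa_N(z)^2]\mathrm dz=O(1/N)\to 0$) and the off-diagonal term over distinct pairs, which by the two-point Mecke formula equals $\int\int\phi_N(z)\phi_N(z')\,\E_{z,z'}[\kappa_N(z)\kappa_N(z')]\,\mathrm dz\,\mathrm dz'$. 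The expectation of this, minus $(\E A_N)^2$, needs to vanish; I expect this to follow because for $z\ne z'$ the rescaled neighborhoods of $z$ and $z'$ are eventually disjoint (they are $O(N^{-1/d})$-local while $\|z-z'\|$ is fixed), so $\kappa_N(z)$ and $\kappa_N(z')$ become asymptotically independent and $\E_{z,z'}[\kappa_N(z)\kappa_N(z')]\to h(z,z)h(z',z')(\E\Delta_0^\uparrow)^2$, while the measure of pairs with $\|z-z'\|=O(N^{-1/d})$ tends to zero.

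The main obstacle will be making this last asymptotic-independence / dominated-convergence step rigorous: one must control $\E_{z,z'}[\kappa_N(z)\kappa_N(z')]$ uniformly to justify the interchange of limit and double integral, which requires a Cauchy--Schwarz bound $\E_{z,z'}[\kappa_N(z)\kappa_N(z')]\le\big(\E_{z,z'}\kappa_N(z)^2\big)^{1/2}\big(\E_{z,z'}\kappa_N(z')^2\big)^{1/2}$ combined with the degree moment condition~\eqref{degmoment} in the form that allows an extra deterministic point $\cA$ — here $\cA=\{z'\}$ or $\{z\}$ — to be present, which is exactly why~\eqref{degmoment} quantifies over all finite $\cA\subset\R^d$. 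Given that uniform bound, dominated convergence applies (the dominating function is $C\phi_N(z)\phi_N(z')$, integrable by boundedness of the supports or integrability of $\phi$), and the pointwise limit of the integrand is $h(z,z)h(z',z')(\E\Delta_0^\uparrow)^2\phi(z)\phi(z')$ for a.e.\ $(z,z')$, whence $\E A_N^2\to\big(\E\Delta_0^\uparrow\int h(z,z)\phi(z)\mathrm dz\big)^2$. Together with the convergence of $\E A_N$ to the same limit's square root, this gives $\Var(A_N)\to 0$ and hence $A_N\stackrel{L^2}\to\E\Delta_0^\uparrow\int h(z,z)\phi(z)\mathrm dz$, completing the proof.
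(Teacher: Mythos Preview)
Your proposal is correct and follows essentially the same route as the paper: both argue that the weights $h(z,w)$ can be replaced by $h(z,z)$ because the edges of $\sG$ land at distance $O(N^{-1/d})$ (the paper makes this precise via an explicit $K$-ball truncation plus the Lebesgue-point hypothesis, you phrase it as localization), after which the limit is $h(z,z)$ times the out-degree, to which Lemma~\ref{poissonstabilize} applies; for the $L^2$ part, both split $\E A_N^2$ into a diagonal $O(1/N)$ term and an off-diagonal term handled by asymptotic independence of $\kappa_N(z_1),\kappa_N(z_2)$ plus dominated convergence, with the uniform bound coming from Cauchy--Schwarz and the degree moment condition (the paper likewise invokes the supremum over finite $\cA$ in~\eqref{degmoment} at this step). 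One small caution: your opening claim that $\kappa_N$ is itself a translation-invariant functional is not literally true (the weights $h(z,w)$ depend on absolute positions), so Lemma~\ref{poissonstabilize} does not apply directly to it---the paper, and your own subsequent argument, handle this by first stripping off the weights via the Lebesgue-point step and only then invoking the lemma on $d^\uparrow$.
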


The lemma is proved below in Section~\ref{PFHZZ}. The same proof shows that 
\begin{equation}\label{hzzupdown}
\frac{1}{N}\sum_{z\in \cP_{N\phi_N}}\kappa_N^+(z) \stackrel{L^2}\rightarrow \E \Delta_0^+ \int_{\R^d} h(z, z)\phi(z)\mathrm d z,
\end{equation}
where $\kappa^+_N(z)=\sum_{w\in \cP_{N\phi_N}} h(z, w) \bm 1\{(z, w), (z, w)\in E(\sG(\cP_{N\phi_N}))\}$ and $\Delta_0^+$ is as defined in~\eqref{Delta0}.

Next, define $\omega^\uparrow, \omega^\downarrow: \R^d\times \R^d\times \R^d\rightarrow [0, 1]$ as follows:
\begin{align}\label{rxyzup}
\omega^\uparrow(x, y, z)=\frac{pq^2 f(x)g(y)g(z)}{(pf(x)+qg(x))(pf(y)+qg(y))(pf(z)+qg(z))},
\end{align}
and
\begin{align}\label{rxyzdown}
\omega^\downarrow(x, y, z)=\frac{p^2q g(x)f(y)f(z)}{(pf(x)+qg(x))(pf(y)+qg(y))(pf(z)+qg(z))}.
\end{align}
Let $$\tau_N^\uparrow(z):=\frac{1}{2}\sum_{w_1\ne w_2\in \cP_{N\phi_N}} \omega^\uparrow(z, w_1, w_2) \bm 1\{(z, w_1), (z, w_2) \in E(\sG(\cP_{N\phi_N}))\},$$
and
$$\tau_N^\downarrow(z):=\frac{1}{2} \sum_{w_1\ne w_2\in \cP_{N\phi_N}} \omega^\downarrow(z, w_1, w_2) \bm 1\{(w_1, z), (w_2, z) \in E(\sG(\cP_{N\phi_N}))\}.$$
where $\phi_N$ is defined in~\eqref{phi}.


\begin{lem}\label{lem:hzzz} Let $\sG$ be an translation and scale invariant graph functional in $\R^d$ which satisfies the $\beta$-degree moment condition~\eqref{degmoment} for some $\beta>4$. Then 
\begin{eqnarray}\label{hzzz}
\frac{1}{N}\sum_{z\in \cP_{N\phi_N}} \tau_N^\uparrow(h, z) &\stackrel{L^2}\rightarrow& \E T_2^\uparrow \int_{\R^d} \omega^\uparrow(z, z, z)\phi (z)\mathrm d z,
\end{eqnarray}
for $\omega^\uparrow$ as in~\eqref{rxyzup}. The same result holds for $\tau_N^\downarrow(h, z)$, with $\E T_2^\uparrow $ replaced by $\E T_2^\downarrow $, and $\omega^\uparrow$ replaced by $\omega^\downarrow$.
\end{lem}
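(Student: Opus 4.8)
The plan is to imitate the proof of Lemma~\ref{lem:hzz}, replacing the out-degree functional by the weighted outward-2-star functional and raising the moment requirement from $\beta>2$ to $\beta>4$, since an outward 2-star at a vertex is, up to the bounded weights, controlled by the square of its out-degree. Note first that $0\le\omega^\uparrow\le1$ (because $\phi(x)\ge qg(x)$ and $\phi(y)\ge pf(y)$), so $\tau_N^\uparrow(z)\le d^\uparrow(z,\sG(\cP_{N\phi_N}))^2$; hence the $\beta$-degree moment condition~\eqref{degmoment} with $\beta>4$ gives a bound on $\E[\tau_N^\uparrow(z)^2]$ that is uniform in $N$ and in any finite set of adjoined points, and in particular uniform integrability of $\tau_N^\uparrow(z)^2$ wherever it is needed.

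\emph{Pointwise first moment.} Fix a Lebesgue point $z$ of $\phi$ and of $f/\phi$, and consider the translation- and scale-invariant functional $\varphi(z,\sG(\cZ)):=T_2^\uparrow(z,\sG(\cZ))$, the number of outward 2-stars at $z$ in $\sG(\cZ\cup\{z\})$. Since $\sG$ stabilizes $\cP_\lambda$ for all $\lambda\in(0,\infty)$, $\varphi$ is a.s.\ stabilizing with limit $T_2^\uparrow(0,\sG(\cP_{\phi(z)}))$, which by scale invariance has the same law as $T_2^\uparrow=T_2^\uparrow(0,\sG(\cP_1))$ from~\eqref{2star0}. As $\varphi$ obeys the moment bound~\eqref{funcmoment} with exponent $2<\beta/2$, Lemma~\ref{poissonstabilize}(a) gives $\E[T_2^\uparrow(z,\sG(\cP_{N\phi_N}))]\to\E T_2^\uparrow$. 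To reinstate the weights, write $\tau_N^\uparrow(z)=\omega^\uparrow(z,z,z)\,T_2^\uparrow(z,\sG(\cP_{N\phi_N}))+E_N(z)$, where $E_N(z)$ collects the terms carrying $\omega^\uparrow(z,w_1,w_2)-\omega^\uparrow(z,z,z)$; applying the Mecke equation to $\E|E_N(z)|$, rescaling the neighbours $w_i=z+N^{-1/d}u_i$, using that the joint two-edge probability decays in $\max_i\|z-w_i\|$ by stabilization (and integrates to a finite constant by the $\beta>2$ moment bound), together with $\phi_N(z+N^{-1/d}u_i)\to\phi(z)$ and the Lebesgue-point property of $f/\phi$ at $z$, shows $\E|E_N(z)|\to0$. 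Hence $\E\tau_N^\uparrow(z)\to\omega^\uparrow(z,z,z)\,\E T_2^\uparrow$ for a.e.\ $z$.

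\emph{First and second moments of the average.} By the Mecke equation, $\frac1N\E\sum_{z\in\cP_{N\phi_N}}\tau_N^\uparrow(z)=\int_{\R^d}\phi_N(z)\,\E[\tilde\tau_N^\uparrow(z)]\,\mathrm dz$, where $\tilde\tau_N^\uparrow(z)$ is the functional with $z$ adjoined to the process; the integrand is uniformly bounded (by $\beta>2$) and converges pointwise by the previous step, so dominated convergence yields the limit $\E T_2^\uparrow\int\omega^\uparrow(z,z,z)\phi(z)\mathrm dz$. It then remains to prove $\Var\big(\frac1N\sum_z\tau_N^\uparrow(z)\big)\to0$. Expanding the second moment with the Mecke equation, the diagonal term is $O(1/N)$, and in the remaining double integral one splits at $\|z-z'\|=LN^{-1/d}$: on the near-diagonal part the $z'$-region has volume $O(1/N)$ and Cauchy--Schwarz together with $\tilde\tau_N^\uparrow\le(d^\uparrow)^2$ and~\eqref{degmoment} ($\beta>4$) bounds the integrand, so this part is $O(1/N)\to0$; on the far part a stabilization-radius coupling shows $\tilde\tau_N^\uparrow(z)$ and $\tilde\tau_N^\uparrow(z')$ depend on disjoint parts of $\cP_{N\phi_N}$ off an event of small probability (controlled by the stabilization tails and, via Cauchy--Schwarz, the $\beta>4$ bound), so $\E[\tilde\tau_N^\uparrow(z)\tilde\tau_N^\uparrow(z')]-\E[\tilde\tau_N^\uparrow(z)]\,\E[\tilde\tau_N^\uparrow(z')]\to0$ and dominated convergence makes the far part converge to $\big(\E T_2^\uparrow\int\omega^\uparrow(z,z,z)\phi(z)\mathrm dz\big)^2$. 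Combining, $\Var\to0$, which gives~\eqref{hzzz}; the downward statement is obtained verbatim with $d^\downarrow$, $T_2^\downarrow$, $\omega^\downarrow$ replacing $d^\uparrow$, $T_2^\uparrow$, $\omega^\uparrow$.

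\emph{Main obstacle.} The crux is the variance estimate, in particular the decoupling of the 2-star counts at $z$ and $z'$ when $\|z-z'\|\gg N^{-1/d}$: here both hypotheses---stabilization of $\sG$ and the $\beta>4$ moment bound---are genuinely used, the latter forced by the near-diagonal Cauchy--Schwarz step where a 2-star behaves like a squared degree. Carrying out the weight-replacement estimate $\E|E_N(z)|\to0$ for merely measurable (not continuous) $f,g$ via Lebesgue points is a secondary technicality, already handled in the proof of Lemma~\ref{lem:hzz}.
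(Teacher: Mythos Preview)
Your proposal is correct and follows essentially the same approach as the paper: replace the weights $\omega^\uparrow(z,w_1,w_2)$ by $\omega^\uparrow(z,z,z)$ via a Lebesgue-point/stabilization argument to obtain $\E\tau_N^\uparrow(z)\to\omega^\uparrow(z,z,z)\,\E T_2^\uparrow$, then integrate by Palm theory and dominated convergence for the first moment, and mimic the $L^2$ step of Lemma~\ref{lem:hzz} for the second moment. The paper carries out the weight-replacement slightly more explicitly---splitting the $(w_1,w_2)$-sum into the part with both neighbours in $B(z,KN^{-1/d})$ (handled by Lebesgue differentiation on $\omega^\uparrow(z,\cdot,\cdot)\phi(\cdot)\phi(\cdot)$) and the remainder bounded by $\E T_{2,K}^\uparrow(z,\sG(\cP_{N\phi_N}))$, which vanishes as $N\to\infty$ then $K\to\infty$ via Lemma~\ref{poissonstabilize}---whereas you phrase the same step as a rescaling/Mecke computation; and for the variance the paper simply points to the corresponding argument in Lemma~\ref{lem:hzz} (pointwise factorization $\E\tau_N^\uparrow(z_1)\tau_N^\uparrow(z_2)\to\omega^\uparrow(z_1,z_1,z_1)\omega^\uparrow(z_2,z_2,z_2)(\E T_2^\uparrow)^2$ for a.e.\ $z_1\ne z_2$ by a coupling) rather than writing out a near/far split, but these are the same mechanism.
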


The proof of Lemma~\ref{lem:hzz} is given in Section~\ref{PFHZZ}. The proof of Lemma~\ref{lem:hzzz} is described in Section~\ref{pfwzzz}.


\subsubsection{Proof of Lemma \ref{lem:hzz}}
\label{PFHZZ}

The proof of Lemma \ref{lem:hzz} is organized as follows: Begin with the proof of~\eqref{kappaz} below. This together with uniform integrability, which follows from the degree moment condition~\eqref{degmoment}, implies the convergence in expectation in \eqref{hzz}. Following this the convergence of (\ref{hzz}) in $L^2$ is shown, by computing the limit of the second moment.\\ 

\noindent \textbf{\textit{Proof of~\eqref{kappaz} and Convergence in Expectation}}: Fix $K>0$. By the Palm theory of Poisson processes~\cite[Theorem 1.6]{penrosebook}, 
\begin{eqnarray}\label{hzzI}
& & \E \sum_{w\in \cP_{N\phi_N}}|h(z, w)-h(z, z)|\bm1\{w\in B(z, KN^{-\frac{1}{d}})\}\nonumber\\
&=&N\int_{B(z, KN^{-\frac{1}{d}})} |h(z, w)-h(z, z)|\phi_N(w)\mathrm dw,
\end{eqnarray}
which tends to zero as $N\rightarrow \infty$, if $z$ is a Lebesgue point of both $\phi$ and $h(z, \cdot)\phi(\cdot)$ (using $\phi_N \rightarrow \phi$ uniformly and $|h| \leq 1$). Since $h$ has range $[0, 1]$, this implies that
\begin{align}\label{explimitI}
 & \limsup_{N\rightarrow \infty}\E \sum_{w\in \cP_{N\phi_N}}|h(z, w)-h(z, z)|\bm1\{(z, w)\in \sG(\cP_{N\phi_N})\} \nonumber\\
\leq & \limsup_{N\rightarrow \infty} \E d_K^\uparrow(z, \sG(\cP_{N\phi_N})),
\end{align}
where $d_K^\uparrow(z, \sG(\cP_{N\phi_N})$ is the number of edges $(z, w)\in E(\sG(\cP_{N\phi_N}))$ incident on $z$ such that $w\notin B(z, KN^{-\frac{1}{d}})$.

Since $\sG$ is stabilizes on $\cP_{\lambda}$ for all $\lambda\in (0, \infty)$ (as in Definition~\ref{stabilization}), the functions $d^\uparrow$ and $d_K^\uparrow$ defined on $\sG$ stabilizes on $\cP_\lambda$, for all $\lambda\in (0, \infty)$ (as in Definition~\ref{defn:stabilize}). Therefore, by Lemma \ref{poissonstabilize}, 
\begin{equation}\label{explimitII}
\limsup_{K\rightarrow \infty} \limsup_{N\rightarrow \infty} \E d_K^\uparrow(z, \sG(\cP_{N\phi_N})=\limsup_{K\rightarrow \infty}  \E d_K^\uparrow(0, \sG(\cP_{\phi(z)})=0.
\end{equation}
Now, recall the definition of $\kappa_N(\cdot)$ from~\eqref{kappadefn}. Then from (\ref{explimitI}) and (\ref{explimitII}), it follows that for every $z$ which is a Lebesgue point of $\phi$ and $h(z, \cdot)\phi$,  
\begin{align}
\lim_{N\rightarrow \infty}\E \kappa_N(z)=&h(z, z)\lim_{N\rightarrow \infty} \E d^\uparrow(z, \sG(\cP_{N\phi_N}))\nonumber\\
=&h(z, z)\E d^\uparrow(0, \sG(\cP_{\phi(z)})) \tag*{(by Lemma~\ref{poissonstabilize})}\nonumber\\
\label{explimitIII}=&h(z, z)\E d^\uparrow(0, \sG(\cP_{1})),
\end{align}
where the last equality uses $\sG$ is scale invariant. Therefore, as $N\rightarrow \infty$
\begin{align}\label{exp1limit}
\E\left(\frac{1}{N}\sum_{z \in \cP_{N\phi_N}} \kappa_N(z)\right)=\int \phi_N(z) \E\kappa_N(z) \rightarrow&\E \Delta_0^\uparrow \int \phi(z) h(z, z)\mathrm d z\nonumber\\
:=&\mu(\sG, h),
\end{align}
by  (\ref{explimitIII}) and the Dominated Convergence Theorem. \\

\noindent \textbf{\textit{Proof of~\eqref{hzz} (Convergence in $L^2$)}}: By the Palm theory of Poisson processes,  
\begin{align}\label{exp2}
\E& \left(\frac{1}{N} \sum_{z\in \cP_{N\phi_N}} \kappa_N(z)\right)^2 \nonumber \\ 
& = \frac{1}{N} \int \phi_N(z)\E\kappa_N^2(z)\mathrm d z+\int \phi_N(z_1)\phi_N(z_2) \E \kappa_N(z_1)\kappa_N(z_2) \mathrm d z_1\mathrm d z_2.
\end{align}

Now, since $h$ is bounded in $[0, 1]$, $\E\kappa_N(z)^2\leq \E \left(d^\uparrow(z, \cP_{N\phi_N})\right)^2\rightarrow \E\left(d^\uparrow(z,  \cP_{\phi(z)})\right)^2$, as (\ref{degmoment}) holds for $\beta> 2$. Thus,  $\int \phi_N(z)\E\kappa_N^2(z)\mathrm d z= O(1)$, and the first term in (\ref{exp2}) goes to 0 as $N\rightarrow \infty$. Therefore, it suffices to consider the second term. Fix $K>0$ and let $z_1$ and $z_2$ be Lebesgue points of $\phi$. Define $A(z_1, z_2):=B(z_1, KN^{-\frac{1}{d}})\times B(z_2, KN^{-\frac{1}{d}}) \subset \R^d \times \R^d$. Then by triangle inequality, for almost all $z_1, z_2$
\begin{eqnarray}\label{exp2t1}
N^2\int_{A(z_1, z_2)} |\phi(w_1)\phi(w_2) - \phi(z_1)\phi(z_2)|\mathrm dw_1\mathrm dw_2\rightarrow 0,
\end{eqnarray}
as $N\rightarrow \infty$. 

Similarly, if $z_1, z_2$ are Lebesgue points of $h(z_1, \cdot)\phi(\cdot)$ and $h(z_2, \cdot)\phi(\cdot)$, respectively, then as $N\rightarrow \infty$
\begin{eqnarray}
& &N^2\int_{A(z_1, z_2)} |h(z_1, w_1)h(z_2, w_2)\phi(w_1)\phi(w_2) - h(z_1, z_1)h(z_2, z_2) \phi(z_1)\phi(z_2)|\mathrm dw_1\mathrm dw_2\nonumber\\
&\leq & N^2\int_{A(z_1, z_2)} \phi(w_2) h(z_2, w_2) |h(z_1, w_1)\phi(w_1) - h(z_1, z_1) \phi(z_1)|\mathrm dw_1\mathrm dw_2\nonumber\\
&+& N^2\int_{A(z_1, z_2)} h(z_1, z_1)  \phi(z_1) |h(z_2, w_2) \phi(w_2) - h(z_2, z_2) \phi(z_2)|\mathrm dw_1\mathrm dw_2\nonumber\\
&\rightarrow & 0.
\label{exp2t2}
\end{eqnarray}

Now, let $S_{w_1, w_2}=\{w_1, w_2\in \cP_{N\phi_N}: (z_1, w_1), (z_2, w_2)\in E(\sG(\cP_{N\phi_N})) \}$. Then, since $h$ has range $[0, 1]$, (\ref{exp2t1}) and (\ref{exp2t2}) gives 
\begin{eqnarray}\label{exp21}
 && \limsup_{N\rightarrow \infty}\E \sum_{S_{w_1, w_2}}|h(z_1, w_1)h(z_2, w_2)-h(z_1, z_1)h(z_2, z_2)| \nonumber\\
&\lesssim & \limsup_{N\rightarrow \infty} T_{N, K}(z_1)+  \limsup_{N\rightarrow \infty} T_{N, K}(z_2).
\end{eqnarray}
where  
\begin{align}\label{exp211}
\limsup_{N\rightarrow \infty} T_{N, K}(z_1)=&\lim_{N\rightarrow \infty}\E d_K^\uparrow(z_1, \sG(\cP_{N\phi_N})d^\uparrow(z_2, \sG(\cP_{N\phi_N})\nonumber\\
\leq & \left(\E\{d_K^\uparrow(0, \sG(\cP_{\phi(z_1)})\}^2 \E \{d^\uparrow(0, \sG(\cP_{\phi(z_2)}\}^2 \right)^{\frac{1}{2}},
\end{align}
by Lemma~\ref{poissonstabilize}, since (\ref{degmoment}) holds for $\beta> 2$. 

Similarly,
\begin{align}\label{exp212}
\limsup_{N\rightarrow \infty} T_{N, K}(z_2)=&\lim_{N\rightarrow \infty}\E d^\uparrow(z_1, \sG(\cP_{N\phi_N})d_K^\uparrow(z_2, \sG(\cP_{N\phi_N}) \nonumber\\
\leq & \left(\E\{d^\uparrow(0, \sG(\cP_{\phi(z_1)})\}^2 \E \{d_K^\uparrow(0, \sG(\cP_{\phi(z_2)}\}^2 \right)^{\frac{1}{2}}.
\end{align}
Combining~\eqref{exp211} and~\eqref{exp212} and taking $K\rightarrow \infty$ it follows that the LHS of~(\ref{exp21}) goes to zero. Therefore, 
\begin{align}\label{exp22}
 & \lim_{N\rightarrow \infty} \E\kappa_N(z_1)\kappa_N(z_2)\nonumber\\
=&\lim_{N\rightarrow \infty} \sum_{w_1, w_2\in \cP_{N\phi_N}} h(z_1, w_1)h(z_2, w_2) \bm 1\{(z_1, w_1), (z_2, w_2)\in E(\sG(\cP_{N\phi_N}))\nonumber\\
=&h(z_1, z_1)h(z_2, z_2) \lim_{N\rightarrow \infty}\E d^\uparrow(z_1, \sG(\cP_{N\phi_N}) d^\uparrow(z_2, \sG(\cP_{N\phi_N})
\end{align}
for $z_1, z_2$ Lebesgue points of $\phi$ and $h(z_1, \cdot)\phi$ and $h(z_2, \cdot)\phi$, respectively. Now, by a modification of the coupling argument used in Lemma \ref{poissonstabilize}, similar to the proof of \cite[Lemma 3.1]{py}, it can be shown that 
\begin{align}\label{exp23}
\lim_{N\rightarrow \infty}\E d^\uparrow(z_1, \sG(\cP_{N\phi_N}) d^\uparrow(z_2, \sG(\cP_{N\phi_N})=&\E d^\uparrow(0, \sG(\cP_{\phi(z_1)}) \E d^\uparrow(0, \sG(\cP_{\phi(z_2)})\nonumber\\
=& \{ \E d^\uparrow(0, \sG(\cP_1)\}^2,
\end{align}
where the last step uses $\sG$ is scale invariant. Combining (\ref{exp23}) with (\ref{exp22}) gives  
$$\lim_{N\rightarrow \infty} \E\kappa_N(z_1)\kappa_N(z_2)=\{ \E d ^\uparrow(0, \sG(\cP_1)\}^2 h(z_1, z_1)h(z_2, z_2). $$ Thus, taking limit as $N\rightarrow\infty$ in (\ref{exp2}) gives
\begin{eqnarray}\label{exp2limit}
\lim_{N\rightarrow}\E\left(\frac{1}{N} \sum_{z\in \cP_{N\phi_N}} \kappa_N(z)\right)^2 &=& \mu(\sG, h)^2,
\end{eqnarray}
where $\mu(\sG, h)$ is defined in (\ref{exp1limit}). Combining (\ref{exp1limit}) and (\ref{exp2limit}) gives the $L^2$ convergence in (\ref{hzz}). \\



\noindent \textbf{\textit{Proof of~\eqref{hzzupdown}}}: It follows from~\eqref{hzzI},~\eqref{explimitI}, and~\eqref{explimitII} that if $z$ is a Lebesgue point of both $\phi$ and $h(z, \cdot)\phi(\cdot)$, then
\begin{align*}
\E \sum_{w\in \cP_{N\phi_N}}|h(z, w)-h(z, z)|\bm1\{(z, w), (w, z)\in \sG(\cP_{N\phi_N})\}\rightarrow 0.
\end{align*}
Lemma~\ref{poissonstabilize} then implies that $\E\kappa_N^+(z)\rightarrow \E\Delta^+_0h(z,z)$, where $\Delta^+_0$ is as defined in~\eqref{Delta0}. This shows convergence in expectation. The $L^2$ convergence is  similar to the proof of~\eqref{hzz}.

\subsubsection{Proof of Lemma \ref{lem:hzzz}}
\label{pfwzzz}

The proof is very similar to Lemma \ref{lem:hzz}. Without loss of generality consider the function $\omega^\uparrow$   defined in~\eqref{rxyzup} (the proof for $\omega^\downarrow$ is identical).

Fix $K>0$. Define $B^2(z):=B(z, KN^{-\frac{1}{d}})\times B(z, KN^{-\frac{1}{d}})$, then by Palm theory, 
\begin{eqnarray}
\label{exphzzz}
&&\E \sum_{w_1\ne w_2\in \cP_{N\phi_N}} \left|\omega^\uparrow(z, w_1, w_2)-\omega^\uparrow(z, z, z)\right| \bm 1\{w_1, w_2 \in  B(z, KN^{-\frac{1}{d}})\}\nonumber\\
&=& N^2 \int_{B^2(z)} \left|\omega^\uparrow(z, w_1, w_2)-\omega^\uparrow(z, z, z)\right|\phi(w_1) \phi(w_2)\mathrm dw_1\mathrm dw_2 +o(1), 
\end{eqnarray}
since $\phi_N\rightarrow\phi$ uniformly.

Note that $$\omega^\uparrow(z, w_1, w_2)\phi(w_1)\phi(w_2)=\frac{pq^2f(z)g(w_1)g(w_2)}{pf(z)+qg(z)}.$$ Therefore, if $z$ is a Lebesgue point of both $f$ and $g$, then $(z, z)$ is the Lebesgue point of $\omega^{\uparrow}(z, \cdot, \cdot) \phi(\cdot) \phi(\cdot)$, and as $N\rightarrow \infty$, 
\begin{align}\label{exphzzz1}
& N^2\int_{B^2(z)} |\omega^\uparrow(z, w_1, w_2)\phi(w_1)\phi(w_2)-\omega^\uparrow(z, z, z)\phi^2(z)|\mathrm dw_1\mathrm dw_2\rightarrow 0.
\end{align}
Moreover, 
\begin{eqnarray}
\phi(z)\lim_{N\rightarrow \infty} N^2\int_{B^2(z)}|\phi(w_2)-\phi(z)|\mathrm dw_1\mathrm dw_2=0.
\label{phi21}
\end{eqnarray}
and by the Lebesgue differentiation theorem
\begin{eqnarray}
\lim_{N\rightarrow \infty}N^2\int_{B(z, KN^{-\frac{1}{d}})} |\phi(w_1)-\phi(z)|\mathrm dy_1\int_{B(z, KN^{-\frac{1}{d}})} \phi(w_2)\mathrm dw_2=0.
\label{phi22}
\end{eqnarray}
Combining (\ref{phi21}) and (\ref{phi22}) gives
\begin{equation}
\lim_{N\rightarrow \infty} N^2 \int_{B^2(z)} \omega^\uparrow(z, z, z)|\phi_N(w_1)\phi(w_2)-\phi^2(z)|\mathrm dw_1\mathrm dw_2\rightarrow 0.
\label{exphzzz2}
\end{equation}
The triangle inequality combined with (\ref{exphzzz1}) and (\ref{exphzzz2}) implies that the RHS of (\ref{exphzzz}) goes to 0 as $N\rightarrow \infty$. This implies that
\begin{align}\label{hzzzI}
\E\sum_{w_1\ne w_2\in \cP_{N\phi_N} }& \left|\omega^\uparrow(z, w_1, w_2)-\omega^\uparrow(z, z, z)\right|\bm 1\{(z, w_1), (z, w_1) \in E(\sG(\cP_{N\phi_N}))\} \nonumber\\
& \leq  2 \E T_{2, K}^\uparrow(z, \sG(\cP_{N\phi_N}))+o(1),
\end{align}
where 
\begin{align*}
T_{2, K}^\uparrow(& z, \sG(\cP_{N\phi_N}))\\
=&\sum_{w\in \cP_{N\phi_N}}\bm  1\{(z, w_1), (z, w_2)\in E(\sG(\cP_{N\phi_N}))\} \bm 1\{w_1\text{ or } w_2 \notin B(z, KN^{-\frac{1}{d}})\}.
\end{align*} 
Now, since $\sG$ stabilizes on $\cP_{\lambda}$ for all $\lambda\in (0, \infty)$, the function $T_{2}^\uparrow$ and hence $T_{2, K}^\uparrow$ stabilizes on $\cP_\lambda$, for all $\lambda\in (0, \infty)$. Moreover, $T_{2, K}^\uparrow(z, \sG(\cP_{N\phi_N}))$ satisfies the bounded moment condition (\ref{funcmoment}) for $\beta > 1$, since $d^\uparrow(z, \sG(\cP_{N\phi_N})$ satisfies (\ref{degmoment}) for some $\beta> 2$. Therefore, by Lemma \ref{poissonstabilize}, 
\begin{equation}
\limsup_{K\rightarrow \infty} \limsup_{N\rightarrow \infty} \E T_{2, K}^\uparrow(z, \sG(\cP_{N\phi_N})=0.
\label{hzzzII}
\end{equation}

From (\ref{hzzzI}) and (\ref{hzzzII}) it follows that for every $z$ which is a Lebesgue point of both $f$ and $g$,
\begin{eqnarray}
\lim_{N\rightarrow \infty}\E \tau_N^\uparrow(z)&=&\omega^\uparrow(z, z, z)\lim_{N\rightarrow \infty} \E 2 T_2^\uparrow(z, \sG(\cP_{N\phi_N}))\nonumber\\
&=&2\omega^\uparrow(z, z, z)\E T_2^\uparrow(0, \sG(\cP_{\phi(z)}))\label{hzzzIIIa}\\
&=&2 \omega^\uparrow(z, z, z)\E T_2^\uparrow,
\label{hzzzIIIb}
\end{eqnarray}
where (\ref{hzzzIIIa}) uses Lemma \ref{poissonstabilize} and (\ref{hzzzIIIb}) uses $\sG$ is scale invariant. Therefore, by the  Dominated Convergence Theorem,
\begin{eqnarray}
\lim_{N\rightarrow \infty} \E\left(\frac{1}{N}\sum_{z\in \cP_{N\phi_N} } \tau_N^\uparrow(z)\right)&=&\lim_{N\rightarrow \infty} \int \phi_N(z) \E\tau_N^\uparrow(z)\nonumber \\
&\rightarrow&\E T_2^\uparrow \int \phi(z) \omega^\uparrow(z, z, z)\mathrm d z.
\label{hzzzexp}
\end{eqnarray}

Now, as in Lemma~\ref{lem:hzz}, it can be shown that the second moment of the quantity $\frac{1}{N}\sum_{z\in \cP_{N\phi_N} } \tau_N^\uparrow(z)$ converges to $\left(\E T_2^\uparrow \int \phi(z) \omega^\uparrow(z, z, z)\mathrm d z\right)^2$. This proves the $L^2$ convergence of (\ref{hzzz}).

%

\subsection{Proof of Proposition~\ref{CONSISTENT}}

\label{sec:pfconsistency}

Recall from \eqref{Tpoisson} the definition of $T(\sG(\cZ_N'))$. Define 
$$h_N(x, y)=\frac{N_1N_2 f(x)g(y)}{(N_1f(x)+N_2g(x))(N_1f(y)+N_2g(y))}.$$ 
Note that $h_N(x, y)\rightarrow h(x, y)=\frac{pq f(x)g(y)}{(pf(x)+qg(x))(pf(y)+qg(y))}$, 
uniformly in $x, y\in \R^d$ as $N\rightarrow \infty$. Then, with $\phi_N$ and $\phi$ as in~\eqref{phi},  
\begin{align}
\frac{1}{N}\E(T(\sG(\cZ_N'))|\cF)&=\frac{1}{N}\sum_{1\leq i\ne j\leq L_{N}}h_{N}(Z_i, Z_j)\bm1\{(Z_i, Z_j)\in E(\sG(\cZ_N'))\} \tag*{(by \eqref{pooledlabelalternative})}\nonumber\\
&=\frac{1}{N}\sum_{1\leq i\ne j\leq L_{N}}h(Z_i, Z_j)\bm1\{(Z_i, Z_j)\in E(\sG(\cZ_N'))\}+o(1)\nonumber\\
&\stackrel{D}=\frac{1}{N}\sum_{z, w\in \cP_{N\phi_N}}h(z, w)\bm1\{(z, w)\in E(\sG(\cZ_N'))\}+o(1)\nonumber\\
\label{exp} &\stackrel{L^2}\rightarrow\E \Delta_0^\uparrow \int h(z,z)\phi(z)\mathrm d z=\frac{\E \Delta_0^\uparrow}{2} (1-\delta(f, g, p)),
\end{align}
where the last step uses Lemma~\ref{hzz} (recall that $\Delta_0^\uparrow=d(0, \sG(\cP_1))$, and $\delta(f, g, p))$ is defined in~\eqref{hpdis}). 

The limit in~\eqref{exp} shows convergence in expectation. To show convergence in probability, compute $$\Var(T(\sG(\cZ_N')))=\E(\Var(T(\sG(\cZ_N'))|\cF))+\Var(\E(T(\sG(\cZ_N'))|\cF)).$$ From~\eqref{exp} it follows that $\Var(\frac{1}{N}\E(T(\sG(\cZ_N'))|\cF))\rightarrow 0$. Moreover, by Lemma~\ref{var_R} below, $$\frac{1}{N} \E(\Var(T(\sG(\cZ_N'))|\cF))= \E(\Var(\cR_1(\sG(\cZ_N'))|\cF))= 
\kappa_{\sG}^2.$$ This implies $\frac{1}{N^2}\Var(T(\sG(\cZ_N')))\rightarrow 0$, completing the proof of~\eqref{eq:weaklimit}.



\subsection{Proof of Theorem~\ref{TH:CLT_R1}}

\label{sec:pfcltR1}

This section is organized as follows: Section~\ref{pfconditionalvar} below derives the limit of the conditional variance \eqref{sigma1}, using results proved in Section~\ref{sec:techlemmas}. The proof of the CLT is given in Section~\ref{pfcondclt}. 

\subsubsection{Limiting conditional variance}
\label{pfconditionalvar}

The limit of the conditional variance $\Var(\cR_1(\sG(\cZ_N'))|\cF)$ can be computed in terms of the graph functional $\sG$ and the unknown densities $f$ and $g$.

\begin{lem}\label{var_R} Let $\sG$ as in Theorem~\ref{TH:CLT_R1} and $\mathcal R_1(\sG(\cZ_N'))$ as in (\ref{R1}). Then, 
\begin{eqnarray}\label{lmsigma1}
\Var(\cR_1(\sG(\cZ_N'))|\cF)\stackrel{L^2}\rightarrow  
\kappa_{\sG}^2,
\end{eqnarray}
where $ 
\kappa_{\sG}$ is defined in \eqref{sigma1}.
\end{lem}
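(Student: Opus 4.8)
The key point is that, conditionally on $\cF=\sigma(\cZ',L_N)$, the point set $\cZ_N'$ and hence the graph $\sG(\cZ_N')$ are frozen, and the only remaining randomness lies in the labels $(c_z)_{z\in\cZ_N'}$, which are mutually independent with $\P(c_z=1\mid\cF)=a_z:=\frac{N_1 f(z)}{N_1 f(z)+N_2 g(z)}$ and $\P(c_z=2\mid\cF)=b_z:=1-a_z$; note $a_z\to\frac{pf(z)}{\phi(z)}$ and $b_z\to\frac{qg(z)}{\phi(z)}$ uniformly on $\{\phi>0\}$. Writing $I_{(x,y)}=\bm 1\{c_x=1\}\bm 1\{c_y=2\}$ so that $T(\sG(\cZ_N'))=\sum_{(x,y)\in E(\sG(\cZ_N'))}I_{(x,y)}$, we have $\Var(\cR_1(\sG(\cZ_N'))\mid\cF)=\frac1N\Var(T(\sG(\cZ_N'))\mid\cF)=\frac1N\sum_{e,e'}\Cov(I_e,I_{e'}\mid\cF)$, the double sum being over ordered pairs of directed edges of $\sG(\cZ_N')$.

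First I would use independence of labels at distinct vertices to kill every term with $e$ and $e'$ vertex-disjoint, and then enumerate the finitely many types of vertex-sharing pairs $(e,e')$: (i) $e=e'$; (ii) $e,e'$ share a tail; (iii) $e,e'$ share a head; (iv) the head of one equals the tail of the other with the two remaining endpoints distinct (directed $2$-paths, i.e.\ mixed-direction $2$-stars); and (v) $e'$ is the reverse of $e$. For each type the conditional covariance is an explicit polynomial in $a_z,b_z$ --- e.g.\ $\Var(I_{(x,y)}\mid\cF)=a_xb_y(1-a_xb_y)$, the tail-sharing $2$-star gives $a_xb_xb_yb_{y'}$, the head-sharing $2$-star gives $a_xa_{x'}a_yb_y$, while the directed-$2$-path and reverse-edge covariances are negative of the form $-a_\bullet b_\bullet a_\bullet b_\bullet$ (the $\E[I_eI_{e'}\mid\cF]$ term vanishing there, one vertex being forced to carry both labels). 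Grouping the resulting sums by the shared vertex $z$ expresses $\frac1N\Var(T(\sG(\cZ_N'))\mid\cF)$ as a finite sum of terms of the form $\frac1N\sum_{z\in\cP_{N\phi_N}}[\text{local graph functional at }z]\times[\text{density weight at }z\text{ and its neighbours}]$, the local functionals being the out-degree $d^\uparrow(z,\sG(\cdot))$ (from type (i)), the outward and inward $2$-star counts $T_2^\uparrow,T_2^\downarrow$ (types (ii), (iii)), the mixed-direction $2$-star count $T_2^+$ (type (iv)), and the reciprocal-edge count $\Delta^+$ (type (v)).

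Each such sum is exactly of the shape treated in Section~\ref{sec:techlemmas}: the degree-type pieces are handled by Lemma~\ref{lem:hzz} (with $h$ a bounded symmetrization of the relevant density weight) and the reciprocal-edge piece by~\eqref{hzzupdown} (limiting constant $\E\Delta_0^+$), while the $2$-star pieces are handled by Lemma~\ref{lem:hzzz}, using the weight functions $\omega^\uparrow,\omega^\downarrow$ of~\eqref{rxyzup}--\eqref{rxyzdown} for the outward/inward $2$-stars and its obvious analogue (same proof) for the mixed $2$-stars; these $\omega$'s are built precisely so that $\omega^\uparrow(z,w_1,w_2)\phi(w_1)\phi(w_2)$ takes the right value on the diagonal $w_1=w_2=z$ after the Palm computation. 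The hypotheses needed are only the $\beta$-degree moment condition with $\beta>4$ (for the second-moment estimates on $2$-stars) and the fact that Lebesgue-a.e.\ $z$ is a Lebesgue point of the density products appearing as weights, which holds since $f,g$ are integrable and all weights are ratios with denominator $\phi$, interpreted as $0$ on $\{\phi=0\}$ (which contains no points of $\cP_{N\phi_N}$ a.s.). Hence each term converges in $L^2$, with $a_z,b_z$ replaced in the limit by $\frac{pf(z)}{\phi(z)},\frac{qg(z)}{\phi(z)}$; a finite sum of $L^2$-convergent sequences converges in $L^2$, and collecting the limits --- using $r=2pq$, $p^2q^2=r^2/4$, the Henze--Penrose identity $1-\delta(f,g,p)=r\int\frac{fg}{\phi}$ for the leading ($e=e'$, $\E[I_e\mid\cF]$) part, and bundling every $-\E[I_e\mid\cF]\E[I_{e'}\mid\cF]$ contribution into the single constant $\beta_0=\E\Delta_0^\uparrow+2\E T_2^\uparrow+2\E T_2^\downarrow+2\E T_2^++2\E\Delta_0^+$ multiplying $\int\frac{f^2g^2}{\phi^3}$ --- reproduces~\eqref{sigma1}, which is~\eqref{lmsigma1}.

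\textbf{Main obstacle.} The real work is the combinatorial bookkeeping in the second and third paragraphs: enumerating all covariance configurations without omission, separating directed $2$-paths (which feed $T_2^+$) from reciprocal pairs (which feed $\Delta_0^+$), tracking the signs and the ordered-versus-unordered multiplicities, and then matching each resulting local sum to the precise weight function ($h$, $\omega^\uparrow$, $\omega^\downarrow$, or their mixed analogue) for which Lemma~\ref{lem:hzz} or Lemma~\ref{lem:hzzz} has been set up --- this is the provenance of the otherwise opaque shapes of $\omega^\uparrow,\omega^\downarrow$ and of the constant $\beta_0$. The upgrade from convergence in probability to $L^2$ convergence needs no separate concentration argument, being already part of the conclusions of those lemmas.
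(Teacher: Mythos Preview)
Your proposal is correct and follows essentially the same route as the paper's proof. The paper likewise freezes the graph by conditioning on $\cF$, writes $\cR_1$ as a sum of centered edge indicators $V_{x,y}$, expands the conditional variance into a sum of $\Var(V_{x,y}\mid\cF)$ plus covariances over pairs of edges sharing a vertex, classifies the latter by configuration (outward $2$-stars, inward $2$-stars, mixed $2$-stars, and reciprocal edges), and then applies Lemma~\ref{lem:hzz}, Lemma~\ref{lem:hzzz}, and~\eqref{hzzupdown} to obtain the $L^2$ limit of each piece; your combinatorial enumeration is actually a bit more explicit than the paper's compressed write-up.
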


\begin{proof} Define the function $h_N: \R^d\times \mathbb R^d \rightarrow [0, 1]$ as 
\begin{equation}\label{hnxy}
h_N(x, y)=\frac{N_1N_2 f(x)g(y)}{(N_1f(x)+N_2g(x))(N_1f(y)+N_2g(y))}.
\end{equation}
By construction $\cZ_N'$ is a Poisson process in $\R^d$ with intensity function $N\phi_N$, where $\phi_N\rightarrow \phi$ uniformly as $N\rightarrow \infty$ (as defined in~\eqref{phi}). Recall that $\psi(c_x, c_y)=\bm 1\{c_x=1, c_y=2\}$, which implies $\P(\psi(c_x, c_y)|\cF)=h_N(x, y)$. Moreover, 
\begin{equation*}
h_N(x, y)\rightarrow h(x, y)=\frac{pq f(x)g(y)}{(pf(x)+qg(x))(pf(y)+qg(y))},
\end{equation*}
uniformly in $x, y\in \R^d$ as $N\rightarrow \infty$.

Now, let 
\begin{eqnarray}
V_{x, y}:=(\psi(c_x, c_y)-h_{N}(x, y) )\bm1\{(x, y)\in E(\sG(\cZ_N')) \}.
\label{v}
\end{eqnarray}
Then by (\ref{R1}), $\cR_1(\sG(\cZ_N')):=\frac{1}{\sqrt N}\sum_{x, y \in \cZ_N'} V_{x, y}$. Therefore, 
\begin{align}\label{var_Q}
\Var& (\cR_1(\sG(\cZ_N'))|\cF) \nonumber \\ 
=&\frac{1}{N} \sum_{x \ne y \in \cZ_N'}\Var(V_{x, y}|\cF) + \frac{2}{N}\left\{\sum_{x \in \cZ_N'} \sum_{\substack{y, z \in \cZ_N' \\ y < z}} \Cov(V_{x, y}, V_{x, z}|\cF) + \sum_{x \in \cZ_N'} \sum_{\substack{y, z \in \cZ_N' \\ y < z}} \Cov(V_{y, x}, V_{z, x}|\cF) \right\} \nonumber \\ 
& \;\;\;\;\;\;\;\;\;\;\;\;\;\;\;\;\;\;\;\;\;\;\;\;\;\; +\frac{2}{N}\left\{ \sum_{x \in \cZ_N'} \sum_{\substack{y, z \in \cZ_N' \\ y < z}}  \Cov(V_{x, y}, V_{z, x}|\cF) + \sum_{x < y \in \cZ_N'} \Cov(V_{x, y}, V_{y, x}|\cF) \right\}.
\end{align} 
By Lemma \ref{lem:hzz}, as $N\rightarrow \infty$,
\begin{align*}
\frac{1}{N}\sum_{x, y \in \cZ_N'} &\Var(V_{x, y}|\cF)\nonumber\\
=&\frac{1}{N} \sum_{x, y \in \cZ_N'} \Big\{h_{N}(x, y)(1-h_{N}(x, y))\Big\} \bm1\{(x, y)\in E(\sG(\cZ_N'))\}\nonumber\\
\stackrel{L_2} \rightarrow & \E \Delta^\uparrow_0 \int \Big\{h(x, x)(1-h(x, x))\Big\} \phi(x)\mathrm dx.
\end{align*}

It remains to compute the limit of the covariance term in (\ref{var_Q}). To this end, observe that   
\begin{align*}
\Cov(V_{x, y}, &V_{x, z}|\cF)\\
=&\{\omega_{N}(x, y, z)-h_N(x, y)h_N(x, z)\} \bm1\{(x, y), (x, z)\in E(\sG(\cZ_N'))\},
\end{align*}
where 
\begin{eqnarray*}
\omega_{N}^\uparrow (x, y, z)&:=&\frac{N_1N_2^2f(x)g(y)g(z)}{(N_1f(x)+N_2g(x))(N_1f(y)+N_2g(y))(N_1f(z)+N_2g(z))}\nonumber\\
&\rightarrow& \omega^\uparrow(x, y, z),
\end{eqnarray*} 
where $\omega^\uparrow(\cdot, \cdot, \cdot)$ is defined in~\eqref{rxyzup}. The convergence above is uniformly in $x, y, z\in \R^d$, as $N\rightarrow \infty$. Therefore, by Lemma \ref{lem:hzzz}, 
\begin{align}\label{var3}
\frac{1}{ N}\sum_{x \in \cZ_N'} \sum_{\substack{y, z \in \cZ_N' \\ y < z}}& \Cov(V_{x, y}, V_{x, z}|\cF)\nonumber\\
\stackrel{L_2} \rightarrow & \E T_2^\uparrow \int \left\{\omega^\uparrow(x,x,x)-h^2(x,x) \right\}\phi(x)\mathrm dx,
\end{align}
where $T_2^\uparrow=T_2^\uparrow(0, \sG(\cP_1))$ is as defined in~\eqref{2star0}. Similarly, define 
\begin{eqnarray*}
\omega_{N}^\downarrow (x, y, z)&:=&\frac{N_1^2N_2g(x)f(y)f(z)}{(N_1f(x)+N_2g(x))(N_1f(y)+N_2g(y))(N_1f(z)+N_2g(z))}\nonumber\\
&\rightarrow& \omega^\downarrow(x, y, z),
\end{eqnarray*} 
where $\omega^\downarrow(\cdot, \cdot, \cdot)$ is defined in~\eqref{rxyzdown}. Then, by Lemma \ref{lem:hzzz}, 
\begin{align}\label{var4}
\frac{1}{N} \sum_{x \in \cZ_N'} \sum_{\substack{y, z \in \cZ_N' \\ y < z}} & \Cov(V_{y, x}, V_{z, x}|\cF)\nonumber\\
\stackrel{L_2} \rightarrow & \E T_2^\downarrow \int \left\{\omega^\downarrow(x,x,x)-h^2(x,x) \right\}\phi(x)\mathrm dx,
\end{align}
where $T_2^\downarrow=T_2^\downarrow(0, \sG(\cP_1))$ is as defined in~\eqref{2star0}.
 
Similarly, it can be shown that 
\begin{align}\label{var5}
\frac{1}{N} \sum_{x \in \cZ_N'} \sum_{\substack{y, z \in \cZ_N' \\ y < z}} \Cov(V_{x, y}, V_{z, x}|\cF)\pto & -\E T_2^+ \int h^2(x,x) \phi(x)\mathrm dx,
\end{align}
where $T_2^+$ is as defined in~\eqref{2star0}.

Also, from~\eqref{hzzupdown}
\begin{align}\label{var6}
\Cov(V_{x, y}, V_{y, x}|\cF)=&-\frac{1}{N} \sum_{x < y \in \cZ_N'}  h_N(x, y)h_N(y, x) \bm1\{(x, y), (y, x)\in E(\sG(\cZ_N'))\}\nonumber\\
\stackrel{L_2} \rightarrow & - \frac{1}{2} \E \Delta^+_0 \int h(x, x)^2 \phi(x)\mathrm dx,
\end{align}
where $\Delta^+_0$ is as defined in~\eqref{Delta0}.

Finally, observe $h(x, x)=\frac{pqf(x)g(x)}{\phi^2(x)}$ and $\omega^\uparrow(x, x, x)=\frac{pq^2f(x)g(x)^2}{\phi^3(x)}$ and $\omega^\downarrow(x, x, x)=\frac{p^2qf(x)^2g(x)}{\phi^3(x)}$. Using this, and combining together (\ref{var3}), (\ref{var4}), (\ref{var5}), (\ref{var6}) with (\ref{var_Q}), the limit in~\eqref{lmsigma1} follows. \end{proof}

\subsubsection{Completing the proof of Theorem~\ref{TH:CLT_R1}}
\label{pfcondclt}

The CLT of $\cR_1(\sG(\cZ_N'))$ will be proved using Stein's method based on dependency graphs given below:  

\begin{thm}[Chen and Shao \cite{stein_local}] \label{th:steinsmethod}
Let $\{W_i, i \in \cV\}$ be random variables indexed by the vertices of a dependency graph $H=(\cV, \cE)$ with maximum degree $D$. If  
$W=\sum_{i \in \cV}W_i$ with $\E(W_i)=0$, $\E W^2 = 1$ and $\E|W_i|^3 \leq\theta^3$ for all $i\in \cV$ and for some $\theta > 0$, then 
\begin{equation}\label{eq:steinsmethod}
\sup_{z\in \R} |\P(W \leq z)-\Phi(z)|\lesssim D^{10}|\cV|\theta^3.
\end{equation}
\end{thm}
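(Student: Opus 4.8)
The plan is to prove this Berry--Esseen bound by Stein's method adapted to local dependence. First I would introduce Stein's equation: for fixed $z\in\R$, let $f_z$ be the bounded solution of $f'(w)-wf(w)=\mathbf 1\{w\le z\}-\Phi(z)$, and record the standard a priori bounds $0<f_z\le\sqrt{2\pi}/4$, $\|f_z'\|_\infty\le 1$, $|wf_z(w)|\le 1$, together with the Lipschitz-type estimate $|(w+s)f_z(w+s)-(w+t)f_z(w+t)|\le(|w|+1)(|s|+|t|)$. Since $\P(W\le z)-\Phi(z)=\E[f_z'(W)-Wf_z(W)]$, it suffices to bound this expression uniformly in $z$.

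Next I would establish an auxiliary \emph{concentration inequality} for the locally dependent sum $W$: there is an explicit exponent $k_1$ with $\P(W\in I)\lesssim(D+1)^{k_1}(|I|+\theta)$ for every interval $I\subseteq\R$ (and similarly for $W-A_i$, $W-B_i$, etc., which inherit the dependency structure). This is itself proved by a Stein-type argument with a clipped linear test function $f$ ($0\le f\le|I|$, $1$-Lipschitz): using the decomposition $W=\sum_i W_i$, the conditional independence of $W_i$ from $W-A_i:=W-\sum_{j\in N_i}W_j$, and $\E W_i=0$, one reduces $\E[Wf(W)]$ and $\E[f'(W)]$ to sums, over neighborhoods in $H$, of expectations of products of boundedly many $W_j$'s; each reduction step costs a factor of order $D$, and the third-moment hypothesis closes the estimate after truncating each $W_i$ at level one to control the four-fold products (to which a bare third-moment assumption does not directly apply). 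I expect this concentration inequality — or rather the careful tracking of which conditional independences are available, and of the powers of $D$ that accumulate — to be the main obstacle.

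The main estimate then follows by the usual telescoping. With $A_i=\sum_{j\in N_i}W_j$ and $B_i=\sum_{j\in N_i^{(2)}}W_j$ the first- and second-order neighborhood sums, $\E[W_if_z(W)]=\E[W_i(f_z(W)-f_z(W-A_i))]$ since $W_i\perp W-A_i$ and $\E W_i=0$, while $\sum_i\E[W_iA_i]=\E W^2=1$. Hence
\[
\E[f_z'(W)-Wf_z(W)]=\E\Big[f_z'(W)\Big(1-\sum_i W_iA_i\Big)\Big]+\sum_i\E\Big[W_i\Big(A_if_z'(W)-f_z(W)+f_z(W-A_i)\Big)\Big].
\]
In the first term I would write $f_z'(W)=f_z'(W-B_i)+(f_z'(W)-f_z'(W-B_i))$ and use $W_iA_i\perp W-B_i$ to kill the $f_z'(W-B_i)$ contribution; in the summands of the second term, $A_if_z'(W)-f_z(W)+f_z(W-A_i)=\int_{W-A_i}^{W}(f_z'(W)-f_z'(t))\,\mathrm dt$. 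In both cases expand $f_z'(u)-f_z'(v)=[uf_z(u)-vf_z(v)]+[\mathbf 1\{u\le z\}-\mathbf 1\{v\le z\}]$: the first bracket is controlled by the Lipschitz estimate for $wf_z(w)$, producing terms such as $\E[|W_i|(|W|+1)A_i^2]$ which, after decoupling $|W|$ through a further neighborhood (so that $\E|W-C_i|\le\sqrt{\E W^2}$ up to lower-order corrections) and expanding the remaining products over $N_i$, are $O((D+1)^c|\cV|\theta^3)$; the indicator bracket vanishes unless $W$ lies within $|A_i|$ (resp.\ $|B_i|$) of $z$, so conditioning on the relevant neighborhood and applying the concentration inequality bounds it by terms like $(D+1)^{k_1}\E[|W_i|A_i^2]$, again summing to $O((D+1)^c|\cV|\theta^3)$. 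Collecting all contributions — truncating wherever products of four or more $W_j$'s occur, and assuming without loss of generality that $(D+1)^c\theta^2\le 1$ (otherwise the asserted bound exceeds a fixed constant and is vacuous) — yields $\sup_z|\P(W\le z)-\Phi(z)|\lesssim(D+1)^{10}|\cV|\theta^3$, where the exponent $10$ is a non-optimized upper bound on the total power of $D$ accruing from the concentration inequality and the several neighborhood-decoupling steps.
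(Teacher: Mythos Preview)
The paper does not prove this statement: Theorem~\ref{th:steinsmethod} is quoted verbatim from Chen and Shao~\cite{stein_local} and invoked as a black box in the proof of Proposition~\ref{cltR1}. There is no ``paper's own proof'' to compare against.

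That said, your sketch is a faithful outline of the Chen--Shao argument itself: Stein's equation, a concentration inequality for $W$ proved via a clipped linear test function and neighborhood decoupling, the telescoping $\E[W_if_z(W)]=\E[W_i(f_z(W)-f_z(W-A_i))]$, the split of $f_z'(u)-f_z'(v)$ into the $wf_z(w)$-Lipschitz part and the indicator part, and the bookkeeping of powers of $D$. The exponent $10$ in the statement is indeed a non-sharp accumulation of these decoupling costs (Chen--Shao obtain smaller exponents under stronger moment assumptions), and your remark that the concentration step and the tracking of conditional independences are the delicate points is accurate. So while your write-up is a correct plan for reproving the cited result, for the purposes of this paper no proof is required---only the citation.
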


Using this, the proposition below shows that the statistic $T(\sG(\cZ_N'))$, centered by the conditional mean and scaled by the conditional variance converges to $N(0, 1)$. This, along with Lemma~\ref{var_R}, completes the proof of Theorem~\ref{TH:CLT_R1}.

\begin{ppn}\label{cltR1} Let $T(\sG(\cZ_N'))$ be as defined in~\eqref{R1} and $\Phi$ the standard normal distribution function. Then 
\begin{eqnarray}\label{clt_R}
\sup_{x \in \R}\left|\P\left(\frac{T(\sG(\cZ_N'))-\E(T(\sG(\cZ_N'))|\cF)}{\sqrt{\Var(T(\sG(\cZ_N'))|\cF)}}\leq x \Big| \cF \right)- \Phi(x)\right| \pto 0,
\end{eqnarray}
\end{ppn}

\begin{proof}Let $W:=\frac{T(\sG(\cZ_N'))-\E(T(\sG(\cZ_N'))|\cF)}{\sqrt{\Var(T(\sG(\cZ_N'))|\cF)}}$. Recall the definition of $V_{x, y}$ from (\ref{v}) and let $$U_{x , y}=\frac{V_{x,  y}}{\sqrt{\Var(T(\sG(\cZ_N'))|\cF)}}.$$ Note that $W=\sum_{x, y\in\cZ_N }U_{x, y}$, and $\E(U_{x, y}|\cF)=0$ and $\E W^2=1$.  Construct a dependency graph $H=(V(H), E(H))$ of the random variables $\{U_{x, y}, (x, y)\in E(\sG(\cZ_N'))\}$ as follows: $V(H)=\{1, 2, \ldots, L_N\}$ and $(i, j)\in E(H)$ whenever the graph distance between $Z_i$ and $Z_j$ in $\sG(\cZ_N')$ is at most 2. Let $D$ be the maximum degree of this dependency graph. It is easy to see that $D\leq 2\Delta(\sG(\cZ_N'))^2=o_P(N^{\frac{1}{20}})$, by Assumption \ref{assummaxdeg}. Moreover, for $(x, y)\in E(\sG(\cZ_N'))$
$$|U_{x, y}|^3\leq \frac{1}{(\Var(T(\sG(\cZ_N')|\cF))^{\frac{3}{2}}}.$$

Therefore, by Theorem \ref{th:steinsmethod} above, conditional on $\cF$,
\begin{eqnarray}
\left|\P\left(W\leq x|\cF\right) -\Phi(x) \right| &\lesssim & D^{10}\frac{|\cZ_N'|}{(\Var(T(\sG(\cZ_N')) |\cF))^{\frac{3}{2}}}\nonumber\\
&=&\frac{D^{10}}{\sqrt{N}}\frac{\frac{|\cZ_{N}'|}{N}}{(\Var(\cR_1(\sG(\cZ_N'))|\cF))^{\frac{3}{2}}}.
\label{ste}
\end{eqnarray}
Note that $\frac{|\cZ_N'|}{N}\pto 1$, as $|\cZ_N'|=L_N$ is a Poisson random variable with mean $N$, and by Lemma \ref{var_R}, $\Var(\cR_1(\sG(\cZ_N'))|\cF)\pto  
\kappa_{\sG}^2$. Therefore,  (\ref{ste}) goes to zero in probability, since $D^{10}=o_P(N^{\frac{1}{2}})$, by Assumption \ref{assummaxdeg}. This completes the proof of (\ref{clt_R}).
\end{proof}

\begin{remark}\label{rem:consistency} (Consistency)  Let $\sG$ be graph functional satisfying the assumptions of Theorem \ref{TH:CLT_R1}. In addition, assume $\Var(|E(\sG(\cZ_N'))|)=O(N)$, where $|E(\sG(\cZ_N'))|$ denotes the number of edges in the graph $\sG(\cZ_N')$. (This assumption is easy to verify in most cases: For example, for the MST $|E(\cT(\cZ_N'))|=L_N-1$, and for the directed $K$-NN graph $|E(\cN_K(\cZ_N'))|)=K L_N$, where $L_N \sim \dPois(N)$, hence the variance condition holds trivially.)  Now, consider the two-sample test based on $\sG$ with rejection region 
\begin{align}\label{eq:rejregion}
\frac{1}{\sqrt N}\Big\{ T(\sG(\cZ_N'))- \E_{H_0}(T(\sG(\cZ_N'))) \Big\} \leq -a_N,
\end{align}
where $a_N$  is a positive sequence going infinity such that $a_N \ll \sqrt N$, as $N \rightarrow \infty$.  Note that the proof of Theorem \ref{TH:CLT_R1} shows that  $\E(\Var_{H_0}(T(\sG(\cZ_N'))|\cF)) =O(N)$. Therefore, 
\begin{align*}
\Var_{H_0}(T(\sG(\cZ_N'))) &= \E(\Var_{H_0}(T(\sG(\cZ_N'))|\cF))+  \Var (\E_{H_0}(T(\sG(\cZ_N'))|\cF)) \nonumber \\
&=O(N) + \frac{N_1^2 N_2^2}{(N_1+N_2)^4}\Var(|E(\sG(\cZ_N'))) =O(N),
\end{align*}
by the variance assumption above. Therefore, by Chebyshev's inequality, the probability of Type I error of the test \eqref{eq:rejregion} goes to zero, as $N \rightarrow \infty$. For the Type II error, suppose $f$ and $g$ differ on a set of positive Lebesgue measure, and  choose $N$ large enough so that $$-\frac{a_N}{\sqrt N} + \frac{\E_{H_0}(T(\sG(\cZ_N')))}{N}  > \frac{\E \Delta_0^\uparrow}{2}\left(1-\delta(f, g, p) \right).$$ 
(This is possible because $\delta(f, g, p)\geq \delta(f, f, p)=p^2+q^2$ and the inequality is strict for densities $f$ and $g$ differing on a set of positive measure (see \cite[Theorem 1 and Corollary 1]{gyorfinemetz1}).) Then the probability of Type II error 
\begin{align*}
\P_{H_1}\left( \frac{T(\sG(\cZ_N'))}{N} >  -\frac{a_N}{\sqrt N} + \frac{\E_{H_0}(T(\sG(\cZ_N')))}{N}  \right) \rightarrow 0,
\end{align*}
by Proposition \ref{CONSISTENT}. This shows the universal consistency of the test \eqref{eq:rejregion}. 
\end{remark}

\subsection{Calculation of the null variance}
\label{sec:varnullKNN}

One can obtain the original de-Poissionized distribution of the 2-sample statistic \eqref{graph2test} by conditioning on the event $\{L_{N_1}=N_1, L_{N_2}=N_2\}$ (recall notations from \eqref{2poisson}). More precisely,  
\begin{align}
\cR(\sG(\sX_{N_1}' \cup \sY_{N_2}'))|\{L_{N_1}=N_1, L_{N_2}=N_2\}&\stackrel{D}=\frac{1}{\sqrt N} \Big\{T(\sG(\sX_{N_1}\cup \sY_{N_2}))-\E_{H_0}(T(\sG(\sX_{N_1}\cup \sY_{N_2}))) \Big\} \nonumber \\
&:=\cR(\sG(\cZ_N)), 
\label{eq:poisson_N1N2}
\end{align}
where $\cZ_N=\sX_{N_1}\cup \sY_{N_2}$ are i.i.d. samples of size $N_1$ and $N_2$ from $f$ and $g$, respectively, as in \eqref{2}. Therefore, one approach to de-Poissionize the CLT in Theorem \ref{TH:CLT_R1} is to derive the joint asymptotic distribution of $(\cR(\sG(\sX_{N_1}' \cup \sY_{N_2}'), L_{N_1}, L_{N_2})$, and then using \eqref{eq:poisson_N1N2} to obtain the conditional distribution. However, the calculation of the limiting variance is quite tedious for general alternatives. On the other hand, for the implementation of the test, we only require the null variance, where the calculations are much simpler. In the following, we compute the (de-Poissonized) null variance of the statistic $\cR(\sG(\cZ_N))$, for a graph functional $\sG$ satisfying the assumptions of Theorem \ref{TH:CLT_R1}. 

To this end, given a set $S$, denote by $E^+(\sG (S))$ the set of pairs of vertices in the graph $\sG(S)$ with edges in both directions (that is, the set of ordered pairs of vertices $(x, y)$ such that both
$(x, y) \in E(\sG (S))$ and $(y, x) \in E(G (S))$).  Then, under the null $H_0$, using the exchangeability of the data, we get 
\begin{align}\label{eq:varnullKNN}
\Var_{H_0}(\cR(\sG(\cZ_N)))=A_1+A_2,
\end{align}
where 
\begin{align}
\label{eq:varKNNI}A_1&= \frac{a|E(\sG(\cZ_N))| + 2c|E^+(\sG(\cZ_N))| + 2 b^{\uparrow}T_2^{\uparrow}(\sG(\cZ_N)) + 2 b^{\downarrow} T_2^{\downarrow}(\sG(\cZ_N)) + 2c T_2^{+}(\sG(\cZ_N))}{N}, \\
 A_2&= L \left( \frac{2 {|E(\sG(\cZ_N))| \choose 2} - 2 (|E^+(\sG(\cZ_N))| - T_2^{\uparrow}(\sG(\cZ_N)) -  T_2^{\uparrow}(\sG(\cZ_N)) - T_2^{+}(\sG(\cZ_N)) )}{N}\right);  
\label{eq:varKNNII} 
\end{align}
with $a=\frac{N_1N_2}{N (N-1)}-\frac{N_1^2N_2^2}{N^2 (N-1)^2}$, $c=-\frac{N_1^2N_2^2}{N^2 (N-1)^2}$, $b^{\uparrow}=\frac{N_1N_2 (N_2-1)}{N (N-1) (N-2)}-\frac{N_1^2N_2^2}{N^2 (N-1)^2}$, $b^{\downarrow}=\frac{N_1 (N_1-1)N_2}{N (N-1) (N-2)}-\frac{N_1^2N_2^2}{N^2 (N-1)^2}$, and $L=\frac{N_1 N_2 (N_1-1)(N_2-1)}{N(N-1)(N-2)(N-3)}-\frac{N_1^2N_2^2}{N^2 (N-1)^2}$.  

Now, from the proof Lemma \ref{var_R}, it follows that  
$$a \frac{|E(\sG(\cZ_N))|}{N} \pto (pq-p^2q^2) \E \Delta_0^\uparrow, \quad \frac{2c|E^+(\sG(\cZ_N))|}{N} \pto - p^2 q^2 \E \Delta_0^{+},$$
and 
$$\frac{2 b^{\downarrow} T_2^{\downarrow}(\sG(\cZ_N))}{N} \pto 2(p^2 q-p^2q^2) \E T_2^\downarrow, \quad \frac{2 b^{\uparrow} T_2^{\uparrow}(\sG(\cZ_N))}{N} \pto 2(pq^2-p^2q^2) \E T_2^\uparrow,$$
and $\frac{2c T_2^{+}(\sG(\cZ_N))}{N} \pto - 2 p^2q^2  \E T_2^+$. Substituting these limiting values in \eqref{eq:varKNNI} gives 
\begin{align}
\label{eq:limitKNNI}
A_1 \pto \frac{r}{4}\Big\{2\E\Delta_0^\uparrow + 4 \left(q\E T_2^\uparrow+ p\E T_2^\downarrow \right) -2 r\Gamma_0\Big\},
\end{align}
which, as expected, matches the expression in \eqref{sigmanulldirected} (recall definition of $\Gamma_0$ from Theorem \ref{TH:CLT_R1}). Next, note that $L \rightarrow 0$ and 
$N L \rightarrow 4 p^2q^2 - pq= -\frac{r}{2}(1-2r)$, as $N \rightarrow \infty$. 
This implies,  
$$L \frac{|E(\sG(\cZ_N))|^2- |E(\sG(\cZ_N))|}{N}  = \frac{L |E(\sG(\cZ_N))|^2}{N} + o_P(1) = -\frac{r}{2}(1-2r)(\E \Delta_0^\uparrow)^2+o_P(1),$$
using $\frac{|E(\sG(\cZ_N))|^2}{N^2} \pto (\E \Delta_0^\uparrow)^2$. Therefore,  recalling \eqref{eq:limitKNNI}, $A_2 \pto - \frac{r}{2}(1-2r) (\E \Delta_0^\uparrow)^2$. This combined with \eqref{eq:varnullKNN} and \eqref{eq:limitKNNI} gives 
\begin{align}\label{eq:varnullG}
\Var_{H_0}(\cR(\sG(\cZ_N)))= \frac{r}{4}\Big\{2\E\Delta_0^\uparrow + 4 \left(q\E T_2^\uparrow+ p\E T_2^\downarrow \right) -2 r\Gamma_0 - 2 (p-q)^2 (\E \Delta_0^\uparrow)^2 \Big\},
\end{align}
which is the limiting (de-Poissonized) variance of the two-sample test based on $\sG$. For instance, for the MST graph $\sG=\cT$, using  \eqref{sigmanullundirected} and $\E \Delta_0^\uparrow =2$, \eqref{eq:varnullG} simplifies to $$\Var_{H_0}(\cR(\cT(\cZ_N)))= r \Big\{r + \frac{1}{2}\Var (\Delta_0)(1-2r)\Big\},$$
which is the null variance of the FR-test as in \cite[Theorem 1]{henzepenrose}.

\subsection{Proofs of Proposition \ref{CLT_R2} and Theorem \ref{TH:CLT_R}}
\label{sec:pfR2R}

The proof of Proposition \ref{CLT_R2} is given in Section~\ref{pfcltR2} below.  Theorems \ref{TH:CLT_R1} and \ref{CLT_R2} can be easily combined to complete the proof of Theorem~\ref{TH:CLT_R} (see Section~\ref{pfcltR}).

\subsubsection{Proof of Proposition \ref{CLT_R2}}
\label{pfcltR2} 
Recall the definition of $h_N(x, y)$ from~\eqref{hnxy}. Note that $h_N(x, y)\rightarrow h(x, y)=\frac{pq f(x)g(y)}{(pf(x)+qg(x))(pf(y)+qg(y))}$, 
uniformly in $x, y\in \R^d$ as $N\rightarrow \infty$. It is easy to see that $$\lim_{N\rightarrow \infty}\sup_{x, y\in \R^d}N^{\frac{1}{2}}\left|\frac{h_N(x, y)}{h(x, y)}-1\right|=0,$$ since $\sqrt N (\frac{N_1}{N_1+N_2}-p)\rightarrow 0$ and $\sqrt N (\frac{N_1}{N_1+N_2}-q)\rightarrow 0$ (recall \eqref{eq:N12}). Therefore, for any $\varepsilon>0$
\begin{eqnarray*}
 & &  \frac{1}{\sqrt N} \sum_{x, y\in \cZ_N'} |h_N(x, y)-h(x, y)| \bm 1\{(x, y)\in E(\sG(\cZ_N'))\}\nonumber\\
&\leq &\frac{\varepsilon}{N} \sum_{x, y\in \cZ_N'} |h(x, y)| \bm 1\{(x, y)\in E(\sG(\cZ_N'))\} \leq \varepsilon \frac{|E(\sG(\cZ_N'))|}{N}
\end{eqnarray*}
for $N$ large enough. By stabilization, $\frac{|E(\sG(\cZ_N'))|}{N}\pto  \E \Delta_0^\uparrow $, as $N\rightarrow \infty$. Therefore, the RHS above is arbitrarily small as $N\rightarrow \infty$, and 
\begin{align}\label{R2I}
\cR_2(\sG(\cZ_N'))= \frac{1}{\sqrt N} \sum_{x, y \in \cZ_N'}(J_{x, y}-\E(J_{x, y}))+o(1),
\end{align}
where $J_{x, y}=h(x, y)\bm 1\{(x, y)\in E(\sG(\cZ_N'))\}$.

Let $w(x)=\frac{p f(x)}{p f(x)+q g(x)}$. For $x\in \R^d$, a locally finite point set $\cH$ and any Borel set $A\subseteq \R^d$, define
\begin{align}
\zeta^x_{\cH}(A):=\zeta(x, \cH, A):=&w(x) \sum_{y\in \cH^x\cap A}\bm 1\{(x, y)\in E(\sG(\cH^x))\},\nonumber\\
=& w(x)d_\sG^\uparrow(x,  \cH, A),
\end{align}
where $d_\sG^\uparrow(\cdot, \cdot, \cdot)$ is the out-degree measure defined in~\eqref{outdegmeasure}. 

Let $$\mu_{N}:=\sum_{x\in \cP_{N\phi_N}}\zeta^x_{\cP_{\phi_N}}$$ and $v(y)=\frac{q g(y)}{p f(y)+q g(y)}$. Therefore, $$\langle v, \zeta^x_{\cP_{\phi_N}}\rangle=\int v(y) \zeta^x_{\cP_{\phi_N}}(\mathrm dy)=w(x)\sum_{y\in \cP_{N\phi_N}^x} v(y)\bm 1\{(x, y)\in E(\sG(\cP_{N\phi_N}^x))\}$$ and $$\langle v, \mu_N\rangle=\sum_{x, y\in \cP_{N\phi_N}} h(x, y)\bm 1\{(x, y)\in E(\sG(\cP_{N\phi_N}))\}.$$
Thus, from~\eqref{R2I} 
\begin{align}\label{R2II}
\cR_2(\sG(\cZ_N'))=\frac{1}{\sqrt N}\left(\langle v, \mu_N\rangle-\E \langle v, \mu_N\rangle \right)+o(1).
\end{align}

\noindent \textit{Proof of~\eqref{varcond}}: By \cite[Lemma 4.1]{penroseclt}
\begin{eqnarray}\label{anbn}
\frac{1}{N}\Var\E(\cR(\sG(\cZ_N'))|\cF)=\frac{1}{N} \Var(\langle v, \mu_N\rangle):=a_N+b_N,
\end{eqnarray}
where $a_N=\int \E\{ \langle v, \zeta^x_{\cP_{\phi_N}}\rangle^2 \}\phi_N(x)\mathrm dx$,
and
\begin{align}\label{bn}
b_N=\int_{\R^2} \Big(&\E\{\langle v,  \zeta^x_{\cP_{N\phi_N}^{x_N(z)}} \rangle  \langle v, \zeta^{x_N(z)}_{\cP_{N\phi_N}^{x}} \rangle\}\nonumber\\
&-\E\{ \langle v, \zeta^x_{\cP_{\phi_N}} \rangle \}\E\{\langle v, \zeta^{x_N(z)}_{\cP_{\phi_N}} \rangle \} \Big)\phi_N(x)\phi_N(x_N(z))\mathrm d z\mathrm dx,
\end{align}
where $x_N(z):=x+N^{-1/d}z$.

From Lemma~\ref{hzz} and the assumption that $\sG$ is $\beta$-degree bounded for $\beta>4$, we get \begin{align}\label{an}
a_N\rightarrow \E(\Delta_0^\uparrow)^2 \int h^2(x, x)\phi(x)\mathrm dx.
\end{align} 
Therefore, to compute of~\eqref{anbn}, it suffices to derive the limit of the $b_N$. To this end, we have the following lemma, which is proved similarly to Lemma \ref{hzz}.

\begin{lem}Let $\sG$ be as in Proposition~\ref{CLT_R2}. For $x$ a Lebesgue point of $\phi$ and  $v(\cdot)\phi(\cdot)$,
\begin{eqnarray}\label{eq:vx}
\lim_{N\rightarrow\infty}\E\{\langle v, \zeta^x_{\cP_{\phi_N}} \rangle\}=h(x, x)\E\Delta_0^\uparrow.
\end{eqnarray}
Moreover, any $z\in \R^d$ and $x_N(z):=x+N^{-1/d}z$,
\begin{eqnarray}\label{eq:vxn}
\quad \lim_{N\rightarrow\infty}\E\{\langle v, \zeta^{x_N(z)}_{\cP_{\phi_N}} \rangle\}=h(x, x)\E\Delta_0^\uparrow.
\end{eqnarray}
\end{lem}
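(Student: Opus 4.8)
The plan is to reduce both limits to the Palm-theoretic argument already carried out in the proof of Lemma~\ref{lem:hzz} (see~\eqref{hzzI}--\eqref{explimitIII}), applied to the bounded symmetric kernel $h(x, y) = w(x) v(y)$, for which $h(x, x) = w(x) v(x)$. Recalling the identity $\langle v, \zeta^x_{\cP_{\phi_N}} \rangle = w(x) \sum_{y \in \cP_{N\phi_N}^x} v(y) \bm 1\{(x, y) \in E(\sG(\cP_{N\phi_N}^x))\}$ and writing $v(y) = v(x) + (v(y) - v(x))$, this splits as
$$\langle v, \zeta^x_{\cP_{\phi_N}} \rangle = w(x) v(x)\, d^\uparrow(x, \sG(\cP_{N\phi_N})) + w(x) \sum_{y \in \cP_{N\phi_N}^x} (v(y) - v(x))\, \bm 1\{(x, y) \in E(\sG(\cP_{N\phi_N}^x))\},$$
a ``main'' term and an ``error'' term.

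For the main term, taking expectations and invoking Lemma~\ref{poissonstabilize}(a) --- whose moment hypothesis~\eqref{funcmoment} is supplied by the $\beta$-degree moment condition~\eqref{degmoment} with $\beta > 2$ --- gives $\E\, d^\uparrow(x, \sG(\cP_{N\phi_N})) \rightarrow \E\, d^\uparrow(0, \sG(\cP_{\phi(x)})) = \E \Delta_0^\uparrow$, the last equality by scale invariance. Multiplying by $w(x) v(x) = h(x, x)$ yields the limit $h(x, x)\E\Delta_0^\uparrow$ for the main term. For the error term, fix $K > 0$; by Palm theory,
$$\E \sum_{y \in \cP_{N\phi_N}} |v(y) - v(x)|\, \bm 1\{y \in B(x, K N^{-1/d})\} = N \int_{B(x, K N^{-1/d})} |v(y) - v(x)|\, \phi_N(y)\, \mathrm d y \rightarrow 0$$
since $x$ is a Lebesgue point of $\phi$ and of $v(\cdot)\phi(\cdot)$ and $\phi_N \rightarrow \phi$ uniformly. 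The contribution of edges $(x, y)$ with $y \notin B(x, K N^{-1/d})$ is bounded, using $|v(y) - v(x)| \leq 1$, by $\E\, d_K^\uparrow(x, \sG(\cP_{N\phi_N}))$, which by~\eqref{explimitII} satisfies $\limsup_{K \rightarrow \infty} \limsup_{N \rightarrow \infty} \E\, d_K^\uparrow(x, \sG(\cP_{N\phi_N})) = 0$ thanks to stabilization and Lemma~\ref{poissonstabilize}. Hence the error term has expectation tending to $0$, which proves~\eqref{eq:vx}.

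The second assertion~\eqref{eq:vxn} is obtained by repeating the same three steps with $x$ replaced by $x_N(z) = x + N^{-1/d} z$: the main term is now handled by Lemma~\ref{poissonstabilize}(b) (the shifted-point version of the coupling), which again produces the limit $\E\, d^\uparrow(0, \sG(\cP_{\phi(x)})) = \E \Delta_0^\uparrow$, independently of $z$, while $w(x_N(z)) \rightarrow w(x)$ by continuity of $w$ at the Lebesgue point $x$. For the error term one uses the inclusion $B(x_N(z), K N^{-1/d}) \subseteq B(x, (K + ||z||) N^{-1/d})$, so the Lebesgue-point estimate above applies verbatim with $K$ replaced by $K + ||z||$, and the far-away edges are once more controlled by the decay of the stabilization radius. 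The only delicate points are (i) verifying that the $\beta$-degree moment condition with $\beta > 2$ supplies the uniform integrability needed to upgrade the distributional convergence furnished by Lemma~\ref{poissonstabilize} to convergence of expectations, and (ii) the bookkeeping for the drifting ball in~\eqref{eq:vxn}; both are routine given the machinery already developed, so the bulk of the argument is essentially a transcription of the proof of Lemma~\ref{lem:hzz}.
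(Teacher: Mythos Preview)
Your proof is correct and follows essentially the same route as the paper's own argument: for~\eqref{eq:vx} the paper simply invokes~\eqref{kappaz} from Lemma~\ref{lem:hzz} with the kernel $h(x,y)=w(x)v(y)$, which is exactly the decomposition you spell out; for~\eqref{eq:vxn} the paper likewise splits off the $v(x)$ piece, controls the near part of the error via Palm theory and the Lebesgue-point hypothesis, controls the far part by $\E d_K^\uparrow(x_N(z),\sG(\cP_{N\phi_N}))\to 0$ using Lemma~\ref{poissonstabilize}(b), and finishes with the shifted-point coupling for the main term. Your use of the inclusion $B(x_N(z),KN^{-1/d})\subseteq B(x,(K+\|z\|)N^{-1/d})$ to reduce the drifting-ball estimate to the fixed-center Lebesgue bound is a slightly cleaner bookkeeping than the paper's direct treatment, but the substance is identical. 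One small quibble: the phrase ``continuity of $w$ at the Lebesgue point $x$'' is not quite right---being a Lebesgue point does not imply pointwise continuity---though the paper makes the same implicit step when passing from $w(x_N(z))$ to $w(x)$, and the statement is ultimately used only for almost every $x$.
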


\begin{proof}The limit in (\ref{eq:vx}) follows from~\eqref{kappaz} in Lemma \ref{hzz}, since $h(x, y)=w(x)v(y)$.  

It remains to show~\eqref{eq:vxn}. By the Palm theory of Poisson processes 
\begin{eqnarray}
& & \E \sum_{w\in \cP_{N\phi_N}}|v(w)-v(x)|\bm1\{w\in B(x_N(z), KN^{-\frac{1}{d}})\}\nonumber\\
&=&N\int_{B(x_N(z), KN^{-\frac{1}{d}})} |v(w)-v(x)|\phi_N(w)\mathrm dw\nonumber\\
&\leq&N\int_{B(x_N(z), KN^{-\frac{1}{d}})} \left(|v(w)\phi_N(w)-v(x)\phi_N(x)|+v(x)|\phi_N(w)-\phi_N(x)|\right)\mathrm dw, \nonumber
\end{eqnarray}
which tends to zero as $N\rightarrow \infty$, if $z$ is a Lebesgue point of both $\phi$ and $v(\cdot)\phi(\cdot)$. 

Since $v$ has range $[0, 1]$, this implies that
\begin{align}
\limsup_{N\rightarrow \infty}\E \sum_{w\in \cP_{N\phi_N}}| & v(w)-v(x)|\bm1\{(x_N(z), w)\in \sG(\cP_{N\phi_N})\}  \nonumber\\
\leq & \limsup_{N\rightarrow \infty} \E d_K^\uparrow(x_N(z), \sG(\cP_{N\phi_N})
\label{eI}
\end{align}
where 
\begin{align}
d_K^\uparrow&(x_N(z), \sG(\cP_{N\phi_N}))\nonumber\\
&=\sum_{w\in \cP_{N\phi_N}}\bm  1\{(x_N(z), w)\in E(\sG(\cP_{N\phi_N}))\} \bm 1\{w\notin B(x_N(z), KN^{-\frac{1}{d}})\}.\nonumber
\end{align}
Therefore, by Lemma \ref{poissonstabilize}(b), 
\begin{align}\label{eII}
\limsup_{K\rightarrow \infty} \limsup_{N\rightarrow \infty} & \E d_K^\uparrow(x_N(z), \sG(\cP_{N\phi_N}))\nonumber\\
=&\limsup_{K\rightarrow \infty}  \E d_K^\uparrow(0, \sG(\cP_{\phi(z)})=0.
\end{align}
From (\ref{eI}) and (\ref{eII}), for every $x$ which is a Lebesgue point of $\phi$ and $v(\cdot)\phi(\cdot)$,  
\begin{align}
\lim_{N\rightarrow \infty}\E\{\langle v, \zeta^{x_N(z)}_{\phi_N} \rangle\}=&\lim_{N\rightarrow \infty} w(x_N(z))v(x)\E d^\uparrow(x_N(z), \sG(\cP_{N\phi_N}))\nonumber\\
=&\E \Delta_0^\uparrow h(z, z),
\end{align}
completing the proof of~\eqref{eq:vxn}.
\end{proof}

By \cite[Lemma 3.6]{penroseclt} it follows that $$\langle v, \zeta^x_{\cP_{N\phi_N}^{x_N(z)}} \rangle \langle v, \zeta^{x_N(z)}_{\cP_{N\phi_N}^{x}} \rangle\dto h^2(x, x)\E\{d^\uparrow(0, \sG(\cP_1^z)) d^\uparrow(z, \sG(\cP_1^0) )\}.$$ Moreover, since $\sG$ is $\beta$-degree moment bounded for $\beta>4$ by assumption, the LHS above is uniformly integrable, which implies the convergence in expectation. This is summarized in the following lemma:

\begin{lem} [{Penrose \cite[Lemma 3.6]{penroseclt}}] \label{degjoint}Let $\sG$ be as in Proposition~\ref{CLT_R2}. For every Lebesgue point $x$ of $\phi$ and any $z\in \R^d$,
\begin{align}\label{eq:vxz}
\lim_{N\rightarrow\infty}\E\{\langle v, \zeta^x_{\cP_{N\phi_N}^{x_N(z)}} \rangle & \langle v, \zeta^{x_N(z)}_{\cP_{N\phi_N}^{x}} \rangle \}\nonumber\\
= & h^2(x, x)\E\{d^\uparrow(0, \sG(\cP_{\phi(x)}^z)) d^\uparrow(z, \sG(\cP_{\phi(x)}^0) )\}.
\end{align}
\end{lem}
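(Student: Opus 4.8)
The plan is to follow the coupling argument from the proof of Lemma~\ref{poissonstabilize}(b) together with the weight-replacement step from the proof of Lemma~\ref{lem:hzz}; this is the adaptation of \cite[Lemma 3.6]{penroseclt} to the $N$-dependent intensity $\phi_N$. Fix $z\in\R^d$ and a point $x$ that is a Lebesgue point of $\phi$ and of $v(\cdot)\phi(\cdot)$, and abbreviate $\cH_N:=\cP_{N\phi_N}\cup\{x,x_N(z)\}$. Expanding the two inner products, the quantity on the left of~\eqref{eq:vxz} equals
\begin{align*}
w(x)\,w(x_N(z))\sum_{y,y'\in\cH_N} v(y)v(y')\,\bm 1\{(x,y),(x_N(z),y')\in E(\sG(\cH_N))\},
\end{align*}
which, since $w,v\in[0,1]$, is dominated by $d^\uparrow(x,\sG(\cH_N))\,d^\uparrow(x_N(z),\sG(\cH_N))$. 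By Cauchy--Schwarz and the $\beta$-degree moment condition~\eqref{degmoment} applied with the finite set $\{x,x_N(z)\}$, this product is bounded in $L^{\beta/2}$ uniformly in $N$, and since $\beta/2>1$ this supplies the uniform integrability needed to pass from convergence in distribution to convergence in expectation. So it suffices to identify the distributional limit.

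The first reduction replaces the weights $v(y),v(y')$ by $v(x)$ and $w(x_N(z))$ by $w(x)$. By the Palm formula for Poisson processes and $\phi_N\to\phi$ uniformly,
\begin{align*}
\E\sum_{y\in\cP_{N\phi_N}}|v(y)-v(x)|\,\bm 1\{y\in B(x,KN^{-1/d})\}=N\!\!\int_{B(x,KN^{-1/d})}\!\!|v(y)-v(x)|\,\phi_N(y)\,\mathrm dy+o(1)\longrightarrow 0
\end{align*}
for each fixed $K$, because $x$ is a Lebesgue point of $\phi$ and of $v(\cdot)\phi(\cdot)$; in fact the same estimate shows the left side converges to $0$ in $L^2$. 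Multiplying by the out-degree of the other vertex (uniformly $L^2$-bounded by~\eqref{degmoment}), applying Cauchy--Schwarz, and discarding the contribution of neighbours outside $B(x,KN^{-1/d})$ by Lemma~\ref{poissonstabilize} as $K\to\infty$ (exactly as in~\eqref{exp211}), this reduces the problem to showing that $\E\big[d^\uparrow(x,\sG(\cH_N))\,d^\uparrow(x_N(z),\sG(\cH_N))\big]$ converges to $\E\big[d^\uparrow(0,\sG(\cP_{\phi(x)}^{z}))\,d^\uparrow(z,\sG(\cP_{\phi(x)}^{0}))\big]$; multiplying back the prefactor $w(x)^2v(x)^2=h^2(x,x)$ (recall $h(x,y)=w(x)v(y)$) then gives~\eqref{eq:vxz}.

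For the reduced statement I would apply the scaling map $u\mapsto N^{1/d}(u-x)$, which by translation and scale invariance of $\sG$ carries the pair of out-degrees at $x$ and $x_N(z)$ onto the pair of out-degrees at $0$ and $z$ in $\sG\big(N^{1/d}(\cP_{N\phi_N}-x)\cup\{0,z\}\big)$. By the shifted-centre coupling of \cite[Lemma 3.1]{penroseclt} (the Lebesgue-point version used in the proof of Lemma~\ref{poissonstabilize}(b)), for each fixed $K$ the rescaled point process agrees on $B(0,K)$ with a homogeneous Poisson process of intensity $\phi(x)$ with probability tending to $1$. Since $d_\sG^\uparrow$ is power law stabilizing of order $q>\frac{\beta}{\beta-2}$, the radii of stabilization of the out-degree measure at $0$ and at $z$ are almost surely finite, so on the coupling event and on the event that both radii are below $K$, both out-degrees — hence their product — are determined by the configuration in $B(0,K)$. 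Letting $N\to\infty$ and then $K\to\infty$, the pair of out-degrees converges in distribution to $\big(d^\uparrow(0,\sG(\cP_{\phi(x)}^{z})),\,d^\uparrow(z,\sG(\cP_{\phi(x)}^{0}))\big)$, and with the uniform integrability established above this yields the convergence in expectation, completing the proof.

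I expect the main obstacle to be the joint control of the two colliding points $x$ and $x_N(z)$: one must show that, after rescaling, their out-degrees decouple into those of the two \emph{fixed} points $0$ and $z$ of a homogeneous Poisson process. This is precisely where power-law stabilization of the out-degree measure (rather than mere stabilization of the total-degree function, as used in Proposition~\ref{CONSISTENT} and Theorem~\ref{TH:CLT_R1}) is needed — it guarantees an almost surely finite radius of stabilization of $d_\sG^\uparrow$ at both $0$ and $z$, which is what makes the localization onto $B(0,K)$ legitimate. A secondary, purely technical point is the replacement of the deterministic factor $w(x_N(z))$ by $w(x)$; as in the proof of the preceding lemma this is used at a.e.\ $x$ (it holds at every continuity point of $w$, hence a.e.\ when $f$ and $g$ are a.e.\ continuous, as in Corollary~\ref{cor:KNN}).
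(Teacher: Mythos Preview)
Your proposal is correct and follows the same approach as the paper: the paper's proof is essentially a one-line citation of \cite[Lemma 3.6]{penroseclt} for the distributional convergence, together with the observation that the $\beta$-degree moment condition yields uniform integrability of the product via Cauchy--Schwarz, exactly as you argue. Your write-up simply unpacks the content of Penrose's lemma (the weight-replacement step reducing to unweighted degrees, the coupling/rescaling to a homogeneous process, and then the stabilization argument at the two fixed points $0$ and $z$), which is precisely what that reference does.

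One small remark: for this particular lemma the \emph{pointwise} distributional convergence of the pair of out-degrees requires only ordinary stabilization (a.s.\ finite radius), not the full power-law tail bound; power-law stabilization is what is used later, in the proof of the limit of $b_N$, to dominate the integrand uniformly in $z$ and pass the limit through the double integral \eqref{bn}. So your identification of ``the main obstacle'' is right, but the hypothesis you invoke is slightly stronger than necessary at this stage.
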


Using this the limit of $b_N$ (defined in~\eqref{bn}) can be derived. The limit in~\eqref{varcond} then follows from~\eqref{an} and the following lemma.

\begin{lem}Let $\sG$ be as in Proposition~\ref{CLT_R2} and $b_N$ as in~\eqref{bn}. Then as $N\rightarrow \infty$
$$b_N\rightarrow r^2  \int  \left(\E\{d^\uparrow(0, \sG(\cP_{1}^z)) d^\uparrow(z, \sG(\cP_{1}^0) )  \}-(\E\Delta^\uparrow_0)^2 \right)\mathrm d z\int \frac{f(y)^2g^2(y)}{\phi^3(y)}\mathrm dy.$$
\end{lem}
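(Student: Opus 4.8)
The plan is to evaluate $b_N$ from~\eqref{bn} by combining the two previous lemmas with a change of variables and a Lebesgue-point/dominated-convergence argument. First I would fix a point $x \in \R^d$ that is a Lebesgue point of $\phi$ and of $v(\cdot)\phi(\cdot)$ (almost every $x$ is, by \cite[Theorem 7.7]{rudin}), and hold it fixed while letting $N \to \infty$ inside the inner integral over $z$. By Lemma~\ref{degjoint}, for each fixed $z$,
\begin{align*}
\E\{\langle v, \zeta^x_{\cP_{N\phi_N}^{x_N(z)}}\rangle \langle v, \zeta^{x_N(z)}_{\cP_{N\phi_N}^{x}}\rangle\} \to h^2(x,x)\,\E\{d^\uparrow(0, \sG(\cP_{\phi(x)}^z))\, d^\uparrow(z, \sG(\cP_{\phi(x)}^0))\},
\end{align*}
and by~\eqref{eq:vx}--\eqref{eq:vxn} the subtracted product term converges to $h^2(x,x)(\E\Delta_0^\uparrow)^2$. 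Since $\phi_N \to \phi$ uniformly, the prefactor $\phi_N(x)\phi_N(x_N(z)) \to \phi(x)^2$. So the integrand over $z$ converges pointwise to $\phi(x)^2 h^2(x,x)\big(\E\{d^\uparrow(0,\sG(\cP_{\phi(x)}^z))d^\uparrow(z,\sG(\cP_{\phi(x)}^0))\} - (\E\Delta_0^\uparrow)^2\big)$.

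Next I would invoke scale invariance of $\sG$ to rewrite $\E\{d^\uparrow(0, \sG(\cP_{\phi(x)}^z))\, d^\uparrow(z, \sG(\cP_{\phi(x)}^0))\}$ in terms of the rate-$1$ process: substituting $z = \phi(x)^{-1/d} u$ and using $\phi(x)^{1/d}(\cP_{\phi(x)} - a) \stackrel{D}{=} \cP_1$, the inner $z$-integral becomes $\phi(x)^{-1}\int(\E\{d^\uparrow(0,\sG(\cP_1^u))d^\uparrow(u,\sG(\cP_1^0))\} - (\E\Delta_0^\uparrow)^2)\mathrm du$, which cancels one factor of $\phi(x)$. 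Then, using $h(x,x) = pqf(x)g(x)/\phi(x)^2$ so that $\phi(x) h^2(x,x) = p^2q^2 f(x)^2 g(x)^2/\phi(x)^3$, and recalling $r = 2pq$ so $r^2 = 4p^2q^2$ — wait, more carefully $p^2q^2 = r^2/4$, but the target has coefficient $r^2$, so the factor-of-$4$ discrepancy must come from the change of variables in the $z$-integral combined with the $N^{-1/d}$ scaling in $x_N(z)$; I would track this constant carefully. The outer $x$-integral then produces $\int f(y)^2 g^2(y)/\phi^3(y)\,\mathrm dy$ times the $z$-integral, matching the claimed limit.

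The technical crux is justifying the interchange of limit and the double integral over $(x,z)$: this requires a dominating function. The $\beta$-degree moment condition for $\beta > 4$ gives, by Cauchy--Schwarz, a uniform bound $\sup_N \E\{\langle v,\zeta^x_{\cP_{N\phi_N}^{x_N(z)}}\rangle\langle v,\zeta^{x_N(z)}_{\cP_{N\phi_N}^{x}}\rangle\} \le \sup_N \E\{d^\uparrow(x,\sG(\cP_{N\phi_N}\cup\{x_N(z)\}))^2\}^{1/2}\E\{d^\uparrow(x_N(z),\sG(\cP_{N\phi_N}\cup\{x\}))^2\}^{1/2} < \infty$; this handles integrability in $x$ (the $x$-integrand is bounded by a constant times $\phi_N(x)\phi_N(x_N(z))$, integrable when further multiplied down). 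Integrability in $z$ — i.e.\ that the inner integral is finite and the convergence is dominated in $z$ — is where the power-law stabilization of order $q > \beta/(\beta-2)$ enters: the covariance-type quantity $\E\{d^\uparrow(0,\sG(\cP_1^z))d^\uparrow(z,\sG(\cP_1^0))\} - (\E\Delta_0^\uparrow)^2$ decays like $\tau(\|z\|)^{1-2/\beta}$ (again by Hölder, splitting on the stabilization event), which is integrable precisely under the stated order condition. This is exactly the estimate carried out in \cite[Lemma 4.2 and Section 4]{penroseclt}, and I would cite that machinery rather than reprove it. Assembling the pointwise limit with the dominating function via the dominated convergence theorem yields the stated formula for $\lim_{N\to\infty} b_N$.
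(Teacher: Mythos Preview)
Your proposal is essentially correct and follows the same route as the paper. The paper's proof is terser: it directly invokes \cite[Theorem 2.1]{penroseclt} together with Lemma~\ref{degjoint} to obtain the intermediate limit
\[
b_N \to r^2 \int_{\R^{2d}} \Big(\E\{d^\uparrow(0,\sG(\cP_{\phi(x)}^z))\,d^\uparrow(z,\sG(\cP_{\phi(x)}^0))\} - (\E\Delta_0^\uparrow)^2\Big)\,u(x)\,\mathrm dx\,\mathrm dz,
\]
with $u(x)=f^2(x)g^2(x)/\phi^2(x)$, and then performs the scale-invariance substitution $z\mapsto \phi(x)^{-1/d}z$ to reduce $\cP_{\phi(x)}$ to $\cP_1$ and convert $u(x)$ into $f^2g^2/\phi^3$. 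What you sketch --- pointwise convergence of the integrand via the two preceding lemmas, followed by dominated convergence in $(x,z)$ with the $z$-integrability coming from the power-law stabilization bound on the covariance --- is precisely the content of \cite[Theorem 2.1]{penroseclt}, so you are unpacking the citation rather than arguing differently.

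On the constant: your arithmetic is right and your instinct to hunt for a hidden factor of $4$ is misplaced. One has $h(x,x)=pq\,f(x)g(x)/\phi(x)^2$, so $\phi(x)^2 h^2(x,x)=p^2q^2 f^2g^2/\phi^2=(r^2/4)\,f^2g^2/\phi^2$; the change of variables $z\mapsto\phi(x)^{-1/d}z$ contributes only the Jacobian $\phi(x)^{-1}$, turning $\phi^{-2}$ into $\phi^{-3}$. There is no further factor of $4$. The coefficient in the lemma statement (and in the paper's intermediate display) should read $r^2/4$, which is exactly what is needed to match the expression for $\sigma_{\sG,2}^2$ in Theorem~\ref{CLT_R2}. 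Do not contort your argument to manufacture the stated $r^2$.
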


\begin{proof} Let $u(x)=\frac{f^2(x)g^2(x)}{\phi^2(x)}$. Since $\sG$ is power-law stabilizing (recall Definition~\ref{expstabilize}), by \cite[Theorem 2.1]{penroseclt} and Lemma~\ref{degjoint} it follows that
\begin{align}\label{pfbn}
b_N \rightarrow& r^2 \int_{\R^2}  \left(\E\{d^\uparrow(0, \sG(\cP_{\phi(x)}^z)) d^\uparrow(z, \sG(\cP_{\phi(x)}^0) )  \}-(\E\Delta^\uparrow_0)^2 \right)u(x) \mathrm dx\mathrm d z.
\end{align}
Note that $$\E\{d^\uparrow(0, \sG(\cP_{\phi(x)}^z)) d^\uparrow(z, \sG(\cP_{\phi(x)}^0) )=\E\{d^\uparrow(0, \sG(\cP_{1}^{\phi(x)^{-\frac{1}{d}} z})) d^\uparrow(\phi(x)^{-\frac{1}{d}} z, \sG(\cP_{1}^0) ),$$ by the  scale invariance of the degree function. Now, substituting $y=\phi(x)^{-\frac{1}{d}}z$ in~\eqref{pfbn}, the result  follows.
%
%
%
%
%
%
\end{proof}

The proof of the limiting normal distribution in Proposition \ref{CLT_R2} now follows from \cite[Theorem 2.2]{penroseclt} (the slight modification required to adapt the result in our setup is straightforward and the details are omitted).

\subsubsection{Proof of Theorem \ref{TH:CLT_R}}
\label{pfcltR}

Recall from~\eqref{R1} and~\eqref{R2} that $\cR(\sG(\cZ_N'))=\cR_1(\sG(\cZ_N'))+\cR_2(\sG(\cZ_N'))$.  Let $ 
\kappa_{\sG}$ and $\tau_\sG$ be as defined in Theorems~\ref{TH:CLT_R1} and~\ref{CLT_R2}. Using $\cR_2(\sG(\cZ_N')) \dto V\sim N(0, \tau_\sG^2)$ (Proposition~\ref{CLT_R2}), Proposition~\ref{cltR1}, and the contimous mapping theorem, for every $t \in \R$, 
$$\E(e^{i t \cR(\sG(\cZ_N'))}|\cF)=e^{i t \cR_2(\sG(\cZ_N'))}\E(e^{i t \cR_1(\sG(\cZ_N'))}|\cF)\dto e^{i t V+\frac{1}{2} 
\kappa_{\sG}^2 t^2}.$$ 

Now, by the Dominated Convergence Theorem 
\begin{align*}
\E(e^{i t \cR(\sG(\cZ_N'))})= \E\E(e^{i  t \cR(\sG(\cZ_N'))}|\cF)\rightarrow & e^{\frac{1}{2}( 
\kappa_{\sG}^2+\tau_\sG^2) t^2},
\end{align*}
which is the characteristic function of $N(0, \kappa_{\sG}^2+\tau_\sG^2)$, as required.

\section{Proof of  Theorem~\ref{EFFSECOND}}
\label{sec:pfeffsecond}

Let $\sX_{N_1}'$ and $\sY_{N_2}'$ be samples from $\P_{\theta_1}$ and $\P_{\theta_2}$ as in~\eqref{epsilonN}, respectively. Let $\phi_N^{\theta_1, \theta_2}(x)=\frac{N_1}{N}f(x|\theta_1)+\frac{N_2}{N}f(x|\theta_2)$, and define
\begin{equation*}\label{hnxyparametric}
h_N^{\theta_1, \theta_2}(x, y)=\frac{N_1N_2 f(x|\theta_1)f(y|\theta_2)}{(N_1f(x|\theta_1)+N_2f(x|\theta_2))(N_1f(y|\theta_1)+N_2f(y|\theta_2))}.
\end{equation*}
Moreover, denote by $\rho^{\theta_1, \theta_2}_{K}(x, y)=\P((x, y)\in E(\cN_K(\cP_{N\phi_N}^{x, y})))$, the probability there is an edge from $x$ to $y$ in the $K$-NN graph on $\cP_{N\phi_N}^{x, y}$. Then, as in~\eqref{exp}, 
\begin{align}\label{beta0}
\E_{H_1}(T(\sG(\cZ_N')))=&N^2 \int_{S\times S} h_{N}^{\theta_1, \theta_2}(x, y)\P((x, y)\in E(\cN_K(\cP_{N\phi_N}^{x, y}))) \phi_N^{\theta_1, \theta_2}(x)\phi_N^{\theta_1, \theta_2}(y)\mathrm dx\mathrm dy \nonumber\\
=&N_1N_2 \int_{S\times S}  f(x|\theta_1)f(y|\theta_2) \rho^{\theta_1, \theta_2}_{K}(x, y) \mathrm dx\mathrm dy \nonumber\\
:=&\frac{N_1N_2}{N^2} \mu_N(\theta_1, \theta_2),
\end{align}
where  $\mu_N(\theta_1, \theta_2)= N^2\int_{S\times S}  f(x|\theta_1)f(y|\theta_2) \rho^{\theta_1, \theta_2}_{K}(x, y) \mathrm dx\mathrm dy$. (Note that $\mu_N$ is related to the function $\delta_N$ in Section \ref{sec:pfoutline} as: $\delta_N(\theta_1, \theta_2) =\frac{N_1N_2}{N^2} \mu_N(\theta_1, \theta_2)$.)

Under the null, $\E_{H_0}(T(\sG(\cZ_N')))=\frac{N_1N_2}{N^2}\mu_N(\theta_1, \theta_1)=\frac{N_1N_2}{N^2}\E(|E(\cN_K(\cP_{N\phi_N}))|)=\frac{K N_1N_2}{N}$, that is, $\mu_N(\theta_1, \theta_1)=K N$. To derive power against local alternatives \eqref{epsilonN}, we need to understand the distribution of 
$$W_N:=\frac{1}{\sqrt N}\left\{ T(\sG(\cZ_N'))-  \frac{N_1N_2}{N^2}\mu_N(\theta_1, \theta_1) \right\},$$ when $\theta_2-\theta_1=\varepsilon_N$ such that $||\varepsilon_N|| \rightarrow 0$. Note that 
\begin{align}\label{eq:W}
W_N=W_N^{(1)}+ \frac{N_1N_2}{N^2}  \cdot \frac{1}{\sqrt N} \bigg\{\mu_N(\theta_1, \theta_2) -\mu_N(\theta_1, \theta_1)\bigg\},
\end{align}
where $$W_N^{(1)}=\frac{1}{\sqrt N} \left\{ T(\sG(\cZ_N'))-  \frac{N_1N_2}{N^2}\mu_N(\theta_2, \theta_1) \right\}.$$ It follows from the proof of Theorem \ref{TH:CLT_R}, that under $\theta_2=\theta_1+\varepsilon_N$, with $||\varepsilon_N|| \rightarrow 0$, $W_N^{(1)}\dto N(0,  \sigma_K^2)$, where $\sigma_K^2$ is the variance of the test statistic under the null, as defined in \eqref{sigmaK}. Therefore, to derive the limiting distribution of \eqref{eq:W}, it suffices to derive the limit of 
\begin{align}
\frac{1}{\sqrt N} \bigg\{\mu_N(\theta_1, \theta_2) -\mu_N(\theta_1, \theta_1)\bigg\} 
=&\frac{\varepsilon_N^\top \grad \mu_N(\theta_1, \theta_1)}{\sqrt N}+\frac{1}{2}\frac{\varepsilon_N^\top \mathrm H \mu_N(\theta_1, \theta_1) \varepsilon_N}{\sqrt N}+ \cR_N,
\label{D}
\end{align}  
where $\grad\mu_N(\theta_1, \theta_1):=\grad_{\theta}\mu_N(\theta_1, \theta)|_{\theta=\theta_1} \in \R^p$ is the gradient vector (with respect to the second coordinate $\theta$) of $\mu_N(\theta_1, \theta)$ evaluated at $\theta=\theta_1$, and $\mathrm H \mu_N(\theta_1, \theta_1)\in \R^{p \times p}$ is the Hessian matrix (with respect to the second coordinate $\theta$) of $\mu_N(\theta_1, \theta)$, at $\theta=\theta_1$, and the remainder term 
\begin{align}\label{errorterm}
\cR_N=\frac{1}{6\sqrt N}\sum_{1 \leq a, b, c \leq p}\varepsilon_{N_a}\varepsilon_{N_b}\varepsilon_{N_c}\frac{\partial^3\mu_N(\theta_1, \theta)}{\partial \theta_{a}\partial \theta_{b}\partial \theta_{c}}\Big|_{\theta=c \theta_1+(1-c) \theta_2},
\end{align} for some $c \in (0, 1)$, where $\theta=(\theta_{1}, \theta_{2}, \ldots, \theta_{p})'$ and $\varepsilon_N=(\varepsilon_{N_1}, \varepsilon_{N_2}, \ldots, \varepsilon_{N_p})'$. 

The limits and orders of the gradient term, the Hessian term, and the remainder term are given in the following lemmas: 

\begin{lem}[Limit of the Gradient Term]\label{gradppn} Let $\varepsilon_N=\frac{h}{N^{\frac{1}{2}-\frac{2}{d}}}$, for some $h \in \R^p\backslash \{\bm 0\}$. Then, under the assumptions in Theorem~\ref{EFFSECOND},
\begin{align*}
\frac{\varepsilon_N^\top \grad  \mu_N(\theta_1, \theta_1)}{\sqrt N} \rightarrow    \frac{p C_{K, 2}}{2d}  \int_{S}  h^\top \grad_{\theta_1} \left(\frac{ \tr(\mathrm H_x f(x|\theta_1))}{f(x|\theta_1)}  \right) f^{\frac{d-2}{d}}(x|\theta_1) \mathrm d x =  -\frac{2 \sigma_K}{r} a_{K, \theta_1}(h),
\end{align*}
where $C_{K, 2}$ is as defined in \eqref{eq:C} and $a_{K, \theta_1}(h)$ as defined in \eqref{eq:aKtheta1}. 
\end{lem}

\begin{lem}[Limit of the Hessian Term] \label{hessianppn} Let $\varepsilon_N=h N^{-\frac{1}{4}}$, for some $h \in \R^p\backslash \{\bm 0\}$. Then, under the assumptions in Theorem~\ref{EFFSECOND},
\begin{align}\label{hessianlimit}
\frac{\varepsilon_N^\top \mathrm H  \mu_N(\theta_1, \theta_1) \varepsilon_N}{\sqrt N} \rightarrow  -  r K  \cdot \E \left( \frac{h^\top\grad_{\theta_1} f(X|\theta_1)}{f(X|\theta_1)} \right)^2 = - \frac{2 \sigma_K}{r} b_{K, \theta_1}(h), 
\end{align}
where the expectation is taken over $X \sim f(\cdot|\theta_1)$ and $b_{K, \theta_1}(h)$ as defined in \eqref{eq:bKtheta1}. 
\end{lem}

\begin{lem}[Controlling the Reminder Term] \label{remainderppn} Let $\varepsilon_N=h N^{-\delta_d}$, for some $h \in \R^p\backslash \{\bm 0\}$. Then, under the assumptions in Theorem~\ref{EFFSECOND},
$$|\cR_N|=O(||h||^3N^{\frac{1}{2}-3 \delta_d}).$$
\end{lem}

The proof of these lemmas are given below.  From the proofs of Lemmas \ref{gradppn} and \ref{hessianppn} it follows that for $\varepsilon_N=h N^{-\delta_d}$, where $h \in \R^p\backslash \{\bm 0\}$, 
\begin{align}\label{eq:ghdeltad}
\frac{|\varepsilon_N^\top \grad  \mu_N(\theta_1, \theta_1)|}{\sqrt N} = O(||h|| N^{\frac{1}{2}-\frac{2}{d}-\delta_d}) \quad \text{and} \quad \frac{\varepsilon_N^\top \mathrm H  \mu_N(\theta_1, \theta_1) \varepsilon_N}{\sqrt N} = \Theta(||h||^2N^{\frac{1}{2}-2 \delta_d}),
\end{align}
where the lower bound on  the Hessian uses Assumption \ref{assumptionlocalpower}(c). Before describing the proofs of these lemmas. we show how they can be used to complete the proof of Theorem \ref{EFFSECOND}. There are two cases depending on the whether dimension is 8 or higher. 

\begin{itemize}

\item[1.]  {\it Suppose the dimension $d \leq 8$}: There are three cases depending how $\varepsilon_N$ decays with $N$. 
\begin{itemize}
\item[(a)] Suppose $N^{\frac{1}{4}}\varepsilon_N=h_N$ such that $||h_N||\rightarrow 0$: Taking $\delta_d=\frac{1}{4}$ in Lemma \ref{remainderppn} it follows that $|\cR_N| \rightarrow 0$. Moreover, by \eqref{eq:ghdeltad}  it follows that 
\begin{align}\label{eq:gradhessian1}
\frac{1}{\sqrt N} \varepsilon_N^\top \grad \mu_N(\theta_1, \theta_1) =O(N^{\frac{1}{4}-\frac{2}{d}}||h_N||), \quad  \frac{1}{\sqrt N} \varepsilon_N^\top \mathrm H \mu_N(\theta_1, \theta_1) \varepsilon_N =\Theta(||h_N||^2),
\end{align}
respectively. Then by \eqref{D}, $$\frac{1}{\sqrt N}\bigg\{\mu_N(\theta_1, \theta_2) -\mu_N(\theta_1, \theta_1)\bigg\} \rightarrow 0,$$ which implies $W_N \dto N(0, \sigma_K^2)$ under $H_1$ as well, where $\sigma_K^2$ is the variance under the null $H_0$, as defined in \eqref{sigmaK}. Therefore, in this regime, the limiting power of the test~\eqref{rejregionN} is $\alpha$.

\item[(b)] Suppose $N^{\frac{1}{4}}\varepsilon_N \rightarrow h$. As before, by  Lemma \ref{gradppn} and \ref{hessianppn}, combined with \eqref{eq:W} and \eqref{D}, it follows that the limiting power of the test~\eqref{rejregionN} is \eqref{eq:effhessian} for dimension $d \leq 7$, and \eqref{eq:effgradhessian} when dimension $d=8$.

\item[(c)] Suppose $N^{\frac{1}{4}}\varepsilon_N = h_N$ such that $||h_N||\rightarrow \infty$ (and $\varepsilon_N \rightarrow 0$). In this case, $|\cR_N|=O(||h_N||^3N^{-\frac{1}{4}})$. Note that $||h_N||^2 \gg N^{-\frac{1}{4}}||h_N||^3$, whenever $||\varepsilon_N|| \rightarrow 0$, which implies, by \eqref{eq:gradhessian1} $$\frac{1}{\sqrt N}\bigg\{\mu_N(\theta_1, \theta_2) -\mu_N(\theta_1, \theta_1)\bigg\} \rightarrow \infty,$$ and the limiting power of the test~\eqref{rejregionN} is 1.

\end{itemize}

\item[2.]  {\it Suppose the dimension  $d \geq 9$}: There are four cases depending how $\varepsilon_N$ decays with $N$. 

\begin{itemize}
\item[(a)] Suppose $N^{\frac{1}{2}-\frac{2}{d}}\varepsilon_N=h_N$ such that $||h_N||\rightarrow 0$: In this case, by (\ref{eq:ghdeltad}), 
$$\frac{1}{\sqrt N}|\varepsilon_N^\top \grad \mu_N(\theta_1, \theta_1)| = O(||h_N||) \rightarrow 0.$$ Again, by (\ref{eq:ghdeltad}) and noting that $d \geq 9$, 
$$\frac{1}{\sqrt N} \varepsilon_N^\top \mathrm H \mu_N(\theta_1, \theta_1) \varepsilon_N = O(||h_N||^2 N^{-\frac{1}{2}+\frac{4}{d}})\rightarrow 0,$$ and $|\cR_N|=O(||h_N||^3 N^{-1+\frac{6}{d}}) \rightarrow 0$ (taking $\delta_d=\frac{1}{2}-\frac{2}{d}$ in Lemma \ref{remainderppn}). Then, by \eqref{eq:W} and \eqref{D}, the limiting power of the test~\eqref{rejregionN} is $\alpha$.

\item[(b)] Suppose $N^{\frac{1}{2}-\frac{2}{d}}\varepsilon_N \rightarrow h$.  As before, $\frac{1}{\sqrt N} \varepsilon_N^\top \mathrm H \mu_N(\theta_1, \theta_1) \varepsilon_N  = O(||h||^2 N^{-\frac{1}{2}+\frac{4}{d}}) \rightarrow 0$ and $|\cR_N|=O(||h_N||^3 N^{-1+\frac{6}{d}}) \rightarrow 0$. Therefore, by Lemma \ref{gradppn} combined  \eqref{eq:W} and \eqref{D}, it follows that the limiting power of the test~\eqref{rejregionN} is 
$$\Phi\left(z_\alpha+ \frac{rp C_{K, 2}}{4d\sigma_K}   \int_{S}  h^\top \grad_{\theta_1} \left(\frac{ \tr(\mathrm H_x f(x|\theta_1))}{f(x|\theta_1)}  \right) f^{\frac{d-2}{d}}(x|\theta_1) \mathrm d x \right),$$
as required.

\item[(c)]  Suppose $N^{\frac{1}{2}-\frac{2}{d}}\varepsilon_N=h_N$  such that $||h_N|| \rightarrow \infty$ and $||N^{\frac{2}{d}}\varepsilon_N|| \rightarrow 0$. In this case, using the assumption $\left| \frac{1}{||h_N||} \int_{S} h_N^\top \grad_{\theta_1} \left(\frac{ \tr(\mathrm H_x f(x|\theta_1))}{f(x|\theta_1)}  \right) f^{\frac{d-2}{d}}(x|\theta_1) \mathrm d x\right|\rightarrow \tau > 0$ (for some $\tau > 0$) and the proof of Lemma \ref{gradppn}, it can be shown that  
$$ \frac{1}{\sqrt N}|\varepsilon_N^\top \grad \mu_N(\theta_1, \theta_1)| =\Theta(||h_N||).$$ Then, from \eqref{eq:ghdeltad}, 
\begin{align*} 
\frac{1}{\sqrt N} \varepsilon_N^\top \mathrm H \mu_N(\theta_1, \theta_1) \varepsilon_N &= O(||h_N||^2 N^{-\frac{1}{2}+\frac{4}{d}}) \nonumber \\
&\ll ||h_N|| \tag*{(since $|| N^{-\frac{1}{2}+\frac{4}{d}}h_N|| =||N^{\frac{2}{d}} \varepsilon_N|| \ll 1$)}\nonumber \\
&\lesssim \frac{1}{\sqrt N}|\varepsilon_N^\top \grad \mu_N(\theta_1, \theta_1)| \nonumber \\
& \lesssim ||h_N||.
\end{align*} 
Moreover,  $|\cR_N|=O(||h_N||^3 N^{-1+\frac{6}{d}}) \ll ||h_N||$, since $||h_N|| \ll N^{\frac{1}{2}-\frac{4}{d}}$ by assumption.  This implies, by \eqref{D}, that $$\frac{1}{\sqrt N}\bigg\{\mu_N(\theta_1, \theta_2) -\mu_N(\theta_1, \theta_1)\bigg\}$$ converges to $-\infty$ or $\infty$ depending on whether $\frac{1}{||h_N||} \int_{S}  h_N^\top \grad_{\theta_1} \left(\frac{ \tr(\mathrm H_x f(x|\theta_1))}{f(x|\theta_1)}  \right) f^{\frac{d-2}{d}}(x|\theta_1) \mathrm d x$ converges to $<0 $ or $> 0$, and the power of the test~\eqref{rejregionN} converges to 1 or 0, respectively.

\item[(d)]  Suppose $N^{\frac{2}{d}}\varepsilon_N \rightarrow h$.  In this case, by  Lemma \ref{remainderppn}, 
\begin{align}\label{eq:remainder_threshold} 
|\cR_N|=O(N^{\frac{1}{2}-\frac{6}{d}}) \ll N^{\frac{1}{2}-\frac{4}{d}}.
\end{align}
Moreover, by \eqref{eq:ghdeltad}, $\frac{1}{\sqrt N}|\varepsilon_N^\top \grad \mu_N(\theta_1, \theta_1)|  =O(N^{\frac{1}{2}-\frac{4}{d}}) $ and $\frac{1}{\sqrt N} \varepsilon_N^\top \mathrm H \mu_N(\theta_1, \theta_1) \varepsilon_N = \Theta(N^{\frac{1}{2}-\frac{4}{d}})$, that is, the gradient and the Hessian terms are of the same order. In particular, defining $\bar \varepsilon_N=  \frac{1}{N^{\frac{1}{2}-\frac{4}{d}}} \varepsilon_N$ and $\hat \varepsilon_N=  \frac{1}{N^{\frac{1}{4}-\frac{2}{d}}} \varepsilon_N$ gives, 
\begin{align}\label{eq:gradhessiansum} 
\Delta_N&:=\frac{1}{N^{\frac{1}{2}-\frac{4}{d}}}   \left(\frac{1}{\sqrt N}\varepsilon_N^\top \grad \mu_N(\theta_1, \theta_1) + \frac{1}{\sqrt N} \varepsilon_N^\top \mathrm H \mu_N(\theta_1, \theta_1) \varepsilon_N \right) \nonumber \\
& =\frac{1}{\sqrt N}\bar \varepsilon_N^\top \grad \mu_N(\theta_1, \theta_1) + \frac{1}{\sqrt N} \hat \varepsilon_N^\top \mathrm H \mu_N(\theta_1, \theta_1) \hat \varepsilon_N.  
\end{align} 
Note that $N^{\frac{1}{2}-\frac{2}{d}}\bar \varepsilon_N \rightarrow h$ and $N^{\frac{1}{4}} \hat \varepsilon_N \rightarrow h$. Then using Lemma \ref{gradppn} and Lemma \ref{hessianppn} in \eqref{eq:gradhessiansum} above gives,
$$\Delta_N \rightarrow  -\frac{2 \sigma_K}{r} (a_{K, \theta_1}(h)+ b_{K, \theta_1}(h)). 
$$ This implies that 
$$\frac{1}{\sqrt N}\varepsilon_N^\top \grad \mu_N(\theta_1, \theta_1) + \frac{1}{\sqrt N} \varepsilon_N^\top \mathrm H \mu_N(\theta_1, \theta_1) \varepsilon_N = N^{\frac{1}{2}-\frac{4}{d}}  \Delta_N,$$
goes to $-\infty$, or $+\infty$, depending on whether $a_{K, \theta_1}(h)+ b_{K, \theta_1}(h)$ is positive or negative. Thus, by \eqref{eq:remainder_threshold} and \eqref{D}, $$\frac{1}{\sqrt N}\bigg\{\mu_N(\theta_1, \theta_2) -\mu_N(\theta_1, \theta_1)\bigg\}$$ converges to $-\infty$ or $\infty$, depending on whether $a_{K, \theta_1}(h)+ b_{K, \theta_1}(h)$ is positive or negative, respectively. Hence, the power of the test~\eqref{rejregionN} converges to 1 or 0,  depending on whether $a_{K, \theta_1}(h)+ b_{K, \theta_1}(h)$ is positive or negative, respectively.

\item[(e)]  Suppose $N^{\frac{2}{d}}\varepsilon_N=h_N$ such that $||h_N|| \rightarrow \infty$ (and $||\varepsilon_N|| \rightarrow 0$). Then by \eqref{eq:ghdeltad}, 
\begin{align*} 
\frac{1}{\sqrt N}|\varepsilon_N^\top \grad \mu_N(\theta_1, \theta_1)|  & =O(N^{\frac{1}{2}-\frac{4}{d}} ||h_N||) \nonumber \\
& \ll N^{\frac{1}{2}-\frac{4}{d}} ||h_N||^2 \nonumber \\
&\lesssim \frac{1}{\sqrt N} \varepsilon_N^\top \mathrm H \mu_N(\theta_1, \theta_1) \varepsilon_N .
\end{align*} 
Moreover, from Lemma \ref{remainderppn}, 
$$|\cR_N|=O(||h_N||^3 N^{\frac{1}{2}-\frac{6}{d}}) \ll N^{\frac{1}{2}-\frac{4}{d}}||h_N||^2 \lesssim \frac{1}{\sqrt N} \varepsilon_N^\top \mathrm H \mu_N(\theta_1, \theta_1) \varepsilon_N.  $$ 
Therefore, using $\frac{1}{\sqrt N} \varepsilon_N^\top \mathrm H \mu_N(\theta_1, \theta_1) \varepsilon_N = \Theta(N^{\frac{1}{2}-\frac{4}{d}} ||h_N||^2)$ (by \eqref{eq:ghdeltad}) and \eqref{D} implies $$\frac{1}{\sqrt N}\bigg\{\mu_N(\theta_1, \theta_2) -\mu_N(\theta_1, \theta_1)\bigg\} \rightarrow \infty,$$ and the power of the test~\eqref{rejregionN} converges 1.

\end{itemize}

\end{itemize}

This completes the proof of Theorem \ref{EFFSECOND}, assuming  Lemma \ref{gradppn}, \ref{hessianppn}, and \ref{remainderppn}. The rest of this section is organized as follows: We begin by with some general preparations in Section \ref{sec:general}. Then proofs of Lemma  \ref{gradppn}, Lemma \ref{hessianppn}, and Lemma \ref{remainderppn}, are then given in Sections~\ref{gradientlimit},~\ref{hessianlimit}, and \ref{remainderlimit}, respectively.

\subsection{Preparations}
\label{sec:general}

Recall that $\cP_1^0$ is the Poisson process with rate 1 in $\R^d$ with origin $0$ added to it. Fix $s\geq 0$, and applying \eqref{eq:mb1} to the function $\varphi^{(s)}(x, \cN_K(S))=\sum_{y \in S} ||x-y||^s \bm 1\{(x, y) \in E(\cN_K(S))\}$, gives 
\begin{align*}
\varphi^{(s)}(0, \cN_K(N^{\frac{1}{d}}(\cP_{Nf(\cdot|\theta_1)}-x))& = N^{\frac{s}{d}}\sum_{y \in \cP_{Nf(\cdot|\theta_1)}} ||x-y||^s \bm 1\{(x, y) \in E(\cN_K(\cP_{Nf(\cdot|\theta_1)}))\}\\
& \rightarrow  \varphi^{(s)}(0, \cN_K(\cP_1)),
\end{align*}
in distribution and in expectation. This implies 

\begin{align}\label{eq:deglength}
\E \varphi^{(s)}(0, \cN_K(N^{\frac{1}{d}}(\cP_{Nf(\cdot|\theta_1)}-x)) & = N^{\frac{s+d}{d}}\int_S ||x-y||^s \P((x, y) \in E(\cN_K(\cP_{Nf(x|\theta_1)}))) f(y|\theta_1) \mathrm dy \nonumber \\ 
& \rightarrow \E\varphi^{(s)}(0, \cN_K(\cP_{f(x|\theta_1)})) \nonumber \\
&=\frac{1}{f(x|\theta_1)^{\frac{s}{d}}} \E \varphi^{(s)}(0, \cN_K(\cP_1)).
\end{align}
Moreover, since the support of $f(\cdot|\theta_1)$ is compact, this convergence is uniformly in $x \in S$. Recalling \eqref{eq:C} it is easy to see that $C_{K, s}=\E \varphi^{(s)}(0, \cN_K(\cP_1))$. By the Palm formula, 
\begin{align} 
C_{K, s} & =  \sum_{b=0}^{K-1}  \int_{\R^d} ||x||^s  e^{-V_d ||x||^d } \frac{V_d^b ||x||^{db}}{b!} \mathrm d x \tag*{($V_d$ is the volume of the unit ball in $\R^d$ )}\nonumber\\
&=S_d \sum_{b=0}^{K-1} \frac{V_d^b}{b!} \int _0^\infty r^{s +bd+d-1} e^{-V_d r^d } \mathrm d r \tag*{($S_d$ is the surface area of the unit ball in $\R^d$ )} \nonumber\\
&= \left(\frac{1}{V_d}\right)^{\frac{s}{d}}    \sum_{b=0}^{K-1} \frac{1}{b!}\int _0^\infty t^{\frac{s}{d}+b} e^{-t} \mathrm d t\nonumber \tag{substituting $t=V_dr^d $}\\
\label{cd} &= \left(\frac{1}{V_d}\right)^{\frac{s}{d}} \sum_{b=0}^{K-1} \frac{1}{b!} \Gamma\left(\frac{s+bd+d}{d}\right).
\end{align}
This formula will be required in calculating the leading constants in the asymptotics  of the gradient and the Hessian terms. In particular, for $K=1$, the following useful identity is easy to derive from \eqref{cd}:
\begin{align}\label{eq:C1}
C_{1, s+d}=\frac{s+d}{d V_d} C_{1,s}.
\end{align}

The gradient and Hessian terms in \eqref{D} involve integrals of $f(\cdot|\theta)$ or $h^\top \grad_\theta f(\cdot|\theta)$ over small balls. To heal with such terms, the following result will be useful:

\begin{ppn}\label{dd+1}Let $S \subset \R^d$ be a compact and convex set, and $h: S \rightarrow \R$ be a three times continuously differentiable function, with gradient vector $\grad_x h(x) \in \R^d$ and Hessian matrix $\mathrm H_x h(x) \in \R^{d\times d}$, at the point $x$. For $x$ in the interior of $S$ and $r> 0$, define $$v_{x} (r)=\int_{B_S(x, r)} h(z)\mathrm dz,$$ where $B_S(x, r)=B(x, r)\cap S$. Then, for every fixed $x\in S$, the derivatives $v^{(s)}_{x}(r)=\frac{\partial^{s}}{\partial r^s}v_{x} (r)$ satisfy: 
\begin{itemize}
\item [(a)] $v^{(s)}_{x}(0)=0$, for $1 \leq s \leq d-1$.
\item [(b)] $v^{(d)}_{x}(0)=d! V_d h(x)$, where $V_d$ is the volume of the unit ball in $\R^d$.
\item [(c)] $v^{(d+1)}_{x}(0)=0$, 
\item [(d)] $v^{(d+2)}_{x}(0)=\frac{1}{2} (d+1)!V_d\tr(\mathrm H_x h(x))$. 
\item[(e)] For any compact interval $I \subset [0, \infty)$, $\sup_{r \in I}|v^{(d+3}_{x}(r)| < \infty $, uniformly in $x \in S$. 
\end{itemize}
\end{ppn}

\begin{proof} Note that 
\begin{align*}
v_{x} (r)=\int_{0}^r  \int_{\partial B_S(x, t)} h(z)  \mathrm dz\mathrm d t=\int_0^r H_x(t)\mathrm dt,
\end{align*}
where $H_x(t)=\int_{\partial B_S(x, t)} h(z)\mathrm dz$. (For any set $A \subset \R^d$, $\partial A$ will denote the boundary of the set $A$.) By the Taylor series expansion of $h(z)$ around $x$, 
\begin{align}\label{d}
& H_x(t) \nonumber \\
&=h(x)t^{d-1}S_d+ \int_{\partial B(x, t)}\left(\langle x-z, \grad_x h(x)\rangle + \frac{1}{2}   (x-z)^\top \mathrm H_x h(x) (x-z)  +   R_x(z) \right) \mathrm d z,
\end{align}
where $S_d=|\partial B(0, 1)|$ is the surface area of the unit ball in $\R^d$, and 
$$R_x(z)=\frac{1}{6}\sum_{1 \leq a, b, c \leq d}(x_a-z_a)(x_b-z_b)(x_c-z_c) \frac{\partial^3 h(s)}{\partial{s_a}\partial{s_b}\partial{s_c}}\Big|_{s=\zeta_x(z)},$$ 
where $\zeta_x(z) = c x+(1-c)z$, for some $c \in (0, 1)$. Since, by assumption, $h(\cdot)$ is a three times continuously differentiable function on a compact set $S$, $\sup_{a, b, c}\sup_{x \in S} |\frac{\partial^3 h(s)}{\partial{s_a}\partial{s_b}\partial{s_c}}|:= M< \infty$. Then defining $e_x(r):=\int_{B_S(x, r)}R_x(z) \mathrm dz$, 
\begin{align}
\left|e_x(r)\right| & \leq \frac{M}{6} \int_{B(x, r)} \sum_{1 \leq a, b, c \leq d}|x_a-z_a||x_b-z_b||x_c-z_c| \mathrm dz \nonumber \\
& \leq  \frac{M}{6} \int_{B(x, r)} ||x-z||_1^3 \mathrm dz \nonumber \\
& \leq  \frac{M d^{\frac{3}{2}}}{6} \int_{B(x, r)} ||x-z||^3 \mathrm dz \tag*{(using $||x||_1 \leq \sqrt d ||x||$)}\nonumber \\
& \leq  \frac{M d^{\frac{3}{2}}}{6} \int_{B(0, r)} ||z||^3 \mathrm dz \nonumber \\
\label{eq:werror} & = \frac{M d^{\frac{3}{2}}}{6} S_{d-1}\int_0^r a^{d+2} \mathrm da =   \frac{M d^{\frac{3}{2}}}{6 (d+3)} r^{d+3}.
\end{align} 
This implies that, for every $x\in S$, the $s$-th derivative $e_x^{(s)}(0)=0$, for all $1 \leq s \leq d+2$ and $\sup_{r \in I}|e^{(d+3)}(r)| < \infty$, uniformly in $x \in S$, for every compact interval $I \subset [0, \infty)$.

Now, let $r>0$ be such that $B(x, r) \subset S$ (which exists because $x$ is in the interior of $S$) . Note that, for all $x \in \R^d$ and $0 \leq t \leq r$, $$\int_{\partial B(x, t)}\langle x-z, \grad_x h(x)\rangle \mathrm dz=0,$$ by symmetry. Next, consider the spectral decomposition $\mathrm H_x h(x)=P_x^{\bot} \Lambda(x) P_x$, where $$\Lambda(x)=\diag(\lambda_1(x), \lambda_2(x), \ldots, \lambda_d(x))$$ is the diagonal matrix of eigenvalues of the Hessian matrix $\mathrm H_x h(x)$. Under orthogonal the transformation $z\mapsto P_x(z-x)$,  
\begin{align}
& \int_{\partial B(x, t)} (x-z)^\top \mathrm H_xh(x)(x-z) \mathrm dz \nonumber \\ 
=& \int_{\partial B(0, t)} \sum_{i=1}^d \lambda_i(x) z_i^2 \mathrm dz\nonumber\\
=&\frac{S_d}{d}\tr(\mathrm H_x h(x))t^{d+1} 
\tag*{(using $\int_{\partial B(0, t)} z_i^2 \mathrm d z= \frac{1}{d} \int_{\partial B(0, t)} ||z||^2 \mathrm d z=\frac{t^{d+1}}{d} S_{d-1})$} \nonumber \\
\label{dsecondterm} =&V_d \tr(\mathrm H_x h(x))t^{d+1},
\end{align}
since $V_d=\frac{S_d}{d}$. 

Therefore, for $r$ small enough~\eqref{d} and~\eqref{dsecondterm}, gives $H_x(t)=h(x) t^{d-1}S_d+\frac{1}{2}V_d \tr(\mathrm H_x h(x))t^{d+1}+\int_{\partial B(x, t)} R_x(z)\mathrm d z$, which implies 
$$v_{x} (r)=h(x) r^{d}V_d+\frac{1}{2 (d+2)}V_d \tr(\mathrm H_x h(x))r^{d+2}+e_x(r).$$
Then from the discussion following \eqref{eq:werror}, the result follows. 
\end{proof}

The following simple observation will also be useful in the proofs. To this end, note that $A \lesssim_\square B$ means $A \leq C(\square) B$, where $C:=C(\square) > 0$ is a constant that depends only on the subscripted quantities.

\begin{obs}\label{obsintconv}Let $A\subseteq \R^d$ be compact and convex with non-empty interior, and $z\in S^{d-1}$ be a fixed unit vector. Then for non-negative integers $r, s>0$,  and $N$ large enough,  
$$\int_{\R^d\backslash N^{\frac{1}{d}} A} ||t||^r |\langle t,  z \rangle|^s e^{-||t||^d}\mathrm dt \lesssim_d e^{-\frac{1}{2}N^\frac{1}{2}}.$$
\end{obs}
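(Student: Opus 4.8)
The plan is to throw away the anisotropy and reduce to a one-dimensional radial tail integral, then estimate it by an incomplete Gamma bound. First, note that the statement forces $A$ to contain the origin in its interior (otherwise $N^{1/d}A$ recedes from $0$ and the integral over its complement keeps the whole Gaussian-type mass near $0$); this is indeed the case in every application here, where $A=S-x$ for $x$ an interior point of the support $S$. Granting this, fix $\rho>0$ with $B(0,\rho)\subseteq A$. Scaling gives $B(0,\rho N^{1/d})=N^{1/d}B(0,\rho)\subseteq N^{1/d}A$, hence $\R^d\setminus N^{1/d}A\subseteq\{t\in\R^d:\|t\|>\rho N^{1/d}\}$. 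Since $\|z\|=1$, Cauchy--Schwarz gives $|\langle t,z\rangle|\le\|t\|$, so it suffices to bound $\int_{\|t\|>\rho N^{1/d}}\|t\|^{r+1}e^{-\|t\|^d}\,\mathrm dt$, whose integrand is now radial.

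Second, pass to polar coordinates and substitute $u=s^d$:
\[
\int_{\|t\|>\rho N^{1/d}}\|t\|^{r+1}e^{-\|t\|^d}\,\mathrm dt
= S_d\int_{\rho N^{1/d}}^{\infty}s^{r+d}e^{-s^d}\,\mathrm ds
= \frac{S_d}{d}\int_{\rho^d N}^{\infty}u^{\frac{r+1}{d}}e^{-u}\,\mathrm du,
\]
where $S_d=|\partial B(0,1)|$. This has reduced the problem to estimating an upper incomplete Gamma integral with lower limit $\rho^d N$.

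Finally, invoke the elementary tail bound $\int_x^\infty u^{a}e^{-u}\,\mathrm du\lesssim_a x^{a}e^{-x}\lesssim_a e^{-x/2}$ as $x\to\infty$ (to avoid citing it, one can instead use $s^d\ge(\rho N^{1/d})^{d-1}s$ for $s\ge\rho N^{1/d}$, bound $s^{r+d}e^{-s^d}\le C_{r,d}\,e^{-(\rho N^{1/d})^{d-1}s/2}$, and integrate directly). Either route gives a bound of order $e^{-\rho^d N/2}$; since $\rho$ and $d$ are fixed, $\rho^d N\ge N^{1/2}$ for all $N$ large, so the quantity is $\lesssim_d e^{-\frac{1}{2}N^{1/2}}$, as claimed (in fact far smaller). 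There is no real difficulty; the only step that needs care is the geometric reduction in the first paragraph — that $N^{1/d}A$ eventually contains a Euclidean ball of radius of order $N^{1/d}$ about the origin — which is exactly where the hypothesis on $A$ is used.
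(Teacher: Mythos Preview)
Your proof is correct and follows essentially the same route as the paper: reduce to a radial integral via Cauchy--Schwarz and a ball containment $B(0,\text{radius})\subset N^{1/d}A$, pass to polar coordinates, and bound the resulting incomplete Gamma tail. The only cosmetic difference is that the paper uses the weaker containment $B(0,\tfrac{1}{2}N^{1/(2d)})\subset N^{1/d}A$ (from $B(0,\tfrac{1}{2})\subset N^{1/(2d)}A$ for $N$ large), which yields exactly the stated $e^{-\frac{1}{2}N^{1/2}}$ rate, whereas your sharper containment $B(0,\rho N^{1/d})\subset N^{1/d}A$ gives the stronger $e^{-c N}$ and you then coarsen it; your remark that one needs $0\in\mathrm{int}(A)$ is also implicitly used in the paper's proof.
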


\begin{proof} Choose $N$ large enough, such that $B(0, \frac{1}{2}) \subset N^{\frac{1}{2d}}A$. Then $B(0, \frac{N^{\frac{1}{2d}}}{2}) \subset N^{\frac{1}{d}}A$, and  by the Cauchy-Schwarz inequality
\begin{align*}
\int_{\R^d\backslash N^{\frac{1}{d}} A} ||t||^r |\langle t,  z \rangle|^s e^{-||t||^d}\mathrm dt \leq &  \int_{\R^d\backslash B(0, \frac{N^{\frac{1}{2d}}}{2})} ||t||^{r+s+1} e^{-||t||^d}\mathrm dt \\
\lesssim_d &  \int_{\frac{N^{\frac{1}{2d}}}{2}}^\infty z^{r+s+d} e^{-z^d} \mathrm dz \\
\lesssim_d &  \int_{N^{\frac{1}{2}}}^\infty y^{\frac{r+s+1}{d}} e^{-y} \mathrm dy \\
\lesssim_d &  \int_{N^{\frac{1}{2}}}^\infty e^{-\frac{1}{2}y} \mathrm dy \lesssim_d e^{-\frac{1}{2}N^\frac{1}{2}},
\end{align*}
using $y\leq e^{\frac{d}{2(r+s+1)} y}$ for $y$ large enough.
\end{proof}

\subsection{Proof of Proposition~\ref{gradppn}: Limit of the gradient term} 
\label{gradientlimit}

Recall the definition of $\mu_N(\theta_1, \theta_2)$ from~\eqref{beta0}. Note that for $x, y$ in the interior of $S$,
\begin{align*}
\rho^{\theta_1, \theta_2}_{K}(x, y)&=\P((x, y)\in E(\cN_K(\cP_{N\phi_N}^{x, y}))) = e^{-\lambda_{\theta_1, \theta_2}(x, y)} \sum_{b=0}^{K-1} \frac{\lambda_{\theta_1, \theta_2}(x, y)^b}{b!}, 
\end{align*}
where $\lambda_{\theta_1, \theta_2}(x, y)=\int_{B_S(x, ||x-y||)}\{N_1f(z|\theta_1)+ N_2 f(z|\theta_2) \} \mathrm dz$. (This is the probability that there are at most $K-1$ points in the region $B_S(x, ||x-y||)$ in the Poisson process $\cP_{N\phi_N}$.) Then differentiating with respect to $\theta_2$ under the integral sign gives,
\begin{align}\label{eq:grad_edgeprob}
\grad \rho^{\theta_1, \theta_2}_{K}(x, y)&=-N_2\left(\int_{B_S(x, ||x-y||)} \grad_{\theta_2}  f(z|\theta_2)\mathrm d z\right) \left\{ \rho^{\theta_1, \theta_2}_{K}(x, y) -\rho^{\theta_1, \theta_2}_{K-1}(x, y) \right\}.
\end{align} 
Then by the chain rule and differentiating under the integral sign gives, 
\begin{align*}
\grad \mu_N(\theta_1,  \theta_2)=N^2\int_{S\times S}  \left(f(x|\theta_1)\grad_{\theta_2} f(y|\theta_2) \rho^{\theta_1, \theta_2}_{K}(x, y) +  f(x|\theta_1) f(y|\theta_2) \grad_{\theta_2} \rho^{\theta_1, \theta_2}_{K}(x, y) \right) \mathrm dx\mathrm dy.
\end{align*}
This and \eqref{eq:grad_edgeprob} implies that 
\begin{align}\label{grad1}
\varepsilon_N^\top \grad \mu_N (\theta_1,  \theta_1)=T_1-N_2 T_2, 
\end{align} 
where
\begin{align}
T_1:=&N^2\int_{S \times S}  f(x|\theta_1) \varepsilon_N^\top \grad_{\theta_1}  f(y|\theta_1)  \rho_K^{\theta_1, \theta_1}(x, y) \mathrm dx\mathrm dy\nonumber\\
T_2:=& N^2\int_{S \times S}  f(x|\theta_1) f(y|\theta_1) \left(\int_{B_S(x, ||x-y||)} \varepsilon_N^\top  \grad_{\theta_1}  f(z|\theta_1)\mathrm dz\right) 
 \left\{ \rho^{\theta_1, \theta_1}_{K}(x, y) -\rho^{\theta_1, \theta_1}_{K-1}(x, y) \right\} \mathrm dx\mathrm dy. \nonumber 
\end{align}

The exact asymptotics of $T_1$ and $T_2$ are obtained in the following two lemmas.

\begin{lem}\label{t11limit} Let $\varepsilon_N=\frac{h}{N^{\frac{1}{2}-\frac{2}{d}}}$, for some $h \in \R^p\backslash \{\bm 0\}$. Then 
$$\frac{T_1}{\sqrt N}\rightarrow  \frac{V_d^{K} C_{1, Kd+2} }{2(d+2) (K-1)!} \int_{S}  h^\top \grad_{\theta_1} \left(\frac{ \tr(\mathrm H_x f(x|\theta_1))}{f(x|\theta_1)}  \right) f^{\frac{d-2}{d}}(x|\theta_1) \mathrm d x.$$
\end{lem}

\begin{lem}\label{t12limit} Let $\varepsilon_N=\frac{h}{N^{\frac{1}{2}-\frac{2}{d}}}$, for some $h \in \R^p\backslash \{\bm 0\}$. Then 
$$\frac{N_2 T_2}{\sqrt N}=  \frac{q C_{K, 2}}{2d} \int_{S}  h^\top \grad_{\theta_1} \left(\frac{ \tr(\mathrm H_x f(x|\theta_1))}{f(x|\theta_1)}  \right) f^{\frac{d-2}{d}}(x|\theta_1) \mathrm d x.$$
\end{lem}

Proposition~\ref{gradppn} is a direct consequence of the above lemmas, \eqref{grad1}, and the following observation:

\begin{obs} $\frac{V_d^{K} C_{1, Kd+2} }{(d+2) (K-1)!}=\frac{C_{K, 2}}{d}.$
\end{obs}

\begin{proof}
By \eqref{cd}
\begin{align*} 
C_{K, 2} &= \left(\frac{1}{V_d}\right)^\frac{2}{d}  \sum_{b=0}^{K-1} \frac{1}{b!} \Gamma\left(\frac{2+bd+d}{d}\right) = \left(\frac{1}{V_d}\right)^\frac{2}{d}  \sum_{b=1}^{K} \frac{1}{(b-1)!} \Gamma\left(\frac{2+bd}{d}\right), 
\end{align*}
and  $\frac{V_d^{K} C_{1, Kd+2} }{(d+2) (K-1)!}=\left(\frac{1}{V_d}\right)^\frac{2}{d} \frac{1}{(d+2)(K-1)!} \Gamma\left(\frac{(K+1)d+2}{d}\right)$. Therefore, to prove the result it suffices to show that 
\begin{align}\label{eq:CK2}
\sum_{b=1}^{K} \frac{1}{(b-1)!} \Gamma\left(\frac{2+bd}{d}\right)=\frac{d}{(d+2)(K-1)!} \Gamma\left(\frac{(K+1)d+2}{d}\right). 
\end{align}
Note that the result holds for $K=1$. Assuming the result holds for all $K' \leq K$, we have 
\begin{align*}
\sum_{b=1}^{K+1} \frac{1}{(b-1)!} \Gamma\left(\frac{2+bd}{d}\right) & = \frac{d}{(d+2)(K-1)!} \Gamma\left(\frac{(K+1)d+2}{d}\right) +  \frac{1}{K!} \Gamma\left(\frac{2+(K+1)d}{d}\right) \nonumber \\
& =\frac{d}{(d+2)K!} \Gamma\left(\frac{(K+1)d+2}{d}\right) \left(\frac{Kd +(d+2)}{d} \right)  \nonumber \\ 
& = \frac{d}{(d+2)K!} \Gamma\left(\frac{(K+2)d+2}{d}\right), \tag*{(using $\Gamma(x+1)=x\Gamma(x)$)}
\end{align*}
as required. This shows that \eqref{eq:CK2} holds for all $K\geq 1$, completing the proof of the observation.
\end{proof}

It remains to prove the above lemmas, which is given below in Section \ref{sec:pft11limit} and Section \ref{sec:pft12limit}, respectively.

\subsubsection{\textbf{Proof of Lemma \ref{t12limit}}}
\label{sec:pft12limit}

It follows from \eqref{eq:deglength}, \eqref{eq:C} and the dominated convergence theorem that, 
\begin{align}\label{eq:length}
N^{\frac{r+d}{d}}\int_{S \times S} w(x) ||x-y||^{r} \rho_K^{\theta_1, \theta_1}(x, y) f(x|\theta_1) f(y|\theta_1) \mathrm d x \mathrm d y \rightarrow C_{K, r} \int_S w(x)f(x|\theta_1)^{1-\frac{r}{d}} \mathrm dx.
\end{align}
Now, the RHS above is zero if $w(\cdot)$ is such that  $\int_S w(x)f(x|\theta_1)^{1-\frac{r}{d}}=0$. This exactly what we encounter in the analysis of the term $T_2$. In this case, in order to determine the correct order and exact asymptotics of the LHS in \eqref{eq:length}, the higher orders terms  have to analyzed.  To this end, we have the following proposition, which gives the exact asymptotics of such `degenerate' functionals, a result which could be of independent interest.

\begin{ppn}\label{ppn:dw} Let $w: S \rightarrow \R$ be a continuously differentiable function such that $\int_S w(x) f(x|\theta_1)^{1-\frac{r}{d}}\mathrm dx=0$,  for some integer $r\geq 0$. Then,
\begin{align*}
N^{\frac{r+d+2}{d}} & \int_{S \times S} w(x) ||x-y||^{r} \rho_K^{\theta_1, \theta_1}(x, y) f(x|\theta_1) f(y|\theta_1) \mathrm d x \mathrm d y  \\
& \rightarrow  - \frac{r  C_{K, r+2} }{2d(d+2)} \int_S  w(x) \tr(\mathrm H_x f(x|\theta_1) )   f^{-\frac{r+2}{d}}(x|\theta_1) \mathrm d x. 
\end{align*}
\end{ppn}

The proof of Proposition \ref{ppn:dw} is given below. Here, we show how it can used to complete the proof of Lemma~\ref{t12limit}: Recall from~\eqref{grad1} that 
\begin{align}\label{t2}
T_2=T_{2}^{(K)}-T_2^{(K-1)},
\end{align}
where 
\begin{align}\label{t2K}
T_2^{(K)}= &  \frac{1}{N^{\frac{1}{2}-\frac{2}{d}}} \E \sum_{x, y \in \cP_{Nf(\cdot|\theta_1)}} v_{x, \theta_1}(||x-y||, h) \bm 1\{(x, y)\in E(\cN_K(\cP_{Nf(\cdot|\theta_1)}))\},
\end{align}
with $v_{x, \theta_1}(r, b)= \int_{B_S(x, r)} b^\top  \grad_{\theta_1}  f(z|\theta_1)\mathrm dz$, for $b\in \R^p, \theta_1\in \Theta$ and $r\geq 0$.

Let $v^{(s)}_{x, \theta_1}(r, b)=\frac{\partial^{s}}{\partial r^s}v_{x, \theta_1}(r, b)$. Then it is easy to see from Proposition \ref{dd+1}, that $v_{x, \theta_1}(0, b)=0$, $v^{(s)}_{x, \theta_1}(0, b)=0$, for all $1 \leq s \leq d-1$,  and $v^{(d+1)}_{x, \theta_1}(0, b)=0$. Therefore, by a Taylor series expansion of $v_{x, \theta_1}(||x-y||, h)$ around 0, $$T_2^{(K)}=T_{21}^{(K)}+T_{22}^{(K)}+T_{23}^{(K)},$$ 
where
\begin{align*}
T_{21}^{(K)} &= \frac{1}{d!} \cdot \frac{1}{N^{\frac{1}{2}-\frac{2}{d}}} \E  \sum_{x, y \in \cP_{Nf(\cdot|\theta_1)}} v^{(d)}_{x, \theta_1}(0, h) ||x-y||^{d} \bm 1\{(x, y)\in E(\cN_K(\cP_{Nf(\cdot|\theta_1)}))\}\nonumber\\ 
T_{22}^{(K)}&=\frac{1}{(d+2)!} \cdot \frac{1}{N^{\frac{1}{2}-\frac{2}{d}}} \E \sum_{x, y \in \cP_{Nf(\cdot|\theta_1)}} v^{(d+2)}_{x, \theta_1}(0, h)\cdot ||x-y||^{d+2} \bm 1\{(x, y)\in E(\cN_K(\cP_{Nf(\cdot|\theta_1)}))\} \nonumber \\
T_{23}^{(K)}&=\frac{1}{(d+3)!} \cdot \frac{1}{N^{\frac{1}{2}-\frac{2}{d}}} \E \sum_{x, y \in \cP_{Nf(\cdot|\theta_1)}} v^{(d+3)}_{x, \theta_1}(\zeta_{x, y}, h)\cdot ||x-y||^{d+3} \bm 1\{(x, y)\in E(\cN_K(\cP_{Nf(\cdot|\theta_1)}))\},
\end{align*}
for some $\zeta_{x, y} \in (0, ||x-y||)$.

Now, using $v^{(d)}_{x, \theta_1}(0, h)=d!V_d h^\top  \grad_{\theta_1}  f(x|\theta_1)$ (by Proposition \ref{dd+1}) and Proposition \ref{ppn:dw} (with $r=d$ and $w(x)=h^\top  \grad_{\theta_1}  f(x|\theta_1)$) gives 
\begin{align*}
N^{\frac{1}{2}} T_{21}^{(K)}&=V_d N^{\frac{2}{d}} \E  \sum_{x, y \in  \cP_{Nf(\cdot|\theta_1)}} h^\top  \grad_{\theta_1}  f(x|\theta_1)  ||x-y||^{d} \bm 1\{ (x, y)\in E(\cN_K(\cP_{Nf(\cdot|\theta_1)})) \} \nonumber \\ 
&= - \sum_{b=0}^{K-1} \frac{V_d^{b+1} C_{1, d+bd+2} }{2(d+2) b!}\int_S  h^\top  \grad_{\theta_1}  f(x|\theta_1) \tr(\mathrm H_x f(x|\theta_1) )   f^{-\frac{d+2}{d}}(x|\theta_1),
\end{align*} 
using $C_{K, d+2}=\sum_{b=0}^{K-1}\frac{V_d^b}{b!} C_{1, d+bd+2}$. Therefore, 
\begin{align}\label{t21}
N^{\frac{1}{2}} \left(T_{21}^{(K)}-T_{21}^{(K-1)}\right) \rightarrow - \frac{V_d^{K} C_{1, Kd+2} }{2(d+2) (K-1)!}\int_S  h^\top  \grad_{\theta_1}  f(x|\theta_1) \tr(\mathrm H_x f(x|\theta_1) )   f^{-\frac{d+2}{d}}(x|\theta_1). 
\end{align} 

Next, using and $v^{(d+2)}_{x, \theta_1}(0, h)=\frac{1}{2} (d+1)!V_d \tr(\mathrm H_x [h^\top  \grad_{\theta_1}  f(x|\theta_1)] )$ and~\eqref{eq:mb1}, it follows that,
\begin{align*} 
N^{\frac{1}{2}} T_{22} &=\frac{V_d}{2(d+2)} N^{\frac{2}{d}} \E \sum_{x, y \in \cP_{Nf(\cdot|\theta_1)}} \tr(\mathrm H_x [h^\top  \grad_{\theta_1}  f(x|\theta_1)] ) ||x-y||^{d+2} \bm 1\{(x, y)\in E(\cN_K(\cP_{Nf(\cdot|\theta_1)}))\} \\ 
& \rightarrow \frac{C_{K, d+2}V_d}{2(d+2)} \int_S \tr(\mathrm H_x [h^\top  \grad_{\theta_1}  f(x|\theta_1)] ) f(x|\theta_1)^{-\frac{2}{d}} \mathrm d x. 
\end{align*}
Therefore, 
\begin{align}\label{t22}
N^{\frac{1}{2}} \left(T_{22}^{(K)}-T_{22}^{(K-1)}\right) & \rightarrow \frac{V_d^{K} C_{1, Kd+2} }{2(d+2) (K-1)!} \int_S \tr(\mathrm H_x [h^\top  \grad_{\theta_1}  f(x|\theta_1)] ) f(x|\theta_1)^{-\frac{2}{d}} \mathrm d x. 
\end{align}
where the last step uses $C_{K, d+2}=\sum_{b=0}^{K-1} \frac{V_d^{b} C_{1, d+bd+2} }{b!}$.

Finally, note that by Assumption \ref{assumptionlocalpower}, $\sup_{x, z\in S} |v^{(d+3)}_{x, \theta_1}(z, h)|< \infty$, and using $$\E \sum_{x, y \in \cP_{Nf(\cdot|\theta_1)}}  ||x-y||^{d+3} \bm 1\{(x, y)\in E(\cN_K(\cP_{Nf(\cdot|\theta_1)}))\} =O(N^{-\frac{3}{d}}),$$ (by \eqref{eq:mb1}), it follows that $N^{\frac{1}{2}} T_{23}^{(K)}=O(N^{-\frac{1}{d}})$. Therefore, combining \eqref{t21} and \eqref{t22}, with \eqref{t2} gives 
\begin{align}
N^{\frac{1}{2}} T_2 \rightarrow \frac{V_d^{K} C_{1, Kd+2} }{2(d+2) (K-1)!} \int_{S} h^\top \grad_{\theta_1} \left(\frac{ \tr(\mathrm H_x f(x|\theta_1))}{f(x|\theta_1)}  \right) f^{\frac{d-2}{d}}(x|\theta_1) \mathrm d x, 
\end{align} 
using the the chain-rule of differentiation, 
\begin{align*}
&\int_{S}  \left(\frac{ \tr(\mathrm H_x h^\top \grad_{\theta_1}  f(x|\theta_1)) f(x|\theta_1)-h^\top \grad_{\theta_1}  f(x|\theta_1)  \tr(\mathrm H_x f(x|\theta_1))}{f^2(x|\theta_1)} \right)  f^{\frac{d-2}{d}}(x|\theta_1) \mathrm d x \nonumber   \\
&=\int_{S} h^\top \grad_{\theta_1} \left(\frac{ \tr(\mathrm H_x f(x|\theta_1))}{f(x|\theta_1)}  \right) f^{\frac{d-2}{d}}(x|\theta_1) \mathrm d x. 
\end{align*}
This completes the proof of Lemma~\ref{t12limit}. \\

\noindent\textit{Proof of Proposition}~\ref{ppn:dw}: \label{pfdiffxy} Define, for $x\in S$,  
\begin{align}\label{eq:Tx}
T(x):=\int_{S} ||x-y||^{r} \rho_K^{\theta_1, \theta_1}(x, y) f(y|\theta_1) \mathrm d y. 
\end{align}
Note that we need to find the limit of 
\begin{align}\label{eq:TxI}
N^{\frac{r+d+2}{d}} \int_{S \times S} w(x) ||x-y||^{r} \rho_K^{\theta_1, \theta_1}(x, y) f(x|\theta_1) f(y|\theta_1) \mathrm d x \mathrm d y = \int_S w(x) T(x) f(x|\theta_1) \mathrm d x. 
\end{align}
Rewrite, $T(x)=\sum_{b=0}^{K-1} \frac{N^b}{b!}  T_{r, b}(x)$, where 
\begin{align}\label{e1}
T_{r, b}(x) &:=  \int_{S} ||x-y||^{r} f(y|\theta_1) e^{-N \int_{B_S(x, ||x-y||)} f(z|\theta_1) \mathrm d z} \left( \int_{B_S(x, ||x-y||)} f(z|\theta_1) \mathrm d z   \right)^b \mathrm d y. 
\end{align}

\begin{lem}\label{lm:e2} Let $w(\cdot)$ be as in the statement of Proposition~\ref{ppn:dw}. Then 
\begin{align*}
 \limsup_{N \rightarrow \infty} N^{\frac{r+d+2}{d}} \int_{S} w(x)\left|T(x)-\sum_{b=0}^{K-1}\frac{N^{b}}{b!}  V_d^{b} f(x|\theta_1)^b \left\{T_{r+bd, 0} + b \eta_{\theta_1}(x) T_{r+bd+2, 0}\right\} \right| f(x|\theta_1) \mathrm d x=0,
\end{align*}
where $\eta_{\theta_1}(x)=\frac{\tr(\mathrm H_x f(x|\theta_1))}{2 (d+2) f(x|\theta_1)}$. 
\end{lem}

\begin{proof} Let $v_{x, f(\cdot|\theta_1)}(||x-y||)=\int_{B_S(x, ||x-y||)} f(z|\theta_1) \mathrm dz$. Then by Proposition \ref{dd+1}  
\begin{align*} 
v_{x, f(\cdot|\theta_1)}&(||x-y||) \\ 
& = V_d f(x|\theta_1) ||x-y||^d + V_d f(x|\theta_1) \eta_{\theta_1}(x) ||x-y||^{d+2} + \frac{v_{x, f(\cdot|\theta_1)}^{(d+3)}(\zeta_{x, y})}{(d+3)!}  ||x-y||^{d+3} , 
\end{align*}
where $\zeta_{x, y} \in (0, ||x-y||)$, for some $c \in (0, 1)$. By Proposition \ref{dd+1},  $\sup_{t, x}|v_{x, f(\cdot|\theta_1)}^{(d+3)}(t)|< \infty$ and by the multinomial theorem 
$$v_{x, f(\cdot|\theta_1)}(||x-y||)^b= V_d^b f(x|\theta_1)^b ||x-y||^{bd} \left\{1+ b \eta_{\theta_1}(x) ||x-y||^{2} \right\} + \sum_{s=3}^{3b}O(||x-y||^{bd+s}),$$ where the constants in the $O(\cdot)$ terms are independent of $x$ and $y$. 

Now, by \eqref{eq:deglength}, 
\begin{align*}
N^{\frac{r+(b+1)d+2}{d}} & \sup_{x \in S}\int_{S} ||x-y||^{r+b d+s} f(y|\theta_1) e^{-N \int_{B_S(x, ||x-y||)} f(z|\theta_1) \mathrm d z} \mathrm d y \nonumber \\ 
&=O(N^{-\frac{s-2}{d}})=o(1),
\end{align*} 
for all $s \in [3, 3b]$. Therefore, recalling the definition of $T_{r, b}(\cdot)$ from \eqref{e1}, whenever $b \geq 1$, 
$$N^{\frac{r+d+2}{d}} \int_{S} w(x)\left|T_{r, b}(x)-N^b V_d^b f(x|\theta_1)^b \left\{T_{r+bd, 0} + b \eta_{\theta_1}(x) T_{r+bd+2, 0}\right\}\right| f(x|\theta_1) \mathrm d x =o(1).$$ 
The lemma now follows by noting that $T(x)=\sum_{b=0}^{K-1} \frac{N^b}{b!}  T_{r, b}(x)$ (recall \eqref{eq:Tx} above). 
\end{proof}

The above lemma shows that to derive the limit of \eqref{eq:TxI} it suffices to derive the limit of $N^{\frac{R+d+2}{d}} T_{R, 0}(x)$, for $x \in S$ and $R \geq 0$. To this end, note that for $R \geq 0$, 
\begin{align}\label{eq:T}
T_{R, 0}(x)&:=T^{(1)}_{R , 0}(x)+ T^{(2)}_{R , 0}(x) + T^{(3)}_{R, 0}(x) + T^{(4)}_{R , 0}(x),
\end{align} 
where 
\begin{align*}
T^{(1)}_{R , 0}(x)&=f(x|\theta_1)  \int_{S} ||x-y||^{R} \rho_1^{\theta_1, \theta_1}(x, y) \mathrm d y, \\
T^{(2)}_{R , 0}(x)&=\int_{S} ||x-y||^{R} \langle y-x, \grad_{x} f(x|\theta_1)\rangle\rho_1^{\theta_1, \theta_1}(x, y) \mathrm d y, \\
T^{(3)}_{R , 0}(x)&=\frac{1}{2}\int_{S} ||x-y||^{R}  (y-x)^\top \mathrm H_x f(x|\theta_1) (y-x) \rho_1^{\theta_1, \theta_1}(x, y)  \mathrm d y, \\ 
T^{(4)}_{R , 0}(x)&=\frac{1}{6}\int_{S} ||x-y||^{R} D(x, y)\rho_1^{\theta_1, \theta_1}(x, y) ,
\end{align*}
where $D(x, y)=\sum_{i, j, k \in [d]} \frac{\partial^3}{\partial z_i\partial z_j \partial z_k}f(z|\theta_1)\big|_{z=\zeta_{x, y}}(x_i-y_i)(x_j-y_j)(x_k-y_k)$, where $\zeta_{x, y}=c x+(1-c)y$, for some $c \in (0, 1)$. 

The limit of $ T^{(j)}_{R, 0}(x)$, for $j \in \{1, 2, 3\}$ are computed in the following three lemmas.  

\begin{lem}\label{lm:T1} For any $R \geq 0$, 
\begin{align*}
N^{\frac{R+d+2}{d}} \left\{ T^{(1)}_{R, 0}(x)   -  \frac{N^\frac{2}{d} C_{1, R}}{f(x|\theta_1)^\frac{R}{d}} \right \} \rightarrow -\left(1+\frac{R}{d+2}\right) \frac{C_{1, R+2}}{2d} \tr(\mathrm H_x f(x|\theta_1) )   f^{-\frac{R+d+2}{d}}(x|\theta_1),
\end{align*}
uniformly over $x \in S$. 
\end{lem}

\begin{proof} To begin with define, 
\begin{align}\label{eq:T11R0}
T^{(11)}_{R, 0}&(x) \nonumber\\ 
:= &f(x|\theta_1) \int_{S} ||x-y||^{R}  e^{-N V_d ||x-y||^d f(x|\theta_1)} \mathrm d y \\
=&\frac{1}{N^{1+\frac{R}{d}}f(x|\theta_1)^{\frac{R}{d}}} \int_{(Nf(x|\theta_1))^{\frac{1}{d}}(S-x)} ||z||^R e^{-V_d ||z||^d} \mathrm dz \tag*{(substituting $y=x+(N f(x|\theta_1))^{-\frac{1}{d}}z$).}  \nonumber
\end{align}
This implies by Observation \ref{obsintconv}, 
\begin{align}\label{eq:s} 
 \sup_{x \in S} \left| N^{1+\frac{R}{d}}  T^{(11)}_{R, 0}(x) f(x|\theta_1) \mathrm d x- \frac{C_{1, R}}{f(x|\theta_1)^\frac{R}{d}}\right|=O(e^{-\frac{1}{4}N^{\frac{1}{2}}}).
\end{align}

Next, by \eqref{eq:T11R0}
\begin{align}\label{diffnn}
T^{(1)}_{R, 0}(x)-T^{(11)}_{R, 0}(x) = & f(x|\theta_1) \int_{S} ||x-y||^{R} e^{-N ||x-y||^d f(x|\theta_1) V_d} \left(e^{-N R_{x, y}}-1\right)\mathrm dy,
\end{align}
where 
\begin{align}\label{eq:RxyTR0} 
R_{x, y}:=\int_{B_S(x, ||x-y||)} f(z|\theta_1) \mathrm d z -||x-y||^d f(x|\theta_1) V_d=\frac{1}{(d+2)!}||x-y||^{d+2} v_{x, f(\cdot|\theta_1)}^{(d+2)}(\zeta_{x, y}),
\end{align}   
for some $\zeta_{x, y} \in (0, ||x-y||)$ (by Proposition \ref{dd+1}). Then using $|e^{-N R_{x, y}}-1+N R_{x, y}|\lesssim N^2 R_{x, y}^2$
\begin{align} 
& N^{\frac{R+d+2}{d}}  \left|T^{(1)}_{R, 0}(x)-T^{(11)}_{R, 0}(x)+ N f(x|\theta_1) \int_{S} ||x-y||^{R} R_{x, y} e^{-N ||x-y||^d f(x|\theta_1) V_d} \mathrm dy \right| \nonumber \\
&\lesssim_d N^{\frac{R+d+2}{d}} f(x|\theta_1)  \int_{S} ||x-y||^{R} |e^{-N R_{x, y}}-1+N R_{x, y}| e^{-N ||x-y||^d f(x|\theta_1) V_d} \mathrm dy \tag*{(by \eqref{diffnn})} \nonumber \\
& \lesssim_d N^{\frac{R+3d+2}{d}} f(x|\theta_1)  \int_{S} ||x-y||^{R} R_{x, y}^2 e^{-N ||x-y||^d f(x|\theta_1) V_d} \mathrm dy \nonumber \\
 & \lesssim_d N^{\frac{R+3d+2}{d}} f(x|\theta_1) \int_{S} ||x-y||^{R+2d+4} e^{-N ||x-y||^d f(x|\theta_1) V_d} \mathrm dy \tag*{(by \eqref{eq:RxyTR0} and $\sup_{t, x} |v_{x, f(\cdot|\theta_1)}^{(d+2)}(t)|< \infty$)}\nonumber \\
 & = \frac{1}{N^{\frac{2}{d}} f(x|\theta_1)^{\frac{R+2d+4}{d}}} \int_{(Nf(x|\theta_1))^{\frac{1}{d}}(S-x)} ||z||^{\frac{R+2d+4}{d}} e^{-||z||^d V_d} \mathrm dz \tag*{(substituting $y=x+(N f(x|\theta_1))^{-\frac{1}{d}}z$)}  \nonumber \\
\label{diffnnn} & = O\left(\frac{1}{N^{\frac{2}{d}}}\right), 
\end{align}
where the last step uses Observation \ref{obsintconv}. Note that, as before, the constant in the $O(\cdot)$ term does not depend on $x$ (by Assumption \ref{assumptionlocalpower}).

Therefore, it suffices to derive the limit of $N^{\frac{R+d+2}{d}} N f(x|\theta_1) \int_{S} ||x-y||^{R} R_{x, y} e^{-N ||x-y||^d f(x|\theta_1) V_d} \mathrm dy$. To this end, note that $$R_{x, y}= R_{x, y}^{(1)}+ R_{x, y}^{(2)},$$ where $R_{x, y}^{(1)}=\frac{1}{2(d+2)} ||x-y||^{d+2} V_d\tr(\mathrm H_x f(x|\theta_1))$ and $R_{x, y}^{(2)}=\frac{1}{(d+3)!}||x-y||^{d+3} v_{x, f(\cdot|\theta_1)}^{(d+3)}(\zeta'_{x, y})$, for some $\zeta'_{x, y} \in (0, ||x-y||)$ (by Proposition \ref{dd+1}).  Now, observe that 
\begin{align}\label{eq:R1xy}
&N^{\frac{R+2d+2}{d}}  f(x|\theta_1) \int_{S} ||x-y||^R R_{x, y}^{(1)} e^{-N ||x-y||^d f(x|\theta_1) V_d} \mathrm dy \nonumber \\
&= N^{\frac{R+2d+2}{d}} \frac{V_d}{2(d+2)}  f(x|\theta_1) \tr(\mathrm H_x f(x|\theta_1) ) \int_{S} ||x-y||^{R+d+2} e^{-N ||x-y||^d f(x|\theta_1) V_d} \mathrm dy \nonumber\\
 & \rightarrow \frac{V_dC_{1, R+d+2} }{2(d+2)} \tr(\mathrm H_x f(x|\theta_1) )   f^{-\frac{R+d+2}{d}}(x|\theta_1),
\end{align}
where the convergence in the last step is uniformly in $x \in S$. This follows by substituting $y=x+(N f(x|\theta_1))^{-\frac{1}{d}}z$ and then applying Observation \ref{obsintconv}. Similarly, using $\sup_{t, x} |v_{x, f(\cdot|\theta_1)}^{(d+3)}(t)|< \infty$, gives 
\begin{align}
&N^{\frac{R+2d+2}{d}}  f(x|\theta_1) \int_{S} ||x-y||^R |R_{x, y}^{(2)}| e^{-N ||x-y||^d f(x|\theta_1) V_d} \mathrm dy \nonumber \\
& \lesssim_d N^{\frac{R+2d+2}{d}} f(x|\theta_1) \int_{S} ||x-y||^{R+d+3}  e^{-N ||x-y||^d f(x|\theta_1) V_d} \mathrm dy \nonumber\\
& = \frac{1}{N^{\frac{1}{d}}f(x|\theta_1)^{\frac{R+d+3}{d}}} \int_{(Nf(x|\theta_1))^{\frac{1}{d}}(S-x)} ||z||^{R+d+3} e^{-V_d ||z||^d} \mathrm dz \tag*{(substituting $y=x+(N f(x|\theta_1))^{-\frac{1}{d}}z$)} \nonumber \\
\label{eq:R2xy} & = O\left( \frac{1}{N^{\frac{1}{d}}}\right), 
\end{align} 
where the constant in the $O(\cdot)$ term does not depend on $x$ (by Observation \ref{obsintconv} and Assumption \ref{assumptionlocalpower}). Therefore, 
\begin{align}
& N^{\frac{R+d+2}{d}}  (T^{(1)}_{R, 0}(x)-T^{(11)}_{R, 0}(x))  \nonumber \\
&=-N^{\frac{R+d+2}{d}} N  f(x|\theta_1) \int_{S} ||x-y||^{R} R_{x, y} e^{-N ||x-y||^d f(x|\theta_1) V_d} \mathrm dy +O\left(\frac{1}{N^{\frac{2}{d}}}\right) \tag*{(by \eqref{diffnnn})} \nonumber \\ 
&=-N^{\frac{R+2d+2}{d}}  f(x|\theta_1) \int_{S} ||x-y||^{R} (R_{x, y}^{(1)}+ R_{x, y}^{(2)}) e^{-N ||x-y||^d f(x|\theta_1) V_d} \mathrm dy +O\left(\frac{1}{N^{\frac{2}{d}}}\right) \nonumber \\ 
&=-N^{\frac{R+2d+2}{d}}  f(x|\theta_1) \int_{S} ||x-y||^{R} R_{x, y}^{(1)}  e^{-N ||x-y||^d f(x|\theta_1) V_d} \mathrm dy +O\left(\frac{1}{N^{\frac{1}{d}}}\right) \tag*{(by \eqref{eq:R2xy})} \nonumber \\ 
& \rightarrow  - \frac{V_dC_{1, R+d+2} }{2(d+2)} \tr(\mathrm H_x f(x|\theta_1) )   f^{-\frac{R+d+2}{d}}(x|\theta_1) \tag*{(by \eqref{eq:R1xy})} \nonumber \\ 
\label{eq:T1diff}  & = -\left(1+\frac{R}{d+2}\right) \frac{C_{1, R+2}}{2d} \tr(\mathrm H_x f(x|\theta_1) )   f^{-\frac{R+d+2}{d}}(x|\theta_1), 
\end{align}
where the last step uses $C_{1, R+d+2} = \frac{R+d+2}{d V_d} C_{1, R+2}$. Moreover, \eqref{eq:s} implies that $$N^{\frac{R+d+2}{d}}  T^{(11)}_{R, 0}(x)= \frac{N^\frac{2}{d} C_{1, R}}{f(x|\theta_1)^\frac{R}{d}} + O(N^{\frac{2}{d}}e^{-\frac{1}{4}N^{\frac{1}{2}}})=o(1).$$ This combined with  \eqref{eq:T1diff} above implies the lemma. 
\end{proof}

\begin{lem}\label{lm:T2} For any $R \geq 0$, 
\begin{align}\label{eq:T2}
N^{\frac{R+d+2}{d}}  & \sup_{x \in S}|T^{(2)}_{R, 0}(x)| \rightarrow    0.
\end{align}
\end{lem}

\begin{proof} For $x\in S$, substituting $y=x+(N f(x|\theta_1))^{-\frac{1}{d}}z$ gives 
\begin{align}\label{gradzero}
& N^{\frac{R+d+2}{d}} \left|\int_{S}   ||x-y||^{R} \langle y-x, \grad_{x} f(x|\theta_1)\rangle e^{-N||x-y||^d f(x|\theta_1) V_d} \mathrm d y \right|
\nonumber\\
= & \frac{N^{\frac{1}{d}}}{f(x|\theta_1)^{\frac{R+d+1}{d}}}\left|\int_{(Nf(x|\theta_1))^{\frac{1}{d}}(S-x)}  ||t||^R \langle t, \grad_{x} f(x|\theta_1)\rangle e^{-||t||^d  V_d} \mathrm d t \right|\nonumber\\
= & \frac{N^{\frac{1}{d}}}{f(x|\theta_1)^{\frac{R+d+1}{d}}}\left|\int_{\R^d\backslash(Nf(x|\theta_1))^{\frac{1}{d}}(S-x)}  ||t||^R \langle t, \grad_{x} f(x|\theta_1)\rangle e^{-||t||^d  V_d} \mathrm d t \right| \nonumber\\
\lesssim_d & \frac{N^{\frac{1}{d}}e^{-\frac{1}{2}N^\frac{1}{2}} }{f(x|\theta_1)^{\frac{R+d+1}{d}}}=O(e^{-\frac{1}{4}N^\frac{1}{2}}),
\end{align}
where the second equality uses $\int_{\R^d}  ||t||^R \langle t, \grad_{x} f(x|\theta_1)\rangle e^{-||t||^d  V_d} \mathrm d t=0$. 

Now, recalling the definition of $T^{(2)}_{R , 0}(x)$ and $R_{x, y}$ (from \eqref{eq:RxyTR0}) gives 
\begin{align}
& N^{\frac{R+d+2}{d}} |T_{R, 0}^{(2)}(x)| \nonumber\\
\leq & N^{\frac{R+d+2}{d}} \left| \int_{S} ||x-y||^{R} \langle y-x, \grad_{x} f(x|\theta_1)\rangle e^{-N ||x-y||^d f(x|\theta_1) V_d} \left(1-e^{-N R_{x, y}} \right)\mathrm dy \right|+O(e^{-\frac{1}{4}N^\frac{1}{2}})\tag*{(by \eqref{gradzero})}\nonumber\\
\leq & N^{\frac{R+2d+2}{d}} \int_{S} ||x-y||^{R+1} |R_{x, y}| ||\grad_{x} f(x|\theta_1)|| e^{-N ||x-y||^d f(x|\theta_1) V_d} \mathrm dy +O(e^{-\frac{1}{4}N^\frac{1}{2}}) \tag*{(using $1-e^{-x}\leq x$ and the Cauchy-Schwarz inequality)} \nonumber \\
\lesssim_d & N^{\frac{R+2d+2}{d}} \int_{S} ||x-y||^{R+d+3} e^{-N ||x-y||^d f(x|\theta_1) V_d} \mathrm dy + O(e^{-\frac{1}{4}N^\frac{1}{2}}) \tag*{(using $|R_{x, y}| \lesssim_d ||x-y||^{d+2}$)} \nonumber\\
\leq  & \frac{1}{N^{\frac{1}{d}} f(x|\theta_1)^\frac{R+2d+3}{d}}  \int_{(Nf(x|\theta_1))^{\frac{1}{d}}(S-x)} ||z||^{R+d+3} e^{-||z||^d V_d} \mathrm dz + O(e^{-\frac{1}{4}N^\frac{1}{2}}) \tag*{(substituting $y=x+(N f(x|\theta_1))^{-\frac{1}{d}}z$)}\nonumber\\
\label{diffn} =&O(N^{-\frac{1}{d}}).
\end{align}
Note that the third inequality uses $\sup_{x}||\grad_x f(x|\theta_1)||< \infty$ (Assumption \ref{assumptionlocalpower}(c)). 
\end{proof}

\begin{lem}\label{lm:T3} For any $R \geq 0$, 
\begin{align}\label{eq:T3}
N^{\frac{R+d+2}{d}}  & T^{(3)}_{R, 0}(x)  \rightarrow  \frac{C_{1, R+2}}{2d} \tr(\mathrm H_x f(x|\theta_1) ) f^{-\frac{R+d+2}{d}}(x|\theta_1),
\end{align}
uniformly over $x \in S$.
\end{lem}

\begin{proof} Recalling the definition of $T^{(3)}_{R, 0}(x)$ and using the expansion of $R_{x, y}$ as in \eqref{eq:RxyTR0} gives, 
\begin{align}\label{eq:xyVx3}
& N^{\frac{R+d+2}{d}} T^{(3)}_{R, 0}(x) \nonumber \\
&= \frac{N^{\frac{R+d+2}{d}}}{2} \int_{S} ||x-y||^{R}  (y-x)^\top \mathrm H_x f(x|\theta_1) (y-x) \rho_1^{\theta_1, \theta_1}(x, y)  \mathrm d y  \nonumber \\  
&=\frac{N^{\frac{R+d+2}{d}}}{2}  \int_{S} ||x-y||^{R}  (y-x)^\top \mathrm H_x f(x|\theta_1) (y-x) e^{-N ||x-y||^d f(x|\theta_1) V_d}  \mathrm d y  +o(1) \nonumber \\  
 & \rightarrow \frac{1}{2}  \left( \int_{\R^d} ||z||^{R} z^\top \mathrm H_x f(x|\theta_1) z e^{-V_d ||z||^d}\mathrm dz \right)  f^{-\frac{R+d+2}{d}}(x|\theta_1), 
\end{align}
where the last step follows by substituting $y=x+(N f(x|\theta_1))^{-\frac{1}{d}}z$ and applying Observation \ref{obsintconv}. Moreover, as in the proof of Lemma \ref{lm:T1}, the $o(1)$ term in the second step goes to zero uniformly over $x \in S$.

Now, consider the spectral decomposition $\mathrm H_x f(x)=P_x^{\bot} \Lambda(x) P_x$, where $$\Lambda(x)=\diag(\lambda_1(x), \lambda_2(x), \ldots, \lambda_d(x))$$ is the diagonal matrix of eigenvalues of the Hessian matrix $\mathrm H_x f(x)$. Under change of variable $t=P_x z$, \eqref{eq:xyVx3} becomes  
\begin{align*} 
N^{\frac{R+d+2}{d}}  T^{(3)}_{R, 0}(x)  & \rightarrow  \frac{C_{1, R+2}}{2d}  \tr(\mathrm H_x f(x|\theta_1) ) f^{-\frac{R+d+2}{d}}(x|\theta_1),  
\end{align*}
completing the proof of the lemma. 
\end{proof}

%
%

Now, using $\sup_{z}|\frac{\partial^3}{\partial z_i\partial z_j \partial z_k}f(z|\theta_1)|< \infty$ by Assumption \ref{assumptionlocalpower}, gives $|D(x, y)| \lesssim ||x-y||_1^3 \lesssim_d  ||x-y||^3$. This implies, 
$|T^{(4)}_{R , 0}(x)| \lesssim \int_{S} ||x-y||^{r+3} \rho_1^{\theta_1, \theta_1}(x, y)$, and using similar arguments as above it follows that $N^{\frac{R+d+2}{d}} |T^{(4)}_{R, 0}(x)| \rightarrow 0$, uniformly over $x \in S$. Therefore, using Lemma \ref{lm:T1}, \ref{lm:T2}, and \ref{lm:T3} with \eqref{eq:T}, and $C_{1, R+d+2}=\frac{R+d+2}{d V_d} C_{1, R+2}$ (recall \eqref{eq:C1}) gives 
\begin{align}\label{eq:Tn}
N^{\frac{R+d+2}{d}} T_{R, 0}(x)-  \frac{N^\frac{2}{d} C_{1, R}}{f(x|\theta_1)^\frac{R}{d}} \rightarrow  -\frac{R C_{1, R+2} }{2d(d+2)} \tr(\mathrm H_x f(x|\theta_1) )   f^{-\frac{R+d+2}{d}}(x|\theta_1),
\end{align}
uniformly over $x \in S$. 

Now, with $r \geq 0$ as in the statement of Proposition \ref{ppn:dw}, $\int_S w(x) f(x|\theta_1)^{1-\frac{r}{d}}=0$. Then by \eqref{eq:Tn} and the Dominated Convergence Theorem,  
\begin{align}\label{eq:T01}
N^{\frac{r+d+bd+2}{d}} & \int_S w(x)    T_{r+bd, 0}  f(x|\theta_1)^{b+1} \mathrm d x \nonumber \\
& \rightarrow -\frac{(r+bd) C_{1, r+bd+2} }{2d(d+2)} \int_S w(x) \tr(\mathrm H_x f(x|\theta_1) )   f^{-\frac{r+2}{d}}(x|\theta_1) \mathrm d x.
\end{align} 
Next, recalling $\eta_{\theta_1}(x)=\frac{\tr(\mathrm H_x f(x|\theta_1))}{2 (d+2) f(x|\theta_1)}$ and  applying \eqref{eq:mb1}, 
\begin{align}\label{eq:T02}
N^{\frac{r+d+bd+2}{d}} & \int_S w(x) \eta_{\theta_1}(x)   T_{r+bd+2, 0}  f(x|\theta_1)^{b+1} \mathrm d x \nonumber \\ 
& \rightarrow \frac{C_{1, r+bd+2} }{2(d+2)} \int_S w(x) \tr(\mathrm H_x f(x|\theta_1) )   f^{-\frac{r+2}{d}}(x|\theta_1) \mathrm d x.
\end{align}

Then, recalling \eqref{eq:Tx} and \eqref{e1} gives 
\begin{align}
N^{\frac{r+d+2}{d}} & \int_S w(x) T(x) f(x|\theta_1) \mathrm d x  \nonumber \\
&=\sum_{b=0}^{K-1} \frac{1}{b!}  N^{\frac{r+d+bd+2}{d}}\int_S w(x)  T_{r, b}(x) f(x|\theta_1) \mathrm d x \nonumber \\
&=\sum_{b=0}^{K-1} \frac{V_d^b}{b!} N^{\frac{r+d+bd+2}{d}} \int_S w(x)    \left\{T_{r+bd, 0} + b \eta_{\theta_1}(x) T_{r+bd+2, 0}\right\} f(x|\theta_1)^{b+1} \mathrm d x + o(1) \tag*{(by Lemma \ref{lm:e2})} \nonumber \\
&\rightarrow -\frac{r }{2d(d+2)} \sum_{b=0}^{K-1} \frac{V_d^b C_{1, r+bd+2} }{b!} \int_S  w(x) \tr(\mathrm H_x f(x|\theta_1) )   f^{-\frac{r+2}{d}}(x|\theta_1) \mathrm d x \tag*{(by \eqref{eq:T01} and \eqref{eq:T02})} \nonumber \\ 
&= -\frac{r  C_{K, r+2} }{2d(d+2)} \int_S  w(x) \tr(\mathrm H_x f(x|\theta_1) )   f^{-\frac{r+2}{d}}(x|\theta_1) \mathrm d x, \nonumber 
\end{align}
where the last step uses \eqref{cd}. This completes the proof of Proposition \ref{ppn:dw}. \qed

\subsubsection{\textbf{Proof of Lemma \ref{t11limit}}}
\label{sec:pft11limit}

To prove Lemma \ref{t11limit} we need another result about `degenerate' functionals similar to Proposition \ref{ppn:dw}. 

\begin{ppn}\label{t1ppn}
Let $w: S \rightarrow \R$ be a twice continuously differentiable function such that $\int_S w(y) \mathrm dy=0$. Then  
\begin{align*}
N^{\frac{d+2}{d}}& \int_{S\times S} w(y)  \rho^{\theta_1, \theta_1}_K(x, y) f(x|\theta_1)  \mathrm d x \mathrm d y \nonumber \\
&\rightarrow \frac{C_{K, 2}}{2d} \int_{S}  \left(\frac{ \tr(\mathrm H_x w(x)) f(x|\theta_1)-w(x)  \tr(\mathrm H_x f(x|\theta_1))}{f^2(x|\theta_1)}  \right) f^{\frac{d-2}{d}}(x|\theta_1) \mathrm d x,
\end{align*}
where $\mathrm H_x w(x) \in \R^{d\times d}$ is the Hessian matrix of $w$ at $x$.
\end{ppn}

The proof of Proposition \ref{t1ppn} is given below. Here, we show how it can used to complete the proof of Lemma~\ref{t11limit}: Recall from \eqref{gradppn} that
\begin{align*}
\frac{T_1}{\sqrt N}&=\frac{1}{N^{1-\frac{2}{d}}} \E  \sum_{x, y \in \cP_{Nf(\cdot|\theta_1)}} \frac{h^\top \grad_{\theta_1}  f(y|\theta_1)}{f(y|\theta_1)} \bm 1\{(x, y)\in E(\cN_K(\cP_{Nf(\cdot|\theta_1)}))\}  \\
&= N^{\frac{d+2}{d}}\int_{S \times S} h^\top \grad_{\theta_1}  f(y|\theta_1)   \rho_K^{\theta_1, \theta_1}(x, y)   f(x|\theta_1) \mathrm d x \mathrm d y \\ 
& \rightarrow \frac{C_{K, 2}}{2d} \int_{S}  \left(\frac{ \tr(\mathrm H_x h^\top \grad_{\theta_1}  f(x|\theta_1)) f(x|\theta_1)-h^\top \grad_{\theta_1}  f(x|\theta_1)  \tr(\mathrm H_x f(x|\theta_1))}{f^2(x|\theta_1)} \right)  f^{\frac{d-2}{d}}(x|\theta_1) \mathrm d x  \tag*{(by Proposition~\ref{t1ppn} with $w(y)=h^\top \grad_{\theta_1}  f(y|\theta_1)$)} \\
& = \frac{C_{K, 2}}{2d} \int_{S}  h^\top \grad_{\theta_1} \left(\frac{ \tr(\mathrm H_x f(x|\theta_1))}{f(x|\theta_1)}  \right) f^{\frac{d-2}{d}}(x|\theta_1) \mathrm d x,
\end{align*}
where the last step uses the product rule of differentiation. \\

\noindent\textit{Proof of Proposition}~\ref{t1ppn}: By Assumption~\ref{assumptionlocalpower}(b) all the points in $\cP_{Nf(\cdot|\theta_1)}$ are in the interior of the support $S$ of $f(\cdot|\theta_1)$ with probability 1. Now, by a Taylor-series expansion of $w$ around $x$ and arguments similar to the proof of Proposition \ref{ppn:dw} it can be shown that 
$$\int_{S\times S} w(y)  \rho^{\theta_1, \theta_1}_K(x, y)  f(x|\theta_1)  \mathrm d x \mathrm d y:=J_1+J_2+J_3+o(N^{-\frac{d+2}{d}}),$$
where 
\begin{align*}
J_1&:=\int_{S\times S} w(x) \rho^{\theta_1, \theta_1}_K(x, y) f(x|\theta_1) \mathrm d x \mathrm dy,  \\ 
J_2&:=\int_{S\times S}  (y-x)^\top \grad_x w(x) \rho^{\theta_1, \theta_1}_K(x, y) f(x|\theta_1) \mathrm d x \mathrm dy,  \\  
J_3&:=\frac{1}{2}\int_{S\times S}   (y-x)^\top \mathrm H_x w(x) (y-x)  \rho^{\theta_1, \theta_1}_K(x, y) f(x|\theta_1) \mathrm d x \mathrm dy. 
\end{align*}

Again, by arguments similar to Proposition \ref{ppn:dw},  it follows that 
\begin{align}\label{eq:wyJ1}
N^{\frac{d+2}{d}} J_1 \rightarrow -\frac{C_{K, 2}}{2d} \int_{S} w(x)  \tr(\mathrm H_x f(x|\theta_1) )   f^{-\frac{d+2}{d}}(x|\theta_1) \mathrm d x,
\end{align}
and $N^{\frac{d+2}{d}}J_2=o(1)$.
%
%
Finally, by \cite[Corollary 8.1]{yukichclt}, 
\begin{align}\label{eq:wyJ3}
N^{\frac{d+2}{d}} J_3&= N^{\frac{d+2}{d}} \cdot \frac{1}{2} \int_{S \times S} (y-x)^\top \mathrm H_x w(x) (y-x) \rho^{\theta_1, \theta_1}_K(x, y) f(x|\theta_1) \mathrm d x \mathrm d y \nonumber \\
& \rightarrow \frac{1}{2} \sum_{b=0}^{K-1} \frac{V_d^b}{b!}\int_S \left(\int_{\R^d} ||z||^{bd} z^\top \mathrm H_x w(x) z e^{-V_d ||z||^{d}}\mathrm dz \right) f^{-\frac{2}{d}}(x|\theta_1) \mathrm d x.
\end{align} 
Now, consider the spectral decomposition $\mathrm H_x w(x)=P_x^{\bot} \Lambda(x) P_x$, where $\Lambda(x)=\diag(\lambda_1(x), \lambda_2(x), \ldots, \lambda_d)$ is the diagonal matrix of eigenvalues of the Hessian matrix $\mathrm H_x w(x)$. Under change of variable $t=P_x z$, \eqref{eq:wyJ3} becomes  
\begin{align}
N^{\frac{d+2}{d}} J_3 & \rightarrow \frac{1}{2} \sum_{b=0}^{K-1} \frac{V_d^b}{b!} \int_S \left(\int_{\R^d} ||t||^{bd}e^{-V_d ||t||^{d}}\mathrm dt  \sum_{i=1}^d \lambda_i(x)t_i^2 \right) f^{-\frac{2}{d}}(x|\theta_1) \mathrm d x \nonumber \\ 
& = \frac{1}{2} \sum_{b=0}^{K-1} \frac{V_d^b}{b! d} \int_S \left(\int_{\R^d} ||t||^{bd+2}e^{-V_d ||t||^{d}}\mathrm dt  \right) \tr(\mathrm H_x w(x)) f^{-\frac{2}{d}}(x|\theta_1) \mathrm d x \nonumber \\ 
& = \frac{1}{2} \sum_{b=0}^{K-1} \frac{V_d^b C_{1, bd+2}}{b! d} \int_S \tr(\mathrm H_x w(x)) f^{-\frac{2}{d}}(x|\theta_1) \mathrm d x \nonumber \\ 
& = \frac{C_{K, 2}}{2 d} \int_S \tr(\mathrm H_x w(x)) f^{-\frac{2}{d}}(x|\theta_1) \mathrm d x, 
\end{align}
since $\tr(\mathrm H_x w(x))= \sum_{i=1}^d \lambda_i(x)$.

Finally, combining \eqref{eq:wyJ1} and \eqref{eq:wyJ3} gives 
\begin{align*}
N^{\frac{d+2}{d}}& \int_{S\times S} w(y) \rho_K^{\theta_1, \theta_1}(x, y) f(x|\theta_1)  \mathrm d x \mathrm d y \\
& \rightarrow \frac{C_{K, 2}}{2d} \int_{S}  \left(\frac{ \tr(\mathrm H_x w(x)) f(x|\theta_1)-w(x)  \tr(\mathrm H_x f(x|\theta_1))}{f^2(x|\theta_1)}  \right) f^{\frac{d-2}{d}}(x|\theta_1) \mathrm d x.
\end{align*}
This completes the proof of the result.

\subsection{Proof of Lemma~\ref{hessianppn}: Limit of the Hessian term}
\label{hessianlimit}

Recall the definition of $\mu_N(\theta_1, \theta_1)$ from~\eqref{beta0}. Differentiating with respect to $\theta_2$ twice under the integral signs gives,
\begin{align}\label{hessiant}
\mathrm H  \mu_N (\theta_1,  \theta_1) = T_{21}+T_{22}+T_{23},
\end{align}
where 
\begin{align*}
T_{21}&=N^2\int_{S\times S} f(x|\theta_1)  \mathrm H_{\theta_1} f(y|\theta_1)  \rho^{\theta_1, \theta_1}_{K}(x, y) \mathrm d x \mathrm d y, \nonumber\\
T_{22}&= 2 N^2 \int_{S\times S}  f(x|\theta_1)\grad_{\theta_1} f(y|\theta_1) \grad_{\theta_1} \rho^{\theta_1, \theta_1}_{K}(x, y) \mathrm d x \mathrm d y,  \nonumber\\
T_{23}&= N^2 \int_{S\times S} f(x|\theta_1) f(y|\theta_1)  \mathrm H_{\theta_1}  \rho^{\theta_1, \theta_1}_{K}(x, y)   \mathrm d x \mathrm d y. 
\end{align*}
The limits of these three terms are given in Lemma \ref{hessianlimitI}, \ref{hessianlimitII}, and \ref{hessianlimitIII}, respectively. Lemma~\ref{hessianppn} follows from these lemmas and noting that $C_{1, Kd}=\frac{1}{V_d^K}K!$ (recall \eqref{cd}).

%
%
%
%

\begin{lem}\label{hessianlimitI} Let $\varepsilon_N=h N^{-\frac{1}{4}}$, for some $h \in \R^p\backslash \{\bm 0\}$. Then $\frac{1}{\sqrt N}  \varepsilon_N^\top T_{21} \varepsilon_N  \rightarrow 0$. \end{lem}

\begin{proof} 
By Lemma~\ref{lem:hzz}, 
\begin{align*}
\frac{1}{\sqrt N} T_{21} =&\frac{1}{N}\E\sum_{x, y\in \cP_{Nf(\cdot|\theta_1)}} \frac{ h^\top \mathrm H_{\theta_1} f(y|\theta_1)  h}{f(y|\theta_1)}\bm 1\{(x, y)\in E(\cN_K(\cP_{Nf(\cdot|\theta_1)}))\}\\
\rightarrow & K \int_S h^\top \mathrm H_{\theta_1} f(x|\theta_1)h \mathrm dx=0,
\end{align*}
since 
\begin{align}
\int_S h^\top \mathrm H_{\theta_1}  f(x|\theta_1)h \mathrm dx=& \int _S \sum_{1\leq i, j \leq p } \frac{\partial^2}{\partial \theta_{1i} \theta_{1j}}  f(x|\theta_{1})h_i h_i\nonumber\\
=&\sum_{1\leq i, j \leq p }\frac{\partial^2}{\partial \theta_{1i} \partial \theta_{1j}} \int _S f(x|\theta_{1})h_i h_i=0.\nonumber
\end{align}
This completes the proof of the lemma. 
\end{proof}

\begin{lem}\label{hessianlimitII} Let $\varepsilon_N=h N^{-\frac{1}{4}}$, for some $h \in \R^p\backslash \{\bm 0\}$. Then $$\frac{1}{\sqrt N}  \varepsilon_N^\top T_{22} \varepsilon_N  \rightarrow -2 q \frac{V_d^{K} C_{1, Kd}}{(K-1)!} \E \left(\frac{h^\top\grad_{\theta_1} f(X|\theta_1)}{f(X|\theta_1)} \right)^2.$$  \end{lem}

\begin{proof} To begin with define 
\begin{align}\label{t22k}
T_{22}^{(K)}&= -2 N_2 N^2 \int_{S\times S}  f(x|\theta_1)\grad_{\theta_1} f(y|\theta_1) \left(\int_{B_S(x, ||x-y||)} \grad_{\theta_1}  f(z|\theta_1) \mathrm d z\right)\rho^{\theta_1, \theta_1}_{K}(x, y) \mathrm d x \mathrm d y, 
\end{align}
and note that $T_{22}=T_{22}^{(K)}-T_{22}^{(K-1)}$. Therefore, it suffices to derive the limit of 
$\frac{1}{\sqrt N} \cdot \varepsilon_N^\top T_{22}^{(K)} \varepsilon_N$. To this end,  observe that 
\begin{align}\label{eq:hf2}
&N^2\int_{S\times S} f(x|\theta_1)f(y|\theta_1)\frac{h^\top \grad_{\theta_1}  f(y|\theta_1) h^\top\grad_{\theta_1}  f(x|\theta_1)}{f(y|\theta_1)}||x-y||^d \rho_K^{\theta_1, \theta_1}(x, y) \mathrm d x \mathrm d y\nonumber\\
=& \E \sum_{(x, y)\in \cP_{Nf(\cdot|\theta_1)}}\frac{h^\top \grad_{\theta_1}  f(y|\theta_1) h^\top\grad_{\theta_1}  f(x|\theta_1)}{f(y|\theta_1)}||x-y||^d \bm 1\{(x, y)\in E(\cN_K(\cP_{Nf(\cdot|\theta_1)}))\}\nonumber\\
\rightarrow &C_{K, d}\int_{S} \frac{\left(h^\top \grad_{\theta_1}  f(x|\theta_1)\right)^2}{f(x|\theta_1)} \mathrm d x \nonumber\\
=&C_{K, d} \E \left(\frac{h^\top\grad_{\theta_1} f(X|\theta_1)}{f(X|\theta_1)} \right)^2.
\end{align}

Now, using (by Proposition \ref{dd+1}) 
\begin{align}\label{eq:vexpH} 
v_{x, \theta_1}(r, b):=  \int_{B_S(x, r)}  b^\top\grad_{\theta_2}  f(z|\theta_2)\mathrm d z=V_d r^d b^\top\grad_{\theta_2}  f(x|\theta_2)+O(r^{d+1}),
\end{align}
where the constant in the $O(\cdot)$ term does not depend on $x$, and \eqref{eq:hf2}, in \eqref{t22k}, we get 
$$\frac{1}{\sqrt N} \varepsilon_N^\top  T_{22}^{(K)} \varepsilon_N \rightarrow -2 q V_d C_{K, d} \E \left(\frac{h^\top\grad_{\theta_1} f(X|\theta_1)}{f(X|\theta_1)} \right)^2.$$
This implies $$\varepsilon_N^\top T_{22} \varepsilon_N =\varepsilon_N^\top T_{22}^{(K)} \varepsilon_N - \varepsilon_N^\top T_{22}^{(K-1)} \varepsilon_N =-2 q \frac{V_d^{K} C_{1, Kd}}{(K-1)!} \E \left(\frac{h^\top\grad_{\theta_1} f(X|\theta_1)}{f(X|\theta_1)} \right)^2,$$ (using $C_{K, d}=\sum_{b=0}^{K-1} \frac{V_d^{b} C_{1, (b+1)d}}{b!}$) proving the lemma.
\end{proof}

\begin{lem}\label{hessianlimitIII} Let $\varepsilon_N=h N^{-\frac{1}{4}}$, for some $h \in \R^p\backslash \{\bm 0\}$. Then $$\frac{1}{\sqrt N}  \varepsilon_N^\top T_{23} \varepsilon_N  \rightarrow \frac{2q^2 V_d^{K} C_{1, Kd}}{(K-1)!}  \E \left(\frac{h^\top\grad_{\theta_1} f(X|\theta_1)}{f(X|\theta_1)} \right)^2. $$ \end{lem}

\begin{proof}Define 
\begin{align}\label{t231}
T_{231}^{(K)}&= -N_2 N^2 \int_{S\times S}  f(x|\theta_1) f(y|\theta_1) \left(\int_{B_S(x, ||x-y||)} \mathrm H_{\theta_1}  f(z|\theta_1) \mathrm d z\right)\rho^{\theta_1, \theta_1}_{K}(x, y) \mathrm d x \mathrm d y, 
\end{align}
and 
\begin{align}\label{t231}
T_{232}^{(K)}&= N_2^2 N^2 \int_{S\times S}  f(x|\theta_1) f(y|\theta_1) \left(\int_{B_S(x, ||x-y||)} \grad_{\theta_1}  f(z|\theta_1) \mathrm d z\right)^2 \rho^{\theta_1, \theta_1}_{K}(x, y) \mathrm d x \mathrm d y. 
\end{align} 
Note that $T_{23}=T_{231}^{(K)}-T_{231}^{(K-1)}+T_{232}^{(K)}-T_{232}^{(K-2)}$. 

To begin with, consider 
\begin{align}
\frac{1}{\sqrt N} \varepsilon_N^\top T_{231}^{(K)}\varepsilon_N=& \frac{N_2}{N}  \E\sum_{x, y \in \cP_{N f(\cdot| \theta_1)}} \left(\int_{B_S(x, ||x-y||)} h^\top \mathrm H_{\theta_1}  f(z|\theta_1) h\mathrm d z\right) \bm 1\{(x, y)\in E(\cN_K(\cP_{Nf(\cdot| \theta_1)}))\}\nonumber\\
=& \frac{N_2}{N} \E\sum_{x, y \in \cP_{N f(\cdot| \theta_1)}}  h^\top \mathrm H_{\theta_1}  f(x|\theta_1) h ||x-y||^d \bm 1\{(x, y)\in E(\cN_K(\cP_{N f(\cdot| \theta_1)}))\}+o(1) \tag*{(using \eqref{eq:vexpH})}\nonumber\\
\label{eq:t231KI} \rightarrow &  q V_d C_{K, d} \int_S h^\top \mathrm H_{\theta_1} f(x|\theta_1)h \mathrm dx =0,
\end{align}
by \eqref{eq:mb1}.
 
Again, using \eqref{eq:vexpH} and Lemma \ref{eq:mb1} gives, 
\begin{align}
\frac{1}{\sqrt N} \varepsilon_N^\top T_{232}^{(K)}\varepsilon_N=&\left(\frac{N_2}{N}\right)^2 N \E\sum_{x, y \in \cP_{N f(\cdot| \theta_1)}}v_{x, \theta_1}^2(||x-y||, h)\bm 1\{(x, y)\in E(\cN_K(\cP_{N f(\cdot| \theta_1)}))\}\nonumber\\
=& \left(\frac{N_2}{N}\right)^2 N \E\sum_{(x, y)\in E(\cN_K(\cP_{N f(\cdot| \theta_1)}))} V_d^2 \left( h^\top\grad_{\theta_2}  f(x|\theta_2)\right)^2 ||x-y||^{2d}  +O(N^{-\frac{1}{d}})\nonumber\\
\rightarrow & q^2 V_d^2 C_{K, 2d} \int \frac{\left( h^\top\grad_{\theta_1}  f(x|\theta_1)\right)^2}{f(x|\theta_1)}\mathrm dx\nonumber\\ 
=& q^2 V_d^2 C_{K, 2d}  \E \left(\frac{h^\top\grad_{\theta_1} f(X|\theta_1)}{f(X|\theta_1)} \right)^2. \nonumber 
\end{align}
Now, using $C_{K, 2d}=\sum_{b=0}^{K-1} \frac{V_d^{b} C_{1, (b+2)d}}{b!}$, gives
\begin{align}\label{eq:t232diff}
\frac{1}{\sqrt N} \left(T_{232}^{(K)}-T_{232}^{(K-2)}\right)&= \frac{q^2 V_d^{K}}{(K-2)!} \left[\frac{V_d C_{1, (K+1)d}}{(K-1)}-C_{1, Kd} \right]  \E \left(\frac{h^\top\grad_{\theta_1} f(X|\theta_1)}{f(X|\theta_1)} \right)^2 \nonumber \\ 
&=\frac{2q^2 V_d^{K} C_{1, Kd}}{(K-1)!}  \E \left(\frac{h^\top\grad_{\theta_1} f(X|\theta_1)}{f(X|\theta_1)} \right)^2,
\end{align}
where the last step uses \eqref{eq:C1}.

The lemma follows from \eqref{eq:t231KI}, \eqref{eq:t232diff}, and noting that $T_{23}=T_{231}^{(K)}-T_{231}^{(K-1)}+T_{232}^{(K)}-T_{232}^{(K-2)}$. 
\end{proof}

\subsection{Proof of Lemma~\ref{remainderppn}}
\label{remainderlimit}

Recall the definition of the error term $\cR_N$ from \eqref{errorterm}:  
\begin{align*}
\cR_N=\frac{1}{6\sqrt N}\sum_{1 \leq a, b, c \leq p}\varepsilon_{N_a}\varepsilon_{N_b}\varepsilon_{N_c}\frac{\partial^3\mu_N(\theta_1, \theta)}{\partial \theta_{a}\partial \theta_{b}\partial \theta_{c}}\Big|_{\theta=c \theta_1+(1-c) \theta_2},
\end{align*} for some $c \in (0, 1)$, where $\theta=(\theta_{1}, \theta_{2}, \ldots, \theta_{p})'$ and $\varepsilon_N=(\varepsilon_{N_1}, \varepsilon_{N_2}, \ldots, \varepsilon_{N_p})'$. The third derivatives of $\mu_N(\theta_1, \theta)$ (with respect to $\theta$) can calculated by taking three derivatives under the integral sign in~\eqref{beta0}. This leads to many terms, all of which can be bounded in a similar manner. In the following, this is illustrated for one such term $T_0$: 
\begin{align}\label{ENI}
T_0 :=&\frac{N^2N_2^3}{\sqrt N}\int f(x|\theta_1) f(y|\theta)  \left( \int_{B_S(x, ||x-y||)}  \varepsilon_N^\top\grad_{\theta}  f(z|\theta)\mathrm d z\right)^3 \rho^{\theta_1, \theta}_{K}(x, y) \mathrm d x \mathrm d y.
\end{align}

Now, using $\varepsilon_N=hN^{-\delta_d}$, $K:=\sup_{z \in \R^d, \theta \in \R^p} ||\grad_{\theta}  f(z|\theta)||< \infty$ (by Assumption \eqref{assumptionlocalpower}), and the Cauchy-Schwarz inequality, 
\begin{align*}
|T_0| \leq & K^3 ||h||^3 \frac{N^2N_2^3}{N^{\frac{1}{2}+3 \delta_d}}\int_{S\times S} f(x|\theta_1) f(y|\theta)  ||x-y||^{3d} \rho^{\theta_1, \theta}_{K}(x, y) \mathrm d x \mathrm d y \tag*{(using $|\varepsilon_N^\top\grad_{\theta}  f(z|\theta))|\leq ||\varepsilon_N||\cdot ||\grad_{\theta}  f(z|\theta)||$}\\
\leq & K^3  ||h||^3 N^{\frac{9}{2}-3 \delta_d}  \int_{S\times S} f(x|\theta_1) f(y|\theta)  ||x-y||^{3d} \rho^{\theta_1, \theta}_{K}(x, y) \mathrm d x \mathrm d y \\
\leq & O( ||h||^3 N^{\frac{1}{2}-3 \delta_d}),
\end{align*}
by \eqref{eq:deglength}. 

The other terms in the third derivative can also be bounded similarly. This implies that $|\cR_N|=O(||h||^3 N^{\frac{1}{2}-3 \delta_d})$.

\section{Normal location} 
\label{sec:pfnlocation}

In this section we complete the calculations in Section \ref{sec:nlocation}. Recall that $A \in \R^d$ is a compact and convex set which symmetric around the origin $\bm 0 \in \R^d$, and, for $\theta \in \R^d$,  
$$\phi_A(x|\theta)=\frac{1}{Z_A(\theta)}e^{-\frac{1}{2} ||x-\theta||^2},$$
where $Z_A(\theta):=\int_A e^{-\frac{1}{2}||x-\theta||^2}\mathrm d x$, is the normalizing constant. We are considering the problem of testing \eqref{epsilonN} based on \eqref{rejregionKNN}, given i.i.d. samples $\sX_{N_1}$ and $\sY_{N_2}$ from $\phi_A(\cdot|\theta_1)$ and $\phi_A(\cdot|\theta_2)$, respectively. There are two cases depending whether the true $\theta_1$ is zero or non-zero.

\subsection{$\theta_1 \ne \textbf{0}$}

For dimension $d \leq 7$, by Theorem \ref{EFFSECOND}, depending on whether $||N^{\frac{1}{4}}\varepsilon_N|| \rightarrow 0$,  $||N^{\frac{1}{4}}\varepsilon_N|| \rightarrow h$, or $||N^{\frac{1}{4}}\varepsilon_N|| \rightarrow \infty$, the limiting power of the test is $0$, 
$$\Phi\left(z_\alpha+\frac{r^2 K}{2 \sigma_K}   \E_{X \sim \phi_A(\cdot|\theta_1)} \left[ \frac{h^\top  \grad_{\theta_1} \phi_A(X|\theta_1)}{\phi_A(X|\theta_1)} \right]^2 \right),$$ 
or $1$, respectively. 

Next, suppose $d\geq 9$. In this case, 
$$\frac{ \tr(\mathrm H_x \phi_A(x|\theta))}{\phi_A(x|\theta)}=||x-\theta||^2-d \quad \text{and} \quad h^\top \grad_{\theta_1} \left(\frac{ \tr(\mathrm H_x \phi_A(x|\theta_1))}{\phi_A(x|\theta_1)}\right)= 2 h^\top (\theta_1-x),$$ and  $\int_{S}  h^\top (\theta_1-x) \phi^{\frac{d-2}{d}}_A(x|\theta_1)\ne 0$, whenever $\theta_1 \ne \textbf{0}$. Therefore, by Theorem \ref{EFFSECOND}, the following cases arise: 

\begin{itemize}

\item[--] $||N^{\frac{1}{2}-\frac{2}{d}}\varepsilon_N||\rightarrow 0:$ The limiting power of the test~\eqref{rejregionKNN} is $\alpha$. 

\item[--]  $N^{\frac{1}{2}-\frac{2}{d}}\varepsilon_N \rightarrow h:$ The limiting power of the test~\eqref{rejregionKNN} is 
$$\Phi\left(z_\alpha -  \frac{rp C_{K, 2}}{2d\sigma_K}   \int_{S}  h^\top (\theta_1-x) \phi^{\frac{d-2}{d}}_A(x|\theta_1) \mathrm d x \right).$$

\item[--]  $||N^{\frac{1}{2}-\frac{2}{d}}\varepsilon_N||\rightarrow \infty$  such that $||N^{\frac{2}{d}}\varepsilon_N|| \rightarrow 0:$ Then depending on whether $\int_{S}  \varepsilon^\top (\theta_1-x) \phi^{\frac{d-2}{d}}_A(x|\theta_1) \mathrm d x$ is positive or negative, the limiting power of the test~\eqref{rejregionKNN} is $0$ or $1$, respectively.

\item[--] $||N^{\frac{2}{d}}\varepsilon_N|| \rightarrow \infty:$ The limiting power of the test~\eqref{rejregionKNN} is $1$. 

\end{itemize}

Finally, suppose dimension $d=8$. Again by Theorem \ref{EFFSECOND}, depending on whether $||N^{\frac{1}{4}}\varepsilon_N|| \rightarrow 0$ or $||N^{\frac{1}{4}}\varepsilon_N|| \rightarrow \infty$, the limiting power of the test is $0$ or $1$, respectively. At the threshold,  $||N^{\frac{1}{4}}\varepsilon_N|| \rightarrow h$, both the gradient and the Hessian term contribute, and the limiting power is 
$$\Phi\left(z_\alpha -  \frac{r p C_{K, 2}}{2d\sigma_K}   \int_{S}  h^\top (\theta_1-x) \phi^{\frac{d-2}{d}}_A(x|\theta_1) \mathrm d x +\frac{r^2 K}{2 \sigma_K}   \E_{X \sim \phi_A(\cdot|\theta_1)} \left[ \frac{h^\top  \grad_{\theta_1} \phi_A(X|\theta_1)}{\phi_A(X|\theta_1)} \right]^2 \right),$$ 
as predicted by \eqref{eq:effgradhessian}.

\subsection{$\theta_1= \textbf{0}$} In this case, $\int_{A}  h^\top (\theta_1-x) \phi^{\frac{d-2}{d}}_A(x|\bm 0)= 0$, and Theorem \ref{EFFSECOND} cannot be directly used to determine threshold for local power, for $d\geq 9$. However, when $\theta_1= \textbf{0}$, a direct calculation shows that the gradient term in \eqref{D} is exactly zero. To this end, define $\theta_A=\int_A x e^{-\frac{1}{2}||x-\theta||^2}\mathrm d x$. Then the gradient of the normalizing constant is $\grad_{\theta}Z_A(\theta)=\int_A (x- \theta) e^{-\frac{1}{2}||x-\theta||^2}\mathrm d x=\theta_A-\theta$. Therefore, 
\begin{align}\label{gradnl}
\grad_{\theta} \phi_A(x|\theta)=\left\{(x -\theta) -\frac{\theta_A-\theta}{Z_A(\theta)}\right\} \phi_A(x|\theta).
\end{align}
By symmetry of $A$, when $\theta=\bm 0$, then $\theta_A=\bm 0$, which implies $\grad_{\theta=\bm 0} \phi_A(x|\theta)=x \phi_A(x|\bm 0)$. In this case, the gradient term \eqref{grad1} is exactly zero, as shown in the following lemma:

\begin{lem}\label{lm:theta0} Let $\phi_A$ be as above. Then for any $\varepsilon \in \R^d$
\begin{enumerate}[(a)]
\item[(a)] $\int_{A\times A}  \phi_A(x|\bm 0) \varepsilon^\top\grad_{\theta=\bm 0} \phi_A(x|\theta) \rho_K^{\bm 0, \bm 0}(x, y)\mathrm dx\mathrm dy=0$.

\item[(b)] $\int_{A\times A}   \phi_A(x|\bm 0) \phi_A(y|\bm 0) \left(\int_{B_A(x, ||x-y||)} \varepsilon^\top  \grad_{\theta=\bm 0} \phi_A(z|\theta) \mathrm dz \right) \rho_K^{\bm 0, \bm 0}(x, y)\mathrm dx\mathrm dy \mathrm dx\mathrm dy=0$. 
\end{enumerate}

\end{lem}

\begin{proof} Note that 
\begin{align}\label{T1nl}
\int_{A\times A}  \phi_A(x|\bm 0) & \varepsilon^\top \grad_{\theta=\bm 0} \phi_A(y|\theta)  \rho_K^{\bm 0, \bm 0}(x, y)\mathrm dx\mathrm dy\nonumber\\
&=\frac{1}{N} \int_A  \varepsilon^\top y    \E(d^\downarrow (y, \cN_K(\cP^y_{N\phi_A(\cdot|\bm 0)}))) \mathrm dy=0,
\end{align}
since $A=-A$ and $\E(d^\downarrow (y, \cN_K(\cP^y_{N\phi_A(\cdot|\bm 0)})))=\E(d^\downarrow (-y, \cN_K(\cP^{-y}_{N\phi_A(\cdot|\bm 0)})))$, by symmetry.

Next, let $v_{x, \theta_1}(r, \varepsilon):=\int_{B_A(x, r)} \varepsilon^\top  \grad_{\theta=\bm 0} \phi_A(z|\theta) \mathrm dz$. By symmetry, for any $r>0$, $$v_{x, \theta_1}(r,  \varepsilon)=\int_{B_A(x, r)}  \varepsilon^\top  z \phi_A(z|\bm 0)\mathrm dz=-v_{-x, \theta_1}(r,  \varepsilon),$$ which implies (b). 
\end{proof}

The above lemma and Lemmas~\ref{hessianppn} and  \ref{remainderppn} implies that, for $\varepsilon_N=h N^{-\frac{1}{4}}$, $$\frac{1}{\sqrt N} \bigg\{\mu_N(\bm 0, \varepsilon_N) -\mu_N(\bm 0, \bm 0)\bigg\}  \rightarrow \frac{r^2 K}{2 \sigma_K}\E_{X \sim \phi_A(\cdot|\bm 0)}\left( h^\top X \right)^2,$$ 
since $\grad_{\theta=\bm 0} \log Z_A(\theta)=\frac{\grad_{\theta=\bm 0}Z_A(\theta)}{Z_A(\theta)}=0$. This implies that the power of the test \eqref{rejregionKNN} is 
$$\Phi\left(z_\alpha+\frac{r^2 K}{2 \sigma_K}   \E_{X \sim \phi_A(\cdot|\bm 0)}\left( h^\top X \right)^2 \right).$$
This also shows that the limiting power of the test is $\alpha$ or 1, depending on whether $||N^{\frac{1}{4}}\varepsilon_N|| \rightarrow 0$ or $||N^{\frac{1}{4}}\varepsilon_N|| \rightarrow \infty$,  respectively.

\section{Spherical Normal}
\label{sec:pfsnormal}

Here, we complete the calculations in Section \ref{sec:snormal}.  Let $M$ be a compact, convex subset of $\R^d$. Recall, for $\lambda >0$, the family of densities $\phi_M(\cdot|\lambda^2)$: 
$$\phi_M(x|\lambda^2)=\frac{1}{Z_M(\lambda^2)}e^{-\frac{1}{2 \lambda^2} ||x||^2}, \quad \text{for } x \in M$$
where $Z_M(\lambda^2):=\int_M e^{-\frac{1}{2\lambda^2}||x||^2}\mathrm d x$, is the normalizing constant. Consider the problem of testing \eqref{epsilonN} based on \eqref{rejregionKNN}, given i.i.d. samples $\sX_{N_1}$ and $\sY_{N_2}$ from $\phi_M(\cdot|\lambda_1^2)$ and $\phi_M(\cdot|\lambda_2^2)$.

For dimension $d \leq 8$, by Theorem \ref{EFFSECOND}, depending on whether $N^{\frac{1}{4}}\varepsilon_N \rightarrow 0$,  $N^{\frac{1}{4}}\varepsilon_N \rightarrow h$, or $N^{\frac{1}{4}}\varepsilon_N \rightarrow \infty$, the limiting power of the test is $0$,  \eqref{eq:effhessian} or \eqref{eq:effgradhessian} (depending on whether $d \leq 7$ or $d=8$) or $1$, respectively.

Next, suppose $d\geq 9$. In this case, $\grad_{\lambda_1} \frac{ \tr(\mathrm H_x \phi_M(x|\lambda^2))}{\phi_M(x|\lambda^2)}=-\frac{4 x^\top x-2\lambda_1 ^2 d}{\lambda_1^5}$,  
and 
\begin{align}
\int_{M} h \cdot \grad_{\lambda_1} \left(\frac{ \tr(\mathrm H_x \phi_M(x|\lambda^2_1))}{\phi_M(x|\lambda^2_1)}\right) \phi^{\frac{d-2}{d}}_M(x|\lambda^2_1) \mathrm d x & = -\frac{Z_M(\frac{d\lambda_1^2}{d-2} )}{Z_M(\lambda_1^2)^{\frac{d-2}{d}}}\frac{2 h}{\lambda_1^5} \int_{M} \left(2 x^\top x-\lambda_1^2 d\right) \phi_M\left(x \Big| \frac{d\lambda_1^2}{d-2}\right) \nonumber \\ 
& = -\frac{Z_M(\frac{d\lambda_1^2}{d-2} )}{Z_M(\lambda_1^2)^{\frac{d-2}{d}}}\frac{2 h}{\lambda_1^5} \left(2\E ||W||^2 -\lambda_1^2 d \right), 
\end{align}
where $W=(W_1, W_2, \ldots, W_d)'$ is sample from the density $\phi_M(\cdot|\frac{d\lambda_1^2}{d-2})$. 

Therefore, by Theorem \ref{EFFSECOND}, the following cases arise:

\begin{itemize}

\item[--] $N^{\frac{1}{2}-\frac{2}{d}}\varepsilon_N\rightarrow 0:$ The limiting power of the test~\eqref{rejregionKNN} is $\alpha$. 

\item[--]  $N^{\frac{1}{2}-\frac{2}{d}}\varepsilon_N \rightarrow h:$ The limiting power of the test~\eqref{rejregionKNN} is 
$$\Phi\left(z_\alpha+ \frac{rp C_{K, 2}}{2 d \sigma_K} \frac{Z_M(\frac{d\lambda_1^2}{d-2} )}{Z_M(\lambda_1^2)^{\frac{d-2}{d}}}\frac{h}{\lambda_1^5} \left(2\E || W|| -\lambda_1^2 d \right) \right).$$

\item[--]  $N^{\frac{1}{2}-\frac{2}{d}}\varepsilon_N\rightarrow \pm \infty$  such that $N^{\frac{2}{d}}\varepsilon_N \rightarrow 0:$ Then the limiting power of the test~\eqref{rejregionKNN} is $1$ or $0$, respectively. 

\item[--] $N^{\frac{2}{d}}\varepsilon_N \rightarrow \infty:$ The limiting power of the test~\eqref{rejregionKNN} is $1$. 

\end{itemize}

\section{The symmetrized $K$-NN test}
\label{sec:KNNsymmetry}

Here, we discuss how the formulas in Theorem \ref{EFFSECOND} can be modified for the test based on the symmetrized $K$-NN graph \eqref{Tknnsymmetry}. To begin with, note that, as in \eqref{beta0}, 
\begin{align}\label{beta0S}
\E_{H_1}(T_S(\sG(\cZ_N')))=&N_1N_2 \int_{S\times S}  \left(f(x|\theta_1)f(y|\theta_2) + f(x|\theta_2)f(y|\theta_1) \right)\rho^{\theta_1, \theta_2}_{K}(x, y) \mathrm dx\mathrm dy \nonumber \\ 
:=&\frac{N_1N_2}{N^2} \mu_N^{(S)}(\theta_1, \theta_2),
\end{align}
As before, by the chain rule and differentiating under the integral sign gives 
\begin{align*}
\grad \mu_N^{(S)}(\theta_1,  \theta_1)=N^2\int_{S\times S}  \left(f(x|\theta_1)\grad_{\theta_1} f(y|\theta_1) \rho^{\theta_1, \theta_1}_{K}(x, y) +  2 f(x|\theta_1) f(y|\theta_1) \grad_{\theta_1} \rho^{\theta_1, \theta_1}_{K}(x, y) \right) \mathrm dx\mathrm dy.
\end{align*} 
since $$\int_{S\times S} \grad_{\theta_1} f(x|\theta_1) f(y|\theta_1) \rho^{\theta_1, \theta_1}_{K}(x, y) \mathrm dx\mathrm dy =\int_S \E d^\uparrow(x, \cP_{Nf(\cdot|\theta_1)}^x) \grad_{\theta_1} f(x|\theta_1) \mathrm dx=O(e^{-\frac{N}{2}}),$$ 
because $\int_S \grad_{\theta_1} f(x|\theta_1) \mathrm dx=0$ and $| \E d^\uparrow(x, \cP_{Nf(\cdot|\theta_1)}^x)-K|\leq K \P(|\cP_{Nf(\cdot|\theta_1)}| \leq K-1) = K \sum_{j=0}^{K-1} \frac{N^j}{j!} e^{-N} \lesssim_K e^{-\frac{N}{2}}$ (since we define $\cN_K(S)$ to be empty, if $|S| \leq K$).

Then by Lemma \ref{t11limit} and Lemma \ref{t12limit}, with $\varepsilon_N=\frac{h}{N^{\frac{1}{2}-\frac{2}{d}}}$, for some $h \in \R^p\backslash \{\bm 0\}$, 
\begin{align}\label{gradlimitN}
\frac{\varepsilon_N^\top \grad  \mu_N^{(S)}(\theta_1, \theta_1)}{\sqrt N} \rightarrow    \frac{r(1-2q) C_{K, 2}}{4d}  \int_{S}  h^\top \grad_{\theta_1} \left(\frac{ \tr(\mathrm H_x f(x|\theta_1))}{f(x|\theta_1)}  \right) f^{\frac{d-2}{d}}(x|\theta_1) \mathrm d x.
\end{align}

Next, differentiating with respect to $\theta_2$ twice under the integral signs gives, $\mathrm H  \mu_N^{(S)}(\theta_1,  \theta_1) = T_{21}+\frac{3}{2}T_{22}+\frac{3}{2} T_{23},$ where  $T_{21}$, $T_{22}$, and $T_{23}$ are as defined in \eqref{hessiant}. Then by Lemmas  \ref{hessianlimitI}, \ref{hessianlimitII}, and \ref{hessianlimitIII}, with $\varepsilon_N=h N^{-\frac{1}{4}}$, for $h \in \R^p\backslash \{\bm 0\}$ 
\begin{align}\label{hessianlimitS}
\frac{\varepsilon_N^\top \mathrm H  \mu_N(\theta_1, \theta_1) \varepsilon_N}{\sqrt N} \rightarrow  -  \frac{3 r K}{2}  \cdot \E \left( \frac{h^\top\grad_{\theta_1} f(X|\theta_1)}{f(X|\theta_1)} \right)^2.
\end{align}

Therefore, the formula  in \eqref{eq:effhessian} changes to 
$$\Phi\left(z_\alpha+\frac{3r^2 K}{4 \sigma_{S, K}}  \E \left[ \frac{h^\top\grad_{\theta_1} f(X|\theta_1)}{f(X|\theta_1)} \right]^2\right),$$
and the formula in \eqref{eq:effgrad} becomes
$$\Phi\left(z_\alpha- \frac{r (1-2q) C_{K, 2}}{4d\sigma_{S, K}}   \int_{S}  h^\top \grad_{\theta_1} \left(\frac{ \tr(\mathrm H_x f(x|\theta_1))}{f(x|\theta_1)}  \right) f^{\frac{d-2}{d}}(x|\theta_1) \mathrm d x \right),$$
where $\sigma_{S, K}^2$ is the limiting variance of $\frac{1}{N}\Var(T_S(\cN_K(\cZ_N')))$ under the null (which can be obtained from Theorem \ref{TH:CLT_R} applied to the symmetrized $K$-NN graph). This shows that Theorem \ref{EFFSECOND} holds almost verbatim for the symmetrzied $K$-NN test with the formulas of the limiting power modified as above.

\section{Additional simulations}
\label{sec:simulations_normal}

In this section we provide additional simulations to illustrate the theoretical results derived above. 

\subsection{The Normal Location Problem}
\label{sec:normal_location}

\begin{figure*}[h]\vspace{-0.2in}
\centering
\begin{minipage}[l]{0.49\textwidth}
\centering
\includegraphics[width=2.05in]
    {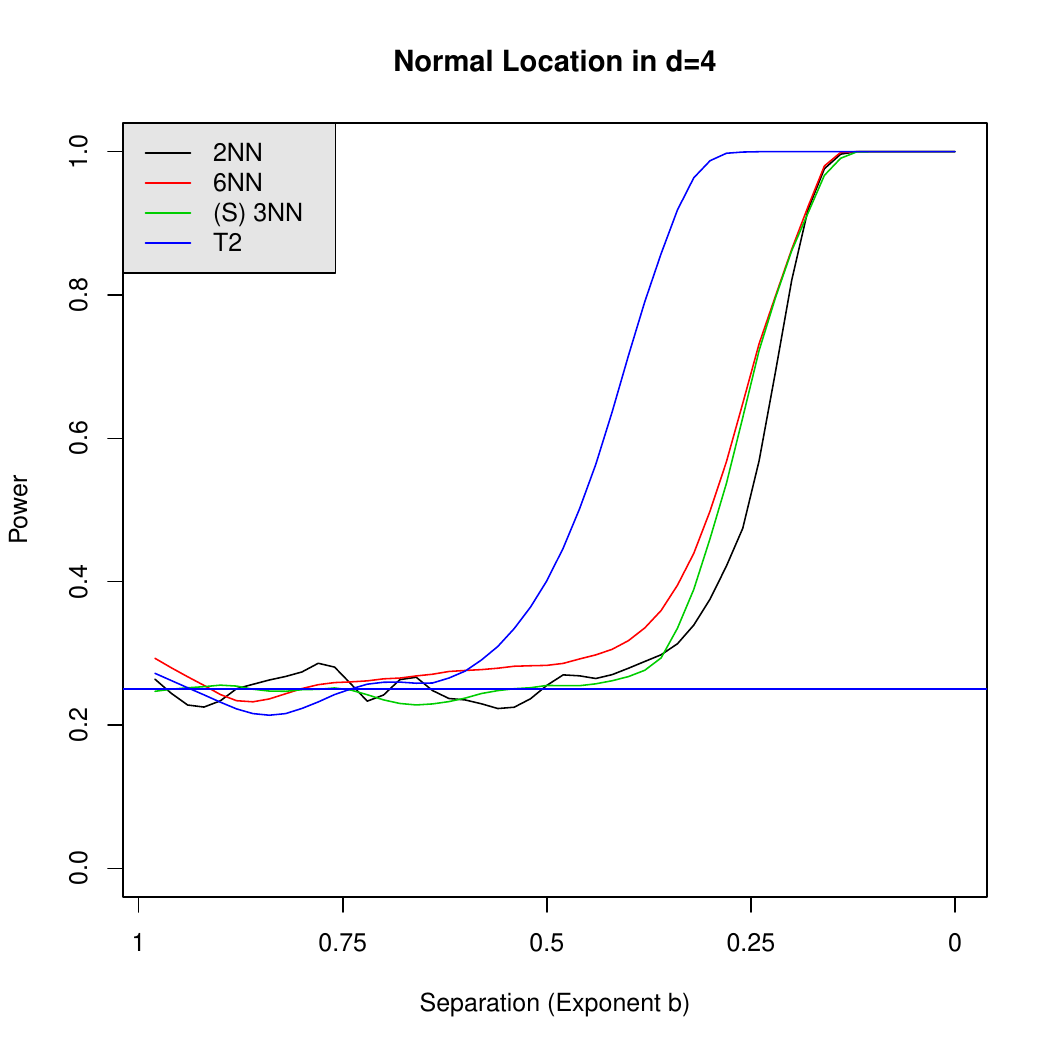}\\
\footnotesize{(a)}
\end{minipage}
\begin{minipage}[l]{0.49\textwidth}
\centering
\includegraphics[width=2.05in]
    {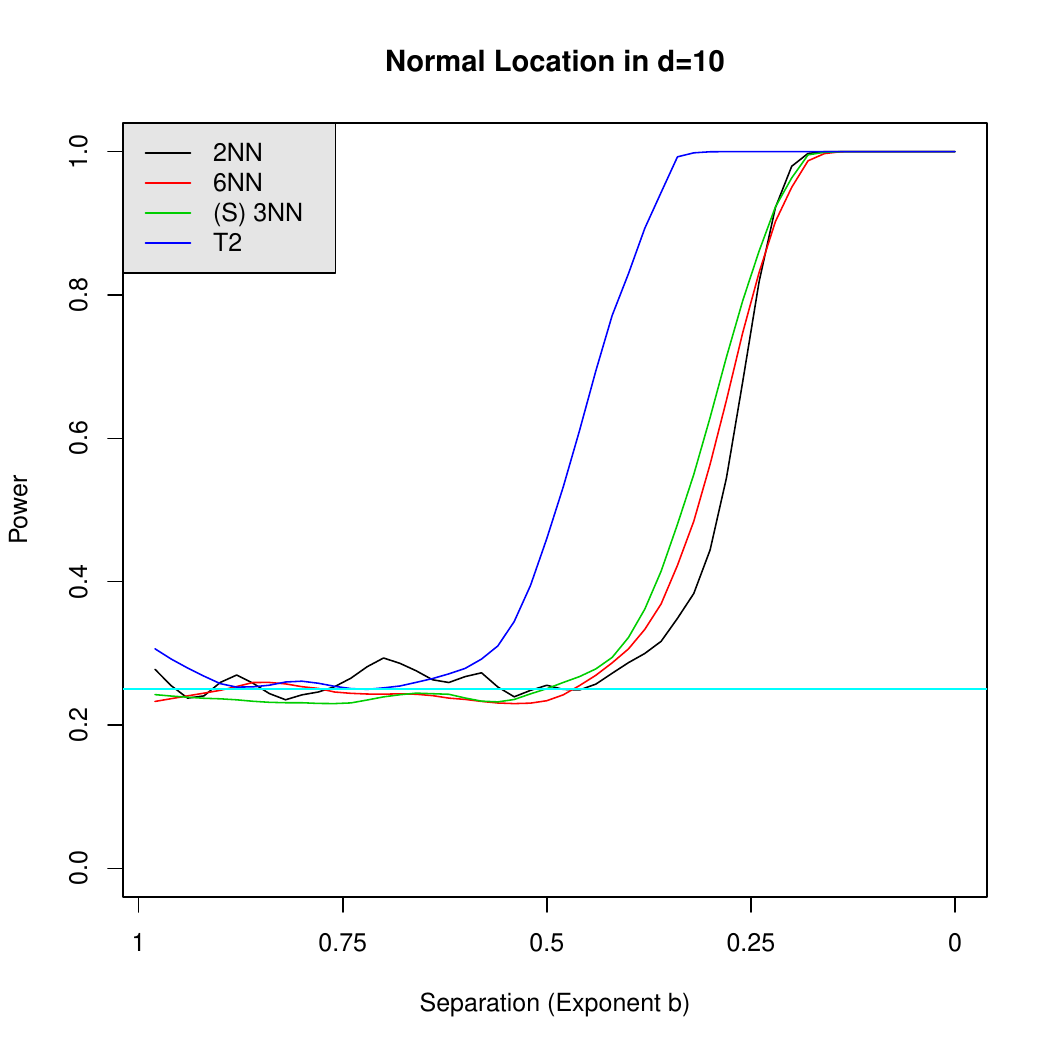}\\
\footnotesize{(b)}
\end{minipage}
\caption{\small{Empirical power in the normal location problem with $N_1=5000$ samples from $N(2 \cdot \bm 1, \mathrm I_d)$ and  $N_2=3000$  samples from $N( 2  \cdot \bm 1 + N^{-b} \cdot \bm h, \mathrm I_d)$, where $b$ varies over a grid of 100 values in $[0, 1]$,  $\bm h=-\bm 1$, and dimension (a) $d=4$ and (b) $d=10$.}}
\label{fig:nlocationII_N} 
\end{figure*}

Here, we provide additional simulations for testing local changes in the normal location family $\{N(\theta, \mathrm I_d): \theta \in \R^d\}$. Figure~\ref{fig:nlocationII_N} shows the empirical power (out of 100 repetitions) of the  different tests with $N_1=5000$ samples from $N(2 \cdot \bm 1, \mathrm I_d)$ and  $N_2=3000$  samples from $N( 2 \cdot \bm 1 + N^{-b} \cdot \bm h,  \mathrm I_d)$, where $b$ varies over a grid of 100 values in $[0, 1]$, $\bm h=-\bm 1$, and dimension (a)  $d=4$ and (b) $d=10$. (Here, $N=N_1+N_2=8000$.) The level of the tests are set to $\alpha=0.25$. Note that the power of the tests based on the $K$-NN graphs transitions from $\alpha$ to 1 around $b=0.25$, which corresponds to the rate $N^{-\frac{1}{4}}$,  as predicted by the calculations above. On the other hand, the power of the Hotelling's $T^2$ test transitions from $\alpha$ to 1 around $b=0.5$, which corresponds to the parametric rate of $N^{-\frac{1}{2}}$. Note that these plots are qualitatively the same as those in Figure~\ref{fig:nlocationII} in Section \ref{sec:nlocation}. This is because, for the normal location problem, the detection threshold does not depend on the dimension and the choice of the direction (recall discussion in Section \ref{sec:nlocation}).

\subsection{The Spherical Normal Problem}
\label{sec:normal_scale}

In this section additional simulations for testing local changes in the spherical normal scale family $\{N(0, \sigma^2 \mathrm I_d): \sigma>0\}$ are presented. Here, we zoom in to the different thresholds obtained in Theorem \ref{EFFSECOND} and compare the power of the different tests.

\begin{figure*}[h]
\centering
\begin{minipage}[l]{0.495\textwidth}
\centering
\includegraphics[width=2.05in]
    {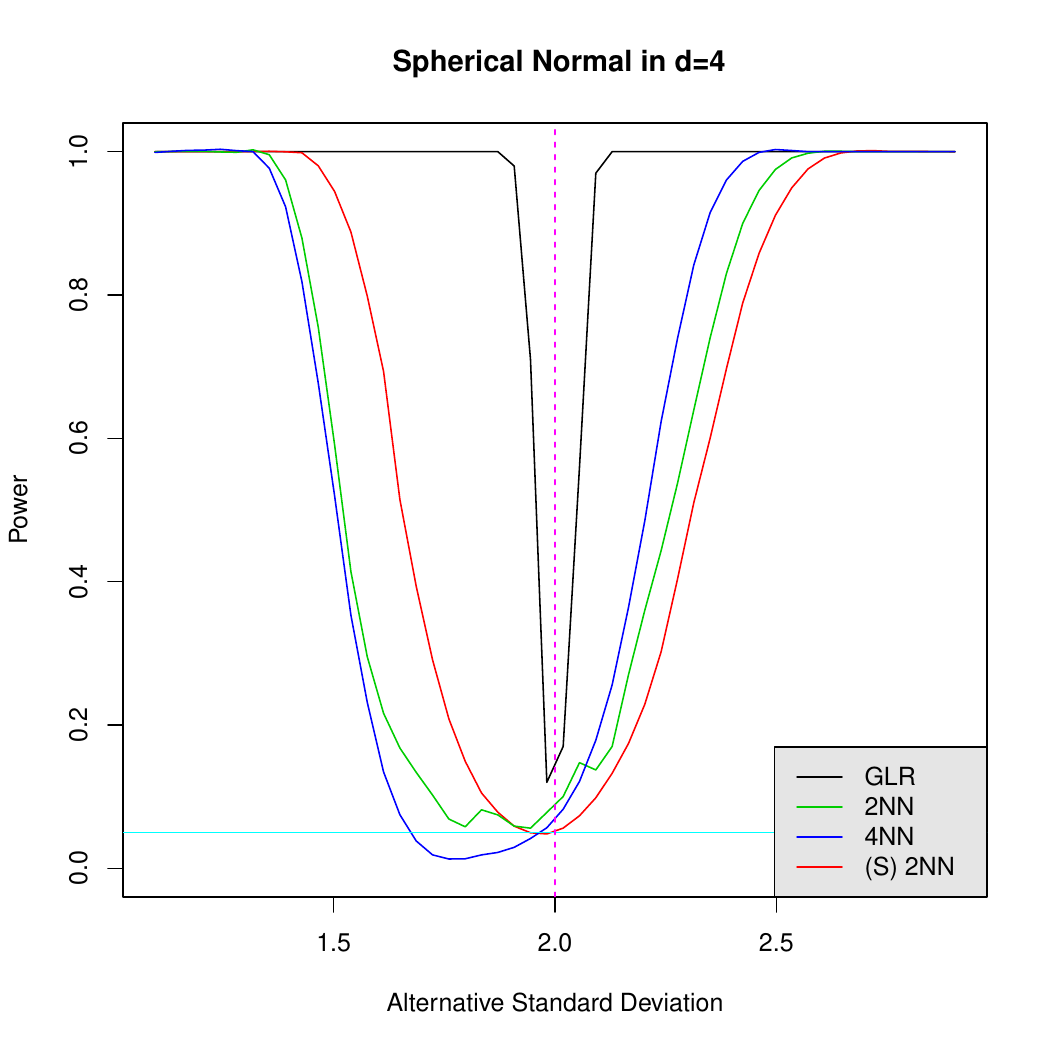}\\
\small{(a)}
\end{minipage}
\begin{minipage}[c]{0.495\textwidth}
\centering
\includegraphics[width=2.05in]
    {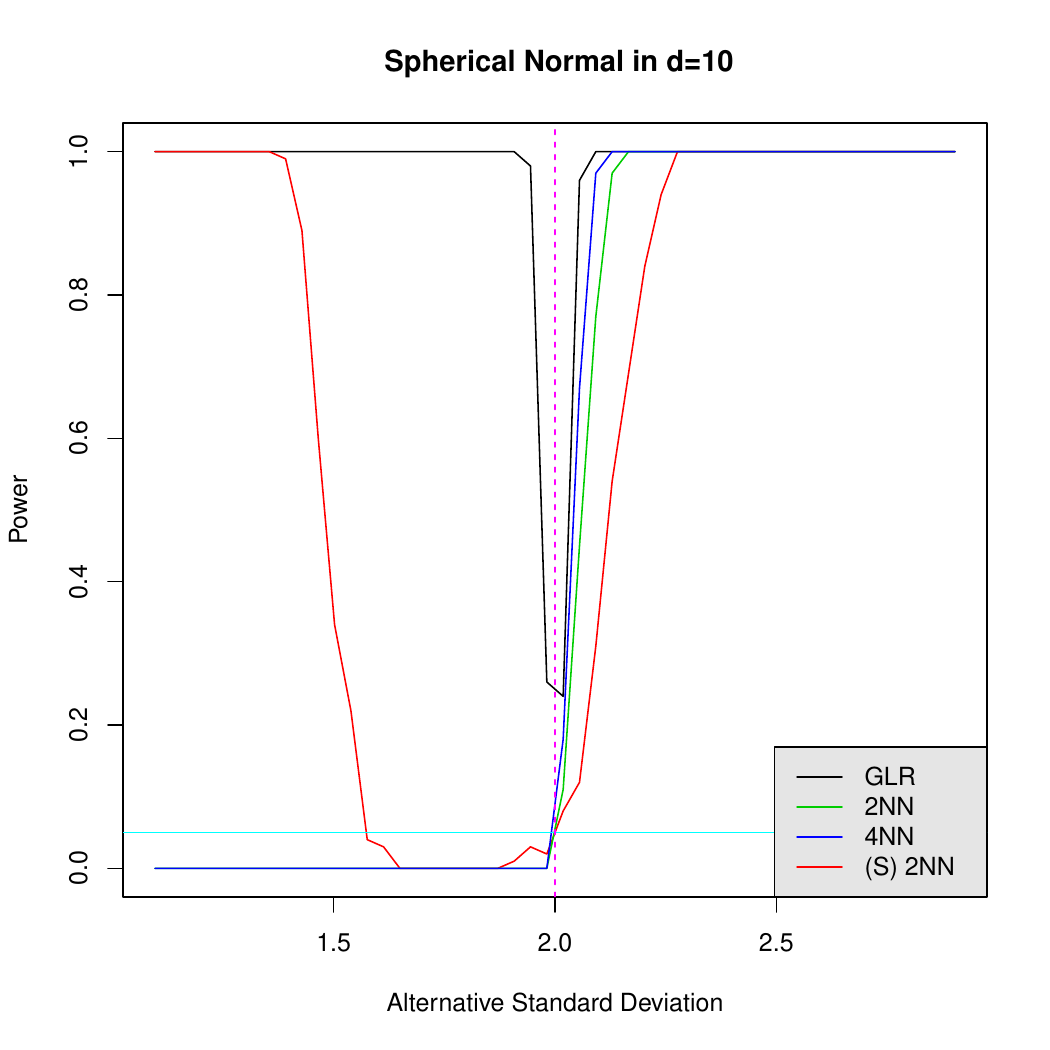}\\
\small{(b)}
\end{minipage}
\caption{\small{Empirical power for the spherical normal problem against $h N^{-\frac{1}{4}}$ alternatives, as a function of $h$, in dimension (a) $d=4$ and (b) $d=10$. The horizontal axis shows the scale parameter $\sigma$ under the alternative.}}
\label{fig:nscaleI}
\end{figure*}
%

Figure~\ref{fig:nscaleI} shows the empirical power (out of 100 repetitions) of the tests based on the 2-NN and 4-NN graphs, the test based on the symmetrized 2-NN graph, and the GLR test, with $N_1=10000$ samples from $N(0, 2^2 \cdot \mathrm I_d)$ and  $N_2=5000$  samples from $N(0, (2 + h N^{-\frac{1}{4}})^2 \mathrm I_d)$, over a grid of 50 values of $h$ in $[-10, 10]$ in (a) dimension 4 and (b) dimension 10.  (Here, $N=N_1+N_2=15000$.)  As before, in both cases, the GLR test has the highest power (going to 1). In dimension 4 (which is less than the critical dimension 8), the tests based on the NN graphs have non-trivial limiting local power as a function of $h$.  However, in dimension 10 (Figure \ref{fig:nscaleI}(b)), there is a drastic change in the shape of the limiting power curve:

\begin{description}

\item[$h>0$] In this case, the power of all the tests quickly increase to 1 with increasing $h$. This is expected because, recalling the discussion in Section \ref{sec:snormal}, for $d \geq 10$ and $h >0$, the detection threshold of the $K$-NN test is at $N^{-\frac{1}{2}+\frac{2}{d}}=N^{-\frac{3}{10}} \ll N^{-\frac{1}{4}}$, and the detection threshold of the GLR test is at $N^{-\frac{1}{2}} \ll N^{-\frac{1}{4}}$. 

\item[$h<0$] Here, there is a region where the $K$-NN based tests have zero power, illustrating the non-monotonicity of the power function and the asymptotic biasedness phenomenon discussed in Section \ref{sec:snormal}. This is because, in this case, the detection threshold is at $N^{-\frac{2}{d}}=N^{-\frac{1}{5}} \gg N^{-\frac{1}{4}}$. 

\end{description}

%

To observe the emergence of these phase transitions more clearly, we now zoom in at the different thresholds with larger sample sizes. Hereafter, we set the level of tests to $\alpha=0.15$.

\begin{figure*}[h]
\centering
\begin{minipage}[l]{0.48\textwidth}
\centering
\includegraphics[width=2.05in]
    {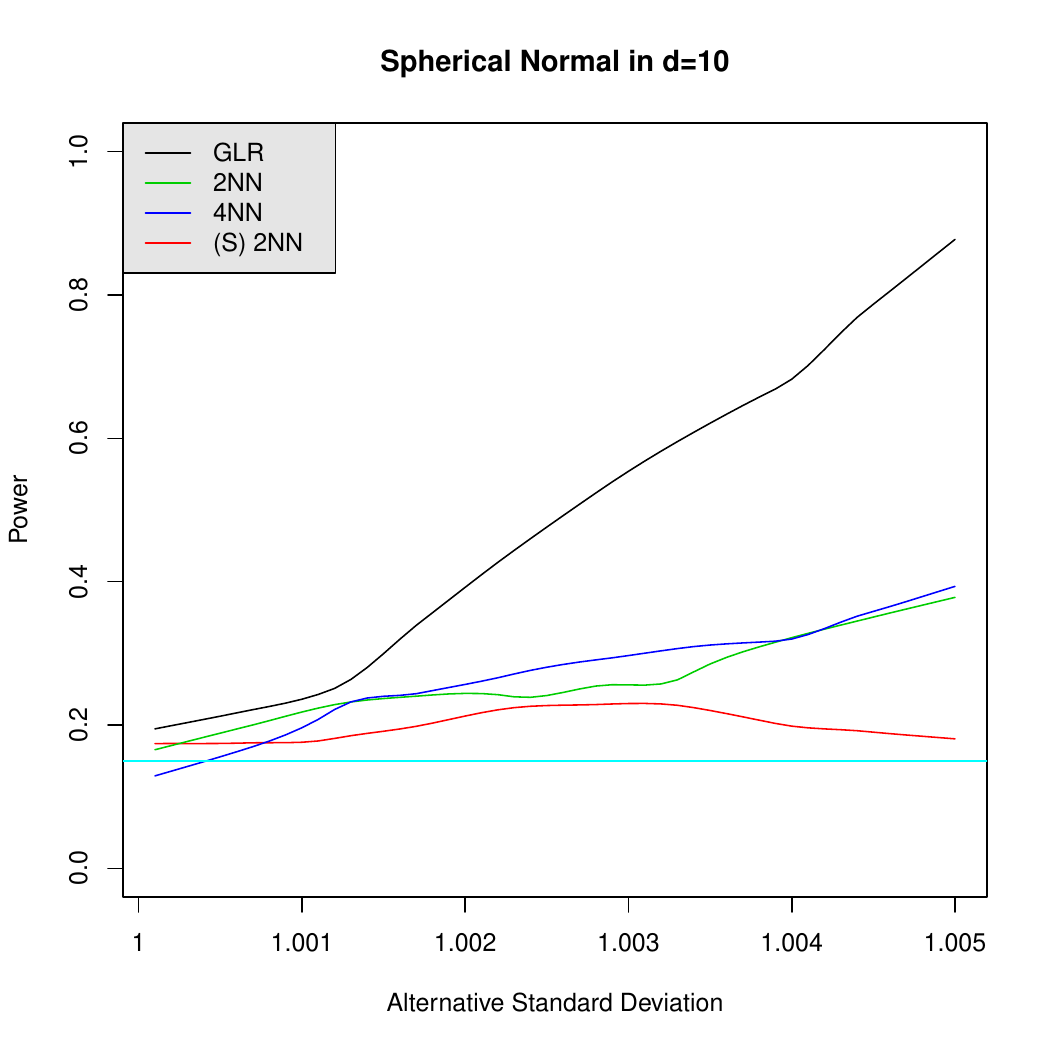}\\
\small{(a)}
\end{minipage}
\begin{minipage}[c]{0.48\textwidth}
\centering
\includegraphics[width=2.05in]
    {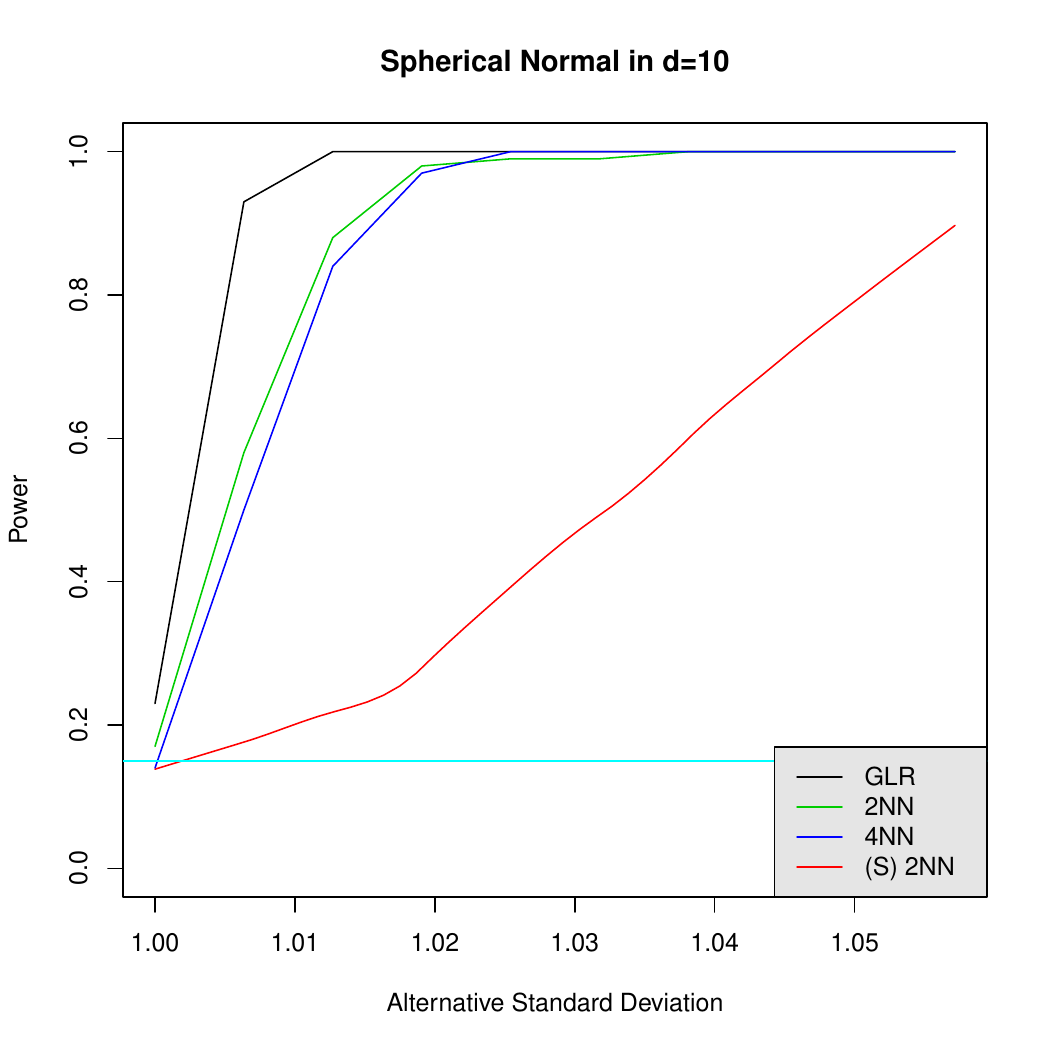}\\
\small{(b)}
\end{minipage}
\caption{\small{Empirical power for the spherical normal problem in dimension $d=10$  against (a) $h N^{-\frac{1}{2}}$ and (b) $h N^{-\frac{3}{10}}$ alternatives, as $h$ varies over a grid values in $[0, 2]$. The horizontal axis shows the scale parameter $\sigma$ under the alternative.}}
\label{fig:nscaleII_N}
\end{figure*}
%

\begin{description}

\item[$h>0$] Figure~\ref{fig:nscaleII_N} shows the empirical power (out of 100 repetitions) in dimension 10 of the different tests, with $N_1=80000$ samples from $N(0, \mathrm I_d)$ and  $N_2=60000$  samples from $N(0, (1 + h N^{-b})^2 \mathrm I_d)$, over a grid of 10 values of $h$ in $[0, 2]$. Figure \ref{fig:nscaleII_N}(a) shows the case $b=0.5$ (which corresponds to parametric rate of detection). Here, as excepted, only the parametric GLR test has power increasing  to 1. On the other hand, the power of the $K$-NN tests are significantly lower (closer to the level of the test). Note that the power of the 2-NN and the 4-NN tests do increase slightly with the number of neighbors $K$ and $h$, but this becomes less significant when the experiment is repeated with larger sample sizes (see the table in Figure \ref{fig:nscaleII_table}(a)).   

In Figure \ref{fig:nscaleII_N}(b) we choose $b=0.3$, thus zooming in at the predicted detection threshold of $N^{-0.3}$, for $h>0$ (recall the discussion from Section \ref{sec:snormal}). Here, the power of the  $K$-NN tests  quickly increase to 1.  The same experiment, with $b=0.5$ and $b=0.3$, is repeated with larger sample sizes $N_1=400000$ and $N_2=300000$, and the results are presented in the tables in Figure \ref{fig:nscaleII_table}, which further validate the theoretical findings.

\begin{figure*}[h]
\centering
\begin{minipage}[l]{0.48\textwidth}
\centering
\small
\begin{tabular}{c|cccc}
\hline
$h$ & 0 & 0.5 & 1 & 1.5 \\ 
\hline
\hline
2-NN& 0.12 & 0.12 & 0.12 & 0.16 \\
\hline
4-NN& 0.16 & 0.2 & 0.22 & 0.24  \\
\hline
2-(S)NN & 0.12 & 0.14 & 0.12 & 0.16 \\
\hline 
GLR & 0.13 & 0.21 & 0.49 & 0.61 \\
\hline
\end{tabular}\\
\small{(a) $\sigma=1 + \frac{h}{N^{0.5}}$ under the alternative}
\end{minipage}
\begin{minipage}[c]{0.48\textwidth}
\centering
\small 
\begin{tabular}{c|cccc}
\hline
$h$ & 0 & 0.5 & 1 & 1.5 \\ 
\hline
\hline
2-NN& 0.14 & 0.86 & 1 & 1 \\
\hline
4-NN& 0.08 & 1 & 1 & 1  \\
\hline
2-(S)NN & 0.24 & 0.26 & 0.44 & 0.8 \\
\hline 
GLR & 0.22 & 1 & 1 & 1 \\
\hline
\end{tabular} \\
\small{(b) $\sigma=1 + \frac{h}{N^{0.3}}$ under the alternative}
\end{minipage}
\caption{\small{Empirical power for the spherical normal problem in dimension $d=10$  for (a) $h N^{-\frac{1}{2}}$ and (b) $h N^{-\frac{3}{10}}$ alternatives, for varying $h$, with sample sizes $N_1=400000$ and $N_2=300000$.}}
\label{fig:nscaleII_table}
\end{figure*}
%

\normalsize

\item[$h<0$]  Figure~\ref{fig:nscaleI_N} shows the empirical power in dimension 10 of the different tests, with $N_1=80000$ samples from $N(0, \mathrm I_d)$ and  $N_2=60000$  samples from $N(0, (1 + h N^{-b})^2 \mathrm I_d)$, over a grid of 10 values of $h$ in $[0, 2]$.  Figure~\ref{fig:nscaleI_N}(a) zooms in at the parametric threshold ($b=0.5$), where only the GLR test has power increasing  to 1, while the power of the $K$-NN tests remain close to the level (decreasing slightly with increasing $h$). In Figure~\ref{fig:nscaleI_N}(b) we take $b=\frac{1}{2}-\frac{2}{d}=0.3$, where the $K$-NN tests have power decreasing from the level 0.15 down to zero, as predicted by Theorem 4.2. In Figure~\ref{fig:nscaleI_N}(c) we zoom in at $N^{-0.2}$, that is, $b=\frac{2}{d}=0.2$. This is the detection threshold for the $K$-NN test for $h<0$, which we observe in the plots where the power sharply transitions from 0 to 1. 
\end{description}

\begin{figure*}[h]
\centering
\begin{minipage}[l]{0.31\textwidth}
\centering
\includegraphics[width=2.05in]
    {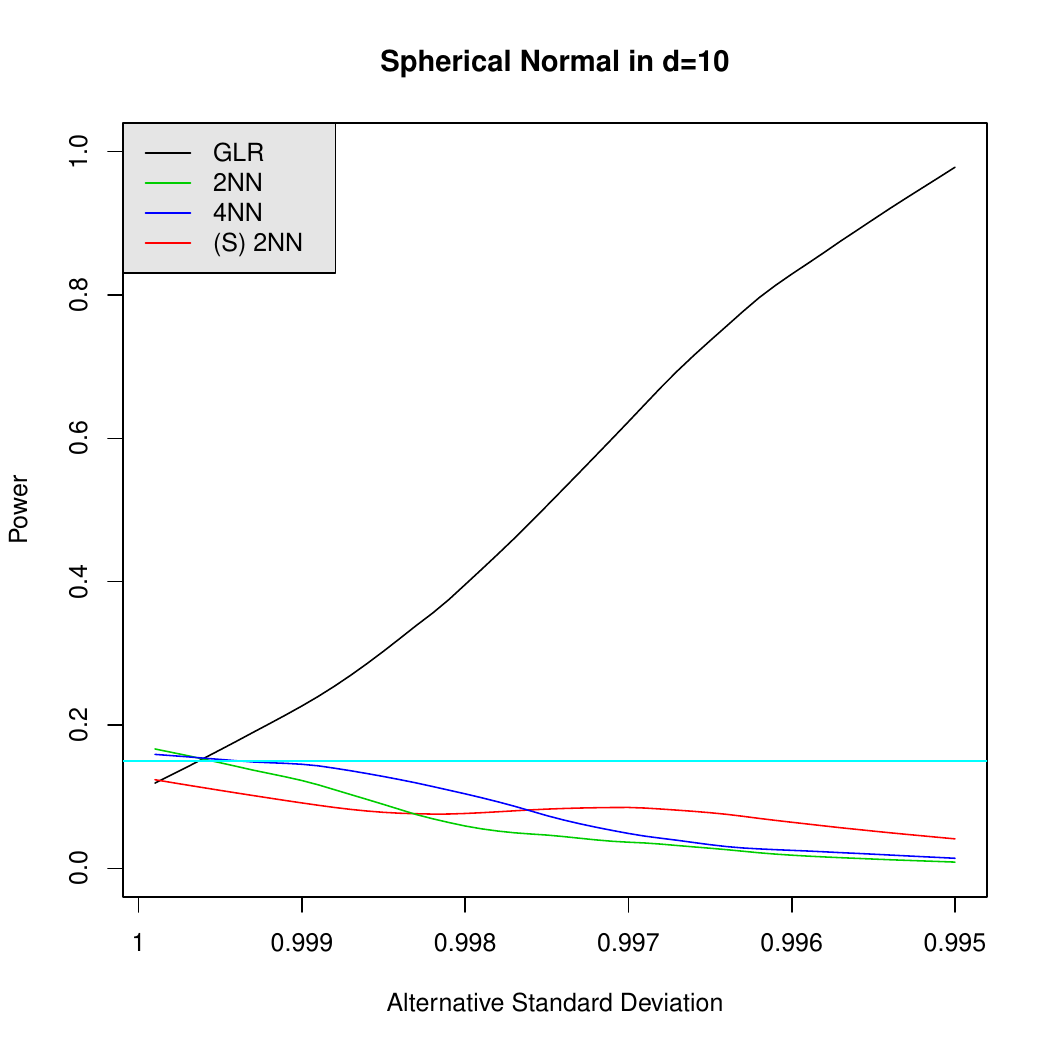}\\
\small{(a)}
\end{minipage}
\begin{minipage}[c]{0.31\textwidth}
\centering
\includegraphics[width=2.05in]
    {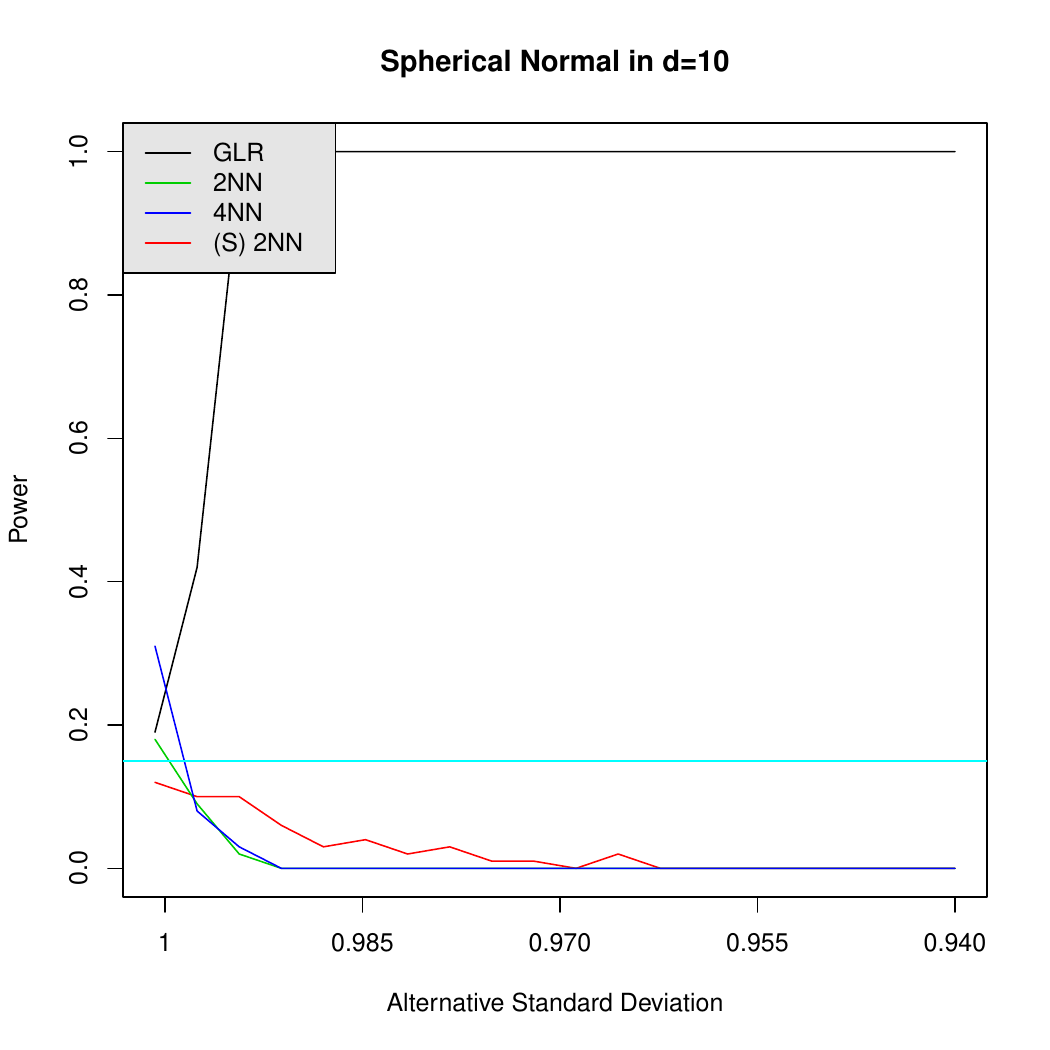}\\
\small{(b)}
\end{minipage}
\begin{minipage}[r]{0.31\textwidth}
\centering
\includegraphics[width=2.05in]
    {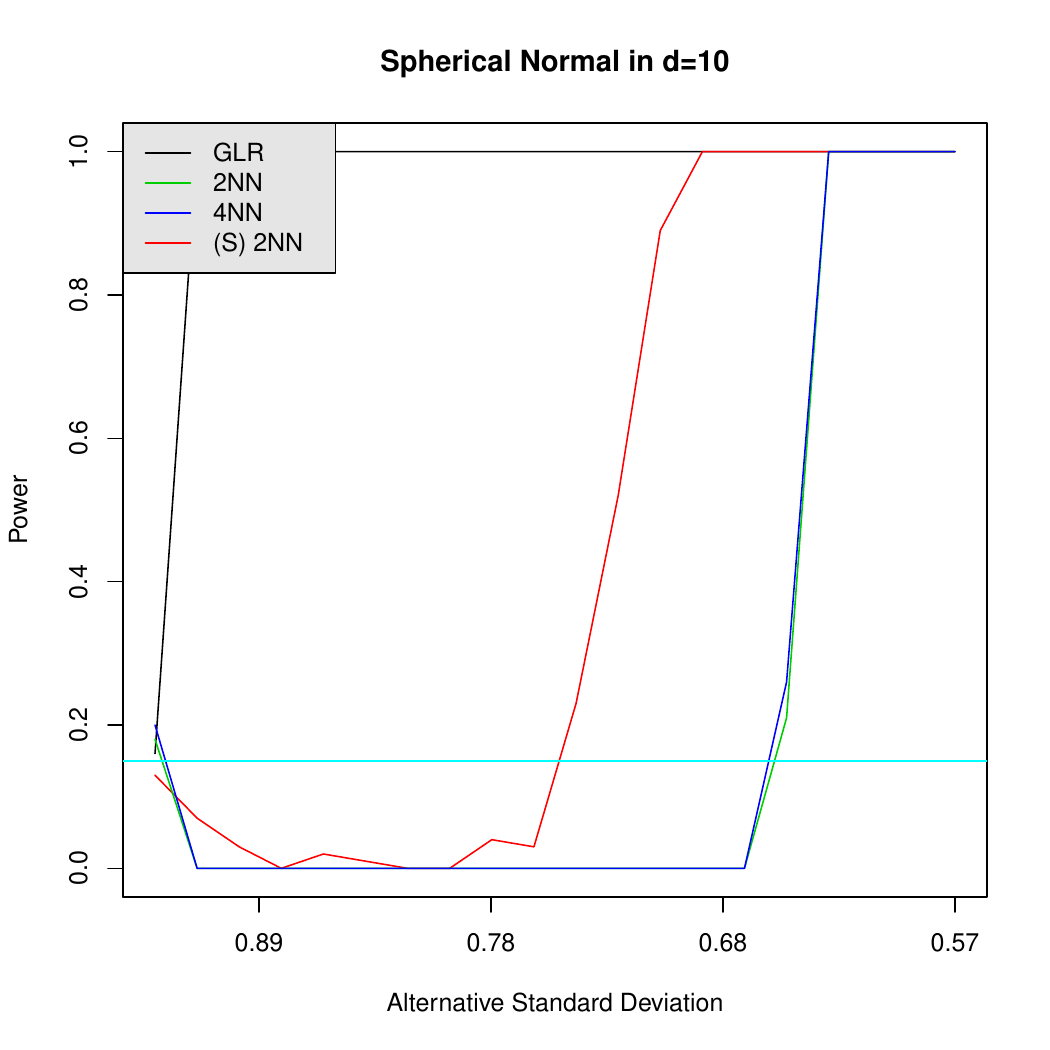}\\
\small{(c)}
\end{minipage}
\caption{\small{Empirical power for the spherical normal problem in dimension $d=10$  against (a) $h N^{-\frac{1}{2}}$, (b) $h N^{-\frac{3}{10}}$, and (c) $h N^{-\frac{1}{5}}$  alternatives, as a function of $h$. The horizontal axis shows the scale parameter $\sigma$ under the alternative.}}
\label{fig:nscaleI_N}
\end{figure*}


\begin{thebibliography}{99}

\bibitem{aldous_steele}
D. Aldous, and J. M. Steele, Asymptotics for Euclidean minimal spanning trees on random points, {\it Probab. Theory Related Fields}, Vol. 92, 247--258, 1992. 
\bibitem{cm_consistency}
E. Arias-Castro and B. Pelletier, On the consistency of the crossmatch test,  {\it Journal of Statistical Planning and Inference}, 184--190, Vol. 171, 2016.  
\bibitem{aslan_zech}
B. Aslan and G. Zech, New test for the multivariate two-sample problem based on the concept of minimum energy, {\it Journal of Statistical Computation and Simulation}, Vol. 75, 109--119, 2005.
\bibitem{bfranz}
L. Baringhaus and C. Franz, On a new multivariate two-sample test, {\it Journal of Multivariate Analysis}, Vol. 88, 190--206,  2004.
\bibitem{batu}
T. Batu, L. Fortnow, R. Rubinfeld, W. D. Smith, and P. White, Testing closeness of discrete distributions, {\it Journal of the ACM}, Vol. 60 (1), Article 4, 2013. 
\bibitem{BBB}
B. B. Bhattacharya, A general asymptotic framework for distribution-free graph-based two-sample tests, {\it Journal of the Royal Statistical Society, Series B}, Vol. 81 (3), 575--602,  2019. 
\bibitem{bickel_breiman}
 P.J. Bickel, L. Breiman, Sums of functions of nearest neighbor distances, moment bounds, limit theorems and a goodness of fit test, {\it Annals of Probability} Vol. 11, 185--214, 1983. 
\bibitem{CDVV}
S.-O. Chan, I. Diakonikolas, G. Valiant, and P. Valiant, Optimal algorithms for testing closeness of discrete distributions, {\it Proc. Symposium on Discrete Algorithms} (SODA), 1193--1203, 2014. 
\bibitem{bickel}
P. J. Bickel, A distribution free version of the Smirnov two sample test in the $p$-variate case, {\it Annals of Mathematical Statistics}, Vol. 40, 1--23, 1969.
\bibitem{tsp}
M. Biswas, M. Mukhopadhyay, A. K. Ghosh, A distribution-free two-sample run test applicable to high-dimensional data, {\it Biometrika}, Vol. 101(4), 913--926, 2014. 
\bibitem{chenfriedman}
H. Chen and J. H. Friedman,  A new graph-based two-sample test for multivariate and object data, {\it J. Amer. Statist. Assoc.} Vol. 112 (517), 397--409, 2017.


\bibitem{stein_local}
L. H. Y. Chen and Q.-M. Shao, Normal approximation under local dependence, {\it Ann. Probab.}, Vol. 32 (3), 1985--2028, 2004. 


\bibitem{henze_yukich}
B. Ebner, N. Henze, and J. Yukich, Multivariate goodness-of-fit on flat and curved spaces via nearest neighbor distances, {\it Journal of Multivariate Analysis}, Vol. 165, 231--242, 2018. 
\bibitem{fr}
J. H. Friedman and L. C. Rafsky,  Multivariate generalizations of the Wolfowitz and Smirnov two-sample tests, {\it Ann. Statist.}, Vol. 7, 697--717, 1979.
\bibitem{gretton}
A. Gretton, K. Borgwardt, M. Rasch, B. Scholkopf, and A. Smola, A kernel two-sample test, {\it Journal of Machine Learning Research}, Vol. 16, 723--773, 2012.
\bibitem{gyorfinemetz1}
L. Gy\"orfi and T. Nemetz, $f$-dissimilarity: A general class of separation measures of several probability measures, {\it In Topics in Information Theory. Colloq. Math. Soc. J\'anos Bolyai}, Vol. 16, 309--321, 1975.
\bibitem{hall_tajvidi}
P. Hall and N. Tajvidi, Permutation tests for equality of distributions in high-dimensional settings, {\it Biometrika}, Vol. 89, 359--374, 2002.
\bibitem{henzenn}
N.  Henze, A multivariate two-sample test based on the number of nearest neighbor  type coincidences,  {\it Ann. Statist.}, Vol. 16, 772--783, 1988.
\bibitem{henzepenrose}
N. Henze and M. D. Penrose, On the multivariate runs test,  {\it The Annals of Statistics}, Vol. 27 (1), 290--298, 1999.
\bibitem{liu_singh}
R. Y. Liu and K. Singh,  A quality index based on data depth and multivariate rank tests, {\it J. Amer. Statist. Assoc.}, Vol. 88, 252--260, 1993.
\bibitem{interpoint}
J.-F. Maa, D. K. Pearl, and R. Bartoszy\' nski, Reducing multidimensional two-sample data to one-dimensional interpoint comparisons, {\it The Annals of Statistics}, Vol. 24 (3), 1069--1074, 1996.
\bibitem{mann_whitney}
H. B. Mann and D. R. Whitney, On a test of whether one of two random variables is stochastically larger than the other, {\it Annals of Mathematical Statistics}, Vol. 18(1), 50--60, 1947.
\bibitem{penrosebook}
M. D. Penrose, {\it Random Geometric Graphs}, Oxford University Press, 2003.
\bibitem{penroseclt}
M. D. Penrose, Gaussian limits for random geometric measures, {\it Electronic Journal of Probability},  Vol. 12, 989--1035, 2007.
\bibitem{py}
M. D. Penrose and J. E. Yukich, Weak laws of large numbers in geometric probability, {\it Ann. Appl. Probab.} Vol. 13, 277--303, 2003.
\bibitem{rosenbaum}
P. R. Rosenbaum, An exact distribution-free test comparing two multivariate distributions based on adjacency, {\it Journal of the Royal Statistical Society: Series B}, Vol. 67 (4), 515--530, 2005.
\bibitem{rousson}
V. Rousson, On distribution-free tests for the multivariate two-sample location-scale model, {\it J. Mult. Anal.}, Vol.  80, 43--57, 2002.
\bibitem{rudin}
W. Rudin, {\it Real and Complex Analysis}, 3rd ed. McGraw-Hill, New York, 1987.
\bibitem{schilling}
M. F. Schilling, Multivariate two-sample tests based on nearest neighbors, {\it J. Amer. Statist. Assoc.}, Vol. 81,  799--806, 1986.
\bibitem{smirnoff}
N. Smirnoff, On the estimation of the discrepancy between empirical curves of distribution for two independent samples, {\it Bulletin de Universite de Moscow, Serie internationale (Mathematiques)}, Vol. 2, 3--14, 1939.
\bibitem{tukey}
J. W. Tukey, Mathematics and picturing data, {\it In Proc. Intern. Congr. Math. Vancouver 1974}, Vol. 2, 523--531, 1975.
\bibitem{ww}
A. Wald and J. Wolfowitz, On a test whether two samples are from the same distribution, {\it Ann. Math. Statist.}, Vol. 11, 147--162, 1940.
\bibitem{weiss}
L. Weiss, Two-sample tests for multivariate distributions, {\it The Annals of Mathematical Statistics}, Vol. 31, 159--164, 1960.
\bibitem{yukichclt}
J. Yukich, Limit theorems in discrete stochastic geometry, {\it Stochastic geometry, spatial statistics and random fields}, Lecture Notes in Mathematics, 2068, 239--275, 2013.

\end{thebibliography}
\end{document}